\let\oldtocsection=\tocsection
\let\oldtocsubsection=\tocsubsection
\renewcommand{\tocsection}[2]{\hspace{0em}\oldtocsection{#1}{#2}}
\renewcommand{\tocsubsection}[2]{\hspace{1em}\oldtocsubsection{#1}{#2}}
\tikzset{node distance=3cm, auto}
\def\@secnumfont{\bfseries}
\def\section{\@startsection{section}{1}%
  \z@{.7\linespacing\@plus\linespacing}{.5\linespacing}%
  {\normalfont\Large\bfseries}}
\def\subsection{\@startsection{subsection}{2}%
  \z@{.5\linespacing\@plus.7\linespacing}{-.5em}%
  {\normalfont\large\bfseries}}
  \def\subsubsection{\@startsection{subsubsection}{3}%
  \z@{.5\linespacing\@plus.7\linespacing}{-.5em}%
  {\normalfont\bfseries}}
\numberwithin{figure}{subsection}  
\numberwithin{equation}{subsection}  
\numberwithin{table}{subsection}  
\let\c@equation\c@figure
\let\c@table\c@figure
\let\c@algorithm\c@figure
\newtheorem{thm}{Theorem}[subsection]
\newtheorem{lemma}[thm]{Lemma}
\newtheorem{prop}[thm]{Proposition}
\newtheorem{cor}[thm]{Corollary}
\newtheorem{conjecture}[thm]{Conjecture}
\newtheorem{definition}[thm]{Definition}
\theoremstyle{definition}
\newtheorem{example}[thm]{Example}
\newtheorem{remark}[thm]{Remark}
\newtheorem{rmk}[thm]{Remark}
\newcommand{\N}{{\mathbb{N}}}
\newcommand{\Tt}{\mathcal{T}}
\newenvironment{itemlist}
   { \begin{list} {$\bullet$}
         { \setlength{\topsep}{.5ex}  \setlength{\itemsep}{.5ex} \setlength{\leftmargin}{2.5ex} } }
   { \end{list} }
   \newcommand{\bx}{{\bf{x}}}
\newcommand{\bw}{{\bf{w}}}
\newcommand{\bE}{{\bf{E}}}
\newcommand{\ov}{\overline}
\newcommand{\be}{{\beta}}
\newcommand{\Om}{{\Omega}}
\newcommand{\la}{{\lambda}}
\newcommand{\p}{{\partial}}
\newcommand{\Z}{\mathbb{Z}}
\newcommand{\R}{\mathbb{R}}
\newcommand{\C}{\mathbb{C}}
\newcommand{\eps}{\varepsilon}
\newcommand{\calL}{\mathcal{L}}
\newcommand{\e}{\eps}
\newcommand{\Lam}{\Lambda}
\newcommand{\acc}{\mathrm{acc}}
\newcommand{\sembeds}{\stackrel{s}{\hookrightarrow}}
\newcommand{\vol}{\operatorname{vol}}
\newcommand{\dashover}[2][\mathop]{#1{\mathpalette\df@over{{\dashfill}{#2}}}}
\newcommand{\fillover}[2][\mathop]{#1{\mathpalette\df@over{{\solidfill}{#2}}}}
\newcommand{\df@over}[2]{\df@@over#1#2}
\newcommand\df@@over[3]{%
  \vbox{
    \offinterlineskip
    \ialign{##\cr
      #2{#1}\cr
      \noalign{\kern1pt}
      $\m@th#1#3$\cr
    }
  }%
}
\newcommand{\dashfill}[1]{%
  \kern-.5pt
  \xleaders\hbox{\kern.5pt\vrule height.4pt width \dash@width{#1}\kern.5pt}\hfill
  \kern-.5pt
}
\newcommand{\dash@width}[1]{%
  \ifx#1\displaystyle
    2pt
  \else
    \ifx#1\textstyle
      1.5pt
    \else
      \ifx#1\scriptstyle
        1.25pt
      \else
        \ifx#1\scriptscriptstyle
          1pt
        \fi
      \fi
    \fi
  \fi
}
\newcommand{\solidfill}[1]{\leaders\hrule\hfill}
\title{Four-periodic infinite staircases for four-dimensional polydisks}
\date{\today}
\author{Caden Farley}
\address{CF: Rice University}
\email{ctf4@rice.edu}
\author{Tara S. Holm}
\address{TH: Cornell University}
\email{tara.holm@cornell.edu}
\author{Nicki Magill}
\address{NM: Cornell University}
\email{nm627@cornell.edu}
\author{Jemma Schroder}
\address{JS: Massachusetts Institute of Technology}
\email{jemma@mit.edu}
\author{Zichen Wang}
\address{ZW: Cornell University}
\email{zw336@cornell.edu}
\author{Morgan Weiler}
\address{MW: Cornell University}
\email{morgan.weiler@cornell.edu}
\author{Elizaveta Zabelina}
\address{EZ: Cornell University}
\email{ez283@cornell.edu}
\begin{document}

\begin{abstract} 
The ellipsoid embedding function of a symplectic four-manifold measures the amount by which its symplectic form must be scaled in order for it to admit an embedding of an ellipsoid of varying eccentricity. This function generalizes the Gromov width and ball packing numbers. In the one continuous family of symplectic four-manifolds that has been analyzed, one-point blowups of the complex projective plane, there is an open dense set of symplectic forms whose ellipsoid embedding functions are completely described by finitely many obstructions, while there is simultaneously a Cantor set of symplectic forms for which an infinite number of obstructions are needed. In
the latter case, we say that the embedding function has an infinite staircase. In this paper we identify a new infinite staircase when the target is a four-dimensional polydisk, extending a countable family identified by Usher in 2019. 
Our work computes the function on infinitely many intervals and thereby indicates a method of proof for a conjecture of Usher. 
\end{abstract}

\maketitle

\tableofcontents

\section{Introduction}\label{sec:intro}

A \textbf{symplectic form} on a $2n$-dimensional smooth manifold $X$ is a differential 2-form satisfying:
\begin{itemize}
    \item $d\omega=0$, i.e., $\omega$ is closed, and
    \item $\omega^n\neq0$, i.e., $\omega$ is nondegenerate.
\end{itemize}
A symplectic form can be thought of as a skew-symmetric version of a Riemannian metric, providing area rather than length measurement. Symplectic geometry forms the mathematical framework for classical mechanics and is a go-between from Riemannian to complex geometry.

The \textbf{volume} $\vol(X)$ of a symplectic manifold is the quantity $\int_X\omega^n$. We say a smooth embedding $\varphi:(X,\omega)\to(X',\omega')$ is \textbf{symplectic} if $\varphi^*(\omega')=\omega$, and we denote symplectic embedding by
\[
\varphi:(X,\omega)\sembeds(X',\omega'),
\]
or $ X\sembeds X'$ when the symplectic form is clear from context and we are not emphasizing the specific embedding $\varphi$.

Let $(X,\omega)$ be a four-dimensional symplectic manifold. Its {\bf ellipsoid embedding function}\footnote{It is sometimes also called the \textbf{embedding capacity function} or \textbf{capacity function}.} is
\begin{equation}\label{eqn:cXdefn}
    c_X(z):=\inf\left\{\lambda\ \Big|\ (E(1,z),\omega_0)\sembeds (X,\lambda\omega)\right\},
\end{equation}
where $z\in\R_{>0}$,  $\lambda X: = (X,\lambda\omega)$ is $X$ with the symplectic form scaled, the \textbf{ellipsoid} $E(c,d)\subset \mathbb{C}^2$ is the set
$$
E(c,d)=\left\{(\zeta_1,\zeta_2)\in\mathbb{C}^2 \ \big|\  \pi \left(  \frac{|\zeta_1|^2}{c}+\frac{|\zeta_2|^2}{d} \right)<1 \right\},
$$
and $\omega_0$ is the standard symplectic form $dx_1\wedge dy_1+dx_2\wedge dy_2$ on $\C^2$. Note that the associated volume form is twice the standard volume form on $\R^4$, thus $\vol(E(c,d))=cd$. There is a symmetry that allows us to reduce to $z\geq1$. Namely, for $0<z<1$ we have $c_X(z)=zc_X(1/z)$, because $\omega_0$ restricted to $E(1,z)$ equals $z\omega_0$ restricted to $E(1/z,1)$ under the diffeomorphism $(\zeta_1,\zeta_2)\mapsto(\zeta_1/\sqrt{z},\zeta_2/\sqrt{z})$. Therefore, from now on we restrict the domain of $c_X(z)$ to $\R_{\geq1}$.

The ellipsoid embedding function generalizes the Gromov width\footnote{The \textbf{Gromov width} of a symplectic manifold is $\sup\,\{r\,|\,E(r,r)\sembeds(X,\omega)\}$, or the largest ball that embeds into $(X,\omega)$.} via
\[
c_{Gr}(X,\omega)=\frac{1}{c_X(1)}
\]
and the fraction of the volume of $X$ that can be filled by $n\in\Z_{\geq1}$ equal balls can, by \cite[Thm.~1.1]{Mc0}, be computed from $c_X$ via
\[
\frac{n}{c_X(n)^2\vol(X)}.
\]

For a class of targets $(X,\omega)$ called ``finite type convex toric domains'' (see \S\ref{ssec:toric}) which includes the polydisks that we study, the ellipsoid embedding function satisfies several key properties.

\begin{prop}[{\cite[p. 4, Prop.~2.1]{cghmp}}]\label{prop:cXprops}
Let $(X,\omega)$ be a finite type convex toric domain.  The ellipsoid embedding function $c_X(z)$ satisfies the following properties.

    \begin{enumerate}[label=(\roman*)]
        \item $c_X(z)\geq\sqrt{\frac{z}{\vol(X)}}$;
        \item $c_X$ is nondecreasing;
        \item $c_X$ is {\bf sublinear}: for all $t\geq1$, we have $c_X(tz)\leq tc_X(z)$;
        \item $c_X(z)$ is continuous (in $z$); 
        \item $c_X(z)$ is equal to the volume curve for sufficiently large values of $z$; \label{it:stab} and
        \item $c_X(z)$ is piecewise linear, when not equal to the volume curve and not at the limit of singular points. 
    \end{enumerate}
\end{prop}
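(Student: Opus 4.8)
The plan is to split the six properties into two groups: the ``soft'' bounds (i)--(iv), which follow from functoriality of symplectic embeddings together with the scaling behavior of $\omega_0$ on $\C^2$, and the structural properties (v)--(vi), which require the machinery of ECH capacities. For (i), I would invoke monotonicity of symplectic volume under embeddings: if $E(1,z)\sembeds(X,\lambda\omega)$ then $\vol(E(1,z))\leq\vol(\lambda X)$. Since $\vol(E(1,z))=z$ and scaling $\omega$ by $\lambda$ scales the top form $\omega^2$ by $\lambda^2$, this reads $z\leq\lambda^2\vol(X)$, so $\lambda\geq\sqrt{z/\vol(X)}$; taking the infimum gives the volume lower bound. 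For (ii) I would note that $z_1\leq z_2$ implies $E(1,z_1)\subseteq E(1,z_2)$, so any embedding of the larger ellipsoid restricts to one of the smaller, whence $c_X(z_1)\leq c_X(z_2)$.

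For (iii), the key is the dilation $s_t(\zeta)=\sqrt{t}\,\zeta$ on $\C^2$, which satisfies $s_t^*\omega_0=t\omega_0$ and carries $E(1,z)$ onto $E(t,tz)$. Composing a given embedding $E(1,z)\sembeds(X,\lambda\omega)$ with $s_t^{-1}$ produces $E(t,tz)\sembeds(X,t\lambda\omega)$; since $t\geq1$ gives $E(1,tz)\subseteq E(t,tz)$, restriction yields $c_X(tz)\leq t\lambda$, and infimizing over $\lambda$ gives sublinearity. Property (iv) I would then deduce purely formally from (ii) and (iii) by a sandwich argument: writing $z'=tz$ with $t=z'/z$, sublinearity gives $c_X(z')\leq(z'/z)c_X(z)$ for $z'>z$, so $\limsup_{z'\to z^+}c_X(z')\leq c_X(z)$, and monotonicity supplies the reverse inequality, giving right-continuity; applying sublinearity with the roles of $z$ and $z'$ exchanged bounds $c_X(z')$ below for $z'<z$ and yields left-continuity.

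The main obstacle is (v) and (vi), which concern the fine structure of $c_X$ and cannot come from volume and scaling alone. Here I would invoke the description of $c_X$ on a convex toric domain as a supremum of ECH obstructions, $c_X(z)=\sup_{k\geq1}c_k(E(1,z))/c_k(X)$, where $c_k(\cdot)$ denotes the $k$-th ECH capacity; this uses the sharpness of the ECH obstruction for these targets. Each $c_k(E(1,z))$ is, as a function of $z$, the $k$-th smallest entry of an explicit weighted lattice-point sequence, hence continuous and piecewise linear in $z$ with finitely many breakpoints on any bounded interval, and dividing by the constant $c_k(X)$ preserves this. For (vi) I would argue that on any region where the supremum is locally attained by finitely many indices and does not coincide with the volume curve, $c_X$ is a finite maximum of piecewise linear functions, hence piecewise linear; the excluded loci are exactly the volume curve and the accumulation points of breakpoints, i.e., the singular points.

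For (v) the plan is to use the asymptotics $c_k(E(1,z))\sim\sqrt{2kz}$ and $c_k(X)\sim\sqrt{2k\,\vol(X)}$, whose ratio tends to $\sqrt{z/\vol(X)}$, together with a uniform estimate showing that once $z$ is large enough every ECH ratio $c_k(E(1,z))/c_k(X)$ is bounded above by $\sqrt{z/\vol(X)}$. Combined with the lower bound (i), this forces the supremum to equal the volume curve for all $z$ beyond a finite threshold. Verifying that this threshold is finite and that no finite-index ratio ever exceeds the volume bound in the thin regime is the delicate analytic point, and it is where I would expect to lean most heavily on the established arithmetic and asymptotic properties of ECH capacities of convex toric domains.
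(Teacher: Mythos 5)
First, a point of comparison: the paper does not prove this proposition at all --- it is imported verbatim as a citation to \cite[Prop.~2.1]{cghmp} --- so your attempt can only be measured against the proof in that reference. Your arguments for (i)--(iv) are correct and are essentially the standard ones used there: volume monotonicity of symplectic embeddings for (i), the inclusion $E(1,z_1)\subseteq E(1,z_2)$ for (ii), the conformal dilation $s_t^*\omega_0=t\omega_0$ carrying $E(1,z)$ to $E(t,tz)$ for (iii), and the sandwich between monotonicity and sublinearity for (iv). Those four parts are sound.

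The genuine gap is in (v), and to a lesser degree (vi). Reducing $c_X$ to the supremum of ECH ratios $c_k(E(1,z))/c_k(X)$ is legitimate for finite type convex toric domains (this is the sharpness result of \cite{cg}; compare Theorem~\ref{thm:mcdcg} for the polydisk), but the asymptotic statement that $c_k(E(1,z))/c_k(X)\to\sqrt{z/\vol(X)}$ as $k\to\infty$ only shows the supremum is \emph{at least} the volume curve, which is already (i). A limit in $k$, taken pointwise in $z$, gives no control over whether some finite $k$ exceeds the limiting value, so it cannot bound the supremum from above. The ``uniform estimate showing that once $z$ is large enough every ECH ratio is bounded above by $\sqrt{z/\vol(X)}$'' that you defer to is not an available lemma: given ECH sharpness, it is \emph{equivalent} to statement (v) itself, i.e.\ to packing stability for ellipsoid embeddings into $X$. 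The known proofs of that fact (Biran's full packings of the ball via algebraic geometry, Bus\'e--Hind--Opshtein in general, the explicit computation of \cite{ball} for the ball) all rest on constructing embeddings or on a complete analysis of the obstruction functions, not on soft capacity asymptotics --- which is consistent with the general principle visible throughout this paper that obstructions alone never give upper bounds for $c_X$. The same issue infects (vi): the claim that away from the volume curve the supremum is ``locally attained by finitely many indices'' is precisely what must be proved, and the cited asymptotics \cite[Thm.~1.1]{cghr} (a bare limit, with no explicit error term) are too weak to rule out large-$k$ ratios competing with the supremum on a compact $z$-interval; one needs quantitative two-sided $O(1)$-error refinements. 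So as written, (i)--(iv) stand, while (v)--(vi) are circular: the ``delicate analytic point'' you flag is the content of the theorem.
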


We say $c_X$ or $X$ has an \textbf{infinite staircase} if it is nonsmooth at infinitely many points. An \textbf{outer corner} is a nonsmooth point near which the function is concave while an \textbf{inner corner} is is one near which the function is convex.
By Proposition \ref{prop:cXprops} \ref{it:stab}, the set of nonsmooth points is bounded. By \cite[Thm.~1.13]{cghmp} (see Theorem \ref{thm: acc} for a statement in our case), the nonsmooth points of $c_X$ have a unique finite limit point called the \textbf{accumulation point}, whose $z$-coordinate we denote by $\acc(X)$. (By abuse of notation, we also refer to this $z$-coordinate as the ``accumulation point.'') 
We say an infinite staircase is \textbf{ascending} if the nonsmooth points accumulate from the left and \textbf{descending} if the nonsmooth points accumulate from the right. 
These concepts are illustrated in Figure~\ref{fig:corners}.
In this paper, we will establish the existence of an ascending staircase.

\begin{center}
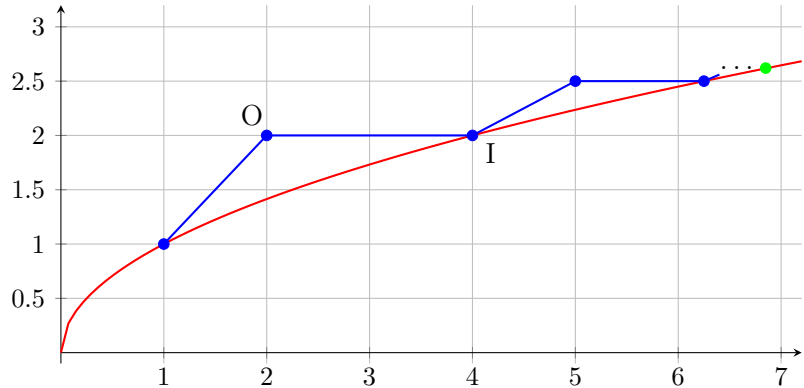
\begin{figure}[H]
\begin{tikzpicture}
\begin{axis}[
	axis lines = middle,
	xtick = {1,2,...,7},
	ytick = {0.5,1,...,3},
	tick label style = {font=\small},
	xmin=0,
	xmax=7.2,
	ymin=-0.1,
	ymax=3.2,
	grid=major,
	width=4.5in,
	height=2.5in]
\addplot [red, thick,
	domain = 0:7.2,
	samples = 100
](x,{sqrt(x)});
\addplot [blue, only marks,  ultra thin, mark=*] coordinates{(1,1) (2,2) (4,2) (5,5/2) (25/4,5/2)};
\addplot [green, only marks,  ultra thin, mark=*] coordinates{(6.85,2.62)};
\addplot [blue, thick,
	domain = 1:2,
	samples = 100
](x,x);
\addplot [blue, thick,
	domain = 2:4,
	samples = 100
](x,2);
\addplot [blue, thick,
	domain = 4:5,
	samples = 100
](x,1/2*x);
\addplot [blue, thick,
	domain = 5:25/4,
	samples = 100
](x,5/2 );
\addplot [blue, thick,
	domain = 25/4:6.4,
	samples = 100
](x,2/5*x);
\node[yshift=1.3in,xshift=1in] {O};
\node[yshift=1.1in,xshift=2.25in] {I};
\node[yshift=1.55in,xshift=3.55in] {\dots};
\end{axis}
\end{tikzpicture}
\caption{ 
In blue, the graph of the embedding capacity function for a ball $X=B^4(1)$ is shown  on the domain indicated.  
The graph in red is the volume lower bound established
in Proposition~\ref{prop:cXprops}(i).  The point marked O is an outer
corner and the point marked I is an inner corner.  This target has
an ascending infinite staircase, first identified by McDuff
and Schlenk \cite{ball} and called the Fibonacci staircase in the literature.
The green point is the accumulation point.
}
\label{fig:corners} 
\end{figure}
\end{center}

\subsection{Summary of results} \label{ssec:summary}
Our target of choice will be the \textbf{polydisk}, defined for $\beta\in\R_{\geq1}$ by  
$$
P(1,\beta):=\left\{(\zeta_1,\zeta_2)\in\C^2 \;\middle|\; \pi|\zeta_1|^2\leq1,\;\pi|\zeta_2|^2\leq \beta\right\}.
$$
We denote by $c_\beta$ its ellipsoid embedding function $c_{P(1,\beta)}$. The polydisk is a finite type convex toric domain, so $c_\beta$ satisfies Proposition~\ref{prop:cXprops}.
In this case there are two functions
\begin{align*}
\acc(\beta)&:=\acc(P(1,\beta)):[1,\infty)\to\left[3+2\sqrt{2},\infty\right)
\\\vol(\beta)&:=\sqrt{\frac{\acc(\beta)}{\vol(P(1,\beta))}}:[1,\infty)\to\left[1+\frac{\sqrt{2}}{2},1\right)
\end{align*}
where if $c_\beta$ has an infinite staircase, its accumulation point has coordinates $(\acc(\beta),\vol(\beta))$ by \cite[Thm.~1.13]{cghmp}; see Lemma \ref{lem:accvol}.

The first ellipsoid embedding function was computed for $X=B^4:=E(1,1)$ by McDuff and Schlenk in \cite{ball}. They found that its graph contained an infinite staircase whose inner and outer corners were derived from the Fibonacci numbers. Further work by Frenkel and M\"uller in \cite{frenkelmuller} exhibited a similar infinite staircase in $c_1$ governed by the Pell numbers, while on the other hand work of Cristofaro-Gardiner, Frenkel, and Schlenk showed that the property of having an infinite staircase is not universal: the functions $c_n$ for $n\in\Z_{>1}$ do not contain infinite staircases \cite{integralpolydisks}. More generally, a conjecture of Cristofaro-Gardiner, Holm, Mandini, and Pires in \cite{cghmp} suggests that $c_\beta$ should not contain an infinite staircase for any rational $\beta$.

However, work by Usher \cite{usher} suggested that the set of irrational $\beta$ for which $c_\beta$ has an infinite staircase might be quite rich: he identified a bi-infinite family $L_{n,k}\in\R_{\geq1}$ for which $c_{L_{n,k}}$ have infinite staircases.\footnote{In this paper as well as in the closely related papers \cite{ICERM}, \cite{symm}, and \cite{MMW} we use $k$ to denote the staircase step and $i$ to denote the image of a step, staircase, or $b$ value under a symmetry analogous to Usher's Brahmagupta moves (\cite[Def.~2.10]{usher}). Our notation differs from Usher's in that what the $i$ and $k$ indices denote are switched. We generally stick to our convention throughout but use Usher's convention here.} Of particular interest to us are his
\[
L_{n,0}:=\sqrt{n^2-1},\quad n\geq2
\]
which generate the $k>0$ values of $L$ with infinite staircases (see \S\ref{ssec:B}). See Figure \ref{fig:polydisk-accum} for a visualization of these results via a plot of the relevant accumulation points.

\begin{center}
\begin{figure}[H]
\begin{tikzpicture}
\begin{axis}[
	axis lines = middle,
	xtick = {5,6,...,13},
	ytick = {1,1.25,1.5,1.75},
	tick label style = {font=\small},
	xlabel = $z$,
	ylabel = $y$,
	xlabel style = {below right},
	ylabel style = {above left},
	xmin=4.9,
	xmax=13.2,
	ymin=0.9,
	ymax=1.8,
	grid=major,
	width=4.75in,
	height=2.5in]
\addplot [red, thick,
	domain = 1:6,
	samples = 100
]({ (((x+1)*(x+1)/((x))-1)+sqrt( ((x+1)*(x+1)/((x))-1)* ((x+1)*(x+1)/((x))-1) -1 ))},{sqrt(( (((x+1)*(x+1)/((x))-1)+sqrt( ((x+1)*(x+1)/((x))-1)* ((x+1)*(x+1)/((x))-1) -1 )))/(2*x))});
\addplot [blue, only marks,  ultra thin, mark=*] coordinates{(6.46,1.366) (8.243,1.2071) (10.164,1.1455) (12.124,1.1124)};
\addplot [red, only marks,  ultra thin, mark=*] coordinates{(5.828,1.7071)};
\addplot [black, only marks, very thick, mark=x] coordinates{(6.854,1.309)  (8.55,1.1937) (10.404,1.1404) (12.319,1.1099)};
\addplot [green, only marks, ultra thin, mark=*] coordinates{(8.161,1.211) (10.123,1.1464) (12.099,1.1127)};
\end{axis}
\end{tikzpicture}
\caption{ 
This figure shows the parameterized curve $(\acc(\beta),\vol(\beta))$ 
in red. The point on the curve at $\beta$ represents a point at 
which an infinite staircase for $c_\beta$ must accumulate, if it
exists. The red dot is the accumulation point of the Pell 
stairs of Frenkel-M\"uller; the blue dots are the $L_{n,0}$
staircases of Usher; and the black $\bm{\times}$s indicate 
values of $\beta$ without infinite staircases, proved by
Cristofaro-Gardiner--Frenkel--Schlenk. The accumulation 
points of the new infinite staircases of 
Theorem~\ref{thm:main} and Conjecture \ref{thm:generalb} are indicated by green dots. 
}
\label{fig:polydisk-accum} 
\end{figure}
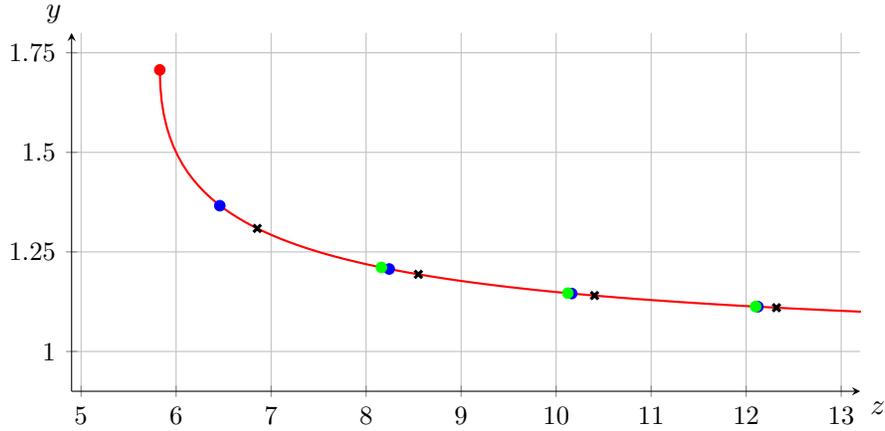
\end{center}

Work by Bertozzi, Holm, Maw, McDuff, Mwakyoma, Pires, and Weiler \cite{ICERM} and by Magill and McDuff \cite{symm} proved an analogous result for the target
\[
H_{b}:=\left\{(\zeta_1,\zeta_2)\in\C^2 \;\middle|\; \pi|\zeta_1|^2+\pi|\zeta_2|^2\leq1,\; \pi|\zeta_2|^2\leq1-b\right\}.
\]
(The region $H_b$ is equivalent in terms of ellipsoid embeddings, see \S\ref{sssec:toricsymp}, to $\C P^2\#\overline{\C P}^2$, thus in the literature on infinite staircases it is also called the \textbf{Hirzebruch surface}.) 
They showed that there are two bi-infinite families $b_{n,i,\delta}$, with $n,i\in\Z_{\geq0}$ and $\delta\in\{0,1\}$, for which $c_{H_b}$ has an ascending infinite staircase. Moreover, each ascending infinite staircase comes paired with a descending infinite staircase.

One feature that all infinite staircases described so far appear to have in common is that their outer corners are at $z$-values whose continued fractions grow by a predictable pattern of adding pairs of integers. Recall that real numbers can be described by their \textbf{continued fractions}, e.g.
    \[
    [m,n,\ell]=m+\frac{1}{n+\frac{1}{\ell}},
    \]
    with repeated parts denoted by
    \[
    \left[m,\{n,\ell\}^k\right]=[m,\hspace{-3pt}\underbrace{n,\ell}_{k\text{ times}}\hspace{-3pt}], \quad \left[m,\{n,\ell\}^\infty\right]=[m,n,\ell,n,\ell,n,\ell,\dots].
    \]
    Every positive real number has a continued fraction with all entries positive integers; rational numbers have finite continued fractions, quadratic irrational numbers (irrational roots of quadratic equations with rational coefficients) have infinite periodic continued fractions, and non-quadratic irrational numbers have infinite non-periodic continued fractions. We will abuse notation and occasionally allow 
   the last entry in a finite continued fraction to bea real number, e.g. in the proof of Theorem \ref{thm:main} (ii) in \S\ref{ssec:outer}. Doing so is just a matter of notation, because if $a_i\in\Z_{>0}$ and $z\in\R$ has continued fraction $[b_0,b_1,\dots]$ then $[a_0,\dots,a_n,z]=[a_0,\dots,a_n,b_0,b_1,\dots]$. Allowing the last number to be real can be helpful when trying to understand the algebraic relationships among the continued fraction's rational approximations, as in Lemma \ref{lem: contfrac}.

Recall that an \textbf{outer corner} of $c_X$ is a nonsmooth point near which $c_X$ is convex; see Figure \ref{fig:main}. The outer corners of the Fibonacci stairs of McDuff-Schlenk have continued fractions
\[
[2], [5], [6,1,5,2], [6,1,5,1,4], [6,1,5,1,5,2], [6,1,5,1,5,1,4],\dots
\]
The accumulation points of all infinite staircases discussed so far are quadratic irrationals with two-periodic continued fractions. We say an infinite staircase is \textbf{$2m$-periodic} if the continued fraction of the $k^\text{th}$ outer corner equals that of the $(k-2)^\text{th}$ outer corner with a fixed length $2m$ sequence of integers added after a fixed sequence of integers at the beginning. For example, in the sequence above, the Fibonacci stairs are 2-periodic with a pair $1,5$ inserted recursively after the $6.$

In \cite{MMW}, Magill, McDuff, and Weiler showed that between each of the pairs of adjacent ascending and descending infinite staircases studied in \cite{ICERM, symm} there is a further Cantor set of values of $b$ for which $c_{H_{b}}$ has an infinite staircase. These include infinite staircases whose outer corners and accumulation points appear to have higher-periodic continued fractions, as well as infinite staircases whose accumulation points may not be quadratic irrational. They were obtained by generalizing the procedure to construct an infinite staircase whose outer corners have four-periodic continued fractions accumulating to $[\{7,5,3,1\}^\infty]$ 
from the descending staircase accumulating to $[7,\{5,1\}^\infty]$ and the ascending staircase accumulating to $[\{7,3\}^\infty]$.

\begin{figure}[h]
    \subfigure[]{
         \centering
         \includegraphics[width=.48\textwidth]{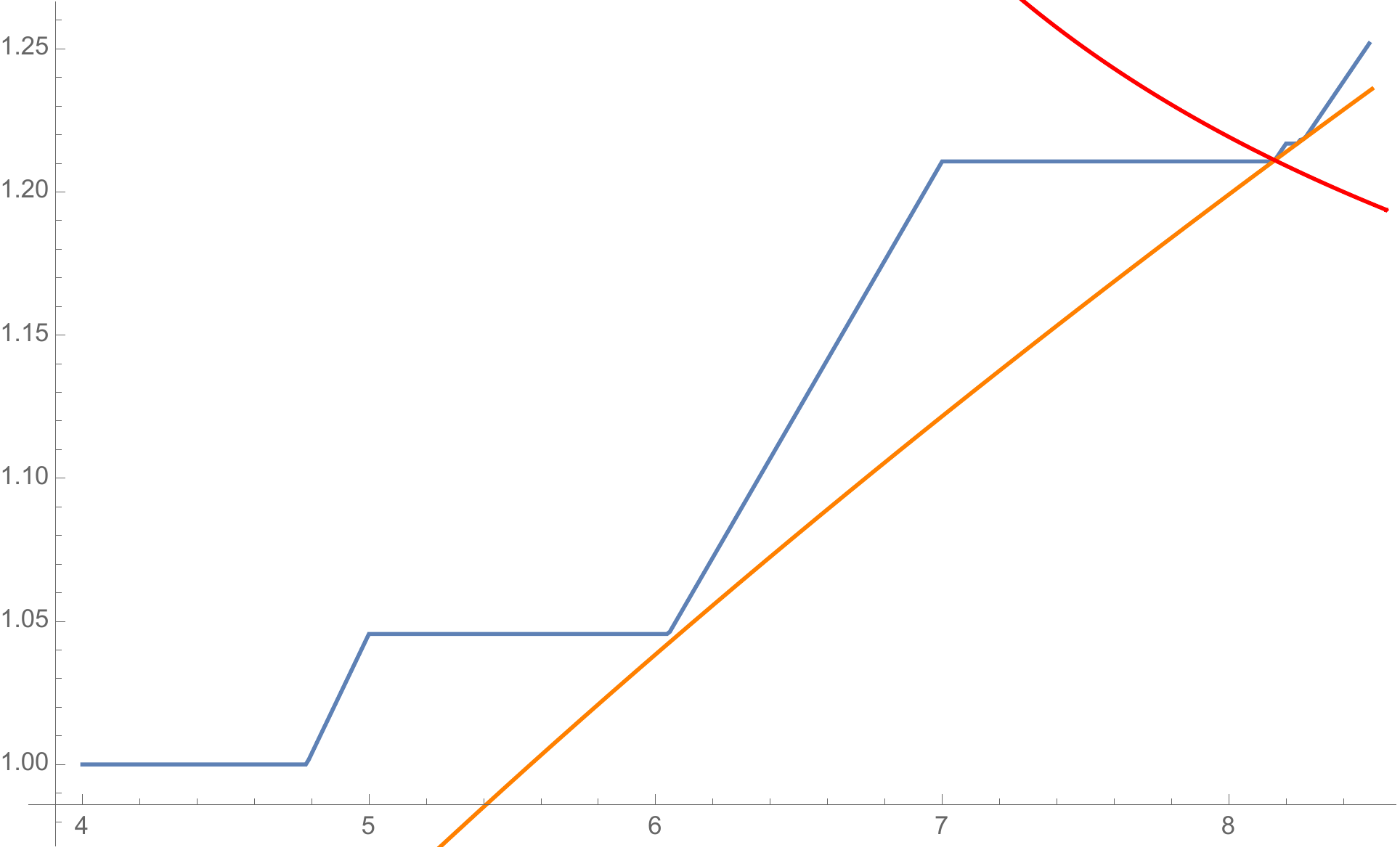}
     }\hfil
    \subfigure[]{
         \centering
         \includegraphics[width=.48\textwidth]{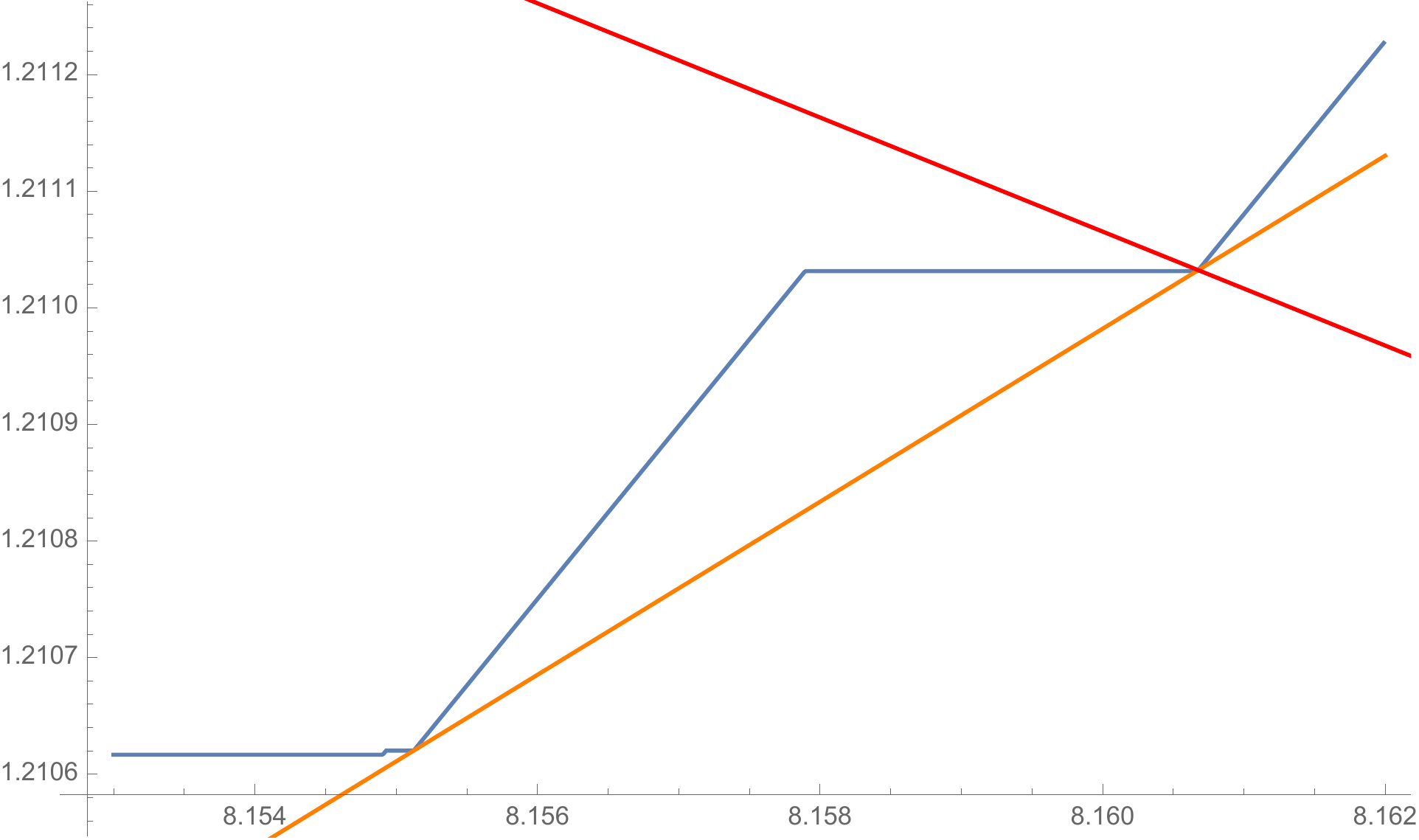}
         }
     \hfil
     \caption{This figure depicts the infinite staircase $c_\beta$ of Theorem \ref{thm:main}. In both figures, with $\beta$ as in Theorem \ref{thm:main}, the orange curve is $\vol_\beta(z)$ and $c_\beta$ is in blue. The accumulation point curve $(\acc(\beta),\vol(\beta))$ is in red -- for this curve, $\beta$ varies. Thus the accumulation point of $c_\beta$ occurs at the intersection of these three curves. In (b), we have zoomed in; the obstructions from $\bE_0, \hat\bE_1$, and $\bE_2$ are visible.  See sections \S\ref{ssec:ECdefs} and \S\ref{sec:proofs} for these definitions.}
    \label{fig:mainplot}
\end{figure}

We predict a very close correspondence between the cases of the polydisk and $H_{b}$ in Conjecture \ref{conj:Pfractal}. Our main theorem provides evidence for this conjecture.
\begin{thm}\label{thm:main}
    Set
    \[
    \beta=\frac{6+5\sqrt{30}}{12}.
    \]
    \begin{enumerate}[label=(\roman*)]
    \item The function $c_\beta$ has an infinite staircase.
    \item It is four-periodic, with $\acc(\beta)=[\{8,6,4,2\}^\infty]$.
    \end{enumerate}
\end{thm}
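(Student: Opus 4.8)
The plan is to use the standard ``obstruct-and-construct'' strategy for infinite staircases on convex toric domains: exhibit an explicit, recursively generated family of exceptional classes whose obstruction functions produce infinitely many corners of $c_\beta$, then read off their continued-fraction data to pin down the periodicity and the accumulation point. The recursion will be a Brahmagupta-type move attached to the unit $11+2\sqrt{30}$ of $\Z[\sqrt{30}]$, reflecting the appearance of $\sqrt{30}$ in $\beta$.

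First I would reduce part~(i) to a combinatorial problem. For the polydisk $P(1,\beta)$, a finite type convex toric domain with $\vol(P(1,\beta))=2\beta$, the embedding function is governed by a complete and \emph{sharp} family of obstructions: $c_\beta(z)$ is the supremum of the volume lower bound $\sqrt{z/(2\beta)}$ of Proposition~\ref{prop:cXprops}(i) and the exceptional-class obstruction functions $\mu_{\bE}$ (defined in \S\ref{ssec:ECdefs}), and this supremum is realized by genuine symplectic embeddings---the sharpness that makes ECH/exceptional-class obstructions a complete invariant for embeddings into convex toric targets. Consequently $c_\beta$ is nonsmooth at $z$ exactly when the supremum is attained by a class whose corner sits at $z$, and part~(i) reduces to producing infinitely many such corners, computing $c_\beta$ on the intervals between them, and checking that they accumulate at the value $\acc(\beta)$ predicted by Theorem~\ref{thm: acc}.

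Next I would introduce the recursively generated family. Starting from the seed classes $\bE_0,\hat\bE_1,\bE_2$ of \S\ref{ssec:ECdefs}, I would define the higher steps by a linear recursion $\bE_{k+4}=\Phi(\bE_k)$, where $\Phi$ corresponds to multiplication by $(11+2\sqrt{30})^2=241+44\sqrt{30}$ (the square of the fundamental unit, i.e.\ the expansion over one four-step period). One then checks that every class so produced is genuinely exceptional---verifying the self-intersection $-1$ and Chern number $1$ normalizations together with the positivity/reducedness inequalities---so that each $\mu_{\bE_k}$ is a valid lower bound; this is a finite verification on the seeds followed by an induction through $\Phi$. The heart of the argument, and the step I expect to be the main obstacle, is to show that on a neighborhood of each corner $z_k$ the class $\bE_k$ (or its neighbors) is the \emph{dominant} obstruction---$\mu_{\bE_k}(z)\ge\mu_{\bE'}(z)$ there for every competing exceptional class $\bE'$---so that $c_\beta$ genuinely equals the staircase on these infinitely many intervals and therefore has a true corner at each $z_k$. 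This ``no competing obstruction'' problem requires controlling all exceptional classes relevant on the intervals in question, not merely the constructed ones, and pinning $c_\beta$ between the staircase obstructions and the volume curve as $z\to\acc(\beta)$; the natural approach combines the accumulation constraint of Theorem~\ref{thm: acc} with monotonicity of the $\mu_{\bE_k}$ along $\Phi$, reducing the verification to the seed classes and the behavior of the recursion near the accumulation point.

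Finally, for part~(ii) I would establish four-periodicity and identify the accumulation point by a continued-fraction computation of the type in Lemma~\ref{lem: contfrac}. The recursion $\Phi$ should act on the corners $z_k$ so that the continued fraction of $z_{k+4}$ is obtained from that of $z_k$ by inserting the block $8,6,4,2$, whence the corners stabilize to $[\{8,6,4,2\}^\infty]$. Evaluating this periodic continued fraction gives the limit $\tfrac{54+11\sqrt{30}}{14}$, and using the volume-curve relation $\acc(\beta)=W+\sqrt{W^2-1}$ with $W=\beta+1+\tfrac1\beta$ from Lemma~\ref{lem:accvol}, one verifies that for $\beta=\tfrac{6+5\sqrt{30}}{12}$ this is exactly $\acc(\beta)$, so that the two determinations of the accumulation point agree and the identification is complete.
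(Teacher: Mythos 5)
Your overall strategy is genuinely different from the paper's, and it founders on exactly the step the paper is designed to avoid. The paper proves part~(i) by an \emph{obstruct-and-construct} argument: the quasi-perfect Diophantine classes $\bE_k$ and $\hat\bE_k$ give lower bounds for $c_\beta$ at their centers (Proposition~\ref{prop:oc}), and then explicit almost toric fibration mutations (the sequences $v^2yxy^kxy$ of \S\ref{ssec:inner}) produce genuine symplectic embeddings giving \emph{matching upper bounds} at the intersection points $I_{k+1}$ (Proposition~\ref{prop:ic1}). Monotonicity and sublinearity then pin $c_\beta$ on infinitely many intervals and force a corner at each $I_{k+1}$, with no need to control any competing obstructions. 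Your proposal replaces the construction half with a ``dominance'' claim: that near each corner the constructed class beats \emph{every} other exceptional class, so that the (complete) supremum formula $c_\beta=\sup\bigl(\vol_\beta,\sup_{\bE}\mu_{\bE,\beta}\bigr)$ itself exhibits the corners.

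That dominance step is a genuine gap, not a technicality. First, it is the hard part: ruling out all competing exceptional classes near infinitely many points is a McDuff--Schlenk-scale classification problem, and your proposed mechanism (``monotonicity of the $\mu_{\bE_k}$ along $\Phi$ plus the accumulation constraint, reducing to the seed classes'') gives no way to exclude classes \emph{outside} the $\Phi$-orbit. The paper itself illustrates the danger: the authors originally believed $c_\beta=\sup_k\mu_{\bE_k,\beta}$ on $[1,\acc(\beta)]$ and only later discovered the competing classes $\hat\bE_k$, whose obstructions strictly beat the $\bE_k$ on parts of each interval (Remark~\ref{rmk:conj}); the full dominance statement remains open (Conjecture~\ref{conj:ic2}, the analogue of Usher's Conjecture~4.23). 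Second, be careful about what lower bounds alone can do: even \emph{sharp} values $c_\beta(p_k/q_k)=\mu_{\bE_k,\beta}(p_k/q_k)$ at all centers do not force nonsmooth points, since a smooth monotone function can thread the region between the obstruction curves and the scaling/monotonicity upper bounds they generate. What forces a corner is an upper bound $c_\beta(z^{in}_{k+1})\le\lambda^{in}_{k+1}$ at a point strictly to the \emph{right} of the center at the \emph{same} level, and that is precisely what the ATF embeddings supply and what dominance-on-a-neighborhood would have to supply in your approach. Your part~(ii) (continued-fraction periodicity via the convergent matrix, evaluation of $[\{8,6,4,2\}^\infty]=\tfrac{54+11\sqrt{30}}{14}$, and comparison with $\acc(\beta)$) coincides with the paper's argument via Lemma~\ref{lem: contfrac} and is fine, but it is conditional on part~(i), which your proposal does not establish.
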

See Figure \ref{fig:mainplot} for a visualization. Detail on the location of the accumulation point of the infinite staircase of Theorem \ref{thm:main} is given in Figure \ref{fig:polydisk-accum-zoom}.

Of note is the fact that we prove Theorem \ref{thm:main} in \S\ref{sec:proofs} by computing it on infinitely many intervals. In \S\ref{sec:proofs} we also outline a procedure for computing $c_\beta$ on the entire interval $[1,\acc(\beta)]$ containing the infinite staircase. This would prove an analogue of \cite[Conj.~4.23]{usher}, with the role of his $A$ classes being played by our $\bE$ classes and his $\hat A$ classes replaced by our $\hat\bE$ classes: see the preamble to \S\ref{sec:proofs} for the definitions $\bE$ and $\hat\bE$, and see \S\ref{ssec:Uconj} for further discussion of Usher's conjecture.

\begin{center}
\begin{figure}[H]
\begin{tikzpicture}
\begin{axis}[
	axis lines = middle,
	xtick = {6,7,...,9},
	ytick = {1,1.25,1.5,1.75},
	tick label style = {font=\small},
	xlabel = $z$,
	ylabel = $y$,
	xlabel style = {below right},
	ylabel style = {above left},
	xmin=5.7,
	xmax=9.2,
	ymin=0.9,
	ymax=1.8,
	grid=major,
	width=4.75in,
	height=2.5in]
\addplot [red, thick,
	domain = 1:6,
	samples = 100
]({ (((x+1)*(x+1)/((x))-1)+sqrt( ((x+1)*(x+1)/((x))-1)* ((x+1)*(x+1)/((x))-1) -1 ))},{sqrt(( (((x+1)*(x+1)/((x))-1)+sqrt( ((x+1)*(x+1)/((x))-1)* ((x+1)*(x+1)/((x))-1) -1 )))/(2*x))});
\addplot [blue, only marks,  thick, mark=*] coordinates{(6.46,1.366) (8.243,1.2071) };
\addplot [red, only marks,  thick, mark=*] coordinates{(5.828,1.7071)};
\addplot [black, only marks, very thick, mark=x] coordinates{(6.854,1.309)  (8.55,1.1937) };
\addplot [green, only marks, thick, mark=*] coordinates{(8.161,1.211)};
\end{axis}
\end{tikzpicture}
\caption{This figure uses the same color scheme as Figure \ref{fig:polydisk-accum}. More detail near the infinite staircase of Theorem \ref{thm:main} is shown. The new staircase's accumulation point is the green dot, while the two blue dots are Usher's staircases with $\beta=L_{2,0}$ and $L_{3,0}$.
}\label{fig:polydisk-accum-zoom} 
\end{figure}
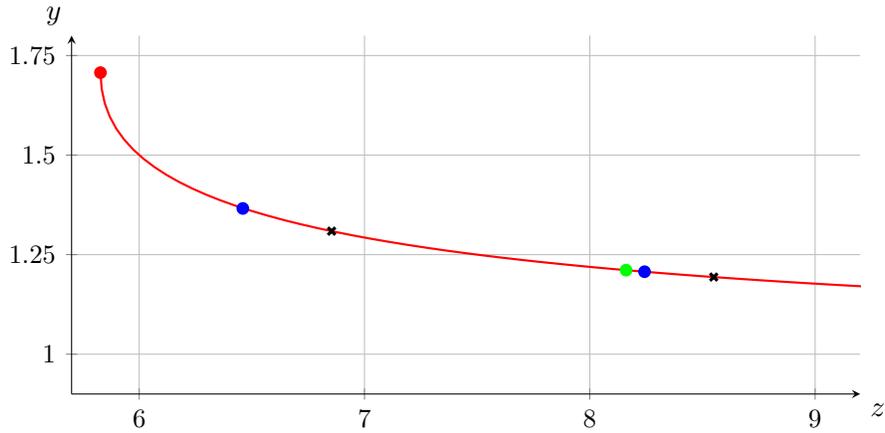
\end{center}

We furthermore expect (from experimental evidence and by combining Conjecture \ref{thm:generalb} with \cite[Thm.~1.1.1]{MMW}) that our result generalizes to all $n\in\Z_{\geq2}$:
\begin{conjecture}\label{thm:generalb} Let $\beta_n$ be of the form
\[
\beta_n=\frac{1}{2}+\frac{(2n+1)\sqrt{n(n^3+2n^2-1)}}{2n(n+1)}
\]

\noindent with $n\in\mathbb{Z}_{\geq2}$.

\begin{enumerate}[label=(\roman*)]
    \item The function $c_{\beta_n}$ has an infinite staircase.
    \item It is four-periodic, with $\acc(\beta_n)=[\{2n+4,2n+2,2n,2n-2\}^\infty]$.
\end{enumerate}
\end{conjecture}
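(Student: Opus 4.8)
The plan is to prove Conjecture \ref{thm:generalb} by running, uniformly in $n\geq 2$, the same machinery that establishes Theorem \ref{thm:main}, which is the case $n=2$. The structure of the argument should be completely parallel: for each fixed $n$ one constructs an explicit bi-infinite family of obstruction classes $\bE_j$ (together with their ``hatted'' variants $\hat\bE_j$ coming from the $i\leftrightarrow k$ symmetry), shows that these classes are realized by actual symplectic embeddings so that they compute $c_{\beta_n}$ exactly on a sequence of intervals, and then shows that the resulting outer corners march off to the limit with the claimed four-periodic continued fraction $[\{2n+4,2n+2,2n,2n-2\}^\infty]$. Part (i), existence of an infinite staircase, is then a formal consequence of having infinitely many genuine outer corners, and part (ii), the identification of $\acc(\beta_n)$, follows by computing the limit of the outer-corner $z$-values and comparing against Lemma \ref{lem:accvol}, which pins the accumulation point to lie on the curve $(\acc(\beta),\vol(\beta))$.

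First I would verify the arithmetic backbone: one must check that the stated $\beta_n$ is exactly the value of $\beta$ for which the accumulation-point curve passes through the quadratic irrational $[\{2n+4,2n+2,2n,2n-2\}^\infty]$. Concretely, set $w_n:=[\{2n+4,2n+2,2n,2n-2\}^\infty]$, which is the positive root of an explicit quadratic with coefficients polynomial in $n$, and then invert the relation $\acc(\beta)=w_n$ using the formula for $\acc$ in terms of $\beta$ recorded in Lemma \ref{lem:accvol}. This should reproduce
\[
\beta_n=\frac{1}{2}+\frac{(2n+1)\sqrt{n(n^3+2n^2-1)}}{2n(n+1)}
\]
after simplification; the appearance of $\sqrt{n(n^3+2n^2-1)}$ is the tell-tale sign that $w_n$ is quadratic over $\Q(\sqrt{n(n^3+2n^2-1)})$. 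I would do this symbolically in $n$ so that the single verification covers the whole family at once, with the $n=2$ instance recovering $\beta=\tfrac{6+5\sqrt{30}}{12}$ as a consistency check against Theorem \ref{thm:main}.

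Next I would set up the $n$-dependent recursion for the obstruction classes. Following the Brahmagupta-move formalism (\cite[Def.~2.10]{usher}) adapted as in \cite{MMW}, the outer-corner $z$-values should satisfy a linear recurrence whose characteristic data is governed by the four-periodic block $2n+4,2n+2,2n,2n-2$; the key is that the continued-fraction growth pattern ``add a fixed length-$8$ string'' (four-periodic means period $2m$ with $m=2$, hence an $8$-tuple of integers inserted recursively) is implemented by a fixed $n$-parametrized matrix in $\op{SL}_2$ or a product thereof. I would prove that each such class is both \emph{obstructive} (its associated constraint forces $c_{\beta_n}$ up to the claimed piecewise-linear graph, via the standard Hutchings/ECH capacity comparison) and \emph{sharp} (realized by an embedding, typically by exhibiting the corresponding perfect class and invoking the reduction/positivity criteria that make the obstruction an equality). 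This is exactly the ``$\bE$ computes $c_\beta$ on infinitely many intervals'' mechanism flagged in the discussion of Theorem \ref{thm:main}.

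The hard part will be establishing sharpness and the requisite positivity/reduction inequalities \emph{uniformly in $n$}, rather than numerically for a single $\beta$. For $n=2$ one can lean on explicit constants, but for general $n$ one must show that the inequalities defining when an obstruction class is perfect (equivalently, that no competing class overtakes the family on its intervals) hold for all $n\geq 2$ simultaneously; these typically reduce to polynomial inequalities in $n$ and in the recurrence index $k$, and proving them requires careful bookkeeping of the leading-order growth of the continued-fraction convergents as $k\to\infty$. A secondary obstacle is confirming that the four families $\bE_j$ and $\hat\bE_j$ interleave correctly to produce genuine alternating inner and outer corners (no spurious smoothing, no gaps) across the whole parameter range; I would handle this by an induction on $k$ that tracks the ordering of the relevant $z$-values and uses the sublinearity and monotonicity from Proposition \ref{prop:cXprops} to rule out degeneracies. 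Once uniform sharpness is in hand, part (i) is immediate and part (ii) follows by passing to the limit in the recurrence and matching the continued fraction to $w_n$.
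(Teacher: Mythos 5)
You should first be aware that the statement you are proving is stated in the paper as a \emph{conjecture}: the paper proves only the $n=2$ case (Theorem \ref{thm:main}) and, in \S\ref{sec:ideas}, sketches essentially the plan you propose --- define classes $\bE_{k,n}$ and $\hat\bE_{k,n}$ by adding $2n-4$ to each entry of the continued fraction of $p_k/q_k$ and recovering $d_{k,n},e_{k,n}$ from Lemma \ref{lem:depqt}, then generalize the almost toric mutation sequences by prefixing $v^n$ in place of $v^2$. So your proposal is aligned with the paper's roadmap, but it is a roadmap, not a proof: every step you defer (``the hard part will be establishing sharpness \ldots uniformly in $n$'') is precisely a step the authors could not close and therefore left open. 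Concretely, a proof would need $n$-uniform analogues of: the Diophantine verifications of Proposition \ref{prop:oc} via Corollary \ref{cor:dioph} (checking \eqref{eq:xtAx} and \eqref{eq:nuCompat} for the $n$-parametrized seeds and recursion constant $t_n$), the interleaving of centers and obstruction values (Lemma \ref{lem:alternate}), and --- the heaviest part --- the ATF computations of Lemmas \ref{lem:correctside}, \ref{lem:als}, \ref{lem:k3}, and \ref{lem:31} for the sequences $v^n y x y^k x y$, whose affine-length identities must be re-derived with $n$ as a parameter. Naming these as ``polynomial inequalities in $n$ and $k$'' does not discharge them; until they are carried out, neither (i) nor (ii) is established.

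Two further points where your plan diverges from, or misreads, the paper. First, your parsing of ``four-periodic'' is wrong: by the paper's definition, $2m$-periodic means a fixed length-$2m$ string is inserted into the continued fraction every two steps, so four-periodic means the \emph{four}-entry block $2n+4,2n+2,2n,2n-2$ is inserted (as in $u_k/v_k=\left[2n+4,2n+2,2n,2n-2,\frac{u_{k-2}}{v_{k-2}}\right]$ in the proof of Theorem \ref{thm:main}~(ii)), not an $8$-tuple. Second, your route to sharpness via perfectness and ``reduction/positivity criteria'' is the route the paper deliberately avoids: lower bounds come from quasi-perfect Diophantine classes, for which symplectically \emph{immersed} spheres suffice (see \eqref{eqn:cboundmu}), and upper bounds come from explicit ATF embeddings via Proposition \ref{prop:atfswork}; perfectness would be needed only to invoke Usher's \cite[Thm.~4.4]{usher}, which the paper flags as a strictly harder alternative that it does not pursue. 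Relatedly, you should not aim to compute $c_{\beta_n}$ ``exactly on a sequence of intervals'' between consecutive corners: the existence of the staircase needs only the corner-pinching argument of \S\ref{ss:combination} (obstruction values at outer corners plus embeddings at the intersection points $I_{k+1}$, combined with Proposition \ref{prop:cXprops}~(ii),(iii)), and indeed even for $n=2$ the full computation of the function depends on Conjecture \ref{conj:ic2}, which remains open. Your arithmetic consistency check that $\acc(\beta_n)=[\{2n+4,2n+2,2n,2n-2\}^\infty]$ via \eqref{eqn:accpteqn} is correct and worthwhile, but it is the easy part of the problem.
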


\noindent Note that setting $n=2$ in Theorem \ref{thm:generalb} reduces to Theorem \ref{thm:main}.

\subsection{Connections to other targets}\label{ssec:context}

There is a function
\[
\acc_H:[0,1)\to\left[3+2\sqrt{2},\infty\right),
\]
which is analogous to $\acc$ in the following way: if $c_{H_b}$ has an infinite staircase then $\acc_H(b)$ is the $z$-coordinate of its accumulation point. It is 2-1 in general but when restricted to $[1/3,1)$ it is 1-1 with range $\left[3+2\sqrt{2},\infty\right)$. These facts have a similar proof to Lemma \ref{lem:accvol}.

\begin{conjecture}\label{conj:Pfractal} Define a function $f:\R\to\R$ by sending $z$ to the number whose continued fraction is obtained from the continued fraction of $z$ by subtracting one from each entry.

Let $\beta\geq\sqrt{3}.$ The function $c_\beta$ has an infinite staircase if and only if the function $c_{H_b}$ has an infinite staircase, where
\[
b=\acc_H^{-1}\circ f\circ\acc(\beta).
\]
\end{conjecture}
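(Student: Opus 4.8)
The plan is to reduce the biconditional to a single correspondence at the level of the combinatorial data that governs the two staircases, namely the infinite families of obstructing exceptional classes (the $\bE$ and $\hat\bE$ classes for the polydisk, and their $H_b$-analogues), and then to show that the continued-fraction map $f$ intertwines these two families. Both $c_\beta$ and $c_{H_b}$ are suprema of a volume term and a collection of obstruction functions indexed by exceptional classes, and an infinite staircase exists on either side precisely when infinitely many of these obstructions are sharp and accumulate at $\acc(\beta)$, respectively at $\mathrm{acc}_H(b)$. Following the Brahmagupta moves of \cite{usher} and their $H_b$-analogue in \cite{MMW}, each such infinite family is generated from a finite \emph{seed} by an explicit symmetry of the underlying Diophantine data; thus the existence of a staircase on either side is equivalent to the existence of a valid seed, and the biconditional becomes a statement about seeds corresponding under $f$.

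First I would check that $b=\mathrm{acc}_H^{-1}\circ f\circ\acc(\beta)$ is well defined for $\beta\geq\sqrt3$. The hypothesis is calibrated precisely so that every continued-fraction entry of $\acc(\beta)$ is at least $2$; subtracting one from each entry then keeps all entries positive, so $f(\acc(\beta))$ is a genuine quadratic irrational lying in the range $\left[3+2\sqrt2,\infty\right)$ on which $\mathrm{acc}_H^{-1}$ is single-valued (by the argument parallel to Lemma \ref{lem:accvol} recorded after the statement). The boundary case $\beta=\sqrt3$ is exactly where some entry of $\acc(\beta)$ equals $2$ and $f$ would produce a spurious $1$ or $0$, which is why the hypothesis cannot be weakened.

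The combinatorial heart is to show that $f$ carries the outer-corner continued fractions of a polydisk staircase to those of an $H_b$ staircase. Concretely, the polydisk staircases have outer corners whose continued fractions grow by inserting four-periodic blocks of the shape $2n+4,\,2n+2,\,2n,\,2n-2$ (as in Theorem~\ref{thm:main} and Conjecture~\ref{thm:generalb}), while the $H_b$ staircases grow by inserting $2n+3,\,2n+1,\,2n-1,\,2n-3$; subtracting one from each entry is exactly $f$. Using the continued-fraction algebra of Lemma~\ref{lem: contfrac}, I would prove that the recursion producing the $k^{\text{th}}$ outer corner from the $(k-2)^{\text{th}}$ on the polydisk side is conjugated by $f$ into the corresponding $H_b$ recursion, so that a valid seed on one side is transported to a valid seed on the other, and the accumulation points match as $f(\acc(\beta))=\mathrm{acc}_H(b)$.

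The hardest step, and the real content beyond bookkeeping, is the equivalence of \emph{sharpness}: that the obstruction attached to each member of the polydisk family equals $c_\beta$ if and only if the obstruction attached to its $f$-image equals $c_{H_b}$. One direction is free, since the class data always yields a lower bound; the reverse requires the matching upper bounds, i.e. producing the symplectic embeddings that make each obstruction sharp, and it must be shown compatible with the volume constraints $\vol(\beta)$ and its $H_b$-analogue simultaneously at every step of both staircases. I expect this to be the main obstacle, since sharpness is exactly where the geometric input enters beyond the arithmetic of exceptional classes. A complete proof would likely proceed by induction along the Brahmagupta recursion: establish sharpness of the seed directly (by the interval-by-interval computation with $\bE$ and $\hat\bE$ classes used to prove Theorem~\ref{thm:main}), and then propagate sharpness through the paired symmetry moves on the two sides in parallel, so that the two families remain sharp together all the way up to their common $f$-related accumulation point.
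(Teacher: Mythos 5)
This statement is Conjecture~\ref{conj:Pfractal}: the paper does not prove it and presents it as open, citing only supporting evidence (the staircases of \cite{usher}, Theorem~\ref{thm:main}, the blocked integral values of \cite{integralpolydisks}, and \cite{MPW}). So there is no proof in the paper to compare against, and your proposal is not a proof either --- it is a research outline that explicitly defers the decisive step. The equivalence of ``sharpness,'' i.e.\ producing the symplectic embeddings that realize the polydisk obstructions, is where all of the geometry lives: the paper needs the entire ATF machinery of \S\ref{ssec:inner} (Lemmas~\ref{lem:vvyx} through \ref{lem:31}) to do this for the \emph{single} value $\beta=(6+5\sqrt{30})/12$, cannot yet compute the full function even there (Conjecture~\ref{conj:ic2} is left open), and states the family version only as Conjecture~\ref{thm:generalb}. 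Writing ``I expect this to be the main obstacle'' and ``a complete proof would likely proceed by induction along the Brahmagupta recursion'' names the missing argument without supplying it; propagating sharpness through the symmetry moves is exactly what nobody currently knows how to do, even on the better-understood $H_b$ side, where \cite{symm,MMW} required delicate case-specific estimates rather than a formal induction.

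Beyond this deferral there are two concrete flaws. First, your reduction of ``has an infinite staircase'' to ``admits a Brahmagupta-type seed generating a periodic family'' is both unjustified and too narrow: by \cite{MMW} the set of $b$ for which $c_{H_b}$ has an infinite staircase contains a Cantor set, including staircases whose accumulation points are apparently \emph{not} quadratic irrational, hence whose outer corners are not produced by inserting a fixed periodic block into a continued fraction. A seed-to-seed transport can at best match the periodic staircases on the two sides; the biconditional requires matching all of them, and also requires the converse (blocked) direction, which your outline never addresses. Second, your well-definedness claim is false: $\beta\geq\sqrt{3}$ does \emph{not} force every continued-fraction entry of $\acc(\beta)$ to be at least $2$. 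For example $\acc(2)=(7+3\sqrt{5})/2=[6,\{1,5\}^\infty]$, which has infinitely many entries equal to $1$, so $f$ produces zero entries there; such $\beta$ are precisely the blocked integral values of \cite{integralpolydisks}, and making sense of $\acc_H^{-1}\circ f$ at them is part of the conjecture's content rather than a consequence of the hypothesis $\beta\geq\sqrt{3}$. Your proposed explanation of that hypothesis (that it keeps all entries $\geq 2$, with $\beta=\sqrt{3}$ the boundary case) is therefore incorrect; what the hypothesis actually guarantees is only $\acc(\beta)\geq[\{6,2\}^\infty]$, i.e.\ $f\circ\acc(\beta)\geq 3+2\sqrt{2}$ when $f$ is defined.
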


The infinite staircases $c_{L_{n,0}}$ of \cite{usher}, $c_{\beta_n}$ of Conjecture \ref{thm:generalb}, and the fact that $c_n, n\in\Z$ do not contain infinite staircases from \cite{integralpolydisks} all support Conjecture \ref{conj:Pfractal}. Further evidence is explored in 
\cite[\S 3 and Remark~3.1.7]{MPW}.
It is possible to extend Conjecture \ref{conj:Pfractal} to all $\beta$ using the Brahmagupta moves of Usher and their counterparts for the Hirzebruch surface from \cite{symm};  explaining the extension is beyond the scope of this paper.
Because of the similarities between \S\ref{ssec:inner} and \cite{M1}, we expect there is a more direct relationship between $b$ and $\beta$ than via the accumulation point function $\acc$, but this has not yet been discovered.

In Section~\ref{ssec:inner}, to construct the desired embeddings, we followed the sequences of mutations on almost toric fibrations found by Magill in \cite{M1} and \cite{M2}. In \cite{M1}, ATF mutations for elliposid embeddings into $H_b$ were considered. The formulas found for the mutations in Section~\ref{ssec:inner} mirror the formulas found in \cite{M1}. In fact, the formulas in Lemma~\ref{lem:als}~and~\ref{lem:k3} could be easily generalized to mirror the formulas of \cite[Def.~3.8]{M1}. Therefore, we expect a generalization of Prop~\ref{prop:fullacc} similar to \cite[Thm.~1.1]{M1}. This is more evidence for the correspondence between the embedding functions for polydisks and $H_{b}.$

In \cite{M2}, ellipsoid embeddings into a two-fold blow up of $\C P^2$ were considered. In Section \ref{ssec:inner}, we follow the same mutation sequences Magill used to compute some of the inner corners of the function. One new addition in Section~\ref{ssec:inner} is Conjecture~\ref{conj:ic2} that adding one extra mutation to Magill's sequences will compute all the inner corners of the function for $c_\beta$. The work of Casals and Vianna in \cite{CV} and Cristofaro-Gardiner, Holm, Mandini, and Pires in \cite{cghmp} show that almost toric mutations give all embeddings for particular rational convex toric domains. Conjecture~\ref{conj:ic2} would imply a similar statement holds for $c_\beta$ where $\beta=\frac{6+5\sqrt{30}}{12}.$ It would be interesting to see if similar statements hold for $H_b$ and the two fold blow up of $\C P^2$.

\subsection{Outline of the paper}\label{ssec:outline}

We introduce the necessary tools to analyze $c_\beta$ in \S\ref{sec:tools}, prove Theorem \ref{thm:main} in \S\ref{sec:proofs}, and in \S\ref{sec:ideas} we outline future work supported by other experimental evidence discovered in summer 2022. The section \S\ref{sssec:toricsymp} 
requires a graduate-level background in geometry and can be skipped on a first reading.

\subsection*{Acknowledgements}
We would like to thank the Department of Mathematics at Cornell University for hosting us under the 2022 SPUR program. We also thank Michael Usher for pointing out the relationship between the obstructions $\hat{A}_{i,n}$ and mutation of perfect classes, as well as helpful comments regarding \S\ref{sec:ideas}. We thank the anonymous referee for helpful comments. In addition,

\vskip 0.1in

\noindent Caden Farley was supported by Jo Nelson's NSF grant DMS-2104411.

\noindent Tara Holm was supported by NSF grant DMS-2204360.

\noindent Nicki Magill was supported by NSF Graduate Research Grant DGE-1650441.

\noindent Jemma Schroder was supported by the MIT Department of Mathematics.

\noindent Zichen Wang was supported by the Cornell University Department of Mathematics.

\noindent Morgan Weiler was supported by NSF Research Grant DMS-2103245.

\noindent Elizaveta Zabelina was supported by the Nexus Scholars Program in the College of Arts \& Sciences, Cornell University.

\noindent Any opinions, findings and conclusions or recommendations expressed in this material
are those of the authors and do not necessarily reflect the views of the National Science
Foundation.

\section{Tools for obstructing and constructing embeddings}\label{sec:tools}

In this section we define the tools we use to prove Theorem \ref{thm:main}.

\subsection{Embedding functions of toric domains}\label{ssec:toric}

A \textbf{toric domain} $X_{\Omega}$ in $\mathbb{C}^2$ is the preimage of a domain $\Omega\subset\mathbb{R}^2_{\geq0}$ under the map $\mu:\mathbb{C}^2\rightarrow\mathbb{R}^2$ given by

\[
(\zeta_1,\zeta_2)\mapsto(\pi|\zeta_1|^2,\pi|\zeta_2|^2).
\]

\noindent We call the map $\mu$ the \textbf{moment map} and the domain $\Omega$ the \textbf{moment polygon} of $X_{\Omega}$, as they are analogous to the moment maps and moment polygons associated to closed toric symplectic manifolds. We say that a toric domain $X_{\Omega}$ is \textbf{convex} if the domain $\Omega$ is a closed, connected region of $\mathbb{R}^2$ and is convex as a polygon in $\mathbb{R}^2$. As a consequence of the presence of factors of $\pi$ in the expression for the moment map $\mu$, the volume of a toric domain $X_{\Omega}$ coincides with twice the area of its moment polygon $\Omega$. 

When $(X,\omega)=(X_\Omega,\omega_0)$, instead of  \eqref{eqn:cXdefn} we write
\[
c_X(z):=\inf\left\{\lambda\ \Big|\ E(1,z)\sembeds X_{\lambda\Omega}\right\},
\]
dropping the symplectic forms from the notation.

We say that a convex toric domain $X_{\Omega}$ is of \textbf{finite type} if $\Omega$ has only finitely many sides and all of these sides have rational slopes. For these finite type toric domains, the accumulation points of potential infinite staircases can be computed as solutions to an explicit quadratic equation. For details of this result and the following definition, see the paper \cite{cghmp}.

\begin{definition}\label{def: afflength}
Let $L$ be a line segment in $\mathbb{R}^2$. The \textbf{affine length} of $L$ is the length of the image $AT(L)$ of $L$ under a composition of a translation $T$ with a linear transformation $A\in \mathrm{SL}(2,\mathbb{Z})$, where $A$ and $T$ are chosen so that $AT(L)$ lies along the $x$-axis.

If $\Omega$ is a polygon in $\mathbb{R}^2_{\geq0}$ with only finitely many sides each of which has a rational slope, define the \textbf{affine perimeter} of $\Omega$ to be the sum of the affine lengths of its sides, and denote this quantity by $\mathrm{per(\Omega)}$.
\end{definition}

With these definitions, we can now state the following result about the accumulation points of infinite staircases of finite type convex toric domains:

\begin{thm}[{\cite[Thm.~1.13]{cghmp}}]\label{thm: acc}
Let $X_{\Omega}$ be a finite type convex toric domain. If the ellipsoid embedding function $c_{X_{\Omega}}(z)$ has an infinite staircase, then it accumulates at $\mathrm{acc}(\Omega) \geq 1$, a real solution\footnote{The solutions to this equation have product one and are either positive or complex. For the polydisk, there is always a unique real solution larger than one.} to the quadratic equation

\[
z^2-\bigg(\frac{\mathrm{per}(\Omega)^2}{2\cdot \mathrm{area}(\Omega)}-2\bigg)z+1=0.
\]

\noindent In this case, at $\mathrm{acc}(\Omega)$, the ellipsoid embedding function touches the volume curve:

\[
c_{X_{\Omega}}(\mathrm{acc}(\Omega))=\sqrt{\frac{\mathrm{acc}(\Omega)}{2\cdot \mathrm{area}(\Omega)}}.
\]
\end{thm}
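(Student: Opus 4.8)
The plan is to realize $c_{X_\Omega}$ through embedded contact homology (ECH) capacities and then read off both assertions from the first- and second-order asymptotics of those capacities. First I would recall that for an ellipsoid mapping into a convex toric domain the ECH capacities $c_k$ form a complete set of obstructions (McDuff for ellipsoids, Cristofaro-Gardiner for concave-into-convex targets). Since each $c_k$ scales linearly, the condition $c_k(E(1,z))\le c_k(X_{\lambda\Omega})=\lambda\,c_k(X_\Omega)$ for all $k$ is equivalent to the embedding, so
\[
c_{X_\Omega}(z)=\sup_{k\ge 1}\frac{c_k(E(1,z))}{c_k(X_\Omega)}.
\]
This exhibits $c_{X_\Omega}$ as the upper envelope of the continuous functions $f_k(z):=c_k(E(1,z))/c_k(X_\Omega)$, each piecewise linear in $z$, and the nonsmooth points of $c_{X_\Omega}$ occur exactly where the index realizing the supremum changes.

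Next I would invoke the volume asymptotics of ECH capacities (Cristofaro-Gardiner--Hutchings--Ramos): $c_k(X_\Omega)\sim 2\sqrt{\mathrm{area}(\Omega)\,k}$ and $c_k(E(1,z))\sim 2\sqrt{(z/2)\,k}$. Dividing, $f_k(z)\to\sqrt{z/(2\,\mathrm{area}(\Omega))}$ for every $z$, which simultaneously recovers the volume lower bound of Proposition~\ref{prop:cXprops}(i) and shows that the supremum can meet the volume curve only when it is attained in the limit $k\to\infty$. To prove the second displayed equation I would argue that an infinite staircase forces the indices of the obstructions producing the corners to tend to infinity: in any region where $c_{X_\Omega}$ stays a bounded distance above the volume curve, only finitely many $f_k$ can contribute corners, since each $f_k$ is piecewise linear with controlled slopes. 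Hence the corners accumulating at $\mathrm{acc}(\Omega)$ are governed by $f_k$ with $k\to\infty$, and by continuity (Proposition~\ref{prop:cXprops}(iv)) the limiting value is the common limit of the $f_k$, namely $\sqrt{\mathrm{acc}(\Omega)/(2\,\mathrm{area}(\Omega))}$, which is exactly $c_{X_\Omega}(\mathrm{acc}(\Omega))=\sqrt{\mathrm{acc}(\Omega)/(2\,\mathrm{area}(\Omega))}$.

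Finally, to locate $\mathrm{acc}(\Omega)$ I would pass to the second-order behavior of the capacity sequences, which is where $\mathrm{per}(\Omega)$ enters: the leading term of $c_k$ is pinned down by $\mathrm{area}(\Omega)$, while the affine perimeter controls the secondary term governing how the outer corners approach the volume curve. Rewriting the stated quadratic in the equivalent form
\[
\sqrt{z}+\frac{1}{\sqrt{z}}=\frac{\mathrm{per}(\Omega)}{\sqrt{2\,\mathrm{area}(\Omega)}}
\]
makes transparent both that it is palindromic — its two roots are $\mathrm{acc}(\Omega)$ and $1/\mathrm{acc}(\Omega)$, reflecting the symmetry $c_X(z)=z\,c_X(1/z)$ — and that it encodes a tangency condition between the volume curve and the limiting envelope of obstructions. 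Concretely, multiplying through by $\sqrt{z}$ shows it is equivalent to $\mathrm{per}(\Omega)\,c_{X_\Omega}(\mathrm{acc}(\Omega))=\mathrm{acc}(\Omega)+1$. I would derive this by comparing the subleading asymptotics of $c_k(E(1,z))$, governed by the sum $z+1$ of the affine lengths of the two coordinate edges of the ellipsoid's moment triangle, with those of $c_k(X_\Omega)$, governed by $\mathrm{per}(\Omega)$, and imposing that at $z=\mathrm{acc}(\Omega)$ the envelope is tangent to (rather than crossing) the volume curve.

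The main obstacle is precisely this last step: the second-order asymptotics of ECH capacities are genuinely oscillatory rather than a clean constant correction, so extracting the exact role of $\mathrm{per}(\Omega)$ requires the explicit combinatorial formula for the ECH capacities of a convex toric domain (via lattice-point counts associated to $\Omega$) and a careful limit taken along the subsequence of indices realizing the outer corners. Establishing that the accumulation point depends only on the two coarse invariants $\mathrm{area}(\Omega)$ and $\mathrm{per}(\Omega)$ — and on no finer feature of $\Omega$ — is the crux of the argument.
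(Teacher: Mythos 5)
A preliminary remark: the paper does not prove this statement at all. It is imported with a citation from \cite[Thm.~1.13]{cghmp} and used as a black box, so there is no internal proof to compare yours against; the relevant comparison is with the proof in that reference. In outline, your strategy is the same as the one used there: realize $c_{X_\Omega}(z)$ as $\sup_k c_k(E(1,z))/c_k(X_\Omega)$ via completeness of ECH capacities (the paper records the polydisk case as \eqref{eqn:echsup}), identify the limiting envelope with the volume curve via the Cristofaro-Gardiner--Hutchings--Ramos asymptotics, and locate the accumulation point through the subleading term of the capacity asymptotics, in which $\mathrm{per}(\Omega)$ appears as the second-order (Ehrhart-type) coefficient of the lattice-point count. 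Your reformulation $\mathrm{per}(\Omega)\cdot c_{X_\Omega}(\acc(\Omega))=\acc(\Omega)+1$ is also exactly how the present paper uses the theorem; compare Lemma \ref{lem:volbaccb}, where $\vol(\beta)=(1+\acc(\beta))/(2+2\beta)$.

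As a proof, however, your proposal has concrete gaps beyond the one you flag. First, the mechanism at $\acc(\Omega)$ is not a tangency. Writing $\vol_\Omega(z):=\sqrt{z/(2\,\mathrm{area}(\Omega))}$ for the volume curve, the second-order expansion gives, for irrational $z$,
\[
\frac{c_k(E(1,z))}{c_k(X_\Omega)}=\vol_\Omega(z)+\frac{\vol_\Omega(z)}{2\sqrt{k}}\left(\frac{\mathrm{per}(\Omega)}{2\sqrt{\mathrm{area}(\Omega)}}-\frac{1+z}{\sqrt{2z}}\right)+o\!\left(k^{-1/2}\right),
\]
so the supremum exceeds the volume curve exactly where $\vol_\Omega(z)>(1+z)/\mathrm{per}(\Omega)$; the line $(1+z)/\mathrm{per}(\Omega)$ crosses the volume curve \emph{transversally} at the two roots of the stated quadratic (whose product is $1$, the palindromy you observe), and $\acc(\Omega)$ is the larger root. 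Second, for rational $z=p/q$ the subleading term of $c_k(E(1,z))$ is $-(1+z+1/q)/2$, not $-(1+z)/2$, because the hypotenuse of the moment triangle contributes affine length $1/q$; this correction cannot be ignored. For $X_\Omega=B^4$ and $z=1$ it is precisely what makes $c_{B^4}(1)=\vol_\Omega(1)$ possible at an interior point of the interval between the two roots, so any argument that corners cannot accumulate at points of the volume curve other than $\acc(\Omega)$ must contend with such rational full-filling points; your sketch does not address why the accumulation point cannot be some other point where $c_{X_\Omega}$ meets the volume curve. Third, your finiteness claim --- that only finitely many indices $k$ contribute corners in a region where $c_{X_\Omega}$ stays above the volume curve, and that near points outside the interval between the roots $c_{X_\Omega}$ is a maximum of $\vol_\Omega$ and finitely many piecewise-linear obstruction functions --- requires the asymptotics above to hold uniformly in $z$ on compact intervals. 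That uniformity is what the finite type (rational slopes) hypothesis buys, and establishing it is the technical heart of the cited proof, not a detail. In short: you have the right strategy and the right identification of where $\mathrm{per}(\Omega)$ enters, but what you have written is a roadmap of the argument of \cite{cghmp} with its hardest steps still open.
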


In the setting of this paper, $X_{\Omega}$ will be the polydisk $P(1,\beta)$, which has moment polygon $\Omega_\beta$ a rectangle situated at the origin with sides of length $1$ and $\beta$ parallel to the $x$- and $y$-axes. Here, the affine perimeter of $\Omega_\beta$ is the same as its regular perimeter, $\mathrm{per}(\Omega_\beta)=2(\beta+1)$, and the area of $\Omega_\beta$ is $\mathrm{area}(\Omega_\beta)=b$. In this case, the quadratic equation in Theorem \ref{thm: acc} becomes

\begin{equation}\label{eqn:accpteqn}
z^2-\bigg(\frac{2(\beta+1)^2}{\beta}-2\bigg)z+1=0.
\end{equation}

In addition to providing an explicit way to calculate accumulation points, Theorem \ref{thm: acc} also describes a necessary condition for the existence of an infinite staircase for different values of $\beta$. We call the difference $c_{X_{\Omega}}(\mathrm{acc}(\Omega))-\sqrt{\frac{\mathrm{acc}(\Omega)}{2\cdot\mathrm{area}(\Omega)}}\geq0$ the \textbf{staircase obstruction} of $X_{\Omega}$. Theorem \ref{thm: acc} indicates that if the ellipsoid embedding function $c_{X_{\Omega}}(z)$ has an infinite staircase, then the staircase obstruction of $X_{\Omega}$ vanishes. For the case where $X_{\Omega}=P(1,\beta)$, if the staircase obstruction does not vanish for a particular value of $\beta$, we say that this $\beta$-value is \textbf{blocked}, and we conclude that the ellipsoid embedding function $c_\beta(z)$ does not have an infinite staircase.

Finally, because the accumulation point of an infinite staircase is on the volume obstruction, the formula on the right hand side of Proposition \ref{prop:cXprops} (i) specialized to the case of the polydisk will be key throughout; we set the notation
\[
\vol_\beta(z):=\sqrt{\frac{z}{2\beta}}=\sqrt{\frac{z}{2\cdot\mathrm{area}(\Omega_\beta)}}.
\]

We compute the ranges of $\acc$ and $\vol$ to motivate Figures \ref{fig:polydisk-accum} and \ref{fig:polydisk-accum-zoom}.

\begin{lemma}\label{lem:accvol} Setting $\acc(\beta)=\acc(\Omega_\beta)$, we have
    \[
\acc:[1,\infty)\to\left[3+2\sqrt{2},\infty\right)
\]
and $\acc$ is increasing. If we set $\vol(\beta)=\vol_\beta(\acc(\beta))$ then
\[
\vol:[1,\infty)\to\left[1+\frac{\sqrt{2}}{2},1\right)
\]
and $\vol$ is decreasing.
\end{lemma}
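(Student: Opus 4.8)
The plan is to reduce the entire statement to the single quadratic \eqref{eqn:accpteqn} and to exploit that its two roots have product one. Write
\[
g(\beta) := \frac{2(\beta+1)^2}{\beta} - 2 = 2\beta + 2 + \frac{2}{\beta}
\]
for its linear coefficient, so that \eqref{eqn:accpteqn} reads $z^2 - g(\beta)z + 1 = 0$. Since the product of the roots is $1$, they are $\acc(\beta)$ and $1/\acc(\beta)$ with sum $g(\beta)$, and the larger one is $\acc(\beta) = \tfrac12\bigl(g(\beta) + \sqrt{g(\beta)^2 - 4}\bigr)$; this is real and exceeds $1$ precisely because $g(\beta) \geq g(1) = 6 > 2$ for all $\beta \geq 1$, which also identifies $\acc(\beta)$ with the unique real solution larger than one promised in Theorem~\ref{thm: acc}.

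For the claims about $\acc$, I would first note $g'(\beta) = 2 - 2/\beta^2 \geq 0$ on $[1,\infty)$, so $g$ increases from $g(1)=6$ to $\infty$ (strictly for $\beta>1$). Since $g \mapsto \tfrac12\bigl(g+\sqrt{g^2-4}\bigr)$ is increasing for $g \geq 2$, composing shows $\acc$ is increasing; evaluating at the endpoints gives $\acc(1) = \tfrac12(6+\sqrt{32}) = 3+2\sqrt{2}$ and $\acc(\beta)\to\infty$, so by continuity the image is exactly $[3+2\sqrt2,\infty)$.

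The heart of the argument is the monotonicity of $\vol(\beta) = \sqrt{\acc(\beta)/(2\beta)}$, and the clean route is to eliminate the square root. Setting $v := \vol(\beta)^2 = \acc(\beta)/(2\beta)$, i.e. $\acc(\beta) = 2\beta v$, and substituting into \eqref{eqn:accpteqn} turns it into the quadratic $4\beta^2 v^2 - 4(\beta^2+\beta+1)v + 1 = 0$ in $v$. The decisive computation is that its discriminant factors,
\[
(\beta^2+\beta+1)^2 - \beta^2 = (\beta^2+1)(\beta+1)^2,
\]
so the relevant (larger) root, corresponding to $\acc(\beta)>1$, is
\[
v(\beta) = \frac{\beta^2+\beta+1 + (\beta+1)\sqrt{\beta^2+1}}{2\beta^2}.
\]
From here monotonicity is transparent: writing $t = 1/\beta \in (0,1]$ one gets $v = \tfrac12\bigl(1 + t + t^2 + (1+t)\sqrt{1+t^2}\bigr)$, a sum of terms each increasing in $t$ and hence each decreasing in $\beta$, so $v$, and therefore $\vol$, is strictly decreasing. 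Finally $\vol(1) = \sqrt{(3+2\sqrt2)/2} = (1+\sqrt2)/\sqrt2 = 1 + \tfrac{\sqrt2}{2}$ (using $3+2\sqrt2 = (1+\sqrt2)^2$) and $v(\beta)\to 1$ as $\beta\to\infty$, so continuity gives the stated range.

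The only genuine obstacle is verifying the discriminant factorization and the resulting closed form for $v(\beta)$; once that identity is secured the monotonicity of $\vol$ — which would otherwise demand a somewhat fiddly implicit-differentiation argument for $\acc'(\beta)$ — drops out by inspection, and the endpoint computations are routine.
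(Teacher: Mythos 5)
Your proof is correct, and on the decisive point it takes a genuinely different route from the paper's. For $\acc$ the two arguments are essentially the same: the paper solves \eqref{eqn:accpteqn} explicitly and checks that both the rational part and the expression under the radical have positive derivative, while you factor the solution through the increasing map $g\mapsto\tfrac12\bigl(g+\sqrt{g^2-4}\bigr)$ with $g(\beta)=2\beta+2+2/\beta$; that is a cosmetic repackaging, and both give $\acc(1)=3+2\sqrt{2}$ and $\acc(\beta)\to\infty$. The divergence is in the monotonicity of $\vol$. The paper dispatches it in one sentence (``because $\vol_\beta(z)$ has $\beta$ in the denominator, it is decreasing if $z$ is increasing''), which as written does not engage the actual difficulty: in $\vol(\beta)^2=\acc(\beta)/(2\beta)$ the numerator and denominator \emph{both} increase with $\beta$, so a genuine computation is required. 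Your substitution $z=2\beta v$, the discriminant factorization $(\beta^2+\beta+1)^2-\beta^2=(\beta^2+1)(\beta+1)^2$, and the closed form
\[
v=\tfrac12\bigl(1+t+t^2+(1+t)\sqrt{1+t^2}\bigr),\qquad t=1/\beta,
\]
supply exactly that computation, making monotonicity term-by-term obvious and the limit $\vol(\beta)\to 1$ immediate. Amusingly, the paper's own limit computation manipulates the same quantity---its expression under the radical equals $\tfrac14(1+t)^2(1+t^2)$---but never notices the factorization and uses the expansion only to evaluate the limit. So what your approach buys is a complete, self-contained proof of the one step the paper treats too briskly, at the modest cost of the algebra needed to verify the factorization; what the paper's approach buys is brevity, with the monotonicity of $\vol$ left essentially as an assertion.
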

\begin{proof}
Solving \eqref{eqn:accpteqn} we obtain
\[
\acc(\beta)=z=\beta+1+\frac{1}{\beta}+\sqrt{\beta^2+2\beta+2+\frac{2}{\beta}+\frac{1}{\beta^2}},
\]
thus
\[
\acc(1)=3+\sqrt{8}=3+2\sqrt{2},
\]
and $\lim_{\beta\to\infty}\acc(\beta)=\infty$ because $\acc(\beta)>\beta$. The function $\acc(\beta)$ is increasing because
\[
\frac{\p}{\p \beta}\left(\beta+1+\frac{1}{\beta}\right)=1-\frac{1}{\beta^2}
\]
and
\[
\frac{\p}{\p \beta}\left(\beta^2+2\beta+2+\frac{2}{\beta}+\frac{1}{\beta^2}\right)=2\beta+2-\frac{2}{\beta^2}-\frac{2}{\beta^3},
\]
which are both positive if $\beta>1$.

Because $\vol_\beta(z)$ has $\beta$ in the denominator, it is decreasing if $z$ is increasing, so $\vol(\beta)$ is decreasing in $\beta$. We compute
\[
\left(1+\frac{\sqrt{2}}{2}\right)^2=1+\sqrt{2}+\frac{1}{2}=\frac{3+2\sqrt{2}}{2}=\sqrt{\frac{\acc(1)}{2}}=\vol(1).
\]
Finally, by the fact that $\vol$ and $\acc$ are continuous and defined on $[1,\infty)$,
\begin{align*}
\left(\lim_{\beta\to\infty}\vol(\beta)\right)^2&=\lim_{\beta\to\infty}\frac{\beta+1+\frac{1}{\beta}+\sqrt{\beta^2+2\beta+2+\frac{2}{\beta}+\frac{1}{\beta^2}}}{2\beta}
\\&=\lim_{\beta\to\infty}\frac{1}{2}+\frac{1}{2\beta}+\frac{1}{2\beta^2}+\sqrt{\frac{1}{4}+\frac{1}{2\beta}+\frac{1}{2\beta^2}+\frac{1}{2\beta^3}+\frac{1}{4\beta^4}}
\\&=1.
\end{align*}
\end{proof}

\subsubsection{Closed toric symplectic manifolds}\label{sssec:toricsymp}

Our methods rely on the fact that ellipsoid embeddings into certain finite type convex toric domain targets are equivalent
to ellipsoid embeddings into certain closed symplectic manifolds, specifically toric blowups of $\C P^2$.
Topologically, {\bf symplectic blowup} is a procedure where an open ball is removed from a manifold, and the resulting 
boundary sphere is collapsed along the Hopf fibration.  This can be achieved in a sympelctic manner if the ball was
symplectically embedded; see \cite[Thm.~7.1.21]{McSal}.
In the special case when the initial manifold $M$ is four-dimensional, the symplectic blowup procedure is equivalent to
the symplectic connected sum $M\#\overline{\C P}^2$; see  \cite[Ex.~7.1.4]{McSal}.
Moreover, when $M$ is a four-dimensional toric symplectic manifold and the blowup respects the action, then
at the level of moment polygons, the toric blowup has the impact of truncating a vertex \cite[Ex.~7.1.15]{McSal}.

Toric symplectic manifolds are classified by their moment polytope, up to equivariant symplectomorphism of the manifold
and up to affine equivalence of the polytopes. Those polytopes which are the moment polytope of some toric symplectic
manifold are called {\bf Delzant polytopes}.
For four-dimensional toric symplectic manifolds, Delzant polygons are those that have edges with rational slope and for
each vertex, the two primitive vectors pointing in the directions of the edges form a $\Z$-basis of the integer lattice
in $\R^2$.  Because we work up to affine equivalence of Delzant polytopes, we may assume that a Delzant polygon
has a vertex at the origin, that the edges emanating from the origin point along the positive $x$- and $y$-axes, and the
polygon is contained in the positive quadrant.
\textbf{Almost toric fibrations}, defined in \S\ref{ssec:ATFdefs}, and natural operations on them allow us to modify the Delzant polygon of $M_\Omega$ to indicate new fibrations. We use the modified Delzant polygon to identify new embeddings $E(c,d)\sembeds M_\Omega$ and \cite[Thm.~1.4]{cghmp}, stated below, to prove there is thus an embedding into $X_\Omega$.

\begin{prop}[{\cite[Theorem~1.4]{cghmp}}]\label{prop:compact-domain}
    If $M_\Omega$ is the toric symplectic manifold with Delzant polygon $\Omega$, then
    \[
    E(c,d)\sembeds M_\Omega \iff E(c,d)\sembeds X_\Omega.
    \]
\end{prop}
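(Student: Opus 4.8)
The claim is an equivalence between embedding the open ellipsoid into two manifolds built from the same moment data, so my plan is first to make the relationship between $X_\Omega$ and $M_\Omega$ precise and then to treat the two implications separately. Write $\mu_M\colon M_\Omega\to\Omega$ for the moment map of the closed manifold, and let $D\subset M_\Omega$ be the union of the symplectic spheres lying over those edges of $\Omega$ that do not lie on a coordinate axis (the divisors ``at infinity''). Since $E(c,d)$ is open and any symplectic embedding $\varphi\colon E(c,d)\to X_\Omega\subset\C^2$ is an equidimensional smooth embedding, its image is open in $\C^2$ by invariance of domain and is therefore contained in the $\C^2$-interior of $X_\Omega$; hence $E(c,d)\sembeds X_\Omega$ if and only if $E(c,d)\sembeds\op{int}(X_\Omega)$, with no loss. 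The remaining structural input, supplied by the Delzant normal form, is a symplectomorphism $\Phi\colon\op{int}(X_\Omega)\xrightarrow{\sim}M_\Omega\setminus D$: away from the outer divisors both spaces are described by the same action--angle toric coordinates over $\Omega$. Thus it suffices to prove $E(c,d)\sembeds M_\Omega\setminus D\iff E(c,d)\sembeds M_\Omega$.

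The forward implication is immediate and monotone: composing with the inclusion $M_\Omega\setminus D\hookrightarrow M_\Omega$ turns an embedding into $M_\Omega\setminus D$ into one into $M_\Omega$. All of the content lies in the reverse implication. Given $\varphi\colon E(c,d)\sembeds M_\Omega$, I must produce an embedding of $E(c,d)$ whose image is disjoint from $D$. My plan is to exhaust the open ellipsoid by closed sub-ellipsoids $\overline{E}(tc,td)$, $t\uparrow1$, each with compact image, and to re-embed each into the complement of a shrinking neighborhood of $D$; since those complements are exactly the slightly smaller toric domains $\op{int}(X_{\Omega'})$ with $\Omega'\uparrow\Omega$, a limit in $t$ then gives the result.

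The main obstacle is exactly this step of separating the embedded ellipsoid from the divisor $D$. One cannot in general isotope the given image $\varphi(\overline{E}(tc,td))$ off $D$, because the spheres making up $D$ are homologically essential; the ellipsoid must instead be re-embedded. I expect the cleanest execution to be one of two standard routes from the ellipsoid-embedding literature. The geometric route keeps the argument inside the toric and almost-toric picture of \S\ref{ssec:ATFdefs}: use positivity of intersection with $J$-holomorphic representatives of the classes in $D$, together with symplectic inflation along those classes, to trade a neighborhood of $D$ for interior room and thereby free the ellipsoid from $D$. The homological route bypasses the geometry and instead shows that both embedding problems are governed by the \emph{same} complete set of obstructions --- the ECH capacities of $E(c,d)$ compared against those of $X_\Omega$ and of $M_\Omega$ --- invoking the sharpness of these obstructions for ellipsoid targets of this type; this has the advantage of establishing both implications at once and of matching the combinatorial framework used elsewhere in the paper.
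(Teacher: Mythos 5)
The paper itself does not prove this proposition: it is quoted directly from \cite[Thm.~1.4]{cghmp} and used as a black box, so the only question is whether your reconstruction would stand on its own. Your structural reduction is sound: the image of the open ellipsoid under any symplectic embedding into $X_\Omega\subset\C^2$ is open by invariance of domain, hence lies in $\op{int}(X_\Omega)$; the identification $\op{int}(X_\Omega)\cong M_\Omega\setminus D$ is the standard toric normal form; and the implication $E(c,d)\sembeds X_\Omega\Rightarrow E(c,d)\sembeds M_\Omega$ is then just composition with the inclusion.

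The genuine gap is that the reverse implication---which, as you say yourself, is where all the content lies---is never actually proved: you name two possible strategies and execute neither. The geometric route is in the right spirit (the proofs in the literature do reduce, via McDuff's weight-expansion theorem, to a ball packing of $M_\Omega$, and then show the packing can be rearranged into the complement of $D$), but that rearrangement is not a routine isotopy; the spheres in $D$ are homologically essential, and pushing the balls off $D$ requires the full strength of the symplectic-cone/inflation machinery for blowups, none of which appears in your sketch. The homological route is worse than incomplete: ECH capacities are invariants of domains (or of symplectic manifolds with contact-type boundary) and are not defined for the closed manifold $M_\Omega$, so ``the ECH capacities of $M_\Omega$'' is not an available object; moreover, the sharpness statement you would need for the closed target is precisely the kind of consequence that is \emph{deduced} from the present proposition (this is exactly how the paper uses it, via Theorem \ref{thm:mcdcg} and \eqref{eqn:echsup} applied to $P(1,\beta)$ in place of $M_\beta$), so invoking it here would be circular. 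In short, the proposal has the correct skeleton and correctly locates the difficulty, but the unique hard step is left as a pointer rather than an argument.
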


\subsection{Quasi-perfect Diophantine classes}\label{ssec:ECdefs}
Embeddings of rational ellipsoids into finite type convex toric domains are completely characterized by the homology classes of symplectically immersed spheres in 
blow ups of $\C P^2$, a method due to McDuff and Polterovich (see the proof of \cite[Prop.~3.2]{m2} and the original reference of \cite{mp}). 
We will not review this entire story, but refer the reader to the original proof, the in-depth survey \cite{Hsurvey} for the case of ellipsoid targets, or the shorter, more general summary in \cite[\S2.3]{cghmp}.
Here we make the definitions and simplifications 
used in this paper.

Define the \textbf{integral weight expansion} $W(p,q)$ of a pair of coprime integers $p>q$ recursively by
\[
W(q,p)=W(p,q)=(q)\cup W(p-q,q),
\]
and the \textbf{weight expansion} $\mathbf{w}(z)$ of a rational number $z=p/q$ to be
\[
\mathbf{w}(z):=W(p,q)/q.
\]
The \textbf{weights} of $z$ are the entries in its weight expansion. Irrational numbers also have (infinite) weight expansions $\mathbf{w}(z):=W(z,1)$.

\begin{example}
    We compute
    \begin{align*}
    W(41,5)&=(5)\cup W(36,5)
    \\&=(5,5)\cup W(31,5)
    \\&=\cdots=(5^{\times8})\cup W(5,1)
    \\&=(5^{\times8},1^{\times5}),
    \end{align*}
    thus $\mathbf{w}(41/5)=(1^{\times8},1/5^{\times5})$.
\end{example}

\begin{rmk}\label{rmk:weights}
\begin{enumerate}[label=(\roman*)]
    \item The continued fraction of $z$ equals the list of multiplicities of its weights, e.g.,
    \[
    [8,5]=8+\frac{1}{5}=\frac{41}{5}.
    \]
    \item By \cite[Lem.~1.2.6]{ball}, if $\mathbf{w}(p/q)=(w_1,\dots,w_M)$, then
    \begin{align*}
        \sum_{i=1}^Mw_i^2&=\frac{p}{q}
        \\\sum_{i=1}^Mw_i&=\frac{p}{q}+1-\frac{1}{q}
    \end{align*}
    \end{enumerate}
\end{rmk}

\begin{definition} We call a 5-tuple of integers
\[
\bE=(d,e,p,q,t)
\]
with $p$ and $q$ coprime a \textbf{quasi-perfect Diophantine class} if 
\begin{equation}\label{eqn:Ds}
    2(d+e)=p+q,\quad 2de=pq-1,\quad t=\sqrt{p^2+q^2-6pq+8}.
\end{equation}
We say $p/q$ is the \textbf{center} of $\bE$, and call the first two equations in (\ref{eqn:Ds}) the \textbf{Diophantine equations}.
\end{definition}

Let $\mu_{\bE,\beta}$ be the obstruction function defined by
\[
\mu_{\bE,\beta}(z):=\frac{W(p,q)\cdot\mathbf{w}(z)}{d+e\beta}.
\]
Our computations in \S\ref{sec:proofs} will rely on the fact that
\begin{equation}\label{eqn:cboundmu}
c_\beta(z)\geq\mu_{\bE,\beta}(z)
\end{equation}
for all quasi-perfect Diophantine classes $\bE$. This follows from the fact that $\bE$ represents the homology class of a symplectically immersed sphere in a blowup of $\C P^2$; the fact that the immersion is symplectic means the sphere has positive area, providing us with an inequality. For the purposes of this paper, \eqref{eqn:cboundmu} may be taken as a black box following from \cite[Prop.~3.2]{m2}.

\begin{rmk}
\begin{enumerate}[label=(\roman*)]
    \item Computing $\mu_{\bE,\beta}$ at the center of $\bE$ is particularly simple by Remark \ref{rmk:weights} (ii):
\begin{equation}\label{eqn:mupq}
    \mu_{\bE,\beta}\left(\frac{p}{q}\right)=\frac{q\bw(p/q)\cdot\bw(p/q)}{d+e\beta}=\frac{p}{d+e\beta}.
\end{equation}
Many outer corners of $c_\beta$, including those in the infinite staircase of Theorem \ref{thm:main}, have $z$-values equal to centers of quasi-perfect Diophantine classes, and near those centers $c_\beta(z)=\mu_{\bE,\beta}(z)$.

\item The fact that $t$ is an integer is redundant:
\[
t^2=4(d+e)^2-16de=4(d-e)^2.
\]
\end{enumerate}
\end{rmk}

Finally, we have the following identities relating $d,e,p,q$, and $t$: 
\begin{lemma}\label{lem:depqt} A integral tuple $(d,e;p,q,t)$ is a quasi-perfect Diophantine class if and only if $t$ is defined from $p,q$ as in \eqref{eqn:Ds} and there are integers $(d,e)$ such that
\[
4d=p+q+t\quad \mbox{and} \quad 4e=p+q-t.
\]
\end{lemma}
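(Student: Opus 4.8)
The plan is to recognize this as a bookkeeping identity: the two \emph{symmetric} equations in $(d,e)$ from \eqref{eqn:Ds}, namely $2(d+e)=p+q$ and $2de=pq-1$, encode the pair $(d,e)$ through its elementary symmetric functions $(d+e,\,de)$, whereas $4d=p+q+t$, $4e=p+q-t$ encode it through its sum $d+e$ and difference $d-e$. I would prove the two implications by passing between these two descriptions using the single algebraic identity
\[
t^2=p^2+q^2-6pq+8=(p+q)^2-8(pq-1),
\]
which I would first record straight from the definition of $t$ in \eqref{eqn:Ds}.

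For the backward direction I would assume $t$ is given by \eqref{eqn:Ds} and that integers $d,e$ satisfy $4d=p+q+t$, $4e=p+q-t$, and then simply add and multiply these two equations. Adding gives $4(d+e)=2(p+q)$, i.e.\ the first Diophantine equation. Multiplying gives
\[
16de=(p+q+t)(p+q-t)=(p+q)^2-t^2=(p+q)^2-\bigl((p+q)^2-8(pq-1)\bigr)=8(pq-1),
\]
i.e.\ the second Diophantine equation $2de=pq-1$. Since $t$ is defined exactly as in \eqref{eqn:Ds} and $p,q$ are coprime by hypothesis, $\bE=(d,e,p,q,t)$ is then a quasi-perfect Diophantine class.

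For the forward direction I would start from a quasi-perfect $\bE=(d,e,p,q,t)$. Here $t$ is defined as in \eqref{eqn:Ds} by definition, and the first Diophantine equation already reads $2(d+e)=p+q$. Feeding both Diophantine equations into the displayed identity yields $4(d-e)^2=4(d+e)^2-16de=(p+q)^2-8(pq-1)=t^2$, so $t=2(d-e)$. Solving the resulting linear system $2(d+e)=p+q$, $2(d-e)=t$ then gives $4d=p+q+t$ and $4e=p+q-t$.

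The step I expect to require the most care is precisely the sign in the forward direction. Because the two Diophantine equations are invariant under swapping $d\leftrightarrow e$ while $t\ge 0$ is the nonnegative square root, the identity only gives $t=2\lvert d-e\rvert$; pinning down $t=2(d-e)$ (rather than $2(e-d)$) is what forces the labeling $d\ge e$ that the formulas $4d=p+q+t$, $4e=p+q-t$ implicitly select. I would flag this ordering convention explicitly. Everything else is the elementary passage between the $(d+e,\,de)$ and $(d+e,\,d-e)$ descriptions of the pair, together with the integrality of $d,e$, which is part of the hypotheses on both sides.
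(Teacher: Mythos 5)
Your proof is correct and is essentially the same argument as the paper's: the paper solves the quadratic $2d^2-d(p+q)+(pq-1)=0$ via the quadratic formula, which is just a repackaging of your passage between the $(d+e,de)$ and $(d+e,d-e)$ descriptions, both resting on the identity $t^2=(p+q)^2-8(pq-1)$. The sign issue you flag is handled identically in the paper, which invokes the convention $d>e$ ("using the fact that $d>e$") to select the $+$ root for $d$ and the $-$ root for $e$.
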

\begin{proof}
    Using the linear Diophantine equation \eqref{eqn:Ds}, we solve for $e$:
    \[
    e=\frac{p+q}{2}-d.
    \]
    We then plug this into the quadratic Diophantine equation, giving us
    \begin{align*}
    d(p+q-2d)=pq-1 &\iff 2d^2-d(p+q)+(pq-1)=0
    \\&\iff d=\frac{p+q+\sqrt{(p+q)^2-8(pq-1)}}{4}
    \\&\iff 4d=p+q+t,
    \end{align*}
    using the fact that $d>e$. The formula for $e$ follows in exactly the same way, using the fact that $e<d$ to obtain the other solution in the quadratic formula.
\end{proof}

\subsection{ECH capacities}\label{ssec:ECHdefs} 

Another way to obtain a lower bound on the ellipsoid embedding function of a symplectic manifold is through embedded contact homology (ECH). Computing these lower bounds is algorithmic, and so allows us to explore the space of ellipsoid embedding functions $c_\beta$ efficiently. In Lemma \ref{lem:Eck} we relate ECH obstructions to quasi-perfect Diophantine classes.

Defined in \cite{Hut}, the \textbf{ECH capacities} of a convex toric domain $X_\Om$ form a sequence
\[
0=c_0(X_\Om)<c_1(X_\Om)\leq c_2(X_\Om)\leq\cdots\leq\infty,
\]
which obstruct symplectic embeddings:
\[
X_\Om\sembeds X_{\Om'} \Rightarrow c_k(X_\Om)\leq c_k(X_{\Om'})\;\forall k.
\]
Our computation of ECH capacities for $P(1,\beta)$ is based on \cite[App.~A]{cg}. 

\begin{definition}
    A \textbf{convex lattice path} $\Lam:[0,1]\rightarrow\R_{\ge 0}$ is a continuous map satisfying \begin{enumerate}
        \item piecewise linearity,
        \item all \textbf{vertices} (nonsmooth points) lie in $\Z^2$,
        \item $\Lam(0)$ is on the $y$-axis and $\Lam(a)$ is on the $x$-axis,
        \item the region enclosed by $\Lam$ and the axes is convex.
        \end{enumerate}
    Its \textbf{edges} are the vector differences between adjacent vertices.
\end{definition}
The function $\calL(\Lam)$ counts the number of lattice points enclosed by $\Lam$, which includes points on $\partial\Lam$ and those lying on the axes. We further define the \textbf{$\Om$-length} $\ell_\Om(\Lam)$ of a given path $\Lam$ as $$\sum_{\nu\in\text{Edges}(\Lam)}\det{[\nu~p_{\Om,\nu}]}$$
where $p_{\Om,\nu}\in\partial\Om$ is the unique point where $\nu$, shifted to be based at $p_{\Om,\nu}$, is tangent to $\partial\Om$ and where $\Om$ lies entirely to the right-hand side of $\nu$. See Figure \ref{fig:pOmegav}. 

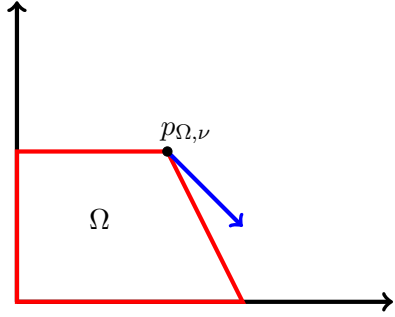
\begin{figure}[H]
\centering

\begin{tikzpicture}
  \draw[->,ultra thick] (0,0) -- (5,0);
  \draw[->,ultra thick] (0,0) -- (0,4);
  \begin{scope}[]
    \draw[ultra thick, red, 
    ]
    (0,0) -- +(0,2) -- +(2,2) -- +(3,0) -- cycle;
  \end{scope}
  \draw[->,ultra thick,blue] (2,2) -- (3,1);
  \filldraw (2,2) circle[radius=1.75pt];
  \node at (2.25,2.25) {$p_{\Om,\nu}$};
  \node at (1.1,1.1) {$\Om$};
\end{tikzpicture}
\caption{With $\Omega$ the region outlined in red and $\nu$ in blue, the black point is $p_{\Om,\nu}$.}
\label{fig:pOmegav}
\end{figure}

\begin{thm}[{\cite[Cor.~A.5]{cg}}]\label{thm:echcvx} If $X_\Omega$ is a convex toric domain, then
$$c_k(X_\Om)=\min\{\ell_{\Om}(\Lam): \text{convex lattice paths $\Lam$ where $\calL(\Lam)=k+1$}\}.$$
\end{thm}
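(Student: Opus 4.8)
The plan is to specialize Hutchings' general combinatorial computation of embedded contact homology to the boundary of a convex toric domain \cite{Hut}, and then to translate the two lattice quantities $\calL(\Lam)$ and $\ell_\Om(\Lam)$ into, respectively, the ECH grading and the symplectic action; this is essentially what \cite[App.~A]{cg} carries out. Recall that the $k$-th ECH capacity is a spectral invariant: $c_k(X_\Om)$ is the smallest symplectic action carried by an ECH generator of $\partial X_\Om$ whose grading is $2k$ and which is linked to the empty orbit set by $k$ applications of the $U$-map. Thus the entire content of the statement is a dictionary between convex lattice paths and the ECH chain-complex generators that realize these spectral numbers.

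First I would recall the Reeb dynamics on $\partial X_\Om$. After choosing a toric contact form, the edges of $\Om$ produce Morse--Bott families of Reeb orbits and the corners produce isolated orbits; perturbing to a nondegenerate form, the ECH generators become finite orbit sets. Hutchings' bookkeeping (his \emph{convex generators}) puts these orbit sets in bijection with exactly the convex lattice paths $\Lam$ of our definition: an edge vector $\nu$ of $\Lam$ records the covering multiplicity together with the edge or corner of $\Om$ to which the associated orbit is attached, and the convexity of the region enclosed by $\Lam$ is precisely the condition singling out the all-elliptic generators that carry the capacities.

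Second, I would nail down the two dictionary entries. The symplectic action of the orbit set attached to $\Lam$ is $\ell_\Om(\Lam)=\sum_{\nu}\det[\nu\;p_{\Om,\nu}]$, since the action of the piece attached to an edge $\nu$ is the integral of the contact form over the corresponding Reeb orbit, which for a toric boundary is exactly the determinant pairing of $\nu$ with the supporting point $p_{\Om,\nu}$ where $\nu$ is tangent to $\partial\Om$ (Figure \ref{fig:pOmegav}). The ECH index is
\[
I(\Lam)=2\big(\calL(\Lam)-1\big),
\]
so grading $2k$ corresponds to $\calL(\Lam)=k+1$. This is the heart of the matter: evaluating Hutchings' index formula $I=c_\tau+Q_\tau+\cz^I_\tau$ on a toric convex generator, the relative Chern, relative self-intersection, and Conley--Zehnder terms telescope into twice the count of lattice points enclosed by $\Lam$ via a Pick-type identity. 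One sanity-checks this against the ellipsoid $E(a,b)$, where the grading-$2k$ generator has action equal to the $(k+1)$-st smallest element of $\{ma+nb\}$ and indeed encloses exactly $k+1$ lattice points.

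Finally, I would assemble the formula: under the dictionary, the definition of $c_k$ as the minimal action of a grading-$2k$ class reached by $U^k$ becomes the minimum of $\ell_\Om(\Lam)$ over convex lattice paths with $\calL(\Lam)=k+1$, after invoking the structural fact that for a convex toric domain these spectral numbers are always realized by convex generators (so the minimum may be taken within the convex class, without generators involving hyperbolic orbits or nonconvex corners). The main obstacle is the index computation $I(\Lam)=2(\calL(\Lam)-1)$: carefully evaluating the relative intersection and Conley--Zehnder contributions for the perturbed toric orbits, and confirming that convexity of the enclosed region is exactly what forces a generator to be action-minimizing in its grading, is where essentially all the work lies, with everything else being translation.
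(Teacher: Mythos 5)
You should first be aware that the paper contains no proof of Theorem~\ref{thm:echcvx} to compare against: the statement is imported verbatim as a black box from \cite[Cor.~A.5]{cg} and is used downstream (in \eqref{eqn:echsup}, Lemma~\ref{lem:Eck}, and Remark~\ref{rmk:echlbs}) purely as a computational device. Your proposal is therefore a reconstruction of the external proof, and it takes a genuinely different route from the one in the cited reference. The argument in \cite[App.~A]{cg} never analyzes the ECH chain complex of a perturbed toric contact form; it deduces the formula from structural properties of ECH capacities --- monotonicity, the disjoint-union formula, the known capacities of balls, and the volume asymptotics of \cite{cghr} --- applied to a weight decomposition realizing the convex domain, together with a disjoint union of balls, as a full filling of a larger ball, and then matches the resulting optimization over sums of ball capacities with the lattice-path minimum. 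What you outline instead is the ``convex generators'' picture of Hutchings (Morse--Bott perturbation, generators corresponding to lattice paths, action equal to $\ell_\Om(\Lam)$, index $I(\Lam)=2(\calL(\Lam)-1)$). That framework is consistent with the formula, but it is not how the formula is proved in the literature; Hutchings' convex-generator work itself cites \cite{cg} for this computation rather than deriving it at chain level.

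Judged as a standalone proof, your sketch also has concrete gaps. First, $\partial X_\Om$ is not smooth for the domains at issue (a polydisk boundary has corners), so ``the ECH of $\partial X_\Om$'' is not defined until one smooths the boundary and controls the limit of the spectral invariants; you do not address this. Second, you conflate two different notions when you say that convexity of the path ``is precisely the condition singling out the all-elliptic generators'': in the convex-generator formalism, convexity is a property of the lattice path, while ellipticity is a choice of labels on its edges, and the assertion that the action minimizer in each grading is realized by a convex, all-elliptic generator is a substantive theorem, not a bookkeeping convention. Third, the identities carrying the whole statement --- the index computation $I(\Lam)=2(\calL(\Lam)-1)$, the identification of the action with $\ell_\Om(\Lam)$, and the minimizing claim just mentioned --- are exactly the steps you defer, as you yourself note. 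So what you have is a plausible roadmap for an alternative proof, not a proof; if the goal is to reprove the cited result, the packing argument of \cite[App.~A]{cg} is both shorter and avoids the chain-level analysis entirely.
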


The ECH capacities of an ellipsoid $E(a,b)$ can also be computed via:
\begin{prop}[{\cite[Prop.~1.2]{Hut}}]\label{prop:echE} Let $N(a,b)$ be the sequence of elements of the array $(am+bn)_{m,n\in\N}$ listed in ascending order with repetitions. The $k$th element indexed from zero of this sequence, $N_k(a,b)$, is exactly equal to $c_k(E(a,b))$. 
\end{prop}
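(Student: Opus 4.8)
The plan is to identify the ECH capacities $c_k(E(a,b))$ with symplectic action levels in the embedded contact homology of the boundary $Y:=\partial E(a,b)$, a three-sphere carrying the contact form $\lambda_0$ induced by $\omega_0$, and then to compute that homology together with its action filtration explicitly. By definition, $c_k$ is the infimum of those $L$ for which the distinguished class $\zeta_k\in\op{ECH}_{2k}(Y,\lambda_0)$ lies in the image of the map $\op{ECH}^L_{2k}(Y,\lambda_0)\to\op{ECH}_{2k}(Y,\lambda_0)$ from filtered ECH, where $\zeta_0=[\nil]$ is the empty orbit set and $\zeta_k$ is characterized by $U\zeta_k=\zeta_{k-1}$. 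Since both $c_k(E(a,b))$ and $N_k(a,b)$ are continuous in $(a,b)$, it suffices to treat the case $a/b\notin\Q$, where $\lambda_0$ is nondegenerate; the rational case then follows by taking limits.

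First I would determine the Reeb dynamics. For $a/b$ irrational the Reeb flow on $Y$ has exactly two embedded orbits $\ga_1,\ga_2$, both elliptic, with symplectic actions $a$ and $b$. Because elliptic orbits carry no multiplicity restriction and there are no other orbits, the ECH generators are precisely the orbit sets $\ga_1^m\ga_2^n$ with $m,n\geq0$, and the action of $\ga_1^m\ga_2^n$ is exactly $ma+nb$. Thus the multiset of actions of generators is exactly the sequence $N(a,b)$.

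The crux is the grading computation. Specializing the general ECH index formula (the sum of Conley--Zehnder indices of the iterates, plus the writhe and relative self-intersection correction terms) to the two elliptic orbits of $Y$, one obtains a lattice-point description of $I(\ga_1^m\ga_2^n)$ in terms of $(m,n)$. From this description I would extract the key combinatorial lemma: the order on generators by increasing ECH index coincides with the order by increasing symplectic action $ma+nb$, and $I$ takes each even value $2k$ exactly once. It then follows that the chain complex has rank one in each nonnegative even degree and vanishes in odd degrees, so the differential is zero for degree reasons and $\op{ECH}_{2k}(Y,\lambda_0)\cong\Z$, generated by the unique orbit set $\alpha_k$ of grading $2k$. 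Checking that $U$ carries $\alpha_k$ to $\alpha_{k-1}$ identifies $\zeta_k=[\alpha_k]$; since $\alpha_k$ is then the generator with the $(k+1)$-st smallest action, filtered $\op{ECH}$ in degree $2k$ first contains $\zeta_k$ precisely at the level equal to its action, giving $c_k(E(a,b))=N_k(a,b)$.

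The main obstacle is the grading computation of the previous paragraph: pinning down the ECH index of the iterated elliptic orbit sets and proving that the index ordering agrees with the action ordering. This is where the genuine ECH input enters, since the general index formula must be evaluated on these generators and the resulting lattice-point count analyzed, whereas the dynamical and algebraic steps are comparatively routine. A secondary point requiring care is the nonvanishing of $U$ on each $\zeta_k$, which I would obtain from the known $U$-module structure of $\op{ECH}(S^3)$ rather than by a direct holomorphic curve count.
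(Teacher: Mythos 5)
The paper offers no proof of this proposition to compare against: it is imported verbatim from Hutchings \cite[Prop.~1.2]{Hut}, and is used in the paper as a black box. Your outline is essentially Hutchings' original argument --- reduction to irrational $a/b$ by continuity of both $c_k$ and $N_k$, the two elliptic Reeb orbits of actions $a$ and $b$ on $\partial E(a,b)$, the lattice-point computation of the ECH index showing that index order coincides with action order and that each even degree is realized exactly once, and the $U$-tower structure of $\op{ECH}(S^3)$ to pin down the classes $\zeta_k$ --- so it is correct as a proof sketch, with the one step carrying real content (the ECH index computation for $\gamma_1^m\gamma_2^n$) rightly identified as the crux rather than glossed over.
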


The use of ECH capacities to obstruct symplectic embeddings of ellipsoids into some 
target relies on the following result of Frenkel-M\"uller and Hutchings, which is also a special case of a theorem of Cristofaro-Gardiner.

\begin{thm}[{\cite[Cor.~1.5]{frenkelmuller}}, {\cite[Cor.~11]{Hsurvey}}, {\cite[Thm.~1.2]{cg}}]\label{thm:mcdcg}
There exists a symplectic embedding \[E(1,z)\xhookrightarrow{s} P(1,\beta)\] if and only if 
\[c_k(E(1,z))\le c_k(P(1,\beta))\] for all $k\in\mathbb{Z}_{\ge 0}$.
\end{thm}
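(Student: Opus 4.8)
The plan is to prove the two directions separately: the forward implication is formal, while the reverse is where all the content lies. For the obstruction direction, suppose $E(1,z)\sembeds P(1,\beta)$. Since both are convex toric domains, the monotonicity of ECH capacities under symplectic embeddings recorded in \S\ref{ssec:ECHdefs} immediately gives $c_k(E(1,z))\le c_k(P(1,\beta))$ for all $k$. This uses nothing beyond the definition and functoriality of the capacities.

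For the converse I would first reduce to $z\in\Q$. The left-hand capacities $c_k(E(1,z))=N_k(1,z)$ depend continuously on $z$ by Proposition \ref{prop:echE}, and the embedding capacity $c_\beta$ is continuous by Proposition \ref{prop:cXprops}(iv); hence if the inequalities hold for $z$ they hold after replacing $z$ by a sequence of slightly smaller rationals $z'\nearrow z$, and an embedding $E(1,z)\sembeds P(1,\beta)$ then follows from embeddings $E(1,z')\sembeds P(1,\beta)$ by exhausting $E(1,z)$ from inside. So it suffices to construct the embedding when $z=p/q$ is rational.

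The strategy for the rational case is to convert the inequalities into a ball-packing problem solvable by positivity in a blowup of $\C P^2$. Using the weight expansion $\mathbf{w}(z)$ and McDuff's reduction of ellipsoid embeddings to disjoint ball embeddings, $E(1,z)\sembeds P(1,\beta)$ is equivalent to embedding the disjoint union of balls with radii $\mathbf{w}(z)$ into $P(1,\beta)$. By Proposition \ref{prop:compact-domain} I may replace the polydisk target by the closed toric symplectic manifold $M_{\Omega_\beta}$, a blowup of $\C P^2$, where the existence of such a packing is governed by the symplectic cone: the packing exists precisely when the associated homology class has positive square and pairs nonnegatively with every exceptional class (McDuff--Polterovich \cite{mp}, together with the solution of the ball-packing problem). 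The decisive point is then a combinatorial identity: using Proposition \ref{prop:echE} for $c_k(E(1,z))$ and the convex lattice path formula of Theorem \ref{thm:echcvx} for $c_k(P(1,\beta))$, one shows that the full list of inequalities $c_k(E(1,z))\le c_k(P(1,\beta))$, $k\in\N$, is equivalent to the nonnegativity of all the relevant exceptional-class pairings, so that no class obstructs the packing.

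The hard part is the sharpness embedded in the last step: upgrading ``no ECH obstruction'' to an actual embedding. This rests on the deep gauge-theoretic input behind ECH — Taubes' identification of embedded contact homology with Seiberg--Witten Floer cohomology and the resulting nonemptiness of the relevant moduli of ECH/$J$-holomorphic curves — which is exactly what makes ECH capacities a complete invariant for a concave domain (here the ellipsoid) mapping into a convex one, as in Cristofaro-Gardiner \cite{cg}. Matching the combinatorics of the lattice-path capacities of $P(1,\beta)$ against the exceptional-class inequalities, uniformly in $k$, is the step I expect to demand the most care, since the two descriptions are indexed and organized quite differently.
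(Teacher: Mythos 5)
First, a structural point: the paper does not prove Theorem \ref{thm:mcdcg} at all --- it is imported as a black box from \cite{frenkelmuller}, \cite{Hsurvey}, and \cite{cg} --- so your proposal can only be measured against the proofs in those sources. Measured that way, your skeleton is essentially the right one: the embedding-implies-inequalities direction is monotonicity of ECH capacities; the converse is reduced to rational $z$ by a limiting argument, then converted via the weight expansion and McDuff's theorem \cite{m2} into a ball-packing problem, moved into a closed toric target by Proposition \ref{prop:compact-domain}, and settled by the exceptional-class/symplectic-cone characterization of ball packings going back to \cite{mp}. Up to that point the proposal tracks the actual arguments of Frenkel--M\"uller and Cristofaro-Gardiner.

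The genuine gap is in your final paragraph, where you locate the deep input. Upgrading ``no ECH obstruction'' to an actual embedding does \emph{not} rest on Taubes' identification of ECH with Seiberg--Witten Floer cohomology, nor on nonemptiness of ECH moduli spaces: ECH, like any capacity theory, only ever \emph{obstructs} embeddings, and no moduli space of ECH curves produces a symplectic embedding. The constructive direction in \cite{frenkelmuller} and \cite{cg} rests on McDuff's solution of the Hofer conjecture \cite{m2} together with the ball-packing technology you yourself invoked earlier --- McDuff--Polterovich \cite{mp}, the symplectic cone of blowups of $\C P^2$, and symplectic inflation, whose gauge-theoretic input is Taubes' $SW=Gr$ for \emph{closed} four-manifolds, a different theorem entirely. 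Dually, your dismissal of the forward direction as ``formal'' inverts the true state of affairs: monotonicity of ECH capacities under symplectic embeddings is precisely where Taubes' three-dimensional ECH$\,=\,$SWF theorem enters, via the cobordism maps of Hutchings--Taubes. So the architecture of your proof is correct, but as written the justification of its key step appeals to machinery that cannot do the job; replacing the appeal to ECH$\,=\,$SWF by the inflation/symplectic-cone results (and keeping the combinatorial matching of capacity inequalities with exceptional-class inequalities, which is indeed where the real work lies) would bring the sketch in line with the cited proofs.
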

Since our target is $P(1,\beta)$, which is convex, we used the methods of \cite[\S5]{ICERM} as well as Theorems \ref{thm:echcvx}, \ref{thm:mcdcg} and Proposition \ref{prop:echE} to compute a lower bound for $c_\beta$. We 
identify $(P(1,\beta),\lambda\omega_0)=(P(\lambda,\lambda\beta),\omega_0)$ by the diffeomorphism $(\zeta_1,\zeta_2)\mapsto(\sqrt{\lambda}\zeta_1,\sqrt{\lambda}\zeta_2)$.
\begin{align*}
    \lambda\geq c_\beta(z)&\iff E(1,z)\sembeds P(\lambda,\lambda\beta)
    \\&\iff c_k(E(1,z))\leq c_k(P(\lambda,\lambda\beta))\;\forall k
    \\&\iff c_k(E(1,z))\leq \lambda\cdot c_k(P(1,\beta))\;\forall k
    \\&\iff \frac{c_k(E(1,z))}{c_k(P(1,\beta))}\leq \lambda\;\forall k,
\end{align*}
where the third line follows by the conformality of ECH capacities \cite[(2.5)]{Hut}. Because $c_\beta(z)$ is the infimum over all such $\lambda$, we obtain
\begin{equation}\label{eqn:echsup}
c_\beta(z)=\sup_k\frac{c_k(E(1,z))}{c_k(P(1,\be))}.
\end{equation}
It is \eqref{eqn:echsup} which allowed us to explore the space of functions $c_\beta$ for potential infinite staircases and identify our values in Theorem \ref{thm:main} and Conjecture \ref{thm:generalb}, by computing
\begin{equation}\label{eqn:maxK}
\max_{k\leq K}\frac{c_k(E(1,z))}{c_k(P(1,\be))}\leq c_\beta(z),
\end{equation}
for $K$ large (e.g. $K=25,000$ or $100,000$). The maximum in \eqref{eqn:maxK} is a good approximation for $c_\beta$ when $K$ is large by \cite[Thm.~1.1]{cghr}.

We can use ECH capacities to identify outer corners of $c_\beta$. Complementary to \eqref{eqn:mupq} and \eqref{eqn:cboundmu}, we can use individual convex lattice paths, Theorem \ref{thm:echcvx}, and \eqref{eqn:echsup} to compute precise lower bounds to values of $c_\beta$ at specific values of $z$. That is, to prove
\[
c_{\be}(z)\geq \lambda,
\]
it is enough to find a single lattice path $\Lam$ for which $N_k(1,z)/\ell_{\Om_\beta}(\Lam)=\lambda$: see Remark \ref{rmk:echlbs}. This is the method used in \cite{cghmp}.

However, in order to make use of \cite{M1}, we prove Theorem \ref{thm:main} using quasi-perfect Diophantine classes rather than ECH capacities. Analogously to \cite[Lem.~92]{ICERM}, we may translate between these perspectives:
\begin{lemma}\label{lem:Eck} If $\bE=(d,e,p,q,t)$ is a quasi-perfect Diophantine class, then
\[
\mu_{\bE,\beta}\left(\frac{p}{q}\right)\leq\frac{c_k(E(1,p/q))}{c_k(P(1,\beta))},
\]
where $k=(d+1)(e+1)-1=\frac{(p+1)(q+1)}{2}-1$.
\end{lemma}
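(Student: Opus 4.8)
The plan is to evaluate both capacities on the right-hand side at the specified index and thereby reduce the claimed inequality to a single upper bound on $c_k(P(1,\beta))$. First I would use \eqref{eqn:mupq} to record that the left-hand side is simply $\mu_{\bE,\beta}(p/q)=\frac{p}{d+e\beta}$. It therefore suffices to establish two facts: that $c_k(E(1,p/q))=p$ and that $c_k(P(1,\beta))\leq d+e\beta$. Granting these, since $p>0$ we obtain $\frac{c_k(E(1,p/q))}{c_k(P(1,\beta))}=\frac{p}{c_k(P(1,\beta))}\geq\frac{p}{d+e\beta}$, which is exactly the claim.

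For the ellipsoid I would use conformality to write $c_k(E(1,p/q))=\frac1q\,c_k(E(q,p))$ and then apply Proposition~\ref{prop:echE}, so that $c_k(E(q,p))=N_k(q,p)$ is the $(k+1)$-st smallest value (indexed from $0$) of $qm+pn$ over $m,n\in\N$. The heart of this step is to count the lattice points $(m,n)\in\Z_{\geq0}^2$ with $qm+pn\leq pq$: by Pick's theorem applied to the triangle with vertices $(0,0),(p,0),(0,q)$, together with $\gcd(p,q)=1$ (which forces the hypotenuse to carry no lattice points besides its endpoints), this count is $\frac{(p+1)(q+1)}{2}+1$. Coprimality also forces the value $pq$ to be attained exactly twice, at $(p,0)$ and $(0,q)$, so the value $N_k(q,p)$ at $k=\frac{(p+1)(q+1)}{2}-1$ is precisely $pq$; dividing by $q$ gives $c_k(E(1,p/q))=p$.

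For the polydisk I would exhibit a single convex lattice path and invoke Theorem~\ref{thm:echcvx}, for which one path enclosing the correct number of lattice points already yields an upper bound. From the Diophantine equations \eqref{eqn:Ds} one checks that $(d+1)(e+1)=\frac{pq-1}{2}+\frac{p+q}{2}+1=\frac{(p+1)(q+1)}{2}=k+1$, so the rectangular path $\Lam$ with vertices $(0,d),(e,d),(e,0)$ encloses exactly $(d+1)(e+1)=k+1$ lattice points, i.e. $\calL(\Lam)=k+1$. To compute $\ell_{\Om_\beta}(\Lam)$ for $\Om_\beta=[0,1]\times[0,\beta]$, I would observe that every edge $\nu$ of a convex lattice path points weakly southeast, so the supporting point $p_{\Om_\beta,\nu}$ can always be taken to be the corner $(1,\beta)$ and $\det[\nu\ (1,\beta)]=\nu_x\beta-\nu_y$; summing over the edges telescopes to $e\beta+d$, depending only on the endpoints of $\Lam$. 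Hence $c_k(P(1,\beta))\leq d+e\beta$, completing the reduction.

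The main obstacle will be the ellipsoid lattice-point count: one must apply Pick's theorem carefully and, crucially, use coprimality both to determine the boundary count on the hypotenuse and to verify that the extremal value $pq$ occurs exactly twice, so that the index $k=\frac{(p+1)(q+1)}{2}-1$ lands precisely on the first occurrence of $pq$ rather than just below or just above it. By contrast, the reduction via \eqref{eqn:mupq}, the identity $(d+1)(e+1)=\frac{(p+1)(q+1)}{2}$ coming from \eqref{eqn:Ds}, and the telescoping length computation for the rectangle are all routine once that count is in hand.
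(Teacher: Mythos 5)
Your proposal is correct and takes essentially the same route as the paper's proof: both exhibit the $d\times e$ rectangular lattice path (whose $\Omega_\beta$-length is $d+e\beta$ and whose enclosed lattice-point count equals $k+1$ via the Diophantine equations \eqref{eqn:Ds}), and both compute the ellipsoid capacity $c_k(E(1,p/q))=p$ by a Pick's theorem count that uses coprimality to control the lattice points on the hypotenuse. The only differences are cosmetic: you transpose the rectangle, pass through $E(q,p)$ by conformality rather than working with $x+py/q$ directly, and count points with value $\leq pq$ rather than $<pq$.
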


Lemma \ref{lem:Eck} allows us to translate between the obstructions from ECH capacities, which are algorithmic and thus good tools for analyzing $c_\beta$ visually by Theorem \ref{thm:echcvx} (see Figures \ref{fig:E_1} and \ref{fig:sqrt3desc}), and quasi-perfect Diophantine classes, which carry more information. Note that if $c_\beta(p/q)=\mu_{\bE,\beta}(p/q)$ then the conclusion of Lemma \ref{lem:Eck} is an equality.

\begin{proof} It suffices to provide a lattice path $\Lam_\bE$ with $\mu_{\bE,\beta}(z)\leq N_k(1,z)/\ell_{\Omega_\beta}(\Lam_\bE)$: this is simply the rectangle with corners the origin, $(0,e), (d,e)$, and $(d,0)$. We check the conclusions of the lemma.

Firstly,
\[
(d+1)(e+1)=\frac{(p+1)(q+1)}{2} \iff 2de+2(d+e)+1=pq+p+q+1,
\]
which follows from \eqref{eqn:Ds}.

Secondly, the edges of $\Lam_{\bE}$ are $(d,0)$ and $(0,-e)$. For both, we may use $p_{\Om_\beta,\nu}=(\beta,1)$. Thus
\[
\ell_{\Omega_\beta}(\Lam_\bE)=\det\begin{pmatrix}d&\beta\\0&1\end{pmatrix}+\det\begin{pmatrix}0&\beta\\-e&1\end{pmatrix}=d+e\beta,
\]
which is the denominator of $\mu_{\bE,\beta}$.

Finally, it remains to show that $N_k(1,p/q)=W(p,q)\cdot\bw(p/q)=p$. Identify the nonnegative integer linear combinations of $1$ and $z=p/q$ with lattice points in $\Z^2_{\geq0}$. We will show that if $(x_0,y_0)$ is either $(p,0)$ or $(0,q)$, there are exactly $k=(p+1)(q+1)/2-1$ lattice points in the first quadrant with $x+py/q<x_0+py_0/q$, and thus $N_k(1,p/q)=x_0+py_0/q=p$.

Let $T$ be the triangle below the line $x+py/q<p$ and above the axes. If $I$ denotes the number of interior points of $T$ and $B$ its number of boundary points, the number of lattice points in the first quadrant below the line $x+py/q<p$ is $I+B-2$. By Pick's Theorem applied to $T$,
\[
I+\frac{B}{2}-1=\frac{pq}{2} \iff I+B-2=\frac{pq}{2}-1+\frac{B}{2}=\frac{pq}{2}-1+\frac{p+q+1}{2}=k,
\]
as desired.
\end{proof}
Note that it is not too difficult to extend the conclusion of Lemma \ref{lem:Eck} to an interval containing $p/q$ as in \cite[Lem.~92]{ICERM}, but we do not need this here.  We conclude this subsection with a figure illustrating the constraint
a single obstruction at a single $z$-value imposes on the embedding capacity function.

\begin{center}
    \begin{figure}[H]
\begin{tikzpicture}
\fill[blue!20] (0.33,0.12) -- (0.33,4.7) -- (10.5,4.7) -- (10.5,3.65) -- (4.4,3.65) -- cycle;
\begin{axis}[
	axis lines = middle,
	xtick = {0,1,...,6},
	ytick = {0,2,...,8},
	tick label style = {font=\small},
	xlabel = $z$,
	ylabel = $\lambda$,
	xlabel style = {below right},
	ylabel style = {above left},
	xmin=-0.2,
	xmax=6.2,
	ymin=-0.2,
	ymax=8.2,
	grid=major,
	width=4.75in,
	height=2.5in]
\addplot [blue, thin,
	domain = 0:6.25,
	samples = 100
](x,2.5*x);
\addplot [blue, thin,
	domain = 0:6.25,
	samples = 100
](x,6.25);
\addplot [blue, only marks,  thick, mark=*] coordinates{(2.5,6.25) };
\end{axis}
\end{tikzpicture}
\caption{The figure depicts the effect of an obstruction providing a lower bound for 
$c_X$ at the indicated blue point. The ellipsoid embedding function $c_X$ must lie in the blue shaded region by 
Proposition~\ref{prop:cXprops} (ii) and (iii).
}\label{fig:obstruction-effects} 
\end{figure}
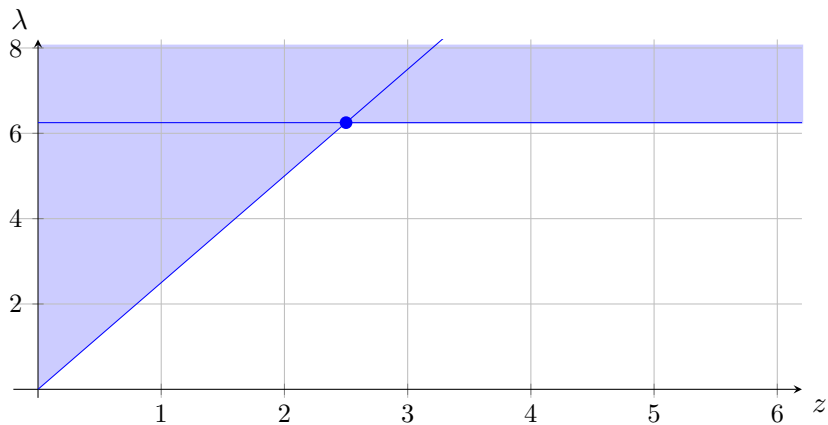
\end{center}

\subsection{Almost toric fibrations}\label{ssec:ATFdefs}

Symplectic embeddings provide a useful counterpoint to the obstructions described 
in sections~\ref{ssec:ECdefs}-\ref{ssec:ECHdefs}. We will use combinatorial techniques 
developed in the theory of {\bf almost toric fibrations (ATFs)}
to establish the existence of embeddings.  Introduced by Symington \cite{symington}
and developed further in \cite{leung-symington, evans},
an ATF is a completely integrable system on a compact symplectic four-manifold with
elliptic and focus-focus singularities.  This framework provides a map
from the manifold $M$ to $\R^2$ whose image is called the {\bf base diagram}.  
There are combinatorial operations called {\bf nodal trades}, {\bf nodal slides}, and {\bf mutation} 
on the base diagram that correspond
to symplectomorphisms of the corresponding manifolds.  These allow us to
discover embeddings of ellipsoids into the manifold $M$ by identifying appropriate
triangles inside the variously manipulated base diagrams.  We may then
use Proposition~\ref{prop:compact-domain} to deduce that the convex toric
domain $X$ also has the same ellipsoid embeddings.

In order to study the polydisk $P(1,\beta)$ with ATFs,
we first find a compact manifold  that has an
ATF with base diagram the $1\times b$ rectangle.
The manifold $M_\beta=\C P^1_1\times \C P^1_\beta$, a product
of two copies of projective space, the first with
size $1$ and the second with
size $\beta$, is equipped with a toric $T^2$ action
by rotation in each factor.  This has moment map
image the Delzant polygon which is the $1\times \beta$ rectangle.
This is our starting point for manipulations using ATF
tools.  These tools will change the map
$M_\beta\to \R^2$ and its image, but not the manifold $M_\beta$ itself.

The first step is to apply a {\bf nodal trade} at each of 
the three vertices $X$, $Y$ and $V$ which are not 
at the origin in $\R^2$.
Geometrically in $M_\beta$, this means excising the neighborhood
of the fixed point corresponding to a vertex and then
gluing in a local model of a focus-focus singularity.
At the level of the base diagram, this corresponds 
to adding a ray with a marked point emanating from 
the {\bf anchor} vertex $P$.  Above the marked point on the ray, there is a
pinched torus.  The pinch point is the new focus-focus singularity
for the updated map $M_\beta\to\R^2$.
If we let $\overrightarrow{E}$ and 
$\overrightarrow{F}$ denote the primitive vectors (in $\Z^2$)
pointing along the edges emanating from $P$, then the
smoothness of $M_\beta$ guarantees that
$\overrightarrow{E}$ and $\overrightarrow{F}$ form a
$\Z$ basis of $\Z^2$.
With this notation, then, the nodal ray that we introduce 
points in the direction $\overrightarrow{E} + \overrightarrow{F}$. 
A useful fact, which follows from a straightforward linear algebraic 
calculation, is that both pairs
$(\overrightarrow{E}, \overrightarrow{E}+\overrightarrow{F})$ and 
$(\overrightarrow{E}+\overrightarrow{F},\overrightarrow{F})$ are
$\Z$ bases of $\Z^2$.

The second operation we can apply to a base diagram is called a {\bf nodal slide}.
The local model for a focus-focus singularity has one degree of freedom, 
corresponding to moving the pinched torus further or closer to the level set
above the vertex.  In the base diagram, this corresponds to moving the
marked point along the nodal ray.

The third operation is {\bf mutation} along a nodal ray of the base diagram.
This changes the shape of the diagram.  At the level of the function $M_\beta\to \R^2$, if the  marked point's location does not move, this corresponds to taking the same function, but choosing a different branch cut to visualize the image of the function.

Combinatorially, the base diagram is divided in two by the line generated
by the nodal ray.  The mutation operation leaves one piece unchanged (which for us will always be the piece containing the origin) and acts
on the other piece by an affine linear transformation that
\begin{itemize}
    \item fixes the anchor vertex;
    \item fixes the nodal ray; and
    \item aligns the two edges emanating from the anchor vertex.
\end{itemize}
There is a unique transformation in $ASL_2(\Z)$ that achieves this, as a consequence
of the linear algebraic fact about the edge rays and nodal rays noted above.
The other changes to the base diagram are the creation of a new (anchor) vertex 
and nodal ray (the negative of the previous).  This is illustrated in 
Figure~\ref{fig:mutation} below.

\begin{center}
  \begin{figure}[ht]
\begin{overpic}[
scale=0.85,unit=1mm]{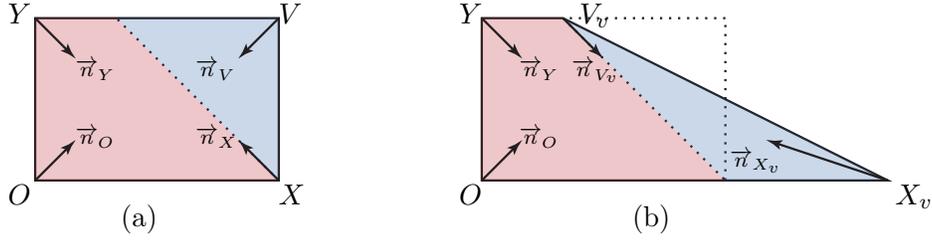}
   \put(12,-5){(a)}
   \put(80,-5){(b)}
   \put(-3,-2){$O$}
   \put(57,-2){$O$}
   \put(-3,22){$Y$}
   \put(57,22){$Y$}
   \put(33,-2){$X$}
   \put(33,22){$V$}
   \put(73,22){$V_v$}
   \put(115,-2){$X_v$}
   \put(6,6){$\scriptstyle\overrightarrow{n}_O$}
   \put(6,15){$\scriptstyle\overrightarrow{n}_Y$}
   \put(22,15){$\scriptstyle\overrightarrow{n}_V$}
   \put(22,6){$\scriptstyle\overrightarrow{n}_X$}
   \put(65,6){$\scriptstyle\overrightarrow{n}_O$}
   \put(65,15){${\scriptstyle\overrightarrow{n}_Y}$}
   \put(72,15){${\scriptstyle\overrightarrow{n}_{V_v}}$}
   \put(93,3){${\scriptstyle\overrightarrow{n}_{X_v}}$}
\end{overpic}
\vskip 0.1in
\caption{We apply a mutation about the vertex $V$ to the figure in (a) to obtain the figure in (b).   The mutation fixes the red region and applies an affine linear transformation encoded in a matrix $M$ to the blue region.  The effect on the vertices and is indicated.  The nodal rays in (b) that differ from (a) are given by
 ${\protect\overrightarrow{n}_{V_v} = -\protect\overrightarrow{n}_X}$ and ${\protect\overrightarrow{n}_{X_v}  = M\cdot \protect\overrightarrow{n}_V}$.}
\label{fig:mutation}
\end{figure}
\end{center}

Procedurally, we apply a sequence of mutations with the goal
of finding wider and wider triangles inside the mutated base diagram.  
The impact that one mutation has on triangles that
fit inside the base diagram is illustrated in Figure~\ref{fig:mutation-impact}.

\begin{center}
  \begin{figure}[ht]
\begin{overpic}[
scale=0.85,unit=1mm]{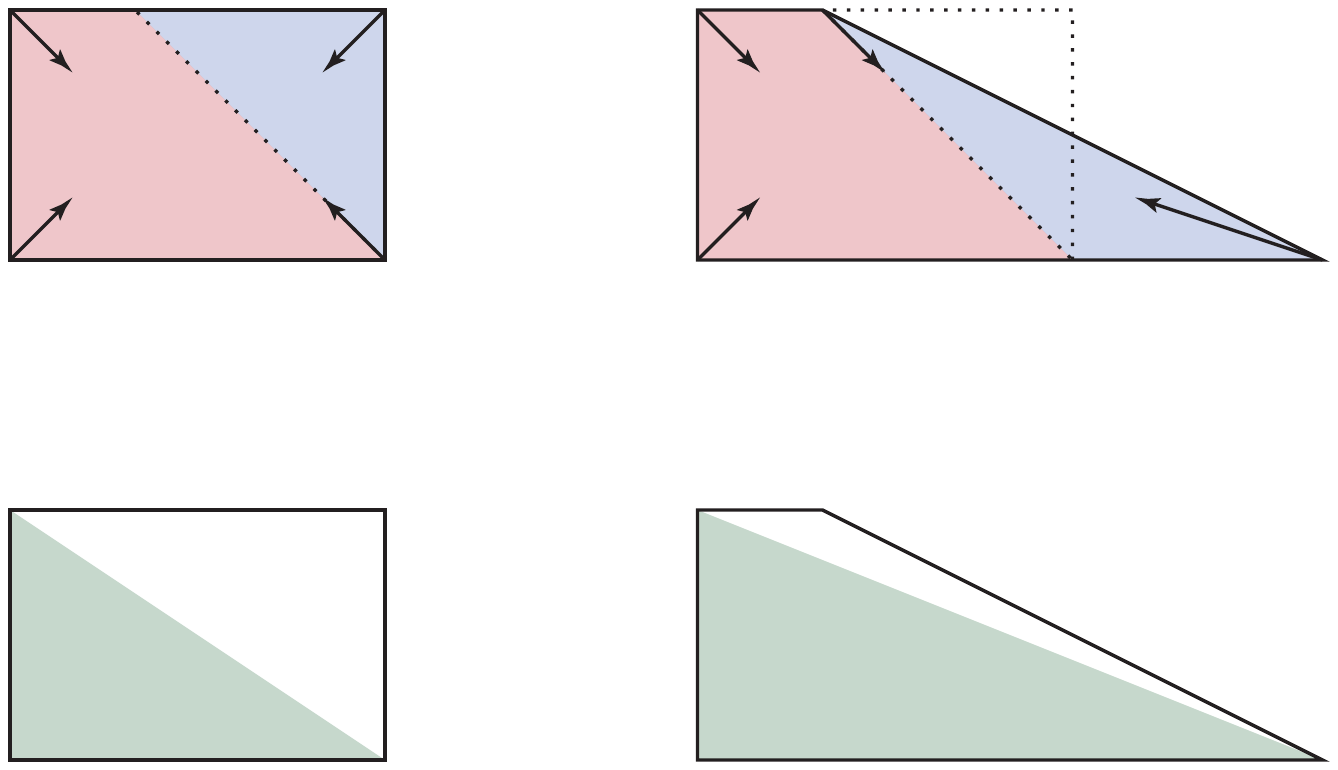}
   \put(12,-5){(a)}
   \put(80,-5){(b)}
   \put(-3,-2){$O$}
   \put(57,-2){$O$}
\end{overpic}
\vskip 0.1in
\caption{The figures in (a) and (b) are related by a mutation, as in 
Figure~\ref{fig:mutation}. The fact that the green triangles centered at $O$
have different proportions indicates that we have embeddings of ellipsoids with
different eccentricities into the corresponding polydisk.
}
\label{fig:mutation-impact}
\end{figure}
\end{center}

\begin{remark}\label{rmk:labeling}
When discussing ATF base diagrams and their mutations, we will use the following conventions.
    \begin{description}
        \item[Vertices] We set $O=(0,0)$, use $X$ and $Y$ to denote the vertices on the $x$- and $y$-axes, respectively, and use $V$ to denote the vertex strictly in the positive quadrant.
        \item[Nodal rays] The nodal ray of vertex $A$ is labeled $\vec{n}_A$.
        \item[Side directions] The primitive integral 
        vector parallel to the side $AB$ is denoted $\overrightarrow{AB}$.
        \item[Affine lengths] The affine length of the side $AB$ is denoted $|AB|$.
        \item[Mutations] The new vertex at its position (relative to the axes) after a mutation at vertex $A$ has a subscript lowercase $a$. For example, the vertex on the $y$-axis after mutation at $A$ is denoted $Y_a$.
        \item[Sequences of mutations] We denote a sequence of mutations by a word in the lowercase letters $x, y, v$, read from left to right. E.g. $v^2yx$ means ``mutate at $V$ twice, then mutate at $Y$, then mutate at $X$.''
    \end{description} 
\end{remark}
We note that after a mutation, the nodal rays are transformed in one of three ways: not at all; by taking the negative; or 
by applying the mutation matrix $M$.
Because our base diagrams are polygons, we will make use of the key identity
 \begin{equation}\label{eqn:4addzero}
        |OY_a|\overrightarrow{OY_a}+|Y_aV_a|\overrightarrow{Y_aV_a}-|X_aV_a|\overrightarrow{X_aV_a}-|OX_a|\overrightarrow{OX_a}=\begin{pmatrix}
            0\\0
        \end{pmatrix},
    \end{equation}
    derived from the fact that the four sides must close up. 
    
The following result makes precise the relationship between triangles in the base diagram and symplectic embeddings of ellipsoids.

\begin{prop}[{\cite[Prop.~2.35]{cghmp}}]\label{prop:atfswork}
   Suppose that a symplectic manifold $X$ is equipped with an almost toric fibration with base diagram 
   $\Delta_X$ that consists of a closed region in $\R^2_{\geq 0}$ that is bounded by the axes and a 
   convex (piecewise-linear) curve from $(a,0)$ to $(0,b)$, for $a,b \in \R^+$. Suppose in addition that there 
   is no nodal ray emanating from $(0,0)$. Then there exists a symplectic embedding of the ellipsoid $(1 -  \varepsilon)\cdot E(a,b)$ into $X$ 
   for any $0 < \varepsilon < 1$. 
\end{prop}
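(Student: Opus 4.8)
The plan is to exhibit the ellipsoid as the preimage, under the almost toric fibration, of a suitable triangle sitting inside the base diagram $\Delta_X$. Write $T=\op{conv}\{(0,0),(a,0),(0,b)\}$ for the triangle whose legs run along the axes with lengths $a$ and $b$ and whose hypotenuse is the chord from $(a,0)$ to $(0,b)$. Since $\Delta_X$ is convex and contains both endpoints $(a,0)$ and $(0,b)$ of the bounding curve, it contains that chord, and hence all of $T$. Now recall that the moment image of $E(a,b)$ under $\mu$ is exactly the region $\{x,y\ge 0,\ x/a+y/b<1\}$, so the toric domain over that region is $E(a,b)$ itself. It therefore suffices to show that over a slightly shrunk copy of this region the fibration restricts to a genuine toric piece symplectomorphic to $E((1-\varepsilon)a,(1-\varepsilon)b)=(1-\varepsilon)\cdot E(a,b)$, using the rescaling identity $(E(a,b),\lambda\omega_0)=(E(\lambda a,\lambda b),\omega_0)$.

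Fix $0<\varepsilon<1$ and set $T_\varepsilon=\op{conv}\{(0,0),((1-\varepsilon)a,0),(0,(1-\varepsilon)b)\}\subset T$. Its corner at the origin is, by hypothesis, a smooth toric corner: no nodal ray emanates from $(0,0)$, and the two edges there run along the axes, so their primitive vectors $(1,0)$ and $(0,1)$ form a $\Z$-basis. Its other two vertices lie on the coordinate axes strictly between the origin and the endpoints $(a,0),(0,b)$, so $T_\varepsilon$ is a compact subset of $\Delta_X$ meeting the boundary only along portions of the axes and at the origin; in particular every vertex of $\Delta_X$ other than the origin lies at positive distance from $T_\varepsilon$. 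The only obstruction to $T_\varepsilon$ being a genuine toric region is that focus--focus nodes, or the branch cuts through them, might meet it. To remove these I would apply nodal slides, which the theory records as symplectomorphisms of $X$. Each node lies on a nodal ray emanating from a vertex of $\Delta_X$ other than the origin, and sliding it toward that vertex — in the limit undoing the corresponding nodal trade — moves it arbitrarily close to the vertex and hence out of $T_\varepsilon$. After finitely many slides we may assume $T_\varepsilon$ contains no node; being simply connected and node-free, it may then be developed with all branch cuts chosen to avoid it.

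With $T_\varepsilon$ now a node-free, cut-free, simply connected region, the restriction of the fibration to $T_\varepsilon$ is a regular Lagrangian torus fibration over the interior, with the standard elliptic boundary behavior along the two legs on the axes and the smooth toric vertex at the origin. By the action--angle (Arnold--Liouville) theorem there is no monodromy, so the integral affine structure on $T_\varepsilon$ agrees with the standard one on $\R^2$; combined with the smooth corner at the origin, this identifies the preimage of $\{x,y\ge 0,\ x/((1-\varepsilon)a)+y/((1-\varepsilon)b)<1\}$ with the standard toric ellipsoid $E((1-\varepsilon)a,(1-\varepsilon)b)$. Composing this symplectomorphism with the inclusion into $X$ yields the desired embedding $(1-\varepsilon)\cdot E(a,b)\sembeds X$. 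The step I expect to be the main obstacle is the last identification: making rigorous that a node-free, simply connected piece of an ATF carrying a single smooth toric corner and standard elliptic edges is globally symplectomorphic to the corresponding toric domain — that is, controlling the affine structure and the degenerations along the axes at once, and verifying that each node can in fact be slid clear of $T_\varepsilon$ within the allowed range of its nodal slide. The convexity hypothesis and the factor $1-\varepsilon$ are precisely what guarantee the room needed to clear the nodes out of $T_\varepsilon$.
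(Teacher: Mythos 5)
This proposition is not proved in the paper at all: it is imported verbatim from \cite[Prop.~2.35]{cghmp}, so there is no in-paper argument to compare against, and your proposal should be judged against the proof in that reference --- which it essentially reproduces. Your outline is the standard one: since every nodal ray is anchored at a vertex of the outer convex curve (the hypothesis of no ray at the origin, plus the $1-\varepsilon$ shrinking, guarantee all such anchor points lie at positive distance from $T_\varepsilon$), finitely many nodal slides move each node and its cut off of $T_\varepsilon$, after which the preimage of the node- and cut-free region $T_\varepsilon$ is identified with the standard toric model $E((1-\varepsilon)a,(1-\varepsilon)b)$. The final identification that you flag as the main obstacle --- that over a simply connected, cut-free region with standard elliptic edges and a genuine Delzant corner the fibration is fiberwise symplectomorphic to the corresponding toric domain --- is a known consequence of action--angle coordinates plus the elliptic local normal forms, due to Symington \cite{symington}, and it is precisely what the cited proof invokes; so your argument is complete modulo that standard citation rather than containing a gap.
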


While the obstructions in sections~\ref{ssec:ECdefs}-\ref{ssec:ECHdefs} give lower bounds on $c_X$, as indicated in Figure~\ref{fig:obstruction-effects}, 
a single embedding forces certain
upper bounds on the embedding capacity function.  As we will see, the combination of the two 
can strongly restrict $c_X$.


\begin{center}
\begin{figure}[H]
\begin{tikzpicture}
\fill[red!20] (0.33,0.12) -- (0.33,3.66) -- (6.05,3.66) -- (7.75,4.7)  -- (10.5,4.7) -- (10.5,0.12) -- cycle;
\begin{axis}[
	axis lines = middle,
	xtick = {0,1,...,6},
	ytick = {0,2,...,8},
	tick label style = {font=\small},
	xlabel = $z$,
	ylabel = $\lambda$,
	xlabel style = {below right},
	ylabel style = {above left},
	xmin=-0.2,
	xmax=6.2,
	ymin=-0.2,
	ymax=8.2,
	grid=major,
	width=4.75in,
	height=2.5in]
\addplot [red, thin,
	domain = 0:6.25,
	samples = 100
](x,6.25*x/3.5);
\addplot [red, thin,
	domain = 0:6.25,
	samples = 100
](x,6.25);
\addplot [red, only marks,  thick, mark=*] coordinates{(3.5,6.25) };
\end{axis}
\end{tikzpicture}
\caption{By contrast to Figure~\ref{fig:obstruction-effects}, an embedding provides an upper bound for $c_X$ at the indicated 
red point. The function $c_X$ must lie in the red shaded region by Proposition~\ref{prop:cXprops} 
(ii) and (iii).
}\label{fig:embedding-effects} 
\end{figure}
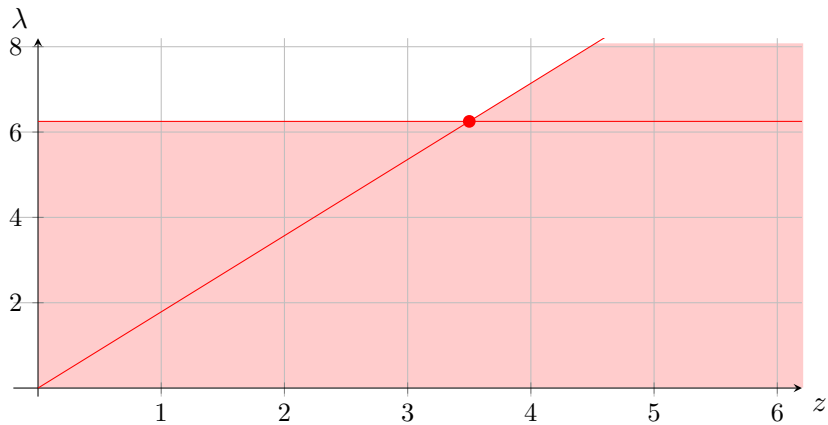
\end{center}

\subsection{Combining obstructions and embeddings}\label{ss:combination}

Combining the effects in Figures~\ref{fig:obstruction-effects} and \ref{fig:embedding-effects}, we see how to 
prove that the combination of lower bounds provided by obstructions
(quasi-perfect Diophantine classes or ratios of ECH capacities) with upper bounds provided by an embedding allows us 
to establish the existence of an infinite staircase.  
A combination of obstructions and embeddings allows us to nail down the ellipsoid embedding function for some ranges 
of $z$-values (indicated by violet segments in Figure \ref{fig:combination}), and provides bounds on $c_X$ for 
other ranges of $z$-values (indicated by violet regions in Figure \ref{fig:combination}).
In this way, one can establish the existence of an infinite staircase without computing the entire function.
Or if the embeddings and obstructions are lined up just so, one might just compute the entire function. Note, this is usually only a effective strategy before the accumulation point.

\begin{center}
\begin{figure}[H]
\begin{tikzpicture} 
\fill[violet!20] (5.25,3.525) -- (7.1,4.8) -- (10.5,4.8) -- (10.5,3.525) -- cycle;
\fill[violet!20] (2.8,2.68) -- (3.08,2.98) -- (4,2.98) -- (3.6,2.68) -- cycle;
\begin{axis}[
	axis lines = middle,
	xtick = {0,1,...,6},
	ytick = {0,2,...,8},
	tick label style = {font=\small},
	xlabel = $z$,
	ylabel = $\lambda$,
	xlabel style = {below right},
	ylabel style = {above left},
	xmin=-0.2,
	xmax=6.2,
	ymin=-0.2,
	ymax=8.2,
	grid=major,
	width=4.75in,
	height=2.5in]
\addplot [gray, thin,
	domain = 0:2.53125,
	samples = 100
](x,2*x);
\addplot [gray, thin,
	domain = 0:2,
	samples = 100
](x,2.25*x);
\addplot [gray, thin,
	domain = 0:1.3333333,
	samples = 100
](x,3*x);
\addplot [gray, thin,
	domain = 0:1,
	samples = 100
](x,4*x);
\addplot [violet, ultra thick,
	domain = 1:4/3,
	samples = 100
](x,4);
\addplot [violet, ultra thick,
	domain = 4/3:1.5,
	samples = 100
](x,3*x);
\addplot [violet, ultra thick,
	domain = 2.25:2.53125,
	samples = 100
](x,5.0625);
\addplot [violet, ultra thick,
	domain = 2.53125:3,
	samples = 100
](x,2*x);
\addplot[dotted, violet, thick,
	domain = 1.5:2
](x,4.5);
\addplot[dotted, violet, thick,
	domain = 2:2.25
](x,2.25*x);
\addplot[dotted, violet, thick,
	domain = 1.6875:2.25
](x,5.0625);
\addplot[dotted, violet, thick,
	domain = 1.5:1.6875
](x,3*x);
\addplot[dotted, violet, thick,
	domain = 3:6.25
](x,6);
\addplot[dotted, violet, thick,
	domain = 3:6.25
](x,2*x);
\addplot [blue, only marks, very thick, mark=*] coordinates{(1,4) (1.5,4.5) (2.25,5.0625) (3,6) };
\addplot [red, only marks, very thick, mark=*] coordinates{(4/3,4) (5.0625/2,5.0625)};
\end{axis}
\end{tikzpicture}
\caption{This figure indicates several obstructions  at the blue dots and embeddings at the red dots.
Combining the bounds forced by these as shown in  Figures~\ref{fig:obstruction-effects} and \ref{fig:embedding-effects}, we deduce
that the ellipsoid embedding function must equal the violet segments and must lie in the violet shaded regions. 
In particular, it must be constant along the horizontal segment between the blue and red points at the same 
$\lambda$-value; and equal the line when a red point and blue point lie on a line through the origin.
}\label{fig:combination} 
\end{figure}
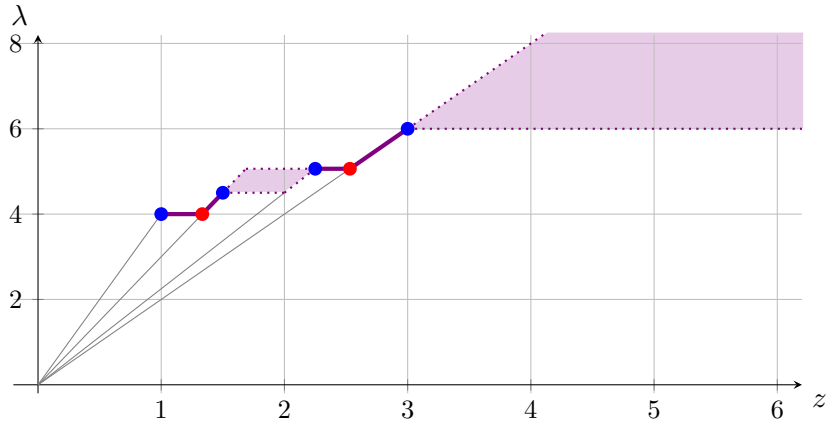
\end{center}

\section{Proof of the main theorem}\label{sec:proofs}

In this section we prove that the polydisk $P(1,\beta)$ has an infinite staircase accumulating to $\acc(\beta)$, where
\begin{equation}\label{eqn:baccb}
\beta=\frac{6+5\sqrt{30}}{12} \text{ and }
\acc(\beta)=\frac{54+11\sqrt{30}}{14}.
\end{equation}
The fact that $\acc(\beta)$ satisfies \eqref{eqn:accpteqn} with $\acc(\beta)=z$ can be verified by hand. Furthermore, set
\[
\bE=(17,6,41,5,22).
\]
The utility of $\bE$ is that it is a quasi-perfect Diophantine class whose obstruction $\mu_{\bE,\beta}$ equals the function $c_\beta$ for $z\in(\acc(\beta),41/5]$. We do not prove this latter claim, but note that on $(\acc(\beta),41/5]$ we do know (as shown in Figure \ref{fig:obstruction-effects} by setting the blue point equal to $(41/5,\mu_{\bE,\beta}(41/5))$) that
\[
c_\beta(z)\geq z\frac{\mu_{\bE,\beta}\left(\frac{41}{5}\right)}{41/5}=\frac{5z}{17+6\beta}.
\]
This is a special case of the analogous \cite[Prop.~42]{ICERM}. The numerics of $\bE$ will be crucial for studying $c_\beta$.

We next define the obstructions which we will use to prove that $c_\beta$ has an infinite staircase, following the procedure outlined in \S\ref{ss:combination}.

\begin{definition}\label{def:outerclasses}
We define the \textbf{outer class} 
\[
\bE_k:=t\bE_{k-1}-\bE_{k-2}
=(d_k,e_k,p_k,q_k,t_k)\]
where $\bE_0=(3,1,7,1,4)$ and $\bE_1=(64,23,155,19,82)$. The recursion constant is $t=22$ for all $k$. 
\end{definition}
\begin{definition}\label{def:hatclasses}
    We define the \textbf{inner class}
    \[
    \hat\bE_k:=t_{k-1}\bE_k-\bE=(\hat d_k,\hat e_k,\hat p_k,\hat q_k,\hat t_k).
    \] Note, $\hat\bE_1=(239,86,579,71,250).$
\end{definition}

\begin{rmk} In the sense of \cite{MMW}, the inner class $\hat\bE_k$ is the $x$-mutation of the triple $(\bE_{k-1},\bE_k,\bE)$. We discovered the $\hat\bE_k$ classes after Mike Usher pointed out the relationship between the $\hat A$ classes in \cite{usher} and $x$-mutation, see \S\ref{ssec:Uconj}.
\end{rmk}
The outer corners of the $\bE_k$ and $\hat\bE_k$ classes alternate in the sense that
\[
\cdots<\frac{p_{k-1}}{q_{k-1}}<\frac{\hat p_k}{\hat q_k}<\frac{p_k}{q_k}<\cdots,
\]
while the values their obstructions take at these $z$-values also alternate. See Lemma \ref{lem:alternate}, which is illustrated by Figure \ref{fig:main}.

\begin{figure}[h]
    \centering
    \includegraphics[width=\textwidth]{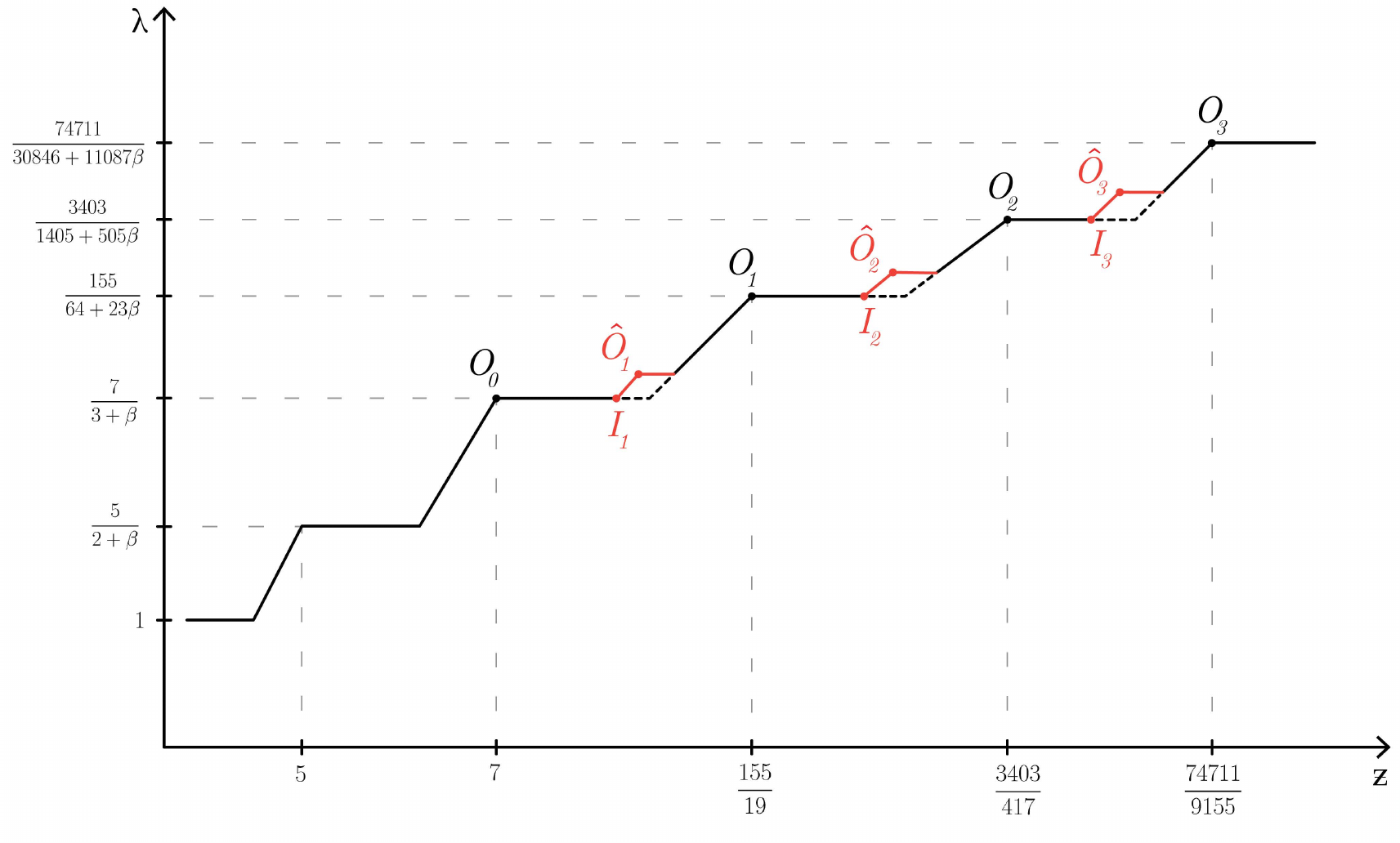}
    \caption{This figure indicates the arrangement of the first several outer and inner corners of $c_\beta$. The black outer corners, labeled $O_k$, arise from the outer $\bE_k$ classes and their coordinates are given in Proposition \ref{prop:oc}~(i). The red outer corners, labeled $\hat O_k$, arise from the inner $\hat\bE_k$ classes and their coordinates are given in Proposition \ref{prop:oc}~(ii). Because $c_\beta$ does not equal the obstruction $\mu_{\bE_k,\beta}$ from the outer $\bE_k$ classes near the intersection of $\mu_{\bE_k,\beta}$ and $\mu_{\bE_{k+1},\beta}$, these obstructions are indicated by dashed black lines where the obstructions $\mu_{\hat\bE_{k+1},\beta}$ are larger. 
    }
    \label{fig:main}
\end{figure}

In \S\ref{ssec:outer} we will prove the following proposition computing the value of $c_\beta$ at the outer corners of its infinite staircase:

\begin{prop}\label{prop:oc} We establish the following lower bounds on $c_\beta$.
\begin{enumerate}[label=(\roman*)]
    \item The outer classes $\bE_k$ determine the lower bounds
    \[
    c_\beta\left(\frac{p_k}{q_k}\right)\geq\frac{p_k
}{d_k+e_k\beta}.
    \]
    \item The inner classes $\hat\bE_k$ determine the lower bounds
\[
c_\beta\left(\frac{\hat p_k}{\hat q_k}\right)\geq\frac{\hat p_k}{\hat d_k+\hat e_k\beta}.
\]
\end{enumerate}
\end{prop}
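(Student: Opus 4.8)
The plan is to reduce the entire proposition to the single assertion that every $\bE_k$ and every $\hat\bE_k$ is a quasi-perfect Diophantine class. Indeed, once $\bE_k=(d_k,e_k,p_k,q_k,t_k)$ is known to be such a class, the black-box inequality \eqref{eqn:cboundmu} gives $c_\beta(p_k/q_k)\geq\mu_{\bE_k,\beta}(p_k/q_k)$, and \eqref{eqn:mupq} evaluates the right-hand side to exactly $p_k/(d_k+e_k\beta)$, which is part (i); part (ii) is word-for-word the same with $\hat\bE_k$ in place of $\bE_k$. So all the content is to verify, for all $k$, coprimality of the centers together with the Diophantine relations of \eqref{eqn:Ds}.

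For the outer classes I would invoke Lemma~\ref{lem:depqt}, reducing the defining relations to the two linear identities $4d_k=p_k+q_k+t_k$, $4e_k=p_k+q_k-t_k$ together with the quadratic identity $t_k^2=p_k^2+q_k^2-6p_kq_k+8$. Since every component of $\bE_k$ obeys the common recursion $a_k=22a_{k-1}-a_{k-2}$, the linear identities are preserved at each step and so follow from checking $k=0,1$. For the quadratic identity, set $Q(p,q,t):=p^2+q^2-6pq-t^2$ and introduce its polarization
\[
B\big((p,q,t),(p',q',t')\big):=pp'+qq'-3(pq'+p'q)-tt',
\]
so that $Q(v)=B(v,v)$ on the $(p,q,t)$-coordinates. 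Writing $\bE_k=22\bE_{k-1}-\bE_{k-2}$ and expanding bilinearly yields
\[
Q(\bE_k)=484\,Q(\bE_{k-1})-44\,B(\bE_{k-1},\bE_{k-2})+Q(\bE_{k-2}),
\]
\[
B(\bE_k,\bE_{k-1})=22\,Q(\bE_{k-1})-B(\bE_{k-1},\bE_{k-2}).
\]
Thus $Q(\bE_k)=-8$ is not preserved in isolation, but the \emph{pair} of identities $Q(\bE_k)=-8$ and $B(\bE_k,\bE_{k-1})=-88$ is: assuming both at stages $k-1,k-2$, the two displayed formulas return $-8$ and $-88$ at stage $k$. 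I would close this coupled induction by checking the base cases from $\bE_0=(3,1,7,1,4)$ and $\bE_1=(64,23,155,19,82)$.

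The inner classes run through the same machine. Since $\hat\bE_k=t_{k-1}\bE_k-\bE$ is a scalar multiple of $\bE_k$ minus the fixed class $\bE=(17,6,41,5,22)$, and both $\bE_k$ and $\bE$ satisfy the linear identities of Lemma~\ref{lem:depqt}, those identities pass to $\hat\bE_k$ verbatim. For the quadratic part, bilinearity gives
\[
Q(\hat\bE_k)=t_{k-1}^2\,Q(\bE_k)-2t_{k-1}\,B(\bE_k,\bE)+Q(\bE),
\]
and, using $Q(\bE_k)=Q(\bE)=-8$, the requirement $Q(\hat\bE_k)=-8$ collapses to the single cross-identity $B(\bE_k,\bE)=-4t_{k-1}$. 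This in turn is an induction on $k$: expanding $B(\bE_k,\bE)=22\,B(\bE_{k-1},\bE)-B(\bE_{k-2},\bE)$ and comparing with the recursion $t_{k-1}=22t_{k-2}-t_{k-3}$ shows the relation is reproduced once it holds at two consecutive stages, which I would check directly.

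The main obstacle is precisely the quadratic half of each verification: a naive induction on $Q(\bE_k)=-8$ alone does not close, and the essential point is to carry the auxiliary bilinear invariants $B(\bE_k,\bE_{k-1})=-88$ (outer) and $B(\bE_k,\bE)=-4t_{k-1}$ (inner) alongside. A secondary technical point is coprimality of the centers: for the outer classes one uses that $p_kq_{k-1}-p_{k-1}q_k$ is a recursion invariant (equal to $22$), so any common factor of $p_k,q_k$ divides $22$, after which the primes $2$ and $11$ are ruled out by noting the quadratic Diophantine equation forces $p_kq_k$ odd while the recursion modulo $11$ is purely periodic and never vanishes on the centers; an analogous elementary congruence argument handles $\hat p_k,\hat q_k$.
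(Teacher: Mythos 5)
Your proposal is correct and is essentially the paper's own argument: the paper likewise reduces, via \eqref{eqn:cboundmu}, \eqref{eqn:mupq} and Lemma~\ref{lem:depqt}, to checking that every $\bE_k$ and $\hat\bE_k$ satisfies the Diophantine conditions, and it propagates these along the recursions by exactly your coupled induction --- the quadric condition $\bx^TA\bx=8$ together with the bilinear compatibility $\bx^TA\bx'=4\nu$ (your $Q$ and $B$ are the negatives of these) --- except that it imports this propagation as a black box from \cite[Lem.~3.1.2]{symm}, restated as Corollary~\ref{cor:dioph}, rather than reproving it by polarization as you do, and it verifies the same base cases ($\bE_0,\bE_1,\bE$ on the quadric, $\nu=22$ compatibility for $\bE_0,\bE_1$, and $\nu=t_{k-1}$ compatibility for $\bE_k,\bE$ checked at two consecutive indices). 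The only genuine additions on your side are the self-contained proof of that recursion lemma and the coprimality-of-centers verification, which the paper leaves implicit; both are correct.
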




We will also prove that the claimed outer corners $z=p_k/q_k$ have four-periodic continued fractions, which, upon proving Theorem \ref{thm:main} (i), proves Theorem \ref{thm:main} (ii).

Our final definition in this section provides notation for the intersections between the obstructions from the $\bE_k$ and $\hat\bE_k$.

\begin{definition} We set the following notation.
    \begin{itemize}
        \item We denote the points discussed in Proposition \ref{prop:oc} by
        \[
        O_k=\left(\frac{p_k}{q_k},\frac{p_k}{d_k+e_k\beta}\right), \mbox{ and } \hat O_k=\left(\frac{\hat p_k}{\hat q_k},\frac{\hat p_k}{\hat d_k+\hat e_k\beta}\right).
        \]
        \item We extend the lower bounds at $O_k$ and $\hat O_k$ by horizontal lines and lines through the origin, using Proposition \ref{prop:cXprops} (ii, iii), as illustrated in Figure \ref{fig:obstruction-effects}.
        \begin{itemize}
            \item Denote by $I_{k+1}=(z^{in}_{k+1},\lambda^{in}_{k+1})$ the intersection between the horizontal line through $O_k$ and the line through the origin and $\hat O_{k+1}$.
            \item Denote by $\hat I_{k+1}=(\hat z^{in}_{k+1},\hat \lambda^{in}_{k+1})$ the intersection between the horizontal line through $\hat O_{k+1}$ and the line through the origin and $O_{k+1}$.
        \end{itemize}
    \end{itemize}
\end{definition}

In \S\ref{ssec:inner} we will use ATFs to construct embeddings computing the value of $c_\beta$ at the points $I_k$, proving that they are inner corners. Specifically, we will show:

\begin{prop}\label{prop:ic1} At the intersections of the obstructions from $\bE_k$ and $\hat\bE_{k+1}$, we have the following upper bound:
    \[
    c_\beta(z^{in}_{k+1})\leq \lambda^{in}_{k+1}.
    \]
\end{prop}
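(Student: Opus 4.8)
The plan is to prove this upper bound by exhibiting, for each $k$, an explicit symplectic embedding coming from an almost toric fibration. By Proposition~\ref{prop:atfswork} together with Proposition~\ref{prop:compact-domain}, it suffices to produce a base diagram for $M_\beta = \C P^1_1\times\C P^1_\beta$ (obtained from the $1\times\beta$ rectangle by a sequence of ATF operations) containing a triangle based at the origin whose legs have affine lengths $a_k$ and $b_k$ satisfying $b_k/a_k = z^{in}_{k+1}$ and $1/a_k = \lambda^{in}_{k+1}$. Indeed, such a triangle yields $E(a_k,b_k)\sembeds M_\beta$, hence $E(a_k,b_k)\sembeds P(1,\beta)$, and rescaling the ellipsoid by $1/a_k$ gives $E(1,z^{in}_{k+1})\sembeds \lambda^{in}_{k+1}\,P(1,\beta)$; the harmless $(1-\varepsilon)$ factor of Proposition~\ref{prop:atfswork} disappears upon taking the infimum defining $c_\beta$. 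The point $\lambda^{in}_{k+1}=p_k/(d_k+e_k\beta)$ is the height of the horizontal line through $O_k$, and $z^{in}_{k+1}$ is its intersection with the ray through the origin and $\hat O_{k+1}$, so the target proportions are completely determined by the $\bE_k$ and $\hat\bE_{k+1}$ data.

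First I would set up the initial fibration: apply a nodal trade at each of the three non-origin vertices $X$, $Y$, $V$ of the $1\times\beta$ rectangle, introducing nodal rays along the directions forced by the edge primitive vectors. I would then apply the mutation word (in the letters $x,y,v$) indexed by $k$, mirroring the mutation sequences of Magill in \cite{M1}, and track through each step the vertex positions, the primitive side directions $\overrightarrow{AB}$, and the affine lengths $|AB|$. The recursive formulas for these quantities are the analogues of \cite[Def.~3.8]{M1} recorded in Lemmas~\ref{lem:als} and \ref{lem:k3}; at each mutation the unknown remaining side length is pinned down by the closing-up identity \eqref{eqn:4addzero}, using that each relevant pair of edge and nodal rays forms a $\Z$-basis of $\Z^2$.

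Once the mutated diagram is in hand, I would read off the origin triangle and verify by direct computation that its legs satisfy $b_k/a_k = z^{in}_{k+1}$ and $1/a_k = \lambda^{in}_{k+1}$. This numerical matching is where the ATF output meets the obstruction side of the argument: the explicit values of $p_k,q_k,d_k,e_k$ and $\hat p_{k+1},\hat q_{k+1},\hat d_{k+1},\hat e_{k+1}$, constrained by the Diophantine relations \eqref{eqn:Ds}, must reproduce exactly the intersection coordinates of $I_{k+1}$. Combined with the lower bound for $c_\beta(z^{in}_{k+1})$ already forced at $I_{k+1}$ by Proposition~\ref{prop:oc} and the monotonicity and sublinearity of Proposition~\ref{prop:cXprops}(ii, iii), this upper bound identifies $I_{k+1}$ as a genuine inner corner.

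The main obstacle is the combinatorial bookkeeping of the mutation sequence. I must identify the correct word for each $k$ and prove by induction that the mutated region stays convex, that no nodal ray emanates from the origin (a hypothesis of Proposition~\ref{prop:atfswork}), and that the origin triangle has precisely the claimed proportions. The delicate part is propagating the affine lengths through mutations, each of which acts by an $ASL_2(\Z)$ transformation on one half of the diagram while fixing the anchor vertex and nodal ray; the identity \eqref{eqn:4addzero} together with the $\Z$-basis property is what makes this recursion close in a controlled way. Once the recursive formulas of Lemmas~\ref{lem:als} and \ref{lem:k3} are established, the final check that the triangle's eccentricity and scale equal $(z^{in}_{k+1},\lambda^{in}_{k+1})$ is routine algebra.
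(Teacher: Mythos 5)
Your proposal follows essentially the same route as the paper: the paper proves this proposition by applying the mutation sequence $v^2yxy^kxy$ to $\Omega_\beta$, tracking nodal rays, direction vectors, and affine lengths inductively through Lemmas~\ref{lem:als}, \ref{lem:k3}, and \ref{lem:31} (with the closing-up identity \eqref{eqn:4addzero} and the side-intersection analysis playing exactly the roles you describe), arriving at the axis lengths $|OX|=(d_k+e_k\beta)/p_k$ and $|OY|=(\hat d_{k+1}+\hat e_{k+1}\beta)/\hat q_{k+1}$, and then invoking Proposition~\ref{prop:atfswork} and rescaling so the $(1-\eps)$ factor vanishes in the infimum defining $c_\beta$. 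Your plan matches this in structure, key lemmas, and the final numerical identification with $(z^{in}_{k+1},\lambda^{in}_{k+1})$ via Lemma~\ref{lem:ics}.
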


Next we state our conjecture which would, if proven, fully compute $c_\beta$ on $[1,\acc(\beta)]$. See Remark \ref{rmk:conj} for a discussion of the complications which arise in its potential proof.
\begin{conjecture}\label{conj:ic2} At the intersections of the obstructions from $\hat\bE_{k+1}$ and $\bE_{k+1}$, we have the following upper bound:
    \[
    c_\beta(\hat z^{in}_{k+1})\leq \hat \lambda^{in}_{k+1}.
    \]
\end{conjecture}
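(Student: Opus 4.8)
The plan is to mirror the proof of Proposition~\ref{prop:ic1}, producing the required upper bound at $\hat I_{k+1}$ by exhibiting an explicit symplectic embedding of an ellipsoid rather than an obstruction. Concretely, for each $k$ I would construct an almost toric fibration of $M_\beta$ whose base diagram contains a triangle with one vertex at the origin and legs along the axes, whose affine lengths $a_k, b_k$ satisfy $b_k/a_k = \hat z^{in}_{k+1}$ and are scaled so that the resulting ellipsoid, once transported to $P(1,\beta)$, certifies $c_\beta(\hat z^{in}_{k+1}) \le \hat\lambda^{in}_{k+1}$. Applying Proposition~\ref{prop:atfswork} would then yield $(1-\varepsilon)\cdot E(a_k,b_k) \sembeds M_\beta$ for all $0<\varepsilon<1$, and Proposition~\ref{prop:compact-domain} would transport this to an embedding into $P(1,\beta)$; letting $\varepsilon\to 0$ gives the claimed bound.

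The mutation sequence is the heart of the construction. Following Magill's sequences in \cite{M1} and \cite{M2} — the same ones used to establish Proposition~\ref{prop:ic1} for the $I_{k+1}$ corners — I would append exactly one additional mutation, as suggested in the preamble to this conjecture. Starting from the $1\times\beta$ rectangle, I would perform nodal trades at $X$, $Y$, $V$, iterate the mutation word recorded in Lemmas~\ref{lem:als} and~\ref{lem:k3}, and then carry out the extra mutation, tracking the images of the vertices and nodal rays under the composed $ASL_2(\Z)$ transformations while enforcing closure via the identity~\eqref{eqn:4addzero}. The extra mutation should shear the diagram so that the widest triangle through $O$ acquires precisely the eccentricity $\hat z^{in}_{k+1}$; I would verify this by computing the affine lengths $|OX_\bullet|$ and $|OY_\bullet|$ in closed form and checking, using the Diophantine relations~\eqref{eqn:Ds} for $\bE_{k+1}$ and $\hat\bE_{k+1}$, that their ratio and product reproduce the coordinates of $\hat I_{k+1}$. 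An induction on $k$, with base case handled by direct computation as in Proposition~\ref{prop:fullacc}, would then close the argument for all $k$.

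The main obstacle — and the reason this remains only a conjecture — is proving that the candidate triangle genuinely fits inside the mutated base diagram for \emph{every} $k$, and that no nodal ray emanates from $O$, so that the hypotheses of Proposition~\ref{prop:atfswork} are met. After the extra mutation the diagram is more intricate than in the $I_{k+1}$ case, and the triangle realizing the correct eccentricity presses against the mutated edges; one must establish a family of affine-length inequalities, uniform in $k$, guaranteeing that the triangle stays within the convex region bounded by the axes and the piecewise-linear boundary, and that the marked points can be slid clear of it by nodal slides. Proving these inequalities in closed form — rather than merely verifying them numerically for small $k$, as discussed in Remark~\ref{rmk:conj} — is precisely the gap. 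I expect the cleanest route is to derive recursive formulas for the post-mutation vertices paralleling \cite[Def.~3.8]{M1}, reducing the required containment to the positivity of certain quadratic expressions in $p_{k+1}, q_{k+1}, \hat p_{k+1}, \hat q_{k+1}$ that can be controlled through the quasi-perfect relations.
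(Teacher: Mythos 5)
This statement is a conjecture in the paper, and the authors do not prove it; your proposal does not prove it either, and to your credit it does not claim to. What you outline is essentially identical to the route the authors themselves hypothesize in Remark~\ref{rmk:conj}: take the mutation word $v^2yxy^kxy$ that establishes Proposition~\ref{prop:ic1} and append exactly one more $y$-mutation, giving $v^2yxy^kxy^2$; compute the affine lengths of the axis sides of the resulting quadrilateral in closed form, check that $|OX|=\frac{\hat d_{k+1}+\hat e_{k+1}\beta}{\hat p_{k+1}}$ and $|OY|=\frac{d_{k+1}+e_{k+1}\beta}{q_{k+1}}$; and then invoke Propositions~\ref{prop:atfswork} and~\ref{prop:compact-domain} together with the scaling/infimum argument from the proof of Proposition~\ref{prop:ic1} to convert the full filling into the upper bound at $\hat I_{k+1}$. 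So in approach you are aligned with the paper.

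Where your description of the gap could be sharpened: the obstacle is not primarily that the candidate triangle ``presses against'' the boundary or that marked points must be slid clear by nodal slides. Per Remark~\ref{rmk:conj}, the real issue is combinatorial: for the final \emph{two} $y$-mutations the nodal ray $\vec{n}_Y$ hits the bottom side $|OX|$ of the quadrilateral rather than the side $|XV|$ hit by the earlier $y^k$ mutations. The inductive machinery of Lemmas~\ref{lem:als} and~\ref{lem:k3} is built entirely for the $|XV|$-hitting regime, and Lemma~\ref{lem:31} handles only a single $OX$-hitting mutation and records only the two axis lengths --- not the full data (all four affine lengths, direction vectors, and nodal rays) of the post-mutation quadrilateral that one would need as input for yet another mutation. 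Closing the conjecture therefore requires deriving a new family of closed-form recursions for this second combinatorial regime, together with the analogues of the identities in Lemma~\ref{lem:identities} needed to simplify them, and a proof (as in Lemma~\ref{lem:correctside}) of which side the nodal ray hits at each stage. Your plan correctly gestures at this, but until those computations are carried out for all $k$, the statement remains exactly what the paper says it is: a conjecture.
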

    

In the following lemma we compute the coordinates of $I_{k+1}$ and $\hat I_{k+1}$.

\begin{lemma}\label{lem:ics}
    \begin{enumerate}[label=(\roman*)]
        \item We have
        \[
        (z_{k+1}^{in},\lambda_{k+1}^{in})=\left(\frac{p_k(\hat d_{k+1}+\hat e_{k+1}\beta)}{\hat q_{k+1}(d_k+e_k\beta)},\frac{p_k}{d_k+e_k\beta}\right).
        \]
        \item We have
        \[
        (\hat z_{k+1}^{in},\hat \lambda_{k+1}^{in})=\left(\frac{\hat p_{k+1}(d_{k+1}+e_{k+1}\beta)}{q_{k+1}(\hat d_{k+1}+\hat e_{k+1}\beta)},\frac{\hat p_{k+1}}{\hat d_{k+1}+\hat e_{k+1}\beta}\right).
        \]
    \end{enumerate}
\end{lemma}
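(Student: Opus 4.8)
The plan is to unwind the two definitions directly: each of $I_{k+1}$ and $\hat I_{k+1}$ is by construction the intersection of a horizontal line of known height with a line through the origin of known slope, so once those two pieces of data are recorded, both coordinates drop out of a single linear solve. First I would compute the slopes of the two relevant rays from the origin. The ray through $\hat O_{k+1}=\left(\frac{\hat p_{k+1}}{\hat q_{k+1}},\frac{\hat p_{k+1}}{\hat d_{k+1}+\hat e_{k+1}\beta}\right)$ has slope equal to its $\lambda$-coordinate divided by its $z$-coordinate, namely $\frac{\hat q_{k+1}}{\hat d_{k+1}+\hat e_{k+1}\beta}$, the common factor $\hat p_{k+1}$ cancelling; likewise the ray through $O_{k+1}$ has slope $\frac{q_{k+1}}{d_{k+1}+e_{k+1}\beta}$.

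For part (i), the height $\lambda_{k+1}^{in}$ is by definition that of the horizontal line through $O_k$, which is precisely the $\lambda$-coordinate $\frac{p_k}{d_k+e_k\beta}$ of $O_k$ from Proposition~\ref{prop:oc}; this is the second coordinate with no work. To obtain $z_{k+1}^{in}$ I would set the ray $\lambda=\frac{\hat q_{k+1}}{\hat d_{k+1}+\hat e_{k+1}\beta}\,z$ equal to this height and solve, giving $z_{k+1}^{in}=\frac{p_k(\hat d_{k+1}+\hat e_{k+1}\beta)}{\hat q_{k+1}(d_k+e_k\beta)}$, as claimed. Part (ii) is entirely parallel with the roles of the hatted and unhatted classes exchanged: $\hat\lambda_{k+1}^{in}$ is the $\lambda$-coordinate $\frac{\hat p_{k+1}}{\hat d_{k+1}+\hat e_{k+1}\beta}$ of $\hat O_{k+1}$, and intersecting the ray through $O_{k+1}$ with that horizontal line yields $\hat z_{k+1}^{in}=\frac{\hat p_{k+1}(d_{k+1}+e_{k+1}\beta)}{q_{k+1}(\hat d_{k+1}+\hat e_{k+1}\beta)}$.

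The main obstacle here is not analytic but purely a matter of bookkeeping: the two statements are near mirror images, with the indices $k$ and $k+1$ and the hatted and unhatted data interchanged, so the chief risk is forming a slope from the wrong center or denominator. I would guard against this by verifying in each case that the center of the class cancels out of the slope ($\hat p_{k+1}$ in (i) and $p_{k+1}$ in (ii)), by checking that the height fed in is genuinely the $\lambda$-coordinate of the outer corner named in the definition, and by confirming that the resulting points sit in the positions shown in Figure~\ref{fig:main} relative to the outer corners $O_k$, $\hat O_{k+1}$, and $O_{k+1}$. Since no cancellation beyond these factors is needed and everything is linear in $\lambda$ for fixed $\beta$, this completes the computation.
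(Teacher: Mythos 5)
Your proposal is correct and matches the paper's own proof: both take the $\lambda$-coordinates as immediate from the definitions of $I_{k+1}$ and $\hat I_{k+1}$, and both obtain the $z$-coordinates by intersecting the horizontal line with the ray through the origin (your explicit slopes $\frac{\hat q_{k+1}}{\hat d_{k+1}+\hat e_{k+1}\beta}$ and $\frac{q_{k+1}}{d_{k+1}+e_{k+1}\beta}$ are exactly the ratios the paper writes without simplifying). No gaps; this is the same computation.
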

\begin{proof}
The values of $\lambda^{in}_{k+1}$ and $\hat \lambda^{in}_{k+1}$ are immediate because they are the $\lambda$-values of the obstructions from $\bE_k$ and $\hat\bE_{k+1}$, respectively.

To compute $z^{in}_{k+1}$, we solve
\[
\frac{p_k}{d_k+e_k\beta}=\frac{\frac{\hat p_{k+1}}{\hat d_{k+1}+\hat e_{k+1}\beta}}{\frac{\hat p_{k+1}}{\hat q_{k+1}}}z^{in}_{k+1}
\]
for $z^{in}_{k+1}$, while to compute $\hat z^{in}_{k+1}$, we solve
\[
\frac{\hat p_{k+1}}{\hat d_{k+1}+\hat e_{k+1}\beta}=\frac{\frac{p_{k+1}}{d_{k+1}+e_{k+1}\beta}}{\frac{p_{k+1}}{q_{k+1}}}\hat z^{in}_{k+1}
\]
for $\hat z^{in}_{k+1}$.
\end{proof}

\begin{proof} (of Theorem \ref{thm:main} (i)) The lower bounds in Propositions \ref{prop:oc}~(i) and \ref{prop:oc}~(ii) combined with the upper bound in Proposition \ref{prop:ic1} prove by Lemma \ref{lem:ics} (i) that $c_\beta$ has infinitely many nonsmooth points at the inner corners between the obstructions from $\bE_k$ and $\hat\bE_{k+1}$, as indicated in Figure \ref{fig:combination}. (These inner corners are labeled $I_{k+1}$ in Figure \ref{fig:main}.) Note that to conclude that $c_\beta(\frac{p_k}{q_k})=c_\beta(z_{k+1}^{in})$ we use the fact that $c_\beta$ is increasing, which requires Lemma~\ref{lem:alternate} to know that 
\[
\frac{p_k}{q_k}\leq z^{in}_{k+1}\leq\frac{\hat p_{k+1}}{\hat q_{k+1}}.
\]
\end{proof}

\begin{rmk}
    Note that if we could show Conjecture \ref{conj:ic2}, then by Lemma \ref{lem:ics} and similar reasoning to the proof of Theorem \ref{thm:main} (i) we would be able to compute the entire function $c_\beta$ between the center $7$ of $\bE_0$ and $\acc(\beta)=\frac{54+11\sqrt{30}}{14}$. (It is very little extra work to compute $c_\beta$ on $[1,7]$, since it requires identifying only two outer and two inner corners.)
\end{rmk}

\subsection{Outer corners}\label{ssec:outer}

To prove Propositions \ref{prop:oc}, it suffices by \eqref{eqn:mupq} to show that the recursively defined families $\bE_k$ and $\hat\bE_k$ satisfy the Diophantine equations \eqref{eqn:Ds}.

For our proof, we use the ideas developed in \cite[Section ~2.2]{symm} to think of a quasi-perfect class as a integral point $(p,q,t)$ on a quadratic surface $X$ where $t=\sqrt{p^2+q^2-6pq+8}.$ In particular, as noted in Lemma~\ref{lem:depqt}, a tuple $(d,e;p,q,t)$ will satisfy the Diophantine equations if given a integral tuple $(p,q,t) \in X$, we define\footnote{Note, as defined in this way $(d,e)$ might not be integers for all integral choices of $(p,q,t)$. Thus, not all points on $X$ correspond to quasi-perfect classes.} $d,e$ by 
\[ 4d=p+q+t, \quad \text{and} \quad 4e=p+q-t.\]

We then use the result \cite[Lem.~3.1.2]{symm} which allows us to see that we can produce new tuples $(p,q,t) \in X$ via recursion assuming certain compatibility conditions hold. Let \[A:=\begin{pmatrix} -1 & 3 & 0 \\ 3 & -1 & 0 \\ 0 & 0 & 1 \end{pmatrix}, \quad \bx:=\begin{pmatrix} p \\ q \\ t \end{pmatrix}.\] 
Then the surface $X=\{\bx^T A \bx=8\}$, as $\bx^T A \bx=6pq-p^2-q^2+t^2.$

The lemma then states:
\begin{lemma}\cite[Lemma~3.1.2]{symm}
Suppose that $\bx_0$ and $\bx_1$ are integral vectors that saisfy the following conditions for some integer $\nu>0:$
    \begin{align}
        \bx_i^TA\bx_i&=8, \quad i=0,1, \label{eq:xtAx} \\
        \bx_1^TA\bx_0&=4\nu .\label{eq:nuCompat}
    \end{align}
Then, the vectors $\bx_2:=\nu \bx_1-\bx_0,\bx_1$ also satisfy these conditions for the given $\nu.$ 
\end{lemma}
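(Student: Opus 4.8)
The plan is to exploit the fact that the matrix $A$ is \emph{symmetric}, so that the pairing $B(\bx,\by):=\bx^T A\by$ is a symmetric bilinear form on $\Z^3$. Under this reformulation the hypotheses read $B(\bx_0,\bx_0)=B(\bx_1,\bx_1)=8$ and $B(\bx_0,\bx_1)=B(\bx_1,\bx_0)=4\nu$, while the surface $X$ is exactly the level set $\{\bx : B(\bx,\bx)=8\}$. With this setup the whole statement reduces to a short bilinear computation.

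First I would pin down precisely what must be checked. The new pair is $(\bx_1,\bx_2)$ with $\bx_2=\nu\bx_1-\bx_0$. One of the three required identities, namely $B(\bx_1,\bx_1)=8$, is already among the hypotheses, so it remains only to establish the norm identity $B(\bx_2,\bx_2)=8$ and the compatibility $B(\bx_2,\bx_1)=4\nu$. Next I would expand both by bilinearity and symmetry. For the norm,
\[
B(\bx_2,\bx_2)=\nu^2 B(\bx_1,\bx_1)-2\nu B(\bx_0,\bx_1)+B(\bx_0,\bx_0)=8\nu^2-2\nu(4\nu)+8=8,
\]
where the symmetry of $A$ is what lets the two cross terms $B(\bx_0,\bx_1)$ and $B(\bx_1,\bx_0)$ combine. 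For the pairing,
\[
B(\bx_2,\bx_1)=\nu B(\bx_1,\bx_1)-B(\bx_0,\bx_1)=8\nu-4\nu=4\nu.
\]
Finally, $\bx_2$ is integral since $\nu\in\Z$ and $\bx_0,\bx_1\in\Z^3$, so the pair $(\bx_1,\bx_2)$ satisfies \eqref{eq:xtAx} and \eqref{eq:nuCompat} with the same value of $\nu$.

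The main obstacle here is conceptual rather than computational, and is in truth quite mild: one must notice that $A=A^{T}$ (so that the quadratic $6pq-p^2-q^2+t^2$ really does come from a symmetric form and the cross terms assemble correctly), and one must reindex the triple carefully so that the preserved vector $\bx_1$ plays the role of the first entry in the new pair. A good sanity check on the statement is that it is precisely the Vieta-jumping move for this quadric: fixing $\bx_1$, the two integral vectors $\bx$ lying on the affine conic $\{B(\bx,\bx)=8,\ B(\bx,\bx_1)=4\nu\}$ are interchanged by the involution $\bx\mapsto\nu\bx_1-\bx$, and $\bx_0,\bx_2$ are exactly this conjugate pair. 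This is why no new compatibility constant appears and why integrality is the only hypothesis on $\nu$ that is actually used.
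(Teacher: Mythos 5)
Your proof is correct, and it is essentially the standard argument: the paper itself does not reprove this lemma but cites \cite[Lem.~3.1.2]{symm}, whose proof is exactly this expansion of the symmetric bilinear form $\bx^TA\by$ by bilinearity, using $A=A^T$ to combine the cross terms. Nothing is missing — the norm computation, the compatibility pairing, and the integrality of $\bx_2=\nu\bx_1-\bx_0$ are all that is required.
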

We can then restate \cite[Cor.~3.1.1]{symm} for our purposes as 
\begin{cor} \label{cor:dioph} Any two integral triples $\bx_i=(p_i,q_i,t_i),$ $i=0,1$ that satisfy \eqref{eq:xtAx} and \eqref{eq:nuCompat} for a given $\nu$ can be extended to a sequence 
\[
\bx_i:=\nu\bx_{i-1}-\bx_{i-2},\quad i \geq 0,
\]
and each successive adjacent pair satisfies these conditions. Further, the corresponding quantities
\[ d_i=\frac{1}{4}(p+q+t), \quad e_i=\frac{1}{4}(p+q-t)\]
also satisfy this recursion and hence are integers, provided that they are integers for $i=0,1.$
\end{cor}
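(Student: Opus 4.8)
The plan is to derive Corollary~\ref{cor:dioph} as a direct induction built on top of the preceding Lemma \cite[Lem.~3.1.2]{symm}, whose single-step conclusion is precisely the inductive step required. The only content beyond that Lemma is to check that the two conditions propagate along the entire sequence and to transfer the recursion from the vectors $\bx_i$ to the scalars $d_i$ and $e_i$. I expect no serious obstacle, since the substantive quadratic identity is already contained in the Lemma.

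First I would induct on $i$. The base case is the hypothesis that $\bx_0,\bx_1$ are integral and satisfy \eqref{eq:xtAx} and \eqref{eq:nuCompat} for the given $\nu$. For the inductive step, assume the adjacent integral pair $\bx_{i-2},\bx_{i-1}$ satisfies both conditions, and apply the Lemma with the substitution $\bx_0\mapsto\bx_{i-2}$, $\bx_1\mapsto\bx_{i-1}$. This produces $\bx_i=\nu\bx_{i-1}-\bx_{i-2}$, which is again integral as an integer combination of integral vectors, and guarantees that the pair $\bx_{i-1},\bx_i$ satisfies \eqref{eq:xtAx} and \eqref{eq:nuCompat}. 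The one point deserving care --- and the closest thing to an obstacle here --- is confirming that the pair emitted by one application is a valid input pair for the next. This uses that $A$ is symmetric, so the bilinear condition \eqref{eq:nuCompat} is insensitive to the order of its two arguments, together with the fact that the Lemma preserves \emph{both} the quadratic and the bilinear conditions rather than only one. With these noted, the induction closes and every successive adjacent pair satisfies the conditions.

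Finally, for the claim about $d_i$ and $e_i$, the key observation is that each is a fixed linear functional of $\bx_i$: writing $\bx_i=(p_i,q_i,t_i)^T$, we have $d_i=\tfrac{1}{4}\langle(1,1,1),\bx_i\rangle$ and $e_i=\tfrac{1}{4}\langle(1,1,-1),\bx_i\rangle$. Since the vector recursion $\bx_i=\nu\bx_{i-1}-\bx_{i-2}$ is linear, applying either functional commutes with it and yields $d_i=\nu d_{i-1}-d_{i-2}$ and $e_i=\nu e_{i-1}-e_{i-2}$. A short final induction then shows that, because $\nu\in\Z$ and $d_0,d_1$ (respectively $e_0,e_1$) are integers by assumption, all $d_i$ (respectively $e_i$) are integers, completing the proof.
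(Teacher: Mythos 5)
Your proof is correct and follows exactly the route the paper intends: the paper simply cites this statement from \cite[Cor.~3.1.1]{symm} as an immediate consequence of the preceding Lemma, and the implicit argument is precisely your induction (using the symmetry of $A$ so that each output pair is a valid input pair) together with the observation that $d_i$ and $e_i$ are linear functionals of $\bx_i$ and hence inherit the linear recursion and integrality. Nothing is missing.
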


We now proceed in proving Prop~\ref{prop:oc} giving the bounds for the outer corners at $z_k=p_k/q_k$ and $\hat{z}_k=\hat{p}_k/\hat{q}_k.$

\begin{proof} (of Proposition \ref{prop:oc})
To prove (i) and (ii), we must check that the classes $\bE_k=t\bE_{k-1}-\bE_{k-2}$ and $\hat{\bE}_k=t_{k-1}\bE_{k}-\bE$ are Diophantine classes. By Cor~\ref{cor:dioph} and Lemma~\ref{lem:depqt}, it is enough to verify:
 \begin{itemlist}
     \item[-] $\bE_k$ and $\bE$ satisfy \eqref{eq:xtAx}.
     \item[-] $\bE_0=(3,1,7,1,4)$ and $\bE_1=(64,23,155,19,82)$ satisfy \eqref{eq:nuCompat} for $\nu=t=22$.
     \item[-] $\hat{\bE}_k$ and $\bE=(17,6,41,5,22)$ satisfy \eqref{eq:nuCompat} for $\nu=t_{k-1}.$
 \end{itemlist}
   
   By Cor~\ref{cor:dioph}, \eqref{eq:xtAx} will hold for $\bE_k$ if it holds for $\bE_0$ and $\bE_1.$ Thus, we must check this for $\bE_0,$ $\bE_1,$ and $\bE.$
We have
\[\bE_0: ~~ 6(7)-7^2-1^2+4^2=8,\]
\[\bE_1: ~~ 6(155)(19)-155^2-19^2+82^2=8,\]
\[\bE: ~~ 6(41)(5)-41^2-5^2+22^2=8.\]

Now, we check \eqref{eq:nuCompat} for $\bE_0,\bE_1$ with $\nu=22$: 
\[ 1(3 \cdot 155-19)+7(3 \cdot 19-155)+82 \cdot 4=4 \cdot 22.\] 

To check \eqref{eq:nuCompat} for the pair $\bE_k$ and $\bE$ with $\nu=t_{k-1},$ this involves verifying \[ 5(3p_k-q_k)+41(3q_k-p_k)+22t_k=4t_{k-1}.\] As this is a linear equation, we can verify it holds by induction by checking for $k=1,2$. This is an easy computation. 

Thus, $\bE_k$ and $\hat{\bE}_k$ are quasi-perfect Diophantine classes, and (i) and (ii) follow by \eqref{eqn:cboundmu} and \eqref{eqn:mupq}. 

\end{proof}

\begin{rmk}\label{rmk:echlbs} 
As in the proof of Lemma \ref{lem:Eck}, if $\bE=(d,e,p,q,t)$ and $\Lam_\bE$ represents the convex lattice path with corners the origin, $(0,e), (d,e)$, and $(d,0)$, then with
\[
k=\calL(\Lam_\bE)=\frac{(p+1)(q+1)}{2}-1=(d+1)(e+1)-1,
\]
we have
\[
c_\beta\left(\frac{p}{q}\right)\geq\frac{N_k(1,p/q)}{\ell_{\Omega_\beta}(\Lam_\bE)}=\frac{p}{d+e\beta}.
\]
Thus to prove Propositions \ref{prop:oc}~(i) and \ref{prop:oc}~(ii) it would also suffice to simply identify the lattice paths $\Lam_{\bE_k}$ and $\Lam_{\hat\bE_k}$.
\end{rmk}

Next we prove that the centers of the quasi-perfect Diophantine classes $\bE_k$ and $\hat\bE_k$ are arranged as depicted in Figure \ref{fig:main}.

\begin{lemma}\label{lem:alternate}
\begin{enumerate}[label=(\roman*)]
    \item The centers of the classes $\bE_k$ and $\hat \bE_k$ alternate:
    \[
    \cdots<\frac{p_k}{q_k}<\frac{\hat p_{k+1}}{\hat q_{k+1}}<\frac{p_{k+1}}{q_{k+1}}<\cdots
    \]
    \item The obstructions from the classes $\bE_k$ and $\hat\bE_k$ alternate:
    \[
    \cdots<\frac{p_k}{d_k+e_k\beta}<\frac{\hat p_{k+1}}{\hat d_{k+1}+\hat e_{k+1}\beta}<\frac{p_{k+1}}{d_{k+1}+e_{k+1}\beta}<\cdots
    \]
\end{enumerate}
\end{lemma}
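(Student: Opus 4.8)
The plan is to convert the whole alternating chain into positivity statements about auxiliary sequences that all obey the single linear recursion $x_k=22x_{k-1}-x_{k-2}$, and then to settle each by a short monotonicity induction, except in one genuinely delicate case where I would instead produce a closed form. The structural input is that the recursion constant in Definition~\ref{def:outerclasses} is the fixed integer $22$, so each of the five coordinate sequences $d_k,e_k,p_k,q_k,t_k$ satisfies $x_k=22x_{k-1}-x_{k-2}$. Consequently, for any two such sequences $a_k,b_k$ the Casoratian $a_{k+1}b_k-a_kb_{k+1}$ is constant in $k$; evaluating at $k=0$ gives the identities $p_{k+1}q_k-p_kq_{k+1}=22$, $p_{k+1}d_k-p_kd_{k+1}=17$ and $p_{k+1}e_k-p_ke_{k+1}=-6$. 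I would combine these with the coordinate formulas $\hat p_{k+1}=t_kp_{k+1}-41$, $\hat q_{k+1}=t_kq_{k+1}-5$, $\hat d_{k+1}=t_kd_{k+1}-17$, $\hat e_{k+1}=t_ke_{k+1}-6$ from Definition~\ref{def:hatclasses}, and with the identity $4(d_j+e_j\beta)=(p_j+q_j)(1+\beta)+t_j(1-\beta)$ that is immediate from Lemma~\ref{lem:depqt}.

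For part (i), since all centers are positive, each inequality is equivalent to the sign of a cross-determinant. Expanding with the formulas above and the Casoratian $p_{k+1}q_k-p_kq_{k+1}=22$ collapses these to $\hat p_{k+1}q_k-p_k\hat q_{k+1}=22t_k+(5p_k-41q_k)$ and $\hat p_{k+1}q_{k+1}-p_{k+1}\hat q_{k+1}=5p_{k+1}-41q_{k+1}$. Thus it suffices to show that $u_k:=5p_k-41q_k$ is negative and $w_k:=22t_k+u_k$ is positive for all $k$. Both inherit the recursion, and each follows from a short induction off the initial values $u_0=-6,u_1=-4$ and $w_0=82,w_1=1800$ (the sequence $(u_k)$ stays negative and eventually decreases, while $(w_k)$ stays positive and increases, using that $22x_k-x_{k-1}>0$ whenever $0<x_{k-1}\le x_k$). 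This gives $\tfrac{p_k}{q_k}<\tfrac{\hat p_{k+1}}{\hat q_{k+1}}<\tfrac{p_{k+1}}{q_{k+1}}$.

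For part (ii) write the obstruction values as $f_k=p_k/(d_k+e_k\beta)$ and $\hat f_{k+1}=\hat p_{k+1}/(\hat d_{k+1}+\hat e_{k+1}\beta)$; all denominators are positive, so again each inequality is a sign condition on a cross-quantity. Expanding the hats, cancelling the $t_k$-terms (and, for $f_k<\hat f_{k+1}$, using the Casoratians to rewrite $p_{k+1}(d_k+e_k\beta)-p_k(d_{k+1}+e_{k+1}\beta)$ as $17-6\beta$), and clearing the factor $4$ via the identity above, the two cross-quantities become
\[
S_k=(27p_k+27t_k-41q_k)+\beta(17t_k-17p_k-41q_k)
\]
for $f_k<\hat f_{k+1}$ and
\[
T_{k+1}=(-27p_{k+1}+41q_{k+1}+41t_{k+1})+\beta(17p_{k+1}+41q_{k+1}-41t_{k+1})
\]
for $\hat f_{k+1}<f_{k+1}$. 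Both inherit the recursion. Since $T_0=16-4\beta>0$, $T_1=-44+52\beta>0$ and $T_1>T_0$, the monotone induction of part (i) gives $T_j>0$, hence $\hat f_{k+1}<f_{k+1}$.

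The remaining inequality $f_k<\hat f_{k+1}$, equivalently $S_k>0$, is the main obstacle. Here the monotone induction breaks down: because $f_k$ and $\hat f_{k+1}$ both converge to $\vol(\beta)$, the gap $S_k$ decreases to $0$, and the inequality is tight, holding precisely because of the chosen value of $\beta$. The key is that $S_k$ has no growing mode. Writing $\lambda:=11-2\sqrt{30}$ for the root of $x^2-22x+1=0$ in $(0,1)$, a direct computation with $\beta=\frac{6+5\sqrt{30}}{12}$ gives $S_0=256-92\beta>0$ (equivalently $630^2>115^2\cdot30$) together with the crucial relation $S_1=\lambda S_0$. Because $\lambda^2=22\lambda-1$, substituting into the recursion yields the closed form $S_k=\lambda^kS_0$, which is positive for all $k$ since $\lambda>0$ and $S_0>0$. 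This establishes the alternation of the obstruction values and completes the proof.
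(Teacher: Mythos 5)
Your proof is correct, and I verified it in detail: the Casoratian identities ($p_{k+1}q_k-p_kq_{k+1}=22$, $p_{k+1}d_k-p_kd_{k+1}=17$, $p_{k+1}e_k-p_ke_{k+1}=-6$), all four sign reductions, the seeds $u_0=-6$, $u_1=-4$, $w_0=82$, $w_1=1800$, $T_0=16-4\beta$, $T_1=-44+52\beta$, $S_0=256-92\beta$, and the exact relation $S_1=5620-2020\beta=(11-2\sqrt{30})\,S_0$, which indeed forces $S_k=\lambda^kS_0>0$. Your route, however, genuinely differs from the paper's. The paper also cross-multiplies using $\hat\bE_{k+1}=t_k\bE_{k+1}-\bE$, but then reduces each part to two qualitative conditions --- monotonicity of $p_k/q_k$ (resp.\ of $p_k/(d_k+e_k\beta)$) together with the uniform comparisons $p_k/q_k<41/5$ (resp.\ $p_k/(d_k+e_k\beta)<41/(17+6\beta)$), see \eqref{eqn:altfinal} and \eqref{eqn:iialtfinal} --- and proves those by induction. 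You instead keep the exact Casoratian constants and convert each inequality into positivity of a single scalar sequence obeying $x_k=22x_{k-1}-x_{k-2}$. This is more than a stylistic difference: for the inequalities $p_k/q_k<\hat p_{k+1}/\hat q_{k+1}$ and your $f_k<\hat f_{k+1}$, the paper's two conditions control the two sides of the needed inequality in opposite directions (in (i), for instance, what is needed is $22t_k>41q_k-5p_k$, and knowing $5p_k<41q_k$ pushes the wrong way), so the exact constant $22$ --- precisely what your $w_k$ and $S_k$ encode --- is what actually closes the argument; similarly, the paper's single-seed checks of its linear conditions would need a second seed to run as inductions, which you supply. The other genuine difference is your treatment of $f_k<\hat f_{k+1}$: you isolate it as the tight inequality (both sides converge to $\vol(\beta)$) and resolve it by observing that $(S_0,S_1)$ is proportional to the decaying eigenvector of the recursion, whereas the paper treats it by the same monotonicity-plus-bound scheme as the easy cases, which obscures why the inequality survives only by the razor-thin margin $630^2>115^2\cdot 30$. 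What your approach buys is completeness and an explanation of the tightness and of the special role of this $\beta$; what the paper's buys is brevity. One cosmetic wrinkle: your parenthetical induction criterion ($0<x_{k-1}\le x_k$) does not apply verbatim to $-u_k$ at the first step, since $-u_0=6>4=-u_1$; one must check $u_2=22u_1-u_0=-82<0$ directly, after which the monotone induction runs exactly as you describe.
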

\begin{proof}
Our goal is to show
\begin{equation}\label{eqn:alternategoal}
\frac{p_k}{q_k}<\frac{\hat p_{k+1}}{\hat q_{k+1}}<\frac{p_{k+1}}{q_{k+1}}.
\end{equation}

The first inequality in \eqref{eqn:alternategoal} is equivalent to
\begin{align*}
    p_k\hat q_{k+1}&<q_k\hat p_{k+1}
    \\p_k(t_kq_{k+1}-5)&<q_k(t_kp_{k+1}-41)
    \\t_kp_kq_{k+1}-5p_k&<t_kp_{k+1}q_k-41q_k,
\end{align*}
which follows if we can show that
\begin{equation}\label{eqn:altfinal}
    \frac{p_k}{q_k}<\frac{p_{k+1}}{q_{k+1}}\quad \mbox{and}  \quad \frac{p_k}{q_k}<\frac{41}{5}.
\end{equation}

Similarly, the second inequality in \eqref{eqn:alternategoal} is equivalent to
\begin{align*}
    \hat p_{k+1}q_{k+1}&<p_{k+1}\hat q_{k+1}
    \\(t_kp_{k+1}-41)q_{k+1}&<p_{k+1}(t_kq_{k+1}-5)
    \\t_kp_{k+1}q_{k+1}-41q_{k+1}&<t_kp_{k+1}q_{k+1}-5p_{k+1},
\end{align*}
which follows from the second inequality in \eqref{eqn:altfinal}.

The first inequality in \eqref{eqn:altfinal} is
\[
p_k(22q_k-q_{k-1})<q_k(22p_k-p_{k-1}) \iff p_{k-1}q_k<p_kq_{k-1},
\]
thus follows by induction and the base case $k=1$:
\[
\frac{p_0}{q_0}=7,\quad \frac{p_1}{q_1}=\frac{155}{19}\approx 8.158.
\]

The second inequality in \eqref{eqn:altfinal} is equivalent to a linear inequality in $p_k, q_k$, which holds because they both satisfy the same recursion and it holds for $k=0$: 
$$
5p_0<41q_0 \iff 5\cdot 7<41\cdot 1).
$$

To prove (ii), notice that
\[
\hat d_{k+1}+\hat e_{k+1}\beta=t_k(d_{k+1}+e_{k+1}\beta)+17+6\beta,
\]
thus by the same logic as in the proof of (i), all we need to show is
\begin{equation}\label{eqn:iialtfinal}
    \frac{p_k}{d_k+e_k\beta}<\frac{p_{k+1}}{d_{k+1}+e_{k+1}\beta},\quad \frac{p_k}{d_k+e_k\beta}<\frac{41}{17+6\beta}.
\end{equation}

The first inequality in \eqref{eqn:iialtfinal} is
\[
p_k(22d_k-d_{k-1}+22e_k\beta-e_{k-1}\beta)<(22p_k-p_{k-1})(d_k+e_k\beta) \iff \frac{p_{k-1}}{d_{k-1}+e_{k-1}\beta}<\frac{p_k}{d_k+e_k\beta},
\]
which follows by induction and the base case $k=1$:
\[
\frac{7}{3+\beta}<\frac{155}{64+23\beta} \iff 6\beta<17,
\]
which holds because $6\beta\approx16.693$.

The second inequality in \eqref{eqn:iialtfinal} is equivalent to a linear inequality in terms satisfying the same recursion, thus we simply need to check it for $k=0$:
\[
\frac{7}{3+\beta}\approx1.211<1.217\approx\frac{41}{17+6\beta}.
\]
\end{proof}

\begin{proof} (of Theorem \ref{thm:main} (ii), assuming (i))

As above, let $\{\frac{p_k}{q_k}\}$ be the sequence of rational numbers described by the recursion with seeds $p_0=7, q_0=1$ and $p_1=155, q_1=19$ and relation

\[
p_{k}=22p_{k-1}-p_{k-2}, \ q_{k}=22q_{k-1}-q_{k-2}
\]

\noindent for $k\geq2$. We prove that this sequence coincides with the sequence of continued fractions of the form
\[
\left[8,6,4,2,\frac{u_{k-2}}{v_{k-2}}\right]=:\frac{u_k}{v_k}
\]

\noindent for all $k\geq2$. Here, we assume that the seeds of both recursions are equal, so $u_j=p_j$ and $v_j=q_j$ for $j=0,1$.

To prove this equality, we use the following standard result of number theory, which is explained in Chapter 2.1 of \cite{ha}.

\begin{lemma}\label{lem: contfrac}
Let the continued fraction for a real number $\alpha$ be $[a_0,a_1,a_2,...]$. If $\{\frac{r_n}{s_n}\}$ denotes the sequence of convergents of $\alpha$ obtained by truncating this continued fraction expansion, then for any real number $z$,

\[
[a_0,a_1,a_2,...,a_n,z]=\frac{z r_n+r_{n-1}}{z s_n+s_{n-1}}.
\]

\noindent Furthermore, $r_{n+1}s_n-r_ns_{n+1}=(-1)^n$ for all $n\geq0$.
\end{lemma}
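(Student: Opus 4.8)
The plan is to recast both statements in terms of the transfer matrices attached to the continued fraction, which makes the first formula a transparent consequence of how fractional linear transformations compose and reduces the second identity to a determinant computation. Recall that the convergents $r_n/s_n$ of $\alpha=[a_0,a_1,\ldots]$ are generated by the standard recursions $r_n=a_nr_{n-1}+r_{n-2}$ and $s_n=a_ns_{n-1}+s_{n-2}$, with initial data $r_{-1}=1$, $r_{-2}=0$, $s_{-1}=0$, $s_{-2}=1$. First I would set
\[
M_k=\begin{pmatrix} a_k & 1 \\ 1 & 0 \end{pmatrix}
\]
and prove by induction on $n$ that $M_0M_1\cdots M_n=\begin{pmatrix} r_n & r_{n-1} \\ s_n & s_{n-1} \end{pmatrix}$. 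The base case $n=0$ is immediate from the initial data, and the inductive step is a single $2\times2$ matrix multiplication that reproduces exactly the two convergent recursions.

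The first identity then follows from the observation that appending a real parameter $z$ as a final entry corresponds to applying the Möbius transformation attached to $M_0\cdots M_n$. Concretely, writing $f_k(x)=a_k+1/x$, one checks by a short induction that $[a_0,\ldots,a_n,z]=(f_0\circ\cdots\circ f_n)(z)$. Since each $f_k$ is the fractional linear transformation with matrix $M_k$, and composition of such transformations corresponds to matrix multiplication, the composite sends $z$ to
\[
\frac{r_nz+r_{n-1}}{s_nz+s_{n-1}},
\]
which is precisely the claimed formula. (As a sanity check, letting $z\to\infty$ recovers $r_n/s_n=[a_0,\ldots,a_n]$, consistent with the definition of the convergents.)

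For the second identity I would simply take determinants in the matrix product formula. Since $\det M_k=-1$ for every $k$, we get $r_ns_{n-1}-r_{n-1}s_n=\det(M_0\cdots M_n)=(-1)^{n+1}$; after shifting the index $n\mapsto n+1$ this reads $r_{n+1}s_n-r_ns_{n+1}=(-1)^{n+2}=(-1)^n$, as desired.

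As this is a classical fact (see \cite{ha}), I do not anticipate any genuine obstacle; the only points requiring care are bookkeeping — fixing the index conventions for the seed convergents $r_{-1},s_{-1}$ so that the base case of the matrix induction holds, and tracking the sign of the determinant through the index shift so that $(-1)^{n+1}$ turns into $(-1)^n$. One could equally well bypass the matrix formalism and prove both statements by a direct double induction on $n$, but the matrix approach exposes the Möbius-composition structure underlying the first formula and collapses the determinant identity to a one-line calculation.
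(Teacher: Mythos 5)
Your proof is correct, but it is worth noting that the paper does not prove this lemma at all: it is invoked as a standard number-theoretic fact with a citation to Chapter 2.1 of \cite{ha}, and used as a black box in the proof of Theorem \ref{thm:main}~(ii). Your transfer-matrix argument supplies the missing details in the standard way: the induction showing
\[
M_0M_1\cdots M_n=\begin{pmatrix} r_n & r_{n-1} \\ s_n & s_{n-1} \end{pmatrix}
\]
is exactly the convergent recursion $r_n=a_nr_{n-1}+r_{n-2}$, $s_n=a_ns_{n-1}+s_{n-2}$ written multiplicatively (your seeds $r_{-1}=1$, $s_{-1}=0$ make the base case check out), the first identity then follows from the correspondence between matrix products and composition of the M\"obius maps $f_k(x)=a_k+1/x$, and the determinant computation $\det(M_0\cdots M_n)=(-1)^{n+1}$ gives $r_ns_{n-1}-r_{n-1}s_n=(-1)^{n+1}$, which after the shift $n\mapsto n+1$ is the stated identity. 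The one point you could make explicit is that the identity $[a_0,\ldots,a_n,z]=(f_0\circ\cdots\circ f_n)(z)$, and hence the displayed formula, requires $z$ to avoid the finitely many values at which an intermediate denominator vanishes (equivalently $zs_n+s_{n-1}\neq 0$); in the paper's application all $a_i$ are positive integers and $z>1$, so this is vacuous. With that caveat noted, your argument is complete and is essentially the proof the cited reference has in mind.
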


The number $\alpha=\frac{54+11\sqrt{30}}{14}=\acc(\beta)$ has the $4$-periodic continued fraction $$\alpha=[\{8,6,4,2\}^{\infty}].$$ The numerators and denominators of the second and third convergents of this continued fraction are $r_2=204,s_2=25,r_3=457,$ and $s_3=56$. Lemma \ref{lem: contfrac} combined with our recurrence relation yields

\[
\frac{u_k}{v_k}=\left[8,6,4,2,\frac{u_{k-2}}{v_{k-2}}\right]=\frac{r_3u_{k-2}+r_2v_{k-2}}{s_3u_{k-2}+s_2v_{k-2}}
\]

\noindent for all $k\geq2$. This can also be written using matrix notation:

\begin{equation}\label{eqn:matrix}
\begin{pmatrix}
u_k \\
v_k
\end{pmatrix}
=
\begin{pmatrix}
r_3 & r_2 \\
s_3 & s_2
\end{pmatrix}
\begin{pmatrix}
u_{k-2} \\
v_{k-2}.
\end{pmatrix}
\end{equation}

We assume $x_j=22x_{j-1}-x_{j-2}$ for $j<k$ and $x_j=u_j, v_j$. By \eqref{eqn:matrix}, we have
\begin{align*}
    u_k&=22u_{k-1}-u_{k-2}
    \\r_3u_{k-2}+r_2v_{k-2}&=22(r_3u_{k-3}+r_2v_{k-3})-(r_3u_{k-4}+r_2v_{k-4}),
\end{align*}
which follows from the inductive hypothesis. Similarly, we may obtain $v_k=22v_{k-1}-v_{k-2}$ from $v_k=s_3u_{k-2}+s_2v_{k-2}$.

\noindent Thus, the sequence of rational numbers $\{\frac{u_k}{v_k}\}$ is determined by the same seeds and the same recurrence relation as the sequence $\{\frac{p_k}{q_k}\}$, as claimed.
\end{proof}

We have also shown:
\begin{cor}\label{cor:pqacc} The limit of the outer corners is
\[
\lim_{k \to \infty}\frac{p_k}{q_k}=\acc(\beta).
\]
\end{cor}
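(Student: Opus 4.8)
The plan is to recast the recursion established in the proof of Theorem~\ref{thm:main}(ii) as the iteration of a single M\"obius transformation and to recognize $\acc(\beta)$ as its attracting fixed point. Indeed, \eqref{eqn:matrix} says precisely that $\frac{p_k}{q_k}=T\!\left(\frac{p_{k-2}}{q_{k-2}}\right)$, where $T(z)=\frac{r_3 z+r_2}{s_3 z+s_2}=\frac{457z+204}{56z+25}$ is the fractional linear map attached to the matrix in \eqref{eqn:matrix}. Thus the even-indexed terms form the orbit $\{T^m(7)\}_{m\geq 0}$ of the seed $\frac{p_0}{q_0}=7$, and the odd-indexed terms form the orbit $\left\{T^m\!\left(\tfrac{155}{19}\right)\right\}_{m\geq 0}$ of the seed $\frac{p_1}{q_1}=\frac{155}{19}$. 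First I would show both orbits converge to a common limit, and then identify that limit as $\acc(\beta)$.

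To locate the fixed point, I would solve $T(z)=z$, which clears to $56z^2-432z-204=0$, i.e. $14z^2-108z-51=0$. Its discriminant is $108^2+4\cdot 14\cdot 51=14520=(22)^2\cdot 30$, so the positive root is $\frac{108+22\sqrt{30}}{28}=\frac{54+11\sqrt{30}}{14}$, which is exactly $\acc(\beta)$ by \eqref{eqn:baccb}. This is the expected answer, since prepending the period $8,6,4,2$ fixes the purely periodic continued fraction $[\{8,6,4,2\}^\infty]=\acc(\beta)$.

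For convergence I would use that $T$ is a sharp contraction on $[7,\infty)$. By the last clause of Lemma~\ref{lem: contfrac}, $r_3s_2-r_2s_3=(-1)^2=1$, so $T'(z)=\frac{1}{(56z+25)^2}$. Both seeds lie in $[7,\infty)$; the map $T$ is increasing and carries $[7,\infty)$ into itself, because $T(7)=\frac{3403}{417}$ and $\lim_{z\to\infty}T(z)=\frac{457}{56}$ are both at least $7$, and there $|T'(z)|\leq \frac{1}{417^2}$. Hence $T$ is a strict contraction of $[7,\infty)$ with unique fixed point $\acc(\beta)$, so by the contraction mapping theorem both parity subsequences converge to $\acc(\beta)$; therefore $\lim_{k\to\infty}\frac{p_k}{q_k}=\acc(\beta)$.

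The computations are entirely elementary, so I do not anticipate a genuine obstacle; the only point requiring a line of care is verifying that $T$ preserves an interval on which it is a strict contraction and which contains both seeds and the fixed point, which is immediate from the tiny derivative bound. Should one prefer to avoid even the contraction estimate, an equally clean route is the standard continued-fraction squeeze: since $\frac{p_k}{q_k}$ agrees with $\acc(\beta)=[\{8,6,4,2\}^\infty]$ in its first (roughly $2k$) partial quotients, it lies between two consecutive convergents of $\acc(\beta)$, and these converge to $\acc(\beta)$ because $|C_n-C_{n-1}|=\frac{1}{s_ns_{n-1}}\to 0$ by Lemma~\ref{lem: contfrac}.
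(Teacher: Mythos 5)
Your proof is correct, and it takes a genuinely different route from the paper's. The paper's own proof is one line: having shown in the proof of Theorem \ref{thm:main}(ii) that $p_k/q_k=\left[8,6,4,2,p_{k-2}/q_{k-2}\right]$, it simply observes that the continued fractions of the $p_k/q_k$ converge to $[\{8,6,4,2\}^\infty]=\acc(\beta)$ --- exactly the ``continued-fraction squeeze'' you sketch in your closing remark, left at the level of a standard fact. Your main argument instead reads \eqref{eqn:matrix} as iteration of the M\"obius map $T(z)=\frac{457z+204}{56z+25}$ and runs a fixed-point argument: the determinant identity $r_3s_2-r_2s_3=1$ from Lemma \ref{lem: contfrac} gives $T'(z)=(56z+25)^{-2}\leq 417^{-2}$ on $[7,\infty)$, that interval is preserved by $T$ and contains both seeds, and the unique fixed point there is computed algebraically (via $14z^2-108z-51=0$) to be $\frac{54+11\sqrt{30}}{14}$, which is $\acc(\beta)$ by \eqref{eqn:baccb}; your arithmetic checks out. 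Both arguments rest on the same prerequisite, namely the identification of $(p_k,q_k)$ with $(u_k,v_k)$ from the proof of Theorem \ref{thm:main}(ii), so neither needs more input than the other. What yours buys: it is quantitative (geometric convergence at rate $417^{-2}$ per double step), it does not appeal to the unproved-in-the-paper fact that agreement of initial partial quotients forces closeness, and it identifies the limit by pure algebra rather than through the expansion $\acc(\beta)=[\{8,6,4,2\}^\infty]$. What the paper's buys: brevity, and it exhibits the outer corners directly as truncations of the accumulation point's continued fraction, which is the structural picture the paper emphasizes. (The paper also notes a third route --- solving the linear recursion of Definition \ref{def:outerclasses} in closed form, as in \cite[Prop.~49]{ICERM} --- distinct from both.)
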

\begin{proof}
    The continued fractions of the ratios $p_k/q_k$ converge to the continued fraction of $\alpha=[\{8,6,4,2\}^\infty]=\acc(\beta)$.
\end{proof}

Corollary \ref{cor:pqacc} may also be proved by solving the recursion in Definition \ref{def:outerclasses}, see \cite[Prop.~49]{ICERM}, however we do not do this here.

\subsection{Inner corners}\label{ssec:inner}

We describe a family of mutations whose existence proves Proposition \ref{prop:ic1} and explains the reasoning behind Conjecture \ref{conj:ic2}. Throughout we will freely use the conventions discussed in Remark~\ref{rmk:labeling}.

The following definition is a version of \cite[Def.~2.1.1]{MMW}. It describes algebraic relations between various classes, which later will be helpful in showing various identities hold that arise in the ATF proofs. 

 \begin{definition}\label{def:Tt}    Two quasi-perfect classes $\bE : = (d,e,p,q,t), \bE': =(d',e',p',q',t')$ 
 are said to be
 {\bf adjacent} if after renaming so that $p/q < p'/q'$ (if necessary), 
 the following relation  holds:
 \[
 (p+q)(p'+q') - tt' = 8 pq'.
 \]
Further,  they are called
 {\bf $t''$-compatible} if
  \[
tt'-4t''= p p' - 3(pq'+qp') + qq', \qquad \mbox{ i.e. }\ \  \bx^T A \bx' = 4t''.
\]
 \end{definition}

The following lemma is from \cite[Lem.~2.1.2]{MMW} about $t$-compatibility and adjacency. It proves how compatibility and adjacency hold throughout a recursive sequence. Note that this proof did not use the $(d,m)$ coordinates used in \cite{MMW} and just uses the $(p,q,t)$ coordinates, and thus, the lemma holds for our classes here.

  \begin{lemma}\label{lem:recur0} \begin{itemlist}\item[{\rm (i)}]
 Suppose that the points $\bx_0:=(p_0,q_0,t_0), \bx_1:=(p_1,q_1,t_1)$ are $t$-compatible for some $t\ge 3$ and  have coordinate by coordinate $ \bx_0<\bx_1$.
Then  $\bx_2: = t\bx_1 -\bx_0 \geq 0$.  Also, $\bx_1<\bx_2$ and the pair $\bx_1,\bx_2$  is $t$-compatible.
Further, if $\bx_0, \bx_1$ are adjacent, so are $\bx_1, \bx_2$.
Thus,  if  $\bE_0, \bE_1$ satisfy $p_0<p_1,q_0<q_1, t_0<t_1$ and are adjacent and  $t$-compatible, then so are the components of all successive pairs in the sequence obtained from $\bE_0, \bE_1$ by $t$-recursion.

\item[{\rm (ii)}]   If $\bE, \bE'$ are adjacent, then they are $t''$-compatible exactly if
 $$
| p'q-pq'| =t''.
 $$
 \end{itemlist}
\end{lemma}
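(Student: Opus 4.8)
The plan is to reduce both parts to linear algebra with the symmetric bilinear form $\bx^T A \bx'$ and the surface equation $\bx^T A \bx = 8$, never invoking the $(d,e)$-coordinates; this is exactly why the argument of \cite[Lem.~2.1.2]{MMW} transfers verbatim to our classes. I would record once the expansion
\[
\bx^T A \bx'=-pp'+3pq'+3qp'-qq'+tt'
\]
and prove part (ii) first, since the identity it produces is the engine for part (i).

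For part (ii), I would start from adjacency. Writing the relation $(p+q)(p'+q')-tt'=8pq'$ (with $p/q<p'/q'$) as an expression for $tt'$ and substituting it into the display above, I expect the quadratic terms to telescope and leave the clean identity $\bx^T A \bx'=4(p'q-pq')$. Since $p/q<p'/q'$ forces $p'q-pq'>0$, this reads $\bx^T A \bx'=4\lvert p'q-pq'\rvert$, and comparing with the definition of $t''$-compatibility ($\bx^T A \bx'=4t''$) yields the stated equivalence $t''=\lvert p'q-pq'\rvert$. This is a one-line substitution once the expansion is in hand.

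For part (i), the compatibility claims are precisely the content of the recursion lemma behind Corollary~\ref{cor:dioph}: expanding $\bx_2^T A \bx_2=(t\bx_1-\bx_0)^T A (t\bx_1-\bx_0)$ and $\bx_1^T A \bx_2$ using $\bx_i^T A \bx_i=8$, the compatibility $\bx_0^T A \bx_1=4t$, and the symmetry of $A$ gives $\bx_2^T A \bx_2=8$ and $\bx_1^T A \bx_2=4t$, so $\bx_1,\bx_2$ is again $t$-compatible. For the ordering, since $t\geq 3$ and $\bx_0<\bx_1$ have positive entries, $(t-1)\bx_1\geq 2\bx_1>\bx_1>\bx_0$ coordinatewise gives $\bx_2=t\bx_1-\bx_0>\bx_1\geq 0$. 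The slope inequality then comes from the determinant computation $p_2q_1-p_1q_2=p_1q_0-p_0q_1$, once I know this common value is positive.

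The crux is preservation of adjacency, and here part (ii) does the work. Applying it to the adjacent, $t$-compatible pair $\bx_0,\bx_1$ gives $p_1q_0-p_0q_1=t$ (positive since $p_0/q_0<p_1/q_1$), which simultaneously confirms $p_1/q_1<p_2/q_2$ above and fixes the orientation for the asymmetric adjacency relation. I would then compute the adjacency defect
\[
\Delta:=(p_1+q_1)(p_2+q_2)-t_1t_2-8p_1q_2
\]
directly, substituting $p_2+q_2=t(p_1+q_1)-(p_0+q_0)$, $t_2=tt_1-t_0$, and $q_2=tq_1-q_0$. The coefficient of $t$ collapses to $(p_1+q_1)^2-t_1^2-8p_1q_1$, which equals $-8$ by the surface equation $\bx_1^T A \bx_1=8$; the surviving terms reorganize, via the adjacency of $\bx_0,\bx_1$, into $8(p_1q_0-p_0q_1)$, so that $\Delta=-8t+8(p_1q_0-p_0q_1)=0$ by the identity just proved. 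This establishes adjacency of $\bx_1,\bx_2$, and the closing ``Thus$\dots$'' clause follows by induction from the base pair $\bE_0,\bE_1$. The only genuine obstacle is bookkeeping: keeping the $p/q<p'/q'$ orientation consistent so the asymmetric adjacency relation is applied with the correct class in the unprimed slot, and spotting the two decisive simplifications, namely the surface equation collapsing the $t$-coefficient to $-8$ and part (ii) identifying the cross-determinant $p_1q_0-p_0q_1$ with $t$.
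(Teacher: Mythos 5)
Your proof is correct; the main difference from the paper is that the paper never actually proves this lemma --- it quotes it from \cite[Lem.~2.1.2]{MMW}, adding only the remark that the proof there uses the $(p,q,t)$-coordinates rather than the $(d,m)$-coordinates and therefore transfers to the present classes. Your proposal carries out precisely the computation that remark promises, and the three steps that carry the argument all check out: the expansion $\bx^TA\bx'=-pp'+3pq'+3qp'-qq'+tt'$ combined with the adjacency relation telescopes to $\bx^TA\bx'=4(p'q-pq')$, which is part (ii); bilinearity together with $\bx_i^TA\bx_i=8$ and $\bx_0^TA\bx_1=4t$ gives $\bx_2^TA\bx_2=8t^2-8t^2+8=8$ and $\bx_1^TA\bx_2=8t-4t=4t$; and in the adjacency defect the coefficient of $t$ is $(p_1+q_1)^2-t_1^2-8p_1q_1=-\bx_1^TA\bx_1=-8$, while the remaining bracket equals $8p_0q_1-8p_1q_0=-8t$ by part (ii), so $\Delta=0$. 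What your version buys is a self-contained verification, inside this paper, of exactly the transferability claim the authors assert; what the paper's citation buys is brevity. Two pieces of bookkeeping deserve an explicit sentence in a final write-up. First, the surface equation $\bx_i^TA\bx_i=8$ is not listed among the lemma's hypotheses, but it holds automatically because $t$ is defined by $t^2=p^2+q^2-6pq+8$ for quasi-perfect classes; note also that your explicit check $\bx_2^TA\bx_2=8$ (not just $\bx_1^TA\bx_2=4t$) is what allows the induction in the final clause to continue. Second, you apply adjacency of $\bx_0,\bx_1$ in the orientation $p_0/q_0<p_1/q_1$, which the coordinatewise hypothesis $\bx_0<\bx_1$ does not formally guarantee; in the opposite orientation the mirror computation gives $p_2/q_2<p_1/q_1$ and the adjacency equation with $8p_2q_1$ on the right-hand side, which vanishes by the same two simplifications, so this is a case split to record rather than a gap.
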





Recall that $\bE=(d,e,p,q,t)=(17,6,41,5,22).$ 
The following lemma comes from \cite[Lem.~4.6]{M1}.
\begin{lemma}\label{lem:identities}
For the classes $\bE_\la,\bE_\mu,\bE_\rho$ where we have $(\bE_\la,\bE_\mu,\bE_\rho):=(\bE_k,\bE_{k+1},\bE)$ or $(\bE_\la,\bE_\mu,\bE_\rho):=(\bE_k,\hat{\bE}_{k+1},\bE_{k+1})$, the following identities hold:
\begin{itemize}  \item[{\rm (i)}] $p_\la+q_\la=q_\mu t_\rho-q_\rho t_\mu$ and $7p_\la-q_\la=p_\mu t_\rho-t_\mu p_\rho$
\item[{\rm (ii)}] $p_\rho+q_\rho=p_\mu t_\la-p_\la t_\mu$ and $p_\rho-7q_\rho=q_\la t_\mu-q_\mu t_\la$
\item[{\rm (iii)}] $p_\mu+q_\mu=q_\rho t_\la+p_\la t_\rho$, $7p_\mu-q_\mu=6 p_\la t_\rho+p_\rho t_\la-q_\la t_\rho$, and \newline $7q_\mu-p_\mu=6q_\rho t_\la+q_\la t_\rho-p_\rho t_\la$
\item[{\rm (iv)}] $p_\la(p_\rho-6q_\rho)+q_\la q_\rho=t_\mu$
\item[{\rm (v)}] $q_\la t_\la+q_\rho t_\rho+q_\mu t_\mu=q_\mu t_\la t_\rho$
\item[{\rm (vi)}] $t_\la\begin{pmatrix} 1+p_\mu q_\mu-6q_\mu^2 \\ q_\mu^2 \end{pmatrix}=q_{x\mu}\begin{pmatrix} p_\mu-6q_\mu \\ q_\mu \end{pmatrix}+q_\mu \begin{pmatrix} p_\rho-6q_\rho \\ q_\rho \end{pmatrix}$ 
\item[{\rm (vii)}]\label{item:XV} $-t_\rho \begin{pmatrix} -q_\mu^2 \\ q_\mu p_\mu-1 \end{pmatrix}=q_{y\mu}\begin{pmatrix} q_\mu \\ -p_\mu \end{pmatrix}+q_\mu \begin{pmatrix} q_\la \\ -p_\la \end{pmatrix}$
\end{itemize}
\end{lemma}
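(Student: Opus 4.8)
The plan is to verify that each of the two triples meets the hypotheses of \cite[Lem.~4.6]{M1}, whose proof (as noted in the preceding remark) uses only the $(p,q,t)$-coordinates and the ambient quadric $\bx^T A\bx=8$, so it transfers verbatim to the classes $\bE_k,\hat\bE_k,\bE$ defined here. Concretely, every identity (i)--(vii) is a polynomial relation among the coordinates of three quasi-perfect classes that are \emph{pairwise adjacent} and \emph{pairwise compatible} in the sense of Definition \ref{def:Tt}; once those relations are in hand, each identity reduces to a $\Z$-linear combination of the quadric $\bx^T A\bx=8$ for each class, the compatibility equations $\bx^T A\bx'=4t''$, and the linear Diophantine relation $2(d+e)=p+q$. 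The coefficient $6$ that recurs is exactly the $6pq$ term of the quadric, while the explicit constants $41,5$ come from the fixed class $\bE=(17,6,41,5,22)$ and the $7$ from the seed $\bE_0$ (whose center is $7/1$).

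First I would record the input data for each triple. For $(\bE_\la,\bE_\mu,\bE_\rho)=(\bE_k,\bE_{k+1},\bE)$: the pair $(\bE_k,\bE_{k+1})$ is adjacent and $22$-compatible by Lemma \ref{lem:recur0}(i) applied to the recursion of Definition \ref{def:outerclasses} with base pair $\bE_0,\bE_1$; and each of $(\bE_k,\bE)$, $(\bE_{k+1},\bE)$ is adjacent and $t_{k-1}$- respectively $t_k$-compatible, which is precisely relation \eqref{eq:nuCompat} as checked in the proof of Proposition \ref{prop:oc} and propagated in $k$ by Corollary \ref{cor:dioph}. For $(\bE_\la,\bE_\mu,\bE_\rho)=(\bE_k,\hat\bE_{k+1},\bE_{k+1})$, I use that $\hat\bE_{k+1}=t_k\bE_{k+1}-\bE$ is the $t_k$-recursion successor of the pair $(\bE,\bE_{k+1})$, which is adjacent, coordinatewise ordered, and $t_k$-compatible; Lemma \ref{lem:recur0}(i) then yields adjacency and $t_k$-compatibility of $(\bE_{k+1},\hat\bE_{k+1})$, and $\bx_{\bE_k}^TA\bx_{\hat\bE_{k+1}}=t_k\cdot 88-4t_{k-1}=4t_{k+1}$ shows $(\bE_k,\hat\bE_{k+1})$ is $t_{k+1}$-compatible. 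The key conversion throughout is Lemma \ref{lem:recur0}(ii): for an adjacent pair, $t''$-compatibility is equivalent to the determinant relation $|p'q-pq'|=t''$, which replaces each compatibility constant by a signed $2\times 2$ minor.

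With this data I would dispatch the scalar identities (i)--(v) one at a time by expanding and collecting. For instance, (iv) in the second triple unwinds, using $\hat t_{k+1}=t_kt_{k+1}-22$ and the adjacency relation $(p_k+q_k)(p_{k+1}+q_{k+1})-t_kt_{k+1}=8p_kq_{k+1}$, to the single minor relation $p_{k+1}q_k-p_kq_{k+1}=22$ (positive since $p_k/q_k<p_{k+1}/q_{k+1}$), which is exactly Lemma \ref{lem:recur0}(ii); in the first triple the same identity collapses instead to the linear relation $11p_k+5q_k=t_{k+1}$. The vector identities (vi) and (vii) I would prove componentwise, each coordinate being a scalar identity of the same shape. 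A shortcut available for the first triple is that $\bE$, hence $t_\rho=22,\ p_\rho=41,\ q_\rho=5$, is constant, so every term of each identity is a $\Z$-linear combination of the sequences $p_k,q_k,t_k$; since these obey the common recursion $x_k=22x_{k-1}-x_{k-2}$, it then suffices to check the identity at $k=0,1$ and invoke induction.

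The main obstacle is the second triple. There $\bE_\rho=\bE_{k+1}$ varies with $k$ and $\bE_\mu=\hat\bE_{k+1}$ is \emph{quadratic} in the $k$-data through the product $t_k\bE_{k+1}$, so the linear two-base-case shortcut fails and each identity must be reduced genuinely through the adjacency relation, the quadric, and the minor relations of Lemma \ref{lem:recur0}(ii). The delicate bookkeeping is to keep straight which compatibility constant ($22$, $t_{k-1}$, $t_k$, or $t_{k+1}$) attaches to which pair, and to confirm that the signs of the minors $p'q-pq'$ are those forced by the ordering of centers in Lemma \ref{lem:alternate}---in particular that $\bE$ has the \emph{largest} center $41/5$, so its minors against $\bE_k$ and $\bE_{k+1}$ flip sign relative to the minors among consecutive $\bE_j$. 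Once these signs and constants are pinned down, every identity becomes a routine, if lengthy, algebraic verification.
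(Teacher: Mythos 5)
Your overall strategy is the same as the paper's: both arguments reduce the seven identities to the statement that the two pairs $(\bE_\la,\bE_\mu)$ and $(\bE_\rho,\bE_\mu)$ are adjacent and ($t_\rho$- resp.\ $t_\la$-) compatible in the sense of Definition \ref{def:Tt}, and then transfer \cite[Lem.~4.6]{M1}, whose proof uses only the $(p,q,t)$-coordinates. For the first triple your verification matches the paper's (induction via Lemma \ref{lem:recur0} and the computations in Proposition \ref{prop:oc}). For the second triple you diverge in a useful way: the paper simply cites \cite[Prop.~2.1.9]{MMW}, whereas you derive the needed facts directly, viewing $\hat\bE_{k+1}=t_k\bE_{k+1}-\bE$ as the $t_k$-recursion successor of the pair $(\bE,\bE_{k+1})$ and applying Lemma \ref{lem:recur0}(i), together with the bilinear computation $\bx_{\bE_k}^TA\bx_{\hat\bE_{k+1}}=88t_k-4t_{k-1}=4t_{k+1}$. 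That computation is correct, and this route is more self-contained than the paper's.

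There are, however, two concrete gaps. First, the reduction requires $(\bE_k,\hat\bE_{k+1})$ to be both $t_{k+1}$-compatible \emph{and adjacent}; your bilinear computation delivers only the compatibility, and adjacency of this pair is never addressed. It does follow from facts within your reach: expanding $(p_k+q_k)(\hat p_{k+1}+\hat q_{k+1})-t_k\hat t_{k+1}-8p_k\hat q_{k+1}$ via $\hat\bE_{k+1}=t_k\bE_{k+1}-\bE$ and using adjacency of $(\bE_k,\bE_{k+1})$ reduces it exactly to adjacency of $(\bE_k,\bE)$ --- but this step must actually be carried out. Second, you justify adjacency of the pairs involving $\bE$ (namely $(\bE_k,\bE)$ and $(\bE,\bE_{k+1})$) by appeal to \eqref{eq:nuCompat}, Proposition \ref{prop:oc}, and Corollary \ref{cor:dioph}; those establish only \emph{compatibility}. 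Adjacency is a separate relation, and it is also an input you need in order to apply Lemma \ref{lem:recur0}(i) to the pair $(\bE,\bE_{k+1})$, since that lemma propagates adjacency only if the base pair is adjacent. The fix is short: with $\bE=(41,5,22)$ fixed and the ordering $p_k/q_k<41/5$ from Lemma \ref{lem:alternate}, the adjacency relation $46(p_k+q_k)-22t_k=40p_k$ is linear in $(p_k,q_k,t_k)$, so it propagates through the recursion once checked at $k=0,1$. With these two additions your plan goes through.
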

\begin{proof}
These identities are a reformulation of the recursion 
compatibility and adjacency equations proven in \cite[Lem.~4.6]{M1} using the facts that:
\begin{itemize}
    \item $\bE_\la$ and $\bE_\mu$ are $t_\rho$-compatible and adjacent
    \item $\bE_\rho$ and $\bE_\mu$ are $t_\la$-compatible and adjacent,
\end{itemize}
which can be proved for the triple $(\bE_k,\bE_{k+1},\bE)$ by induction using Lemma \ref{lem:recur0} and for the triple $(\bE_k,\hat\bE_{k+1},\bE_{k+1})$ using \cite[Prop.~2.1.9]{MMW}. Note that in the proof of this proposition, the $(d;m)$ coordinates used in \cite{MMW} are not needed, and only the properties of $(p,q,t)$ were needed which are the same coordinates we are using here.
\end{proof}

The first sequence of mutations we consider is $v^2yxy^k$. Note, we found this sequence by adapting an analogous case found by Magill in \cite[Prop.~3.9]{M1}.\footnote{In \cite{M1}, the mutation sequences are instead written from right to left.} We show in Proposition \ref{prop:fullacc} that this sequence gives embeddings $(1-\eps)\cdot E(1,z_k)\sembeds P(\vol(\beta),\vol(\beta)\beta)$\footnote{Recall that $\vol(\beta)=\vol_\beta(\acc(\beta))$.} for a sequence $z_k$ such that $\lim_{k \to \infty} z_k=\acc(\beta)$. Each of the points $(z_k,\vol(\beta))$ lie strictly above the embedding function.

Then, for each $k$, we will perform several additional mutations that provide embeddings $(1-\eps)\cdot E(1,z)\sembeds P(\lambda,\lambda \beta)$ where $(z,\lambda)$ does lie on the graph of the embedding function: specifically, at the inner corners between the obstructions from $\bE_k$ and $\hat\bE_{k+1}$, proving Proposition \ref{prop:ic1}.

The effects of the successive mutations in the sequence $v^2y^2$ are illustrated in Figure \ref{fig:vvyx}.

\begin{figure}
    \subfigure[$v$]{
         \centering
         \includegraphics[height=2.5cm]{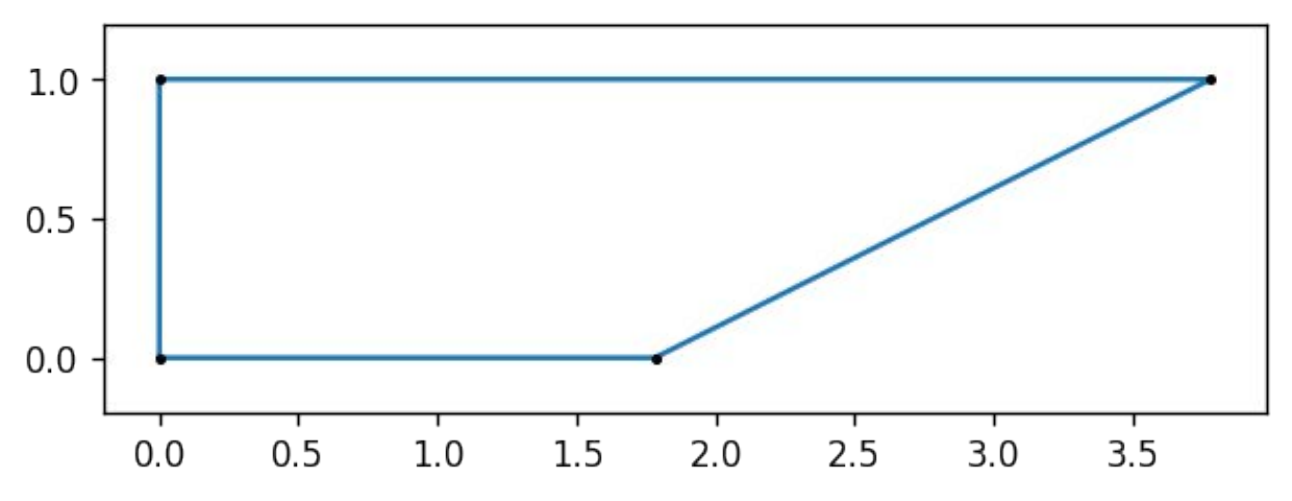}
     }\hfil
    \subfigure[$v^2$]{
         \centering
         \includegraphics[height=2.45cm]{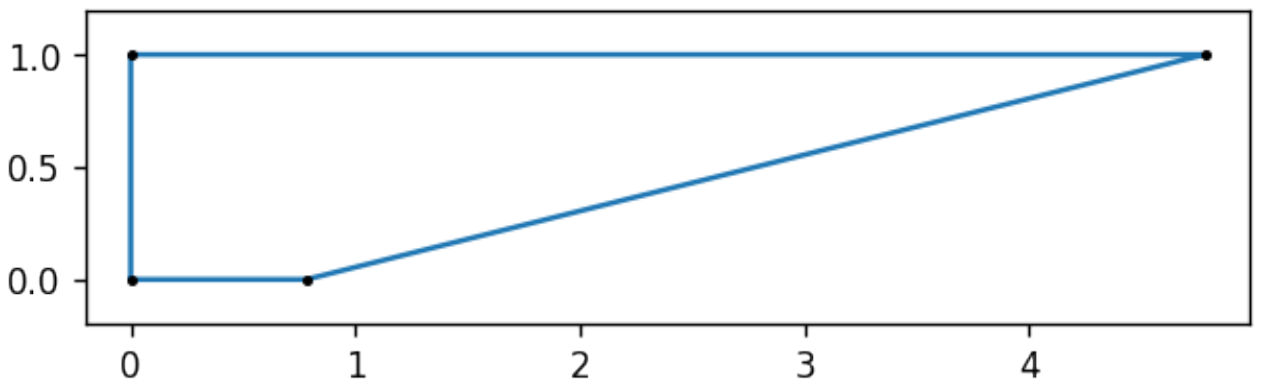}
         }
     \hfil
     \subfigure[$v^2y$]{
         \centering
         \includegraphics[height=1.8cm]{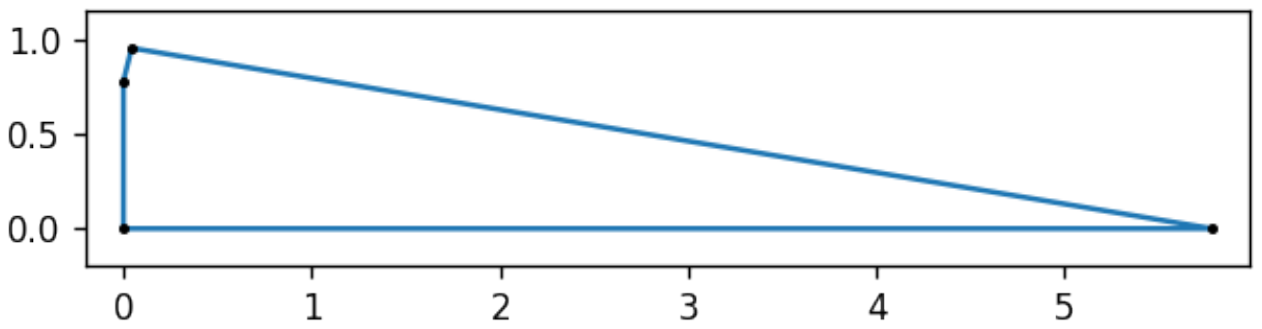}
     }\hfil
     \subfigure[$v^2y^2$]{
         \centering
         \includegraphics[height=1.8cm]{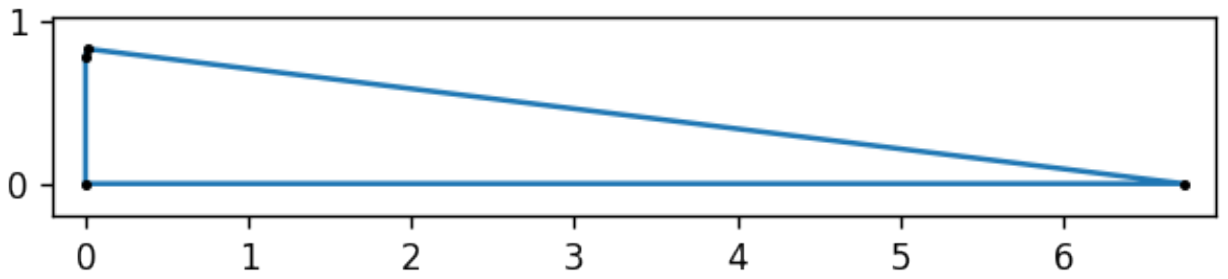}
     }\hfil
     \caption{An illustrative example of the mutation sequence $v^2y^2$, where each figure represents the polygon $\Omega_\beta$ after one step of mutation. Figure $(c)$ and $(d)$ have their axes reflected: the correct figures are the ones displayed with $z$ and $\lambda$ switched. Already it is clear that more mutations by $y$ would cut the edge $XV$ shorter and shorter. We do not include the mutation by $x$ here, even though the actual sequence considered is $v^2yxy$, because its effect would be very difficult to see at this scale.}
     \label{fig:vvyx}
\end{figure}

We will frequently use the following simplification of $\vol(\beta)$.
\begin{lemma}\label{lem:volbaccb} We have the relation
    \[
    \frac{1}{\vol(\beta)}=-1+\frac{\sqrt{30}}{3}=\frac{4\beta-7}{5}.
    \]
\end{lemma}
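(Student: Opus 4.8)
The plan is to reduce everything to the explicit values $\beta=\frac{6+5\sqrt{30}}{12}$ and $\acc(\beta)=\frac{54+11\sqrt{30}}{14}$ recorded in \eqref{eqn:baccb}, and then verify two elementary identities in $\Q(\sqrt{30})$. First I would unwind the definitions: since $\vol(\beta)=\vol_\beta(\acc(\beta))=\sqrt{\acc(\beta)/(2\beta)}$, we have
\[
\frac{1}{\vol(\beta)}=\sqrt{\frac{2\beta}{\acc(\beta)}},
\]
so the lemma splits into the two claims $\sqrt{2\beta/\acc(\beta)}=-1+\frac{\sqrt{30}}{3}$ and $-1+\frac{\sqrt{30}}{3}=\frac{4\beta-7}{5}$.

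The rightmost equality is pure substitution, and I would dispatch it first: from $\beta=\frac{6+5\sqrt{30}}{12}$ one gets $4\beta=2+\frac{5\sqrt{30}}{3}$, hence $4\beta-7=-5+\frac{5\sqrt{30}}{3}=5\left(-1+\frac{\sqrt{30}}{3}\right)$, which yields $\frac{4\beta-7}{5}=-1+\frac{\sqrt{30}}{3}$ immediately.

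For the remaining equality I would square the candidate value to remove the radical: $\left(-1+\frac{\sqrt{30}}{3}\right)^2=\frac{13-2\sqrt{30}}{3}$, which is positive (numerically $\approx 0.68$), as is $-1+\frac{\sqrt{30}}{3}\approx 0.83$, so matching squares suffices to match the quantities themselves. It then remains to check $\frac{2\beta}{\acc(\beta)}=\frac{13-2\sqrt{30}}{3}$. Substituting the explicit values gives $\frac{2\beta}{\acc(\beta)}=\frac{7(6+5\sqrt{30})}{3(54+11\sqrt{30})}=\frac{42+35\sqrt{30}}{162+33\sqrt{30}}$, and the desired identity reduces, upon cross-multiplying, to $3(42+35\sqrt{30})=(13-2\sqrt{30})(162+33\sqrt{30})$. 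Expanding the right-hand side using $(\sqrt{30})^2=30$ gives $2106-1980+(429-324)\sqrt{30}=126+105\sqrt{30}$, matching the left-hand side $126+105\sqrt{30}$.

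The computation presents no serious obstacle; everything lives in $\Q(\sqrt{30})$ and comes down to comparing rational parts and $\sqrt{30}$-coefficients. The only points needing a word of care are (a) the choice of the positive square root, justified by the positivity noted above, and (b) the legitimacy of the closed form for $\acc(\beta)$, which one confirms by substituting $z=\frac{54+11\sqrt{30}}{14}$ into the accumulation-point quadratic \eqref{eqn:accpteqn}; alternatively, one could bypass the explicit form of $\acc(\beta)$ altogether by writing $\acc(\beta)=2\beta/\vol(\beta)^2$ and substituting into \eqref{eqn:accpteqn}, but direct substitution is the shortest route.
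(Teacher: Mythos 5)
Your proof is correct, but it takes a different route from the paper's. The paper first invokes the general identity $\vol(\beta)=\frac{1+\acc(\beta)}{2+2\beta}$ (following the method of \cite[Lem.~2.2.7]{symm}, with the affine perimeter $2+2\beta$ playing the role of $3-b$); this identity is just the accumulation-point quadratic \eqref{eqn:accpteqn} rewritten as $(1+z)^2=\frac{\mathrm{per}(\Omega_\beta)^2\, z}{2\,\mathrm{area}(\Omega_\beta)}$, and it has the effect of rationalizing the square root in the definition of $\vol$. With that linear relation in hand, the first equality of the lemma is a single substitution of the explicit values of $\beta$ and $\acc(\beta)$, with no radicals to manage. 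You instead work directly from the definition $\vol(\beta)=\sqrt{\acc(\beta)/(2\beta)}$, square the candidate value $-1+\tfrac{\sqrt{30}}{3}$, and verify the resulting identity in $\Q(\sqrt{30})$ by cross-multiplication, together with the (necessary and correctly supplied) positivity check that legitimizes matching square roots. Your computations are all accurate, and both arguments ultimately rest on the same input, namely that $\acc(\beta)=\frac{54+11\sqrt{30}}{14}$ solves \eqref{eqn:accpteqn}, which you flag in your point (b) and which the paper asserts at \eqref{eqn:baccb}. What your approach buys is self-containedness: no appeal to \cite{symm} is needed. What the paper's approach buys is brevity and reusability: the relation $\vol(\beta)=\frac{1+\acc(\beta)}{2+2\beta}$ holds for every $\beta$, sidesteps all sign considerations, and is reused later (e.g.\ in Remark \ref{prop:stepafteracc}), so deriving it once is the more economical choice in the context of the paper.
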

\begin{proof}
    Following the method of proof of \cite[Lem.~2.2.7]{symm} and replacing $3-b$ with the affine perimeter in our case, which is $2+2\beta$, for any value of $\beta$ we have
    \[
    \vol(\beta)=\frac{1+\acc(\beta)}{2+2\beta}.
    \]
    With $\beta=(6+5\sqrt{30})/12$ and $\acc(\beta)=\frac{54+11\sqrt{30}}{14}$, we simplify
    \[
    \frac{1}{\vol(\beta)}=\frac{2+2\frac{6+5\sqrt{30}}{12}}{1+\frac{54+11\sqrt{30}}{14}}=-1+\frac{\sqrt{30}}{3}.
    \]
    This proves the first equality. The second is a simple computation.
\end{proof}
\begin{rmk}\label{rmk:d'e'} 

The conclusion of Lemma \ref{lem:volbaccb} is similar to \cite[Lem.~5.1~(iii)]{M1} where we find that in the case of the Hirzebruch surface, if to the right of the accumulation the function $c_\beta(z)$ is given by a class $\bE=(d,m,p,q,t)$, then
\[ \vol(b)=\frac{q}{(m-q)b-(d-3q)},\]
where $\vol(b)=\vol_b(\acc_H(b))$, noting that the volume obstruction $\vol_b(z)$ has the formula $\sqrt{z/(1-b^2)}$ when the target is $H_b$. In our case, with $\bE=(17,6,41,5,22)$, $\beta=(6+5\sqrt{30})/12$, and our definition of $\vol$, we have 
\begin{equation}\label{eqn:qe'd'}
\vol(\beta)=\frac{5}{4\beta-7}=\frac{q}{(2q-e)\beta+(2q-d)}.
\end{equation}

\end{rmk}

We compute the result of the first four mutations, illustrated in Figure \ref{fig:vvyx}.
\begin{lemma}\label{lem:vvyx} After performing the sequence of mutations $v^2yx$ to the diagram $\Omega_\beta$, the nodal rays are
\[
\vec{n}_Y=\begin{pmatrix}1\\-7\end{pmatrix},\quad \vec{n}_V=\begin{pmatrix}-3\\-1\end{pmatrix},\quad \vec{n}_X=\begin{pmatrix}11\\5\end{pmatrix},
\]
the direction vectors are
\[
\overrightarrow{OY}=\begin{pmatrix}0\\1\end{pmatrix},\quad \overrightarrow{OX}=\begin{pmatrix}1\\0\end{pmatrix},\quad \overrightarrow{YV}=\begin{pmatrix}1\\-6\end{pmatrix},\quad \overrightarrow{XV}=\begin{pmatrix}56\\25\end{pmatrix},
\]
and the affine lengths are
\[
|OY|=3+\beta,\quad |OX|=\frac{1}{\vol(\beta)},\quad |YV|=\frac{7+4\beta}{19},\quad |XV|=\frac{3-\beta}{95}.
\]
\end{lemma}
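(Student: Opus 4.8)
The plan is to prove the lemma by directly iterating the combinatorial mutation rules of \S\ref{ssec:ATFdefs}, tracking at each step the vertices, edge direction vectors, nodal rays, and affine lengths. First I would fix the initial data: starting from the Delzant rectangle $\Omega_\beta$ with $O=(0,0)$, $X=(1,0)$, $Y=(0,\beta)$, $V=(1,\beta)$, a nodal trade at each of $X$, $Y$, $V$ produces the nodal rays $\vec n_X=(-1,1)$, $\vec n_Y=(1,-1)$, $\vec n_V=(-1,-1)$, since each is the sum $\overrightarrow{E}+\overrightarrow{F}$ of the two primitive outgoing edge vectors at that vertex. The initial direction vectors are $\overrightarrow{OX}=(1,0)$, $\overrightarrow{OY}=(0,1)$, $\overrightarrow{XV}=(0,1)$, $\overrightarrow{YV}=(1,0)$, with affine lengths $|OX|=1$, $|OY|=\beta$, $|XV|=\beta$, $|YV|=1$.

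The second step is to set up the mutation bookkeeping. For a primitive nodal direction $\mathbf v=(a,b)^T$, the mutation is realized by the transvection
\[
M_{\mathbf v}=\begin{pmatrix} 1-ab & a^2\\ -b^2 & 1+ab\end{pmatrix}\in SL_2(\Z)
\]
(or its inverse, depending on the orientation of the branch cut), which fixes $\mathbf v$. Following \S\ref{ssec:ATFdefs}, each mutation at a vertex $A$ leaves the region containing $O$ unchanged and applies $M_{\vec n_A}$ to the complementary region; it relabels vertices per Remark~\ref{rmk:labeling}, negates the nodal ray at $A$, leaves the nodal rays in the fixed region alone, and applies $M_{\vec n_A}$ to those in the moved region. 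Because each mutation is realized by an element of $ASL_2(\Z)$, affine lengths are preserved under an individual mutation; the affine lengths in the final diagram differ from the initial ones only because the identities of the four bounding edges change as successive mutations cut and recombine them.

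With these rules in place, I would carry out the four mutations $v$, $v$, $y$, $x$ in order. At each stage I determine the line spanned by the current nodal ray at the mutation vertex, decide which side contains $O$, apply the appropriate transvection to the other side, and read off the updated vertices, primitive edge directions, and nodal rays; the affine length of each new edge is then computed by normalizing the difference of its endpoints against its primitive direction. Iterating through $v^2yx$ should yield the claimed $\vec n_Y=(1,-7)$, $\vec n_V=(-3,-1)$, $\vec n_X=(11,5)$, the direction vectors $\overrightarrow{OY}=(0,1)$, $\overrightarrow{OX}=(1,0)$, $\overrightarrow{YV}=(1,-6)$, $\overrightarrow{XV}=(56,25)$, and affine lengths $|OY|=3+\beta$, $|OX|=(4\beta-7)/5$, $|YV|=(7+4\beta)/19$, $|XV|=(3-\beta)/95$, where Lemma~\ref{lem:volbaccb} rewrites $(4\beta-7)/5$ as $1/\vol(\beta)$.

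The main obstacle is not any single hard idea but keeping the conventions internally consistent across four compositions: the sign of each transvection, the correct choice of the origin-containing half at each nodal line, and the affine-length normalization of the now-slanted edges. Two independent checks guard against bookkeeping errors. First, the four final edges must close up, so I would verify the identity \eqref{eqn:4addzero}; a direct substitution shows both components balance, with the $x$-components equal to $(7+4\beta)/19$ and the $y$-components equal to $5(3-\beta)/19$. Second, the value $|OX|=1/\vol(\beta)$ can be cross-checked against the formula for $\vol(\beta)$ recorded in Remark~\ref{rmk:d'e'}. Passing both checks confirms that the computed diagram is the correct output of the sequence $v^2yx$.
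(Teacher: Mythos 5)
Your overall strategy is the same as the paper's (iterate the mutation rules, tracking vertices, edge directions, nodal rays, and affine lengths), but your initial condition is transposed, and this breaks the computation. You place the rectangle as $[0,1]\times[0,\beta]$, i.e.\ $X=(1,0)$, $Y=(0,\beta)$, $|OX|=1$, $|OY|=\beta$. The data claimed in the lemma is computed with the opposite placement, $X=(\beta,0)$, $Y=(0,1)$, $V=(\beta,1)$, so that initially $|OX|=\beta$ and $|OY|=1$; this is the convention of the paper's proof and of the initialization code in \S\ref{sec:code}. (The paper's prose description of $\Omega_\beta$ is admittedly ambiguous, but the lemma's claimed output forces this convention.) The difference is not cosmetic, because the word $v^2yx$ names mutations at the specific vertices $X$ and $Y$, these play asymmetric roles, and mutations at $X$ and $Y$ do not commute here. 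Concretely, with your placement the ray $\vec{n}_V=(-1,-1)$ from $V=(1,\beta)$ exits through $OY$ at $(0,\beta-1)$ rather than through $OX$, and your whole computation becomes the reflection across $y=x$ of the sequence $v^2xy$ carried out in the paper's convention. If you run your four steps faithfully from your stated initial data you get, after $v^2$, the polygon with vertices $(0,0),(1,0),(1,\beta+2),(0,\beta-2)$; the $y$-mutation then cuts from $(0,\beta-2)$ to $(1,\beta+1)$, the $x$-mutation cuts from $(1,0)$ to $(0,1)$, and the final polygon has vertices $(0,0),(0,1),(\beta-2,1),(\beta+2,0)$, i.e.\ $|OY|=1$, $|YV|=\beta-2$, $|XV|=1$, $|OX|=\beta+2$ --- not the claimed $|OY|=3+\beta$, $|OX|=(4\beta-7)/5$, $|YV|=(7+4\beta)/19$, $|XV|=(3-\beta)/95$. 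The geometric fork your setup misses is at the third mutation: in the correct convention one has $|OX|=\beta-2<1$ after $v^2$, so the ray $\vec{n}_Y=(1,-1)$ from $(0,1)$ hits the slanted side $XV$ rather than the $x$-axis, and it is exactly this event that produces the denominators $19$ and $95$.

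Relatedly, your proposal never actually performs the computation --- the assertion that iterating $v^2yx$ ``should yield'' the claimed values is the entire content of the lemma --- and the safeguards you propose cannot catch the error above. Closure \eqref{eqn:4addzero} and area preservation hold for the output of any legitimate mutation sequence, in particular for the wrong polygon your setup actually produces; and comparing $|OX|$ with $1/\vol(\beta)$ via Lemma~\ref{lem:volbaccb} tests the claimed data, not your derivation of it (though note that this check would in fact fail for your output, since $\beta+2\neq 1/\vol(\beta)$, which is how you would have discovered the problem had you run the steps). The repair is to start from $X=(\beta,0)$, $Y=(0,1)$, $V=(\beta,1)$ with $\vec{n}_Y=(1,-1)$, $\vec{n}_V=(-1,-1)$, $\vec{n}_X=(-1,1)$, and then record the four intermediate diagrams explicitly as the paper does; your transvection bookkeeping (each mutation matrix is the primitive shear fixing the nodal direction, up to replacing $M$ by $M^{-1}$ according to which side moves) is otherwise consistent with the paper's matrices.
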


\begin{proof}
The diagram $\Omega_\beta$ has nodal rays
\[
\vec{n}_Y=\begin{pmatrix}1\\-1\end{pmatrix},\quad \vec{n}_V=\begin{pmatrix}-1\\-1\end{pmatrix},\quad \vec{n}_X=\begin{pmatrix}-1\\1\end{pmatrix},
\]
direction vectors
\[
\overrightarrow{OY}=\overrightarrow{XV}=\begin{pmatrix}0\\1\end{pmatrix},\quad \overrightarrow{OX}=\overrightarrow{YV}=\begin{pmatrix}1\\0\end{pmatrix},
\]
and affine lengths
\[
|OY|=|XV|=1,\quad |OX|=|YV|=\beta.
\]

\noindent\textit{Step 1: first mutation at $V$.} The nodal ray $\vec{n}_V$ hits the side $OX$ at $(\beta-1,0)$, giving us the affine lengths
\[
|OY_v|=|X_vV_v|=1,\quad |OX_v|=\beta-1,\quad |Y_vV_v|=\beta+1.
\]

The matrix $M$ for mutation at $V$ must satisfy
\[
M\vec{n}_V=\vec{n}_V,\quad M\overrightarrow{VX}=\overrightarrow{YV} \iff M=\begin{pmatrix}2&-1\\1&0\end{pmatrix}.
\]
Thus the result of a $V$-mutation has nodal rays
\[
\vec{n}_{Y_v}=\vec{n}_Y=\begin{pmatrix}1\\-1\end{pmatrix},\quad \vec{n}_{V_v}=M\vec{n}_X=\begin{pmatrix}-3\\-1\end{pmatrix},\quad \vec{n}_{X_v}=-\vec{n}_V=\begin{pmatrix}1\\1\end{pmatrix}.
\]

The unchanged direction vectors are
\[
\overrightarrow{OY_v}=\begin{pmatrix}0\\1\end{pmatrix},\quad \overrightarrow{OX_v}=\overrightarrow{Y_vV_v}=\begin{pmatrix}1\\0\end{pmatrix},
\]
and
\[
    \overrightarrow{X_vV_v}=M\overrightarrow{OX}=\begin{pmatrix}2\\1\end{pmatrix}.
\]

\noindent\textit{Step 2: second mutation at $V$.} We now replace each result $A_v$ of Step 1 with $A$ so that we do not have to stack subscripts. The nodal ray $\vec{n}_V=(-3,-1)$ hits the side $OX$ at $(b+1,1)+(-3,-1)=(b-2,0)$, giving us the affine lengths
\[
|OY_v|=|X_vV_v|=1,\quad |OX_v|=\beta-2,\quad |Y_vV_v|=\beta+2.
\]

The mutation matrix $M$ must satisfy
\[
M\vec{n}_V=\vec{n}_V,\quad M\overrightarrow{VX}=\overrightarrow{YV} \iff M=\begin{pmatrix}4&-9\\1&-2\end{pmatrix}.
\]
Thus the nodal rays are
\[
\vec{n}_{Y_v}=\vec{n}_Y=\begin{pmatrix}1\\-1\end{pmatrix},\quad \vec{n}_{V_v}=M\vec{n}_X=\begin{pmatrix}-5\\-1\end{pmatrix},\quad \vec{n}_{X_v}=-\vec{n}_V=\begin{pmatrix}3\\1\end{pmatrix}.
\]

The unchanged direction vectors are
\[
\overrightarrow{OY_v}=\begin{pmatrix}0\\1\end{pmatrix},\quad \overrightarrow{OX_v}=\overrightarrow{Y_vV_v}=\begin{pmatrix}1\\0\end{pmatrix},
\]
and
\[
\overrightarrow{X_vV_v}=M\overrightarrow{OX}=\begin{pmatrix}4\\1\end{pmatrix}.
\]

\noindent\textit{Step 3: mutation at $Y$.} Again, we replace $A_v$ with $A$. The nodal ray $\vec{n}_Y$ hits the side $XV$, because its $x$-intercept is at $(1,0)$ and $\beta-2<1$. The mutation matrix $M$ must satisfy
\[
M\vec{n}_Y=\vec{n}_Y,\quad M\overrightarrow{YV}=\overrightarrow{OY} \iff M=\begin{pmatrix}0&-1\\1&2\end{pmatrix}.
\]
Thus the nodal rays are
\[
\vec{n}_{Y_y}=M\vec{n}_V=\begin{pmatrix}1\\-7\end{pmatrix},\quad \vec{n}_{V_y}=-\vec{n}_Y=\begin{pmatrix}-1\\1\end{pmatrix},\quad \vec{n}_{X_y}=\vec{n}_X=\begin{pmatrix}3\\1\end{pmatrix}.
\]

We know the affine lengths
\[
|OY_y|=|OY|+|YV|=\beta+3,\quad |OX_y|=|OX|=\beta-2,
\]
and the unchanged direction vectors
\[
\overrightarrow{OY_y}=\begin{pmatrix}0\\1\end{pmatrix},\quad \overrightarrow{OX_y}=\overrightarrow{OX}=\begin{pmatrix}1\\0\end{pmatrix},\quad \overrightarrow{X_yV_y}=\overrightarrow{XV_y}=\begin{pmatrix}4\\1\end{pmatrix}.
\]
Furthermore,
\[
\overrightarrow{Y_yV_y}=M\overrightarrow{VX}=M\begin{pmatrix}-4\\-1\end{pmatrix}=\begin{pmatrix}1\\-6\end{pmatrix}.
\]

Finally, we solve \eqref{eqn:4addzero} with $a=y$ to obtain
\[
|Y_vV_v|=\frac{\beta+2}{5},\quad |X_yV_y|=\frac{3-\beta}{5};
\]
note that $|Y_yV_y|+|X_yV_y|=|XV|=1$.


\noindent\textit{Step 4: mutation at $X$.} We replace $A_y$ with $A$. The nodal ray $\vec{n}_X$ hits the side $YV$ because it has positive slope. The mutation matrix $M$ must satisfy
\[
M\vec{n}_X=\vec{n}_X,\quad M\overrightarrow{XV}=\overrightarrow{OX} \iff M=\begin{pmatrix}-2&9\\-1&4\end{pmatrix},
\]
thus the nodal rays are
\[
\vec{n}_{Y_x}=\vec{n}_Y=\begin{pmatrix}1\\-7\end{pmatrix},\quad \vec{n}_{V_x}=-\vec{n}_X=\begin{pmatrix}-3\\-1\end{pmatrix},\quad \vec{n}_{X_x}=M\vec{n}_V=\begin{pmatrix}11\\5\end{pmatrix}.
\]

We know the affine lengths
\[
|OY_x|=|OY|=\beta+3,\quad |OX_x|=|OX|+|XV|=\beta-2+\frac{3-\beta}{5}=\frac{4\beta-7}{5},
\]
and the unchanged direction vectors
\[
\overrightarrow{OY_x}=\overrightarrow{OY}=\begin{pmatrix}0\\1\end{pmatrix},\quad \overrightarrow{Y_xV_x}=\overrightarrow{YV_x}=\begin{pmatrix}1\\-6\end{pmatrix},\quad \overrightarrow{OX_x}=\begin{pmatrix}1\\0\end{pmatrix}.
\]
Furthermore,
\[
\overrightarrow{X_xV_x}=M\overrightarrow{VY}=M\begin{pmatrix}-1\\6\end{pmatrix}=\begin{pmatrix}56\\25\end{pmatrix}.
\]

Finally, we solve \eqref{eqn:4addzero} with $a=x$ to obtain
\[
|Y_xV_x|=\frac{7+4\beta}{19},\quad |X_xV_x|=\frac{3-\beta}{95}.
\]

\end{proof}

The next lemma allows us to compute the effect of $k$ additional mutations at the corner $Y$.
\begin{lemma} \label{lem:correctside}
While performing the sequence $v^2yxy^k$, for each of the final $y$ mutations, the nodal ray $\vec{n}_Y$ always intersects the side $\overrightarrow{XV}$.
\end{lemma}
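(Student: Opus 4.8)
The plan is to track the base diagram through the entire tail $y^k$ by induction, reducing the claim at each step to a single inequality that I can solve in closed form. Lemma~\ref{lem:vvyx} furnishes the base case $j=0$: after $v^2yx$ the diagram is a quadrilateral $OXVY$ with $Y=(0,\beta+3)$, $X=\left(\tfrac{1}{\vol(\beta)},0\right)=\left(\tfrac{4\beta-7}{5},0\right)$ (using Lemma~\ref{lem:volbaccb}), and $\vec n_Y=\begin{pmatrix}1\\-7\end{pmatrix}$. The inductive hypothesis I would carry is that after $v^2yxy^j$ the diagram is a quadrilateral with
\[
Y=\left(0,\tfrac{d_j+e_j\beta}{q_j}\right),\qquad X=\left(\tfrac{4\beta-7}{5},0\right),\qquad \vec n_Y=\begin{pmatrix}q_j\\-p_j\end{pmatrix},
\]
together with the auxiliary data $\vec n_V,\overrightarrow{YV},\overrightarrow{XV}$ needed to iterate. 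The case $j=0$ matches, since $(d_0,e_0,p_0,q_0)=(3,1,7,1)$ gives height $3+\beta$ and $\vec n_Y=(1,-7)$. Because each $y$-mutation's cut lands on $\overrightarrow{XV}$ (which is exactly what we are proving), the region containing the origin together with the whole edge $OX$ is left fixed, so $X$ and $|OX|=1/\vol(\beta)$ persist unchanged throughout the tail.

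Granting this inductive description, the geometric heart is a clean reduction. The ray from $Y=(0,h_j)$ with $h_j=(d_j+e_j\beta)/q_j$ in direction $\vec n_Y=(q_j,-p_j)$ meets the $x$-axis at $x=q_jh_j/p_j$, and by convexity of the quadrilateral it exits through $\overrightarrow{XV}$ rather than $\overrightarrow{OX}$ exactly when this intercept exceeds the $x$-coordinate $\tfrac{4\beta-7}{5}$ of $X$; this is equivalent to
\[
f_j:=5(d_j+e_j\beta)-p_j(4\beta-7)>0.
\]
The hit then automatically lies on the segment $XV$ and not beyond $V$, because a ray entering the interior from $Y$ and exiting to the right of $X$ lies inside the angle $\angle XYV$ and so crosses the opposite side of triangle $XYV$. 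Thus the lemma reduces to showing $f_j>0$ for $0\le j\le k-1$.

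Since $d_j,e_j,p_j,q_j$ all obey the recursion $\bullet_j=22\bullet_{j-1}-\bullet_{j-2}$ of Definition~\ref{def:outerclasses}, so does $f_j$. Its characteristic polynomial $\lambda^2-22\lambda+1$ has roots $11\pm2\sqrt{30}$; set $r=11-2\sqrt{30}\in(0,1)$. I would then record the two seeds
\[
f_0=5(3+\beta)-7(4\beta-7)=64-23\beta,\qquad f_1=5(64+23\beta)-155(4\beta-7)=1405-505\beta,
\]
and verify the single identity $f_1=r\,f_0$, which holds precisely for $\beta=\tfrac{6+5\sqrt{30}}{12}$: both sides equal $\tfrac{13830-2525\sqrt{30}}{12}$. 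Because $r$ is a root of the recursion, $f_1=rf_0$ forces $f_j=r^jf_0$ for all $j$, and $f_0=\tfrac{630-115\sqrt{30}}{12}>0$ (as $630^2>115^2\cdot30$) together with $r>0$ yields $f_j>0$. Notably this is exactly the vanishing of the growing eigenmode of $f_j$ for this distinguished $\beta$, so geometrically $f_j\to0^+$, reflecting that the configurations limit onto the accumulation point; this tightness means no crude bound suffices and the exact closed form is essential.

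The step I expect to be the main obstacle is propagating the inductive description, specifically verifying that the $y$-mutation carries the nodal ray of $V$ to the next class. If $M^{(j)}$ is the $SL_2(\Z)$ mutation matrix determined by $M\vec n_Y=\vec n_Y$ and $M\overrightarrow{YV}=\overrightarrow{OY}$, one must show $M^{(j)}\vec n_V=\begin{pmatrix}q_{j+1}\\-p_{j+1}\end{pmatrix}$ and the height update $h_{j+1}=h_j+|YV|=\tfrac{d_{j+1}+e_{j+1}\beta}{q_{j+1}}$, with $XV$ shrinking via \eqref{eqn:4addzero}. This is the computation that actually uses the adjacency and compatibility relations repackaged in Lemma~\ref{lem:identities} (applied to $(\bE_j,\bE_{j+1},\bE)$), and where the bookkeeping is heaviest; the explicit cases $j=0,1$ serve as a template, e.g.\ $M^{(0)}=\begin{pmatrix}-6&-1\\49&8\end{pmatrix}$ sends $\vec n_V=(-3,-1)$ to $(19,-155)=(q_1,-p_1)$. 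Once this propagation is in hand, everything else is the short closed-form argument above.
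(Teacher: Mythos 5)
Your proposal is correct in its core content, but it takes a genuinely different route from the paper's proof, and the difference matters. The paper proves Lemma~\ref{lem:correctside} \emph{before} any exact formulas are available: it argues by contradiction using only soft data, namely that the heights satisfy $h_k\geq h_0$ and that the nodal-ray slopes stay steeper than $-6$, the slope of the initial $\overrightarrow{YV}$. You instead fold the formulas of Lemmas~\ref{lem:rays} and~\ref{lem:als} into a simultaneous induction, reduce the side-hitting claim at stage $j$ to the positivity of $f_j=5(d_j+e_j\beta)-p_j(4\beta-7)$, and solve that exactly: $f_j$ obeys $f_j=22f_{j-1}-f_{j-2}$, the seeds satisfy $f_1=rf_0$ with $r=11-2\sqrt{30}\in(0,1)$, hence $f_j=r^jf_0>0$. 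I verified your identities ($f_0=64-23\beta$, $f_1=1405-505\beta$, $f_1=rf_0$ at this $\beta$, $f_0>0$, and the template matrix $M^{(0)}$ sending $\vec{n}_V=(-3,-1)$ to $(q_1,-p_1)$); they are all correct, as is the convexity argument reducing the lemma to the $x$-intercept comparison. Your restructuring as a simultaneous induction is also logically necessary for this route: in the paper, Lemmas~\ref{lem:rays} and~\ref{lem:als} are proved \emph{after} Lemma~\ref{lem:correctside} and use it, so you cannot simply quote them; but in a simultaneous induction the stage-$j$ formulas need the side-hitting claim only at stages $<j$, so there is no circularity. The one incomplete piece is the propagation step you explicitly defer (updating $\vec{n}_Y$, the height, and the auxiliary data under a $y$-mutation); this is exactly the content of the paper's Lemmas~\ref{lem:rays} and~\ref{lem:als} via Lemma~\ref{lem:identities}, so it is known to go through, but a self-contained write-up must include it.

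What your exact computation buys is not cosmetic; it appears to be essential. As you observe, the inequality is asymptotically tight: both $h_k=(d_k+e_k\beta)/q_k$ and $x_X\cdot p_k/q_k$, where $x_X=(\sqrt{30}-3)/3$ is the $x$-coordinate of $X$, increase to the \emph{same} limit $4+\tfrac{\sqrt{30}}{2}$, so no argument using only $h_k\geq h_0$ together with a fixed slope bound can decide which is larger. The paper's proof runs into exactly this obstruction: its final chain deduces $\frac{(3-\sqrt{30})y_k}{3x_k}<\frac{(-6)(3-\sqrt{30})}{3}=2(\sqrt{30}-3)$ from the slope bound $\frac{y_k}{x_k}<-6$, but multiplying by the negative number $\frac{3-\sqrt{30}}{3}$ reverses the inequality; since in fact $y_k/x_k=-p_k/q_k\leq-7$, one has $\frac{(3-\sqrt{30})y_k}{3x_k}\geq\frac{7(\sqrt{30}-3)}{3}>2(\sqrt{30}-3)$, and the claimed contradiction with $h_k\geq h_0$ does not follow. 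Your eigenmode argument (the coefficient of the growing root $1/r$ in $f_j$ vanishes precisely at this $\beta$) supplies the stage-by-stage exact comparison that the soft bounds cannot, so your route is not merely an alternative but the kind of argument this lemma actually requires.
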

\begin{proof} 
We first compute the exact corners of the effect of the sequence $v^2yx$ applied to $\Omega_\beta$. From Lemma \ref{lem:vvyx} we obtain the vertices $X=(1/\vol(\beta),0)=(-1+\sqrt{30}/3,0)$ and $Y=(0,\beta+3)=(0,(42+5\sqrt{30})/12)$. 

Let $h_k$ denote the height of the quadrilateral along the $y$-axis after the mutation sequence $v^2yxy^k$, with $h_0=(42+5\sqrt{30})/12$ as above. Likewise, let $(x_k,y_k)$ denote the vector $\vec{n}_Y$ at the vertex $(0,h_k)$ along which we are mutating, with $(x_0,y_0)=(1,-7)$ by Lemma \ref{lem:vvyx}, and let $t_k$ denote the $x$-coordinate of the intersection point of the line through the point $(0,h_k)$ in the direction of the vector $(x_k,y_k)$ with the line through the point $(-1+\sqrt{30}/3,0)$ with slope $\frac{25}{56}$. In terms of $h_k, x_k,$ and $y_k$, $t_k$ is given by the formula

\begin{equation}\label{eq1}
t_k=\frac{x_k(168h_k+25(\sqrt{30}-3))}{3(25x_k-56y_k)}.
\end{equation}

By definition of mutation, $h_k>h_{k-1}$ for all $k$. Assume by induction that $\vec{n}_Y$ intersects $\overrightarrow{XV}$ for the first $k-1$ mutations by $y$. Letting $V_j$ denote the vertex $V$ after the mutation sequence $v^2yxy^{j-1}$, our inductive hypothesis implies that $V_j$ has both $x$ and $y$ coordinates less than $V_{j-1}$ if $j<k$, so if $V_j=(x_v,y_v)$,
\begin{equation}\label{eq2}
    \frac{y_k}{x_k}<\frac{h_k-y_v}{x_v}<
    -6,
\end{equation}
which is the slope of the initial side $\overrightarrow{YV}$.

Assume by way of contradiction that
\[
t_k\leq-1+\frac{\sqrt{30}}{3}.
\]
Using the formula in \eqref{eq1} for $t_k$, we have

\[
\frac{x_k(168h_k+25(\sqrt{30}-3))}{3(25x_k-56y_k)}\leq\frac{-3+\sqrt{30}}{3} \iff h_k\leq\frac{(3-\sqrt{30})y_k}{3x_k}.
\]



\noindent Then, by the inequality \eqref{eq2}, we obtain

\[
h_k\leq\frac{(3-\sqrt{30})y_k}{3x_k}<\frac{(-6)(3-\sqrt{30})}{3}=2(\sqrt{30}-3),
\]


However, $h_k\geq h_0=\frac{42+5\sqrt{30}}{12}$ for all $k\in\mathbb{Z}_{\geq0}$ and $h_0=\frac{42+5\sqrt{30}}{12}>2(\sqrt{30}-3)$. Thus we have a contradiction.
\end{proof}

We now compute the nodal rays and directions of the sides after the mutation sequence $v^2yxy^k$.
\begin{lemma}\label{lem:rays}
After performing the sequence $v^2yxy^k$ the nodal rays are given by:
\[ \vec{n}_Y=\begin{pmatrix}q_k \\ -p_k \end{pmatrix}, \quad  \vec{n}_V=\begin{pmatrix}-q_{k-1} \\ p_{k-1} \end{pmatrix}, \quad \vec{n}_X=\begin{pmatrix} 11 \\ 5 \end{pmatrix}
\]
and the direction vectors are given by: 
\[ \overrightarrow{YV}=\begin{pmatrix} q_k^2 \\ -p_k q_k+1\end{pmatrix}, \quad \overrightarrow{XV}=\begin{pmatrix} 56 \\ 25 \end{pmatrix}\] 
\end{lemma}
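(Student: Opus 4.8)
The plan is to induct on $k$, taking the base case $k=0$ directly from Lemma \ref{lem:vvyx}. Reading off $p_0=7,q_0=1$ together with the backward-extended values $p_{-1}=22p_0-p_1=-1$ and $q_{-1}=22q_0-q_1=3$, the asserted formulas $\vec n_Y=(q_0,-p_0)=(1,-7)$, $\vec n_V=(-q_{-1},p_{-1})=(-3,-1)$, $\vec n_X=(11,5)$, $\overrightarrow{YV}=(q_0^2,-p_0q_0+1)=(1,-6)$, and $\overrightarrow{XV}=(56,25)$ all coincide with the output of Lemma \ref{lem:vvyx}.

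For the inductive step I would analyze the single $Y$-mutation carrying the diagram after $v^2yxy^{k-1}$ to the one after $v^2yxy^k$. By Lemma \ref{lem:correctside} this mutation is legal with $\vec n_Y$ meeting the side $XV$, so the piece containing $O$ (hence the vertex $X$ and the side $OX$) is fixed, while the piece containing $V$ is moved by the mutation matrix $M$. Feeding the inductive values $\vec n_Y=(q_{k-1},-p_{k-1})$ and $\overrightarrow{YV}=(q_{k-1}^2,-p_{k-1}q_{k-1}+1)$ into the defining conditions $M\vec n_Y=\vec n_Y$ and $M\overrightarrow{YV}=\overrightarrow{OY}$ (as used in Steps 3--4 of Lemma \ref{lem:vvyx}) determines
\[
M=\begin{pmatrix} 1-p_{k-1}q_{k-1} & -q_{k-1}^2 \\ p_{k-1}^2 & 1+p_{k-1}q_{k-1}\end{pmatrix}\in\mathrm{SL}_2(\Z),
\]
and a one-line check confirms $M\overrightarrow{YV}=(0,1)$.

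I would then apply the three transformation rules from those same steps. The fixed vertex $X$ retains $\vec n_X=(11,5)$; the newly created interior vertex inherits $\vec n_V=-\vec n_Y^{\mathrm{old}}=(-q_{k-1},p_{k-1})$; and the image of the old $V$ becomes the new $Y$-vertex with $\vec n_Y=M\vec n_V^{\mathrm{old}}=M(-q_{k-2},p_{k-2})$. Expanding this product, the claim $\vec n_Y=(q_k,-p_k)$ collapses to the single Casoratian relation $p_{k-1}q_{k-2}-q_{k-1}p_{k-2}=22$ together with the recursion $p_k=22p_{k-1}-p_{k-2}$, $q_k=22q_{k-1}-q_{k-2}$. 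Since $C_k:=p_kq_{k-1}-p_{k-1}q_k$ satisfies $C_k=C_{k-1}$ by a telescoping computation and $C_0=7\cdot3-(-1)\cdot1=22$, we get $C_k\equiv22$, settling all three nodal rays. For the side directions, $\overrightarrow{XV}=(56,25)$ is unchanged because the new interior vertex lands on the old segment $XV$, so its direction lies in the fixed region; and the new $\overrightarrow{YV}$ equals $M\overrightarrow{VX}^{\mathrm{old}}=M(-56,-25)$.

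The \textbf{main obstacle} is verifying that $M(-56,-25)=(q_k^2,-p_kq_k+1)$, i.e. the two scalar identities
\[
56p_{k-1}q_{k-1}+25q_{k-1}^2-56=q_k^2,\qquad 56p_{k-1}^2+25p_{k-1}q_{k-1}+25=p_kq_k-1.
\]
These are genuinely quadratic in the sequence and do not follow from the Casoratian identity alone; they encode the adjacency and compatibility of the triple $(\bE_{k-1},\bE_k,\bE)$. I would dispatch them by invoking Lemma \ref{lem:identities} for this triple (with $\la=k-1$, $\mu=k$, $\rho=\bE$), whose items (vi)--(vii) are precisely the vector relations featuring $\begin{pmatrix}q_\mu^2\\ q_\mu p_\mu-1\end{pmatrix}$; combined with $t_\rho=22$ these pin down $M(-56,-25)$. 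Alternatively one can prove the two scalar identities by a dedicated induction that also carries the quadratic invariant $6p_kq_k-p_k^2-q_k^2+t_k^2=8$ and the $t_{k-1}$-compatibility $-26p_k+118q_k+22t_k=4t_{k-1}$ of $\bE_k$ with $\bE$, but routing through Lemma \ref{lem:identities} is cleaner since that lemma has already packaged exactly these relations.
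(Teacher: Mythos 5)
Your proof is correct, and it takes a genuinely different route from the paper's. The paper handles the base case exactly as you do (Lemma \ref{lem:vvyx} plus the backward extension $p_{-1}=-1$, $q_{-1}=3$), but it disposes of the inductive step by citation: it identifies the quadrilateral produced by $v^2yxy^k$ with the polygon $Q(\Tt)$ of \cite[Def.~3.8]{M1} for the triple $(\bE_k,\bE_{k+1},\bE)$ and invokes \cite[Lem.~6.6]{M1}, noting that the proof there depends only on the identities of Lemma \ref{lem:identities} and not on the affine lengths. You instead carry out the mutation in-house, essentially reconstructing the argument the paper outsources: your matrix $M=\begin{pmatrix}1-p_{k-1}q_{k-1}&-q_{k-1}^2\\ p_{k-1}^2&1+p_{k-1}q_{k-1}\end{pmatrix}$ is correct (it fixes $(q_{k-1},-p_{k-1})$, sends $(q_{k-1}^2,-p_{k-1}q_{k-1}+1)$ to $(0,1)$, and has determinant $1$), your transformation rules match Steps 3--4 of Lemma \ref{lem:vvyx}, the Casoratian computation $p_kq_{k-1}-p_{k-1}q_k\equiv 22$ does yield all three nodal rays, and the two quadratic identities you isolate for $M(-56,-25)^T=(q_k^2,-p_kq_k+1)^T$ are exactly what is needed and are true. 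Your route buys a self-contained proof that exposes precisely which arithmetic facts drive the lemma; the paper's buys brevity and uniformity with the framework of \cite{M1}, which it reuses again in Lemmas \ref{lem:als} and \ref{lem:k3}.

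One imprecision should be repaired before formalizing: items (vi)--(vii) of Lemma \ref{lem:identities} do not by themselves ``pin down'' $M(-56,-25)$. For the triple $(\bE_{k-1},\bE_k,\bE)$, item (vii) (with $q_{y\mu}=22q_k-q_{k-1}=q_{k+1}$) unwinds to the recursion together with the Casoratian, i.e.\ to linear information you have already used; converting it into a statement about $M(-56,-25)$ would require additional cross-level bilinear identities such as $q_{k+1}q_{k-2}-q_kq_{k-1}=1232$, which are not items of that lemma. The two quadratic identities do follow from the paper's toolkit, e.g.\ from Lemma \ref{lem:identities}(iii) (which gives $4q_k=11(p_{k-1}+q_{k-1})-3t_{k-1}$ and $4p_k=77p_{k-1}-11q_{k-1}+23t_{k-1}$) combined with the Diophantine relation $t_{k-1}^2=p_{k-1}^2+q_{k-1}^2-6p_{k-1}q_{k-1}+8$ and the adjacency of $\bE_{k-1}$ with $\bE$ (equivalently $11t_{k-1}=3p_{k-1}+23q_{k-1}$); or by your fallback induction, most cleanly by noting that since $p_k,q_k$ satisfy $x_k=22x_{k-1}-x_{k-2}$, every bilinear expression in $(p_{k-1},q_{k-1},p_k,q_k)$ satisfies one fixed third-order linear recursion, so each identity follows from verification at three consecutive values of $k$. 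With that substitution your argument is complete.
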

\begin{proof}
    We first must check the base case when $k=0$, which was computed in Lemma~\ref{lem:vvyx}. This is seen as by the defining recursion $x_k=22x_{k-1}-x_{k-2}$ we have $p_{-1}=-1$ and $q_{-1}=3$. 
    For the inductive step, we explain how this lemma is equivalent to \cite[Lem.~6.6]{M1}, so follows by the proof there. 

    In \cite[Lem.~6.6]{M1}, the lemma is similarly looking at $y$ mutations to a quadrilateral where the nodal ray intersects the $|XV|\overrightarrow{XV}$ side of the polygon. The lemma assumes that the quadrilateral is defined via a triple $\Tt$ notated as $Q(\Tt)$. Looking at the definition of $Q(\Tt)$ in \cite[Def.~3.8]{M1}, we see that if we set $\bE_\la=\bE_k,$ $\bE_{\mu}=\bE_{k+1}$, and $\bE_\rho=\bE,$ the definition for the nodal rays and direction vectors are the same as the identities we must prove. Further, after assuming which side the nodal ray will hit, checking formulas for nodal rays and direction vectors after a mutation do not depend on the side lengths. Thus, this lemma is equivalent to  \cite[Lem.~6.6]{M1}. The proof uses the identities we already established in Lemma~\ref{lem:identities}.  
\end{proof}

Next we compute the affine lengths of the sides after the mutation sequence $v^2yxy^k$.
\begin{lemma}\label{lem:als}
After performing the sequence $v^2yxy^k$ the affine lengths are given by: 
\begin{align*}
    |OY| &= \frac{d_k+e_k\beta}{q_k}, \quad 
    |OX|= -1 + \frac{\sqrt{30}}{3} =\frac{4\beta-7}{5}=\frac{1}{\vol(\beta)} \\
    |YV| &= \frac{4\beta + 7}{q_k q_{k+1}}, \quad 
    |XV| = \frac{d_k-e_k\beta}{5q_{k+1}}
\end{align*}
\end{lemma}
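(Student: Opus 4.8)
The plan is to induct on $k$, taking the base case $k=0$ directly from Lemma \ref{lem:vvyx} (where the four claimed formulas specialize correctly using $q_0=1$, $q_1=19$, $d_0=3$, $e_0=1$). For the inductive step I would determine the four affine lengths one family at a time, combining two geometric facts about each of the trailing $y$-mutations in $v^2yxy^k$ with the closing-up relation \eqref{eqn:4addzero} and the direction vectors supplied by Lemma \ref{lem:rays}.

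First I would pin down $|OX|$ and $|OY|$. Since Lemma \ref{lem:correctside} guarantees that at every $y$-mutation the nodal ray $\vec{n}_Y$ meets the side $XV$, the region fixed by the mutation always contains the entire edge $OX$; hence $|OX|$ is never changed and equals $1/\vol(\beta)$ for all $k$, as computed in Lemma \ref{lem:volbaccb}. For the same reason each $y$-mutation folds the whole edge $YV$ onto the $y$-axis, and because the mutation matrix lies in $ASL_2(\Z)$ it preserves affine length; thus $|OY|$ obeys the recursion $|OY_k| = |OY_{k-1}| + |YV_{k-1}|$. Substituting the inductive formulas for $|OY_{k-1}|$ and $|YV_{k-1}|$, this identity reduces to the two Casoratian-type relations $q_{k-1}d_k - q_k d_{k-1} = 7$ and $q_{k-1}e_k - q_k e_{k-1} = 4$. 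These hold because $d_k$, $e_k$, $q_k$ all satisfy $x_k = 22x_{k-1} - x_{k-2}$, so each such expression is independent of $k$, with value read off from the seeds in Definition \ref{def:outerclasses} (e.g. $q_0 d_1 - q_1 d_0 = 64 - 57 = 7$).

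With $|OX_k|$ and $|OY_k|$ established, and the directions $\overrightarrow{OY}=(0,1)$, $\overrightarrow{OX}=(1,0)$, $\overrightarrow{YV}=(q_k^2,\,1-p_kq_k)$, $\overrightarrow{XV}=(56,25)$ from Lemma \ref{lem:rays}, the closing-up identity \eqref{eqn:4addzero} becomes a linear system of two scalar equations in the remaining unknowns $|YV_k|$ and $|XV_k|$. This system is nonsingular because $\overrightarrow{YV}$ and $\overrightarrow{XV}$ are non-parallel sides of a nondegenerate quadrilateral, so I would verify that the claimed formulas solve both equations and invoke uniqueness. Separating the rational and the $\beta$- (equivalently $\sqrt{30}$-) parts, the $x$-component equation reduces to the two linear identities $q_{k+1} = 5q_k + 14 e_k$ and $q_{k+1} = 8 d_k - 5 q_k$, which follow by comparing $q_{k+1} = 22 q_k - q_{k-1}$ against the seeds (using $q_{-1}=3$, as in the proof of Lemma \ref{lem:rays}). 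Pleasingly, both the $\beta$-coefficient and the constant part of the $y$-component equation collapse, after using these same linear identities, to exactly the Diophantine equation $2 d_k e_k = p_k q_k - 1$, which holds because $\bE_k$ is a quasi-perfect Diophantine class (established in the proof of Proposition \ref{prop:oc}).

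The routine algebra is the reduction of the two closing-up equations to the named identities; the genuine content, and the step most in need of care, is the geometric justification that all $k$ trailing $y$-mutations behave identically --- each fixing $OX$ and folding $YV$ onto $OY$ while preserving affine length --- which is precisely what Lemma \ref{lem:correctside} secures. Once that uniform behavior is in hand, the argument is a bookkeeping induction driven by the recursion $x_k = 22x_{k-1}-x_{k-2}$ and the Diophantine relations for $\bE_k$; this mirrors the affine-length computation of \cite[Lem.~6.6]{M1}. An alternative to the fold recursion for $|OY|$ would be to impose symplectic-volume (area) preservation as the fourth constraint, but that introduces a nonlinear equation and is less clean than the linear fold rule.
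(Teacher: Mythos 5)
Your proposal is correct, and while it shares the paper's skeleton for half the lemma, it handles the other half by a genuinely different mechanism. Like the paper, you induct with base case Lemma \ref{lem:vvyx}, use Lemma \ref{lem:correctside} to keep $|OX|$ constant, and obtain $|OY|$ from the fold rule $|OY_y|=|OY|+|YV|$; your Casoratian packaging of the resulting identities $q_{k-1}d_k-q_kd_{k-1}=7$, $q_{k-1}e_k-q_ke_{k-1}=4$ is just a slicker form of the paper's term-by-term induction (the paper writes them as $q_{k+1}d_k+7=q_kd_{k+1}$, etc.). Where you diverge is $|YV|$ and $|XV|$: the paper \emph{derives} them, first solving for $|XV_y|$ from the sub-quadrilateral cut off by the nodal ray $\vec{n}_Y=(q_k,-p_k)$ (eliminating the ray-length parameter $s$ to get $|XV_y|=\frac{q_k|OY|-p_k|OX|}{25q_k+56p_k}$), then setting $|Y_yV_y|=|XV|-|XV_y|$; this route needs the identities $25q_k+56p_k=q_{k+2}$ (via \eqref{eqn:matrix}), $5e_k-4p_k=-e_{k+1}$, $5d_k+7p_k=d_{k+1}$, $-e_kq_{k+2}+e_{k+1}q_{k+1}=20$, and $q_{k+2}d_k-d_{k+1}q_{k+1}=35$. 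You instead treat the closing-up identity \eqref{eqn:4addzero}, with directions from Lemma \ref{lem:rays}, as a nonsingular $2\times 2$ linear system in the two unknown lengths and \emph{verify} that the claimed formulas solve it — I checked your reductions, and they are right: the $x$-component gives $q_{k+1}=5q_k+14e_k$ and $q_{k+1}=8d_k-5q_k$, and the $y$-component, after substituting these, collapses in both its $\beta$-coefficient and constant part to $2d_ke_k=p_kq_k-1$. What each approach buys: the paper's computation is constructive and produces the step-$(k+1)$ data directly, whereas yours is shorter, avoids the auxiliary parameter $s$ and any identities involving $q_{k+2}$ or $p_k$, and exposes a nice conceptual point the paper's route hides — that once $|OY|$ and $|OX|$ are known, the closing-up constraint is equivalent to precisely the Diophantine equation for $\bE_k$ (legitimately available from the proof of Proposition \ref{prop:oc}, which precedes \S\ref{ssec:inner}, so there is no circularity). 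Your uses of Lemmas \ref{lem:correctside} and \ref{lem:rays} are likewise non-circular, since neither depends on the affine lengths.
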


\begin{proof} 
We will show this by induction on $k$. The base case $k=0$ is proved in Lemma \ref{lem:vvyx}. 
Suppose that the conclusions hold for $k$. Then, for $k + 1$, by Lemma \ref{lem:correctside}, in performing the consecutive $y$-mutations, the nodal ray will hit the $|XV|$ side of the polygon. Therefore, the $|OX|$ side remains constant.

For $|OY|$, it suffices to show that $|OY_y| = |OY| + |YV|$. By the induction hypothesis, we have
\begin{align*}
      |OY| + |YV| 
    = &\frac{d_k+e_k\beta}{q_{k}} + \frac{4\beta + 7}{q_k q_{k+1}} \\
    = &\frac{(q_{k+1} e_k + 4)\beta + (q_{k+1} d_k + 7)}{q_k q_{k+1}}.
\end{align*}
It remains to show that $q_{k+1} e_k + 4 = q_{k} e_{k+1}$ and $q_{k+1} d_k + 7 = q_{k} d_{k+1}$. Both hold by induction. The base case is easily checked. For the inductive step, we show the details of former and the latter follows similarly. Assuming the equality holds for $k$, by Definition \ref{def:outerclasses}, 
\begin{align*}
    q_{k+1} e_k + 4 &= (22q_{k} - q_{k-1}) e_k + 4 \\
                    &= 22q_k e_k - (q_{k-1} e_k - 4) \\
                    &= 22q_k e_k - q_k e_{k-1} \\
                    &= q_k (22e_k - e_{k-1}) \\
                    &= q_k e_{k+1}
\end{align*}
This completes the proof for $|OY_y|.$

For $|XV_y|$,
we must show
\[ |XV_y|=\frac{d_{k+1}-e_{k+1}\beta}{5q_{k+2}}.\] 
By adding the sides of the quadrilateral that is fixed during mutation, we have the following equality \[ \begin{pmatrix} 0 \\ -|OY| \end{pmatrix} + \begin{pmatrix} |OX| \\ 0 \end{pmatrix} + |XV_y| \begin{pmatrix} 56 \\ 25 \end{pmatrix} =s\begin{pmatrix} q_k \\ -p_k \end{pmatrix}\]
where $s$ is the length of the nodal ray $\vec{n}_Y$ where it intersects the side $XV.$ This equality gives us two equations
\begin{align*}
    |OX|+56|XV_y|&=s q_k \\
    -|OY|+25|XV_y|&=-sp_k.
\end{align*}
We can solve the first one for $s$ and substitute it into the second one to get the equation 
\[ -|OY|+25|XV_y|=-\frac{p_k}{q_k}(|OX|+56|XV_y|).\]
Solving for $|XV_y|$, we get
\[ |XV_y|=\frac{q_k|OY|-p_k|OX|}{25 q_k+56p_k}.\]
First, we consider the denominator. We must show that
\[ 25q_k+56p_k=q_{k+2}.\] 
This follows from \eqref{eqn:matrix}. We then consider the numerator where we substitute in the formulas for $|OY|$ and $|OX|:$
\begin{align*}
 q_k|OY|-p_k|OX|&= (e_k\beta+d_k)-\frac{p_k(4\beta-7)}{5} \\
                &=\frac{1}{5}(b(5e_k-4p_k)+(5d_k+7p_k)) .
\end{align*} 
It remains to show that
\begin{align*}
   - 4p_k+5e_k=-e_{k+1} \\
    5d_k+7p_k=d_{k+1}.
\end{align*}

Again, we prove the first identity by induction and the second follows similarly. The base case is easily checked. Suppose $4p_k - 5e_k = e_{k+1}$ for all $k$. Then, for $ k + 1$, by Definition \ref{def:outerclasses}, 
\begin{align*}
    4p_{k+1} &= 4(22p_k - p_{k-1}) \\
             &= 22(e_{k+1} + 5e_k) - (e_k + 5e_{k-1}) \\
             &= (22e_{k+1} - e_k) + 5(22e_{k} - e_{k-1}) \\
             &= e_{k+2} + 5e_{k+1}.
\end{align*}

It remains to show that the formula for $|YV|$ holds. We can verify this by checking the first equality in
\[\frac{4\beta+7}{q_{k+1}q_{k+2}}=\frac{d_k-e_k\beta}{5q_{k+1}}-\frac{d_{k+1}-e_{k+1}\beta}{5q_{k+2}}=|VX|-|V_yX|=|Y_yV_y|.\] 
This is equivalent to showing 
\[ \beta(-e_kq_{k+2}+e_{k+1}q_{k+1})+(q_{k+2}d_k-d_{k+1}q_{k+1})=5(4\beta+7).
\] 
Therefore, we must show that
\begin{align*}
    -e_kq_{k+2}+e_{k+1}q_{k+1}&=20 \\
    q_{k+2}d_k-d_{k+1}q_{k+1}&=35. 
\end{align*}

We prove the first identity here and the second follows by a similar manner. Suppose that for all $ k$, we have $-e_k q_{k+2} + e_{k+1} q_{k+1} = 20$. Then, for $ k + 1$, 
\begin{align*}
      -e_{k+1} q_{k+3} + e_{k+2} q_{k+2} &=  -e_{k+1}(22q_{k+2} - q_{k+1}) + e_{k+2} q_{k+2} \\
    &=  -22e_{k+1} q_{k+2} + e_{k+1} q_{k+1} + e_{k+2} q_{k+2} \\
    &=  -22e_{k+1} q_{k+2} + (20 + e_{k} q_{k+2}) + e_{k+2} q_{k+2} \\
    &=  20 - (22e_{k+1} - e_{k})q_{k+2} + e_{k+2} q_{k+2} \\
    &=  20 - e_{k+2} q_{k+2} +e_{k+2} q_{k+2} \\
    &=  20.
\end{align*}
\end{proof}

\begin{center}
\begin{figure}[h]
    \includegraphics[width=4in]{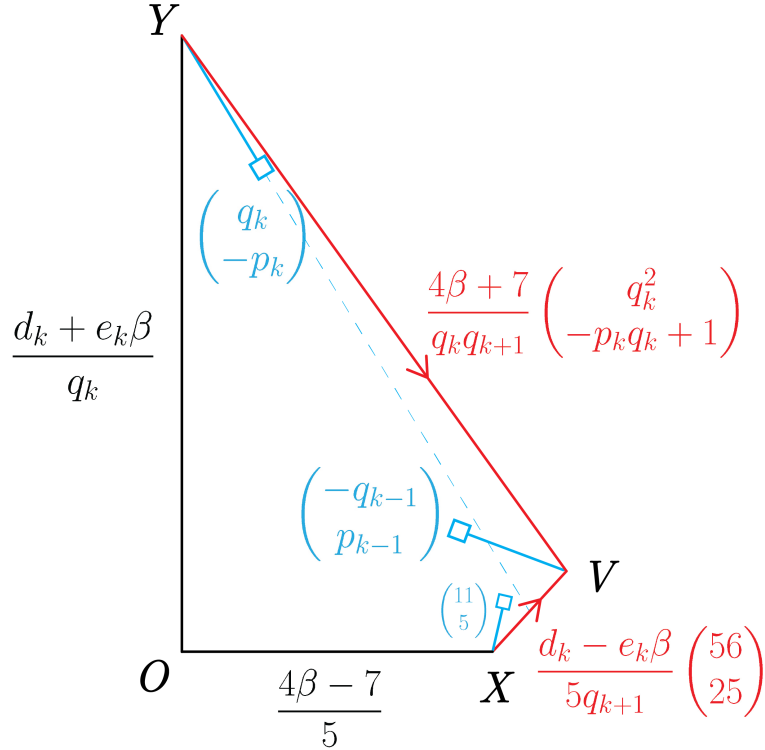}
    \caption{This figure illustrates Lemmas~\ref{lem:rays} and \ref{lem:als}. The
    nodal rays are drawn in light blue, with a square indicating their marked
    point. The fact that 
    $\vec{n}_Y$ intersects $\protect\overrightarrow{XV}$, 
    Lemma~\ref{lem:correctside}, is 
    indicated by the dashed blue line. The affine lengths of $|OX|$ and $|OY|$ are in 
    black, while the vectors $|YV|\protect\overrightarrow{YV}$ 
    and $|XV|\protect\overrightarrow{XV}$ 
    are labeled in red, with their directions indicated by arrowheads.}
    \label{fig:22131k}
\end{figure}
\end{center}

Together, these lemmas prove:
\begin{prop}\label{prop:fullacc}
    There is a full filling at the accumulation point. That is,
    \[
    c_\beta(\acc(\beta))=\vol(\beta).
    \]
\end{prop}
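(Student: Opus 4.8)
The plan is to prove the two inequalities $c_\beta(\acc(\beta)) \ge \vol(\beta)$ and $c_\beta(\acc(\beta)) \le \vol(\beta)$ separately. The lower bound is free: Proposition \ref{prop:cXprops}(i) gives $c_\beta(z) \ge \vol_\beta(z)$ for every $z$, and at $z = \acc(\beta)$ this reads $c_\beta(\acc(\beta)) \ge \vol_\beta(\acc(\beta)) = \vol(\beta)$. All the content is in the upper bound, which I would extract from the family of ATF embeddings produced by the mutation sequence $v^2yxy^k$.

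For the upper bound I would read the base diagram of $M_\beta$ after $v^2yxy^k$ straight off Lemmas \ref{lem:rays} and \ref{lem:als}. Lemma \ref{lem:correctside} guarantees the sequence is legal for every $k$, and the resulting diagram is the quadrilateral $OXVY$ with $x$-intercept $X=(|OX|,0)$, where $|OX| = 1/\vol(\beta)$, and $y$-intercept $Y=(0,|OY|)$, where $|OY| = (d_k+e_k\beta)/q_k$. A one-line check of the signs of the cross products of consecutive edge directions from Lemma \ref{lem:rays} shows the region is convex, and Lemma \ref{lem:rays} places the three nodal rays at $X$, $V$, $Y$, so none emanates from $O$. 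Hence Proposition \ref{prop:atfswork} applies and yields $(1-\eps)E(|OX|,|OY|) \sembeds M_\beta$ for all $\eps\in(0,1)$. Setting $z_k := \vol(\beta)(d_k+e_k\beta)/q_k$, so that $E(|OX|,|OY|) = \vol(\beta)^{-1}E(1,z_k)$, and using Proposition \ref{prop:compact-domain} to replace $M_\beta$ by $P(1,\beta)$, I rescale by $\vol(\beta)/(1-\eps)$ to obtain $E(1,z_k)\sembeds P\!\left(\tfrac{\vol(\beta)}{1-\eps},\tfrac{\vol(\beta)\beta}{1-\eps}\right)$. Thus $c_\beta(z_k)\le \vol(\beta)/(1-\eps)$, and letting $\eps\to 0$ gives $c_\beta(z_k)\le \vol(\beta)$ for every $k$.

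It then remains to show $z_k \to \acc(\beta)$, after which continuity of $c_\beta$ (Proposition \ref{prop:cXprops}(iv)) forces $c_\beta(\acc(\beta)) = \lim_k c_\beta(z_k) \le \vol(\beta)$, finishing the argument. The cleanest route uses the two scalar identities $5d_k + 7p_k = d_{k+1}$ and $4p_k - 5e_k = e_{k+1}$ already verified inside the proof of Lemma \ref{lem:als}; they combine to give $5(d_k+e_k\beta) = (d_{k+1}-e_{k+1}\beta) + (4\beta-7)p_k$, and since $\vol(\beta) = 5/(4\beta-7)$ by Lemma \ref{lem:volbaccb}, this rearranges to
\[
z_k = \frac{p_k}{q_k} + \frac{d_{k+1}-e_{k+1}\beta}{(4\beta-7)q_k}.
\]
By Corollary \ref{cor:pqacc} the first term tends to $\acc(\beta)$, so it suffices to check the correction term vanishes. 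The sequence $d_k-e_k\beta$ obeys the recursion $x_{k+1}=22x_k-x_{k-1}$, whose roots are $11\pm 2\sqrt{30}$; a short computation with the seeds $d_0-e_0\beta = 3-\beta$ and $d_1-e_1\beta = 64-23\beta$ shows that their ratio equals the subdominant root $11-2\sqrt{30}<1$, so $d_k-e_k\beta = (3-\beta)(11-2\sqrt{30})^k \to 0$ while $q_k\to\infty$. Hence the correction term tends to $0$ and $z_k\to \acc(\beta)$.

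Given the cited lemmas, the main obstacle is bookkeeping rather than a single hard idea: the geometry is pinned down by Lemmas \ref{lem:rays}, \ref{lem:als}, and \ref{lem:correctside}, and only the limit $z_k\to\acc(\beta)$ needs genuine computation. I expect the delicate point to be precisely that the leading $11+2\sqrt{30}$ component of $d_k-e_k\beta$ cancels exactly (equivalently, $|XV|\to 0$), which is the special feature of the value $\beta = (6+5\sqrt{30})/12$ that collapses the quadrilateral $OXVY$ onto a limiting triangle of aspect ratio exactly $\acc(\beta)$. I would also take care with the rescaling through Proposition \ref{prop:compact-domain} and the normalization $E(|OX|,|OY|)=\vol(\beta)^{-1}E(1,z_k)$, since a slip there would alter the scale factor and hence the claimed value $\vol(\beta)$.
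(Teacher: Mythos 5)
Your proof is correct, and its geometric core coincides with the paper's: both read the $v^2yxy^k$ base diagram off Lemmas \ref{lem:correctside}, \ref{lem:rays}, and \ref{lem:als}, apply Propositions \ref{prop:atfswork} and \ref{prop:compact-domain}, and rescale to get $c_\beta(z_k)\leq\vol(\beta)$ for $z_k=\vol(\beta)(d_k+e_k\beta)/q_k$; the paper then finishes exactly as you do, via the volume lower bound and continuity, though it leaves those two closing steps implicit. Where you genuinely differ is in proving $z_k\to\acc(\beta)$. The paper uses the identity $\vol(\beta)=5\acc(\beta)/(17+6\beta)$ (its \eqref{eqn:lowerblocked}) to reduce the goal to $\lim_k (d_k+e_k\beta)/q_k=(17+6\beta)/5$, solves the recursion $x_k=22x_{k-1}-x_{k-2}$ in closed form with $r=11+2\sqrt{30}$, and evaluates the limit as $(d+e\beta)/q$. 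You instead combine the identities $5d_k+7p_k=d_{k+1}$ and $4p_k-5e_k=e_{k+1}$ (proved inside Lemma \ref{lem:als}) with Lemma \ref{lem:volbaccb} to write $z_k=p_k/q_k+(d_{k+1}-e_{k+1}\beta)/\bigl((4\beta-7)q_k\bigr)$, quote Corollary \ref{cor:pqacc} for the first term (legitimate: that corollary is purely number-theoretic and is established in \S\ref{ssec:outer}, before this point), and kill the error term by observing that $d_k-e_k\beta$ is a pure subdominant-mode solution, $d_k-e_k\beta=(3-\beta)(11-2\sqrt{30})^k$; your seed-ratio check $(126-23\sqrt{30})/(6-\sqrt{30})=11-2\sqrt{30}$ is correct, and in the paper's closed-form notation it is exactly the cancellation $d=e\beta$ of the dominant mode. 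Your route is lighter and arguably more conceptual: it isolates the special feature of $\beta=(6+5\sqrt{30})/12$ (the dominant component of $d_k-e_k\beta$, equivalently of $|XV|$, vanishes identically, collapsing the quadrilateral onto the limiting triangle) rather than burying it inside a closed-form limit. What the paper's heavier computation buys is reuse: the explicit constants $d,\ov{d},e,\ov{e},q,\ov{q}$ are invoked again verbatim in the proof of Lemma \ref{lem:31} to establish \eqref{eqn:hitsOX}, so the paper pays that cost once, whereas your approach would still have to carry out that closed-form solution later on the way to Proposition \ref{prop:ic1}.
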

\begin{proof}
    By Lemma \ref{lem:als} and Lemma \ref{lem:volbaccb}, the sequence $v^2yxy^k$ of mutations of the rectangle $\Omega_\beta$ is a convex quadrilateral containing the
    \[
    \frac{d_k+e_k\beta}{q_k}\times\left(-1+\frac{\sqrt{30}}{3}\right)=\frac{d_k+e_k\beta}{q_k}\times\frac{1}{\vol(\beta)}
    \]
    right triangle abutting the axes. By multiplying by $\vol(\beta)$, we invoke Proposition \ref{prop:atfswork} to obtain an embedding
    \[
    (1-\eps)\cdot E\left(1,\frac{\vol(\beta)(d_k+e_k\beta)}{q_k}\right)\sembeds P(\vol(\beta),\vol(\beta)\beta)
    \]
    for all $\eps>0$. It therefore remains to show
    \[
    \lim_{k\to\infty}\frac{\vol(\beta)(d_k+e_k\beta)}{q_k}=\acc(\beta).
    \]
    First we note that
    \begin{equation}\label{eqn:lowerblocked}
    \vol(\beta)=\frac{5\acc(\beta)}{17+6\beta},
    \end{equation}
    which can be checked using the formulas for $\beta$ and $\acc(\beta)$ in \eqref{eqn:baccb}. Thus our goal becomes
    \[
        \lim_{k\to\infty}\frac{d_k+e_k\beta}{q_k}=\frac{17+6\beta}{5}.
    \]
    We find a closed form for the recursion $x_k=22x_{k-1}-x_{k-2}$ with $x_k=d_k,e_k,q_k$. Set
    \begin{align*}
    r&=11+2\sqrt{30}
    \\d&=\frac{3}{2}+\frac{31}{120}\sqrt{30},\quad \ov{d}=\frac{3}{2}-\frac{31}{120}\sqrt{30}
    \\e&=\frac{1}{2}+\frac{1}{10}\sqrt{30},\quad \ov{e}=\frac{1}{2}-\frac{1}{10}\sqrt{30}
    \\q&=\frac{1}{2}+\frac{1}{15}\sqrt{30},\quad \ov{q}=\frac{1}{2}-\frac{1}{15}\sqrt{30}.
    \end{align*}
    Then
    \begin{align*}
    d_k&=dr^k+\ov{d}r^{-k}
    \\e_k&=er^k+\ov{e}r^{-k}
    \\q_k&=qr^k+\ov{q}r^{-k}.
    \end{align*}
        Then we have
        \begin{align*}
            \lim_{k\to\infty}\frac{d_k+e_k\beta}{q_k}&=\lim_{k\to\infty}\frac{dr^k+\ov{d}r^{-k}+\left(er^k+\ov{e}r^{-k}\right)b}{qr^k+\ov{q}r^{-k}}
            \\&=\lim_{k\to\infty}\frac{d+\ov{d}r^{-2k}+\left(e+\ov{e}r^{-2k}\right)b}{q+\ov{q}r^{-2k}}\\&=\frac{d+e\beta}{q}
            \\&=\frac{17+6\beta}{5}.
        \end{align*}
\end{proof}
\begin{rmk}
    Notice that the slope of $\overrightarrow{YV}$ has limit
    \[
    \lim_{k\to\infty}\frac{p_kq_k-1}{q_k^2}=\lim_{k\to\infty}\frac{p_k}{q_k}=\acc(\beta)
    \]
    by Corollary \ref{cor:pqacc}. Coupled with the fact that
    \[
    \overrightarrow{XV}=\frac{d_k-e_k\beta_k}{5q_{k+1}}\begin{pmatrix}
        56\\25
    \end{pmatrix},
    \]
    to prove Proposition \ref{prop:fullacc} it would suffice to show that $d_k/e_k\to \beta$, so that the short side $\overrightarrow{XV}$ approaches zero and thus the ratio $|OY|/|OX|$ approaches the slope of $\overrightarrow{YV}$. However, this would also require solving the recursion.
\end{rmk}

To get the points on the capacity function, we now consider the sequence $v^2yxy^kxy^2$. This allows us to prove Proposition \ref{prop:ic1} and support Conjecture \ref{conj:ic2}, because
\begin{itemize}
    \item the sequence of mutations $v^2yxy^kxy$ provides an embedding realizing the inner corner between the obstructions from $\bE_k$ and $\hat\bE_{k+1}$, while
    \item the sequence of mutations $v^2yxy^kxy^2$ conjecturally provides an embedding realizing the inner corner between the obstructions from $\hat\bE_{k+1}$ and $\bE_{k+1}$.
\end{itemize}

We will use the notation
\[ d':=2q-d, e':=2q-e.\]
In the following lemma, we are going to use no subscripts to denote the vertices from $v^2yxy^k,$ and then add a subscript of $x$ to get the vertices from $v^2yxy^kx$.
\begin{lemma}\label{lem:k3}
Beginning with the data from Lemma~\ref{lem:als} from performing the sequence $v^2yxy^k$, one mutation by $x$ gives the nodal rays
\[
\vec{n}_{Y_x}=\begin{pmatrix}q_k\\-p_k\end{pmatrix},\quad \vec{n}_{V_x}=\begin{pmatrix}-11\\-5\end{pmatrix},\quad \vec{n}_{X_x}=\begin{pmatrix}121p_{k-1}+54q_{k-1}\\56p_{k-1}+25q_{k-1}\end{pmatrix},
\]
the direction vectors
\[
\overrightarrow{Y_xV_x}=\begin{pmatrix}q_k^2\\-p_kq_k+1\end{pmatrix},\quad \overrightarrow{X_xV_x}=\begin{pmatrix}-54q_k^2-121p_kq_k+121\\-25q_k^2-56p_kq_k+56\end{pmatrix},
\]
and the affine lengths
\[
\begin{split}
|OY_x|=\frac{d_k+e_k\beta}{q_k},\quad |OX_x|=\frac{d'_{k+1}+e'_{k+1}\beta}{q_{k+1}},
\\|Y_xV_x|=\frac{-d'_{k+1}+e'_{k+1}\beta}{q_k\hat{q}_{k+1}},\quad |X_xV_x|=\frac{d_k-e_k\beta}{q_{k+1}\hat{q}_{k+1}}.
\end{split}
\]

\end{lemma}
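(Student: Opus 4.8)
The plan is to follow the template of Step 4 (the $x$-mutation) in the proof of Lemma \ref{lem:vvyx}, now starting from the diagram produced by $v^2yxy^k$ whose data is recorded in Lemmas \ref{lem:rays} and \ref{lem:als}. First I would determine which side the nodal ray $\vec{n}_X=(11,5)$ meets. Since $\overrightarrow{XV}=(56,25)$ has slope $25/56<5/11$, the ray leaves $X$ just above the side $XV$ and, by convexity of the quadrilateral, crosses the side $\overrightarrow{YV}$; this is the exact analogue of the situation in Step 4 of Lemma \ref{lem:vvyx} and can be justified as in Lemma \ref{lem:correctside}. Mutating at $X$ therefore fixes the region containing $O$ together with the side $OY$, and acts by an affine transformation on the triangle cut off along $\vec{n}_X$.

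Next I would compute the mutation matrix. The defining conditions $M\vec{n}_X=\vec{n}_X$ and $M\overrightarrow{XV}=\overrightarrow{OX}$ determine the unique element
\[
M=\begin{pmatrix}-54 & 121\\ -25 & 56\end{pmatrix}\in\mathrm{SL}_2(\Z),
\]
and the transformation rules for mutation then give the nodal rays and direction vectors directly: $\vec{n}_{Y_x}=\vec{n}_Y$ and $\overrightarrow{Y_xV_x}=\overrightarrow{YV}$ are unchanged (they lie in the fixed region), while $\vec{n}_{V_x}=-\vec{n}_X=(-11,-5)$, $\vec{n}_{X_x}=M\vec{n}_V$, and $\overrightarrow{X_xV_x}=M\overrightarrow{YV}$. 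Carrying out these two matrix–vector products against $\vec{n}_V=(-q_{k-1},p_{k-1})$ and $\overrightarrow{YV}=(q_k^2,-p_kq_k+1)$ from Lemma \ref{lem:rays} reproduces the claimed expressions for $\vec{n}_{X_x}$ and $\overrightarrow{X_xV_x}$ verbatim.

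For the affine lengths, $|OY_x|=|OY|$ because $OY$ is untouched, while the side $XV$ is folded onto the $x$-axis, so $|OX_x|=|OX|+|XV|$; matching the latter to $(d'_{k+1}+e'_{k+1}\beta)/q_{k+1}$ reduces to the two linear identities $5e_{k+1}=e_k+6q_{k+1}$ and $5d_{k+1}+d_k=17q_{k+1}$, each of which is checked on the seeds $\bE_0,\bE_1$ and propagated through the common recursion $x_{k+1}=22x_k-x_{k-1}$ exactly as in the proof of Lemma \ref{lem:als}. The remaining two lengths $|Y_xV_x|$ and $|X_xV_x|$ I would obtain by solving the closing-up relation \eqref{eqn:4addzero} with $a=x$. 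Here the determinant $\det[\overrightarrow{Y_xV_x},\overrightarrow{X_xV_x}]$ evaluates to $-(5q_k^2+11p_kq_k-11)^2$, and the crucial simplification is the identity $5q_k^2+11p_kq_k-11=\hat q_{k+1}$ (using $\hat q_{k+1}=t_kq_{k+1}-5$), which turns the denominator of Cramer's rule into $\hat q_{k+1}^2$; after the numerators are simplified, one factor of $\hat q_{k+1}$ cancels, leaving the stated denominators $q_k\hat q_{k+1}$ and $q_{k+1}\hat q_{k+1}$.

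I expect the main obstacle to be exactly this last bundle of recursion identities: establishing $5q_k^2+11p_kq_k-11=\hat q_{k+1}$ together with the companion relations needed to reduce the two Cramer's-rule numerators to $-d'_{k+1}+e'_{k+1}\beta$ and $d_k-e_k\beta$. These are quadratic rather than linear in the sequence entries, so they are cleanest to derive from the compatibility and adjacency identities of Lemma \ref{lem:identities} applied to the triple $(\bE_k,\hat\bE_{k+1},\bE_{k+1})$ (recall that $\hat\bE_{k+1}$ is precisely the $x$-mutation of $(\bE_k,\bE_{k+1},\bE)$), with the $d',e'$ bookkeeping handled as in Remark \ref{rmk:d'e'}. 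As with Lemma \ref{lem:rays}, the structural part of this computation is the analogue of \cite[Lem.~6.6]{M1} under the substitution $\bE_\la=\bE_k$, $\bE_\mu=\hat\bE_{k+1}$, $\bE_\rho=\bE_{k+1}$, so once these identities are in hand the algebra follows the same path.
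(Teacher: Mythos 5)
Your proposal is correct, and for the nodal rays, direction vectors, $|OY_x|$, and $|OX_x|$ it follows the paper's proof exactly: the same determination that $\vec{n}_X$ hits $\overrightarrow{YV}$, the same matrix $M$, the same transformation rules, and your two linear identities $5e_{k+1}=e_k+6q_{k+1}$ and $5d_{k+1}+d_k=17q_{k+1}$ are algebraically equivalent (via $d'=2q-d$, $e'=2q-e$) to the paper's $4q_{k+1}-e_k=5e'_{k+1}$ and $-7q_{k+1}+d_k=5d'_{k+1}$, proved the same way (seeds plus the common recursion). Where you genuinely diverge is in extracting the last two lengths: you solve the full closing-up relation \eqref{eqn:4addzero} by Cramer's rule, with denominator $\det[\overrightarrow{Y_xV_x},\,\overrightarrow{X_xV_x}]=-(5q_k^2+11p_kq_k-11)^2=-\hat q_{k+1}^2$ (your determinant computation is correct), whereas the paper follows \cite[Lem.~6.1~(ii)]{M1}, using the smaller quadrilateral fixed by the $x$-mutation to get $|Y_xV_x|=(11|OY|+5|OX|)/\hat{q}_{k+1}$ directly and then $|X_xV_x|=|YV|-|Y_xV_x|$. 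Both routes turn on exactly the same key identity $5q_k^2+11p_kq_k-11=\hat q_{k+1}$ plus comparable recursion identities, so yours is a legitimate variant, at the cost of heavier numerator algebra since the two unknowns stay coupled. One correction, though: the supporting identities here are \emph{not} those of the triple $(\bE_k,\hat\bE_{k+1},\bE_{k+1})$ as you propose, but of $(\bE_k,\bE_{k+1},\bE)$ --- the constants $11=p-6q$ and $5=q$ come from $\bE$, and the key identity unwinds as $q_k(11p_k+5q_k)-11=q_kt_{k+1}-11=t_kq_{k+1}-5=\hat q_{k+1}$, using Lemma \ref{lem:identities}~(iv) and (ii) for $(\bE_k,\bE_{k+1},\bE)$ together with the definition $\hat\bE_{k+1}=t_k\bE_{k+1}-\bE$; the paper applies Lemma \ref{lem:identities} only to this triple in this lemma, and the triple $(\bE_k,\hat\bE_{k+1},\bE_{k+1})$ is needed only later, in Lemma \ref{lem:31}. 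Finally, your $\overrightarrow{X_xV_x}=M\overrightarrow{YV}$ reproduces the displayed components of the statement; the paper's proof writes $-M\overrightarrow{YV}$ while displaying the components of $+M\overrightarrow{YV}$ (an internal sign slip in the paper), so this is not a defect of your argument.
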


\begin{proof}
We first give the proof for the direction vectors and nodal rays. 

By Lemma \ref{lem:rays}, after performing the sequence $v^2yxy^k$ we have $\vec{n}_X = (11, 5)$, $\vec{n}_V = (-q_{k-1}, p_{k-1})$, and $\overrightarrow{XV} = (56, 25)$. Note that for the next mutation at $X$, because $\vec{n}_X$ has positive slope, it will always hit the edge $\overrightarrow{YV}$. Thus, the mutation matrix should satisfy
\[  M\vec{n}_X=\vec{n}_X, \quad
   M\overrightarrow{XV}
   =\overrightarrow{OX} \iff M =  \begin{pmatrix} -54 & 121 \\ -25 & 56 \end{pmatrix}. \]
The polygon after mutation at $X$ should thus have $\vec{n}_{Y_x}=\vec{n}_Y$,
\[
\vec{n}_{V_x}=-\vec{n}_X=\begin{pmatrix}-11\\-5\end{pmatrix},\quad \vec{n}_{X_x}=M\vec{n}_V=\begin{pmatrix}121p_{k-1}+54q_{k-1}\\56p_{k-1}+25q_{k-1}\end{pmatrix},
\]
and
\[
\overrightarrow{X_xV_x}=-M\overrightarrow{YV}=\begin{pmatrix}-54q_k^2-121p_kq_k+121\\-25q_k^2-56p_kq_k+56\end{pmatrix},
\]
while $\overrightarrow{Y_xV_x}=\overrightarrow{YV}$ because $\vec{n}_X$ hits $\overrightarrow{YV}$.

We now give the proofs for the affine lengths.
Note that $|OY_x|=|OY|$ because $Y_x=Y$ Next, to compute $|OX_x|$, we check that given the formulas for $|OX|$ and $|XV|$ from Lemma~\ref{lem:als}, we have
\[
|OX_x|=|OX|+|XV|=\frac{4\beta-7}{5}+\frac{d_k-e_k\beta}{5q_{k+1}}=\frac{d'_{k+1}+e'_{k+1}\beta}{q_{k+1}}.
\]
This follows from the identities
\begin{align*} 
 4q_{k+1}-e_k&=5e'_{k+1} \\
 -7q_{k+1}+d_k&=5d'_{k+1},
 \end{align*} 
which hold by induction because they are linear identities and the $(d_k,e_k,q_k)$ (and thus $d_k'$ and $e_k'$) satisfy the same linear recursion.

We now look at $|Y_xV_x|$. Following the proof of \cite[Lem.~6.1~(ii)]{M1} (which solves for $|Y_xV_x|$ using the fact that the sides of the quadrilateral which is fixed under $x$-mutation, with sides $|OX|\overrightarrow{OX}, -|Y_xV_x|\overrightarrow{YV}, |OY_x|\overrightarrow{OY}$, and a side parallel to $\vec{n}_X$, must add to zero), the stated formula for $|Y_xV_x|$ holds if our analogue of \cite[(6.0.2)]{M1} gives us the claimed value for $|Y_xV_x|$, that is
\begin{equation}\label{eqn:YxVx}
    |Y_xV_x|=\frac{11|OY|+5|OX|}{-11+q_k(41p_k+5q_k-30p_k)}=\frac{-d'_{k+1}+e'_{k+1}\beta}{q_k\hat{q}_{k+1}}.
\end{equation}

For the denominator of \eqref{eqn:YxVx}, note that
\[
-11+q_k(41p_k+5q_k-30p_k)\overset{(*)}{=}t_{k+1}q_k-11\overset{(**)}{=}t_kq_{k+1}-5=\hat q_{k+1},
\]
where $(*)$ uses Lemma \ref{lem:identities} (iv) and $(**)$ uses the second conclusion of Lemma \ref{lem:identities} (ii), both applied to $(\bE_k,\bE_{k+1},\bE)$.

For the numerator of \eqref{eqn:YxVx}, we must show $11|OY|+5|OX|=(-d'_{k+1}+e'_{k+1}\beta)/q_k.$
We have
\[
11|OY|+5|OX|=\frac{11(d_k+e_k\beta)}{q_k}+(4\beta-7),
\]
so we must check that
\begin{align*}
    11e_k+4q_k&=e'_{k+1}\\
    11d_k-7q_k&=-d'_{k+1}.
\end{align*}
As in the proof of the $|OX_x|$ formula, these hold by induction, using the fact that all terms satisfy the same recursion.

Finally, 
to verify the formula for $|X_xV_x|$, we check that
\[
|X_xV_v|=|YV|-|Y_xV_x|=\frac{4\beta+7}{q_kq_{k+1}}-\frac{-d'_{k+1}+e'_{k+1}\beta}{q_k\hat{q}_{k+1}}=\frac{d_k-e_k\beta}{q_{k+1}\hat{q}_{k+1}}.
\]
This is equivalent to
\begin{align*}
    4\hat{q}_{k+1}-e'_{k+1}q_{k+1}&=-e_kq_k \\
    7\hat{q}_{k+1}+d'_{k+1}q_{k+1}&=d_kq_k.
\end{align*}
The first formula can be verified as follows: 
\begin{align*}
    4\hat{q}_{k+1}-e'_{k+1}q_{k+1}&=-e_kq_k
    \\4(22t_kq_{k+1}-5)-(2q_{k+1}-e_{k+1})q_{k+1}&=-e_kq_k
    \\16t_kq_{k+1}-80-7q_{k+1}+p_{k+1}q_{k+1}-t_{k+1}q_{k+1}&=-p_kq_k-q_k^2+t_kq_k \text{ by Lemma \ref{lem:depqt}}\\-6t_kq_{k+1}+30-7q_{k+1}+p_{k+1}q_{k+1}&=-p_kq_k-q_k^2 \text{ by Lemma \ref{lem:identities} (v)}
    \\5t_kq_{k+1}+30-22q_kq_{k+1}&=-p_kq_k-q_k^2 \text{ by Lemma \ref{lem:identities} (iii)}
    \\t_kq_{k+1}+6&=t_{k+1}q_k \text{ by Lemma \ref{lem:identities} (i)},
\end{align*}
which holds by Lemma \ref{lem:identities} (ii). All applications of Lemma \ref{lem:identities} use the triple $(\bE_k,\bE_{k+1},\bE)$. The verification of the second formula uses the exact same sequence of identities.
\end{proof}

The next lemma is the key step which allows us to prove Proposition \ref{prop:ic1} and thus Theorem \ref{thm:main}. Similar to Lemma \ref{lem:k3}, we use no subscripts to denote the vertices after the sequence $v^2yxy^kx$, and a subscript $y$ to denote the vertices after the final $y$ mutation. 
\begin{lemma}\label{lem:31}
After performing the sequence of mutations $v^2yxy^kxy$, the affine lengths of the axis sides are
\begin{align*}
    |OY_y| &= \frac{\hat{d}_{k+1} + \hat{e}_{k+1}\beta}{\hat{q}_{k+1}}\\
    |OX_y| &= \frac{d_k + e_k\beta}{p_k} 
\end{align*}
\end{lemma}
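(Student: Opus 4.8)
The goal is to compute the two axis-side affine lengths $|OY_y|$ and $|OX_y|$ after the mutation sequence $v^2yxy^kxy$, building on the data computed in Lemma~\ref{lem:k3} for the sequence $v^2yxy^kx$. The strategy mirrors the inductive, identity-checking approach used throughout \S\ref{ssec:inner}, especially Lemmas~\ref{lem:als} and~\ref{lem:k3}. The key geometric input is to determine which side of the quadrilateral the nodal ray $\vec n_{Y}$ (from the $v^2yxy^kx$ stage) hits when we perform the final $y$-mutation. By Lemma~\ref{lem:k3}, after $v^2yxy^kx$ we have $\vec n_{Y_x} = (q_k, -p_k)$, which has a steep negative slope, so I expect it to intersect the side $\overrightarrow{X_xV_x}$ (the "short" side emanating from $X_x$), analogously to the behavior established in Lemma~\ref{lem:correctside}. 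Establishing this intersection claim rigorously, by comparing the slope $-p_k/q_k$ of the nodal ray against the slopes of the candidate sides computed in Lemma~\ref{lem:k3}, is the first step.

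**The two computations.** For $|OY_y|$: since the nodal ray hits $\overrightarrow{X_xV_x}$ and not the side through $Y_x$, the vertex $Y_x$ is unaffected by the mutation except that the side $|OY|$ grows by the portion of the adjacent side that gets folded over. Concretely, I expect $|OY_y| = |OY_x| + |Y_xV_x|$, exactly as in the $|OY_y| = |OY| + |YV|$ step of Lemma~\ref{lem:als}. Substituting the Lemma~\ref{lem:k3} formulas,
\[
|OY_y| = \frac{d_k + e_k\beta}{q_k} + \frac{-d'_{k+1} + e'_{k+1}\beta}{q_k\hat q_{k+1}}
= \frac{\hat q_{k+1}(d_k + e_k\beta) + (-d'_{k+1} + e'_{k+1}\beta)}{q_k \hat q_{k+1}}.
\]
It then remains to verify the two integer identities
\[
\hat q_{k+1} e_k + e'_{k+1} = q_k \hat e_{k+1}, \qquad
\hat q_{k+1} d_k - d'_{k+1} = q_k \hat d_{k+1},
\]
using $d' = 2q - d$, $e' = 2q - e$ and Definition~\ref{def:hatclasses} ($\hat\bE_{k+1} = t_k \bE_{k+1} - \bE$); these should reduce, after applying the recursion $x_{k+1} = 22 x_k - x_{k-1}$ and the relations of Lemma~\ref{lem:identities}, to linear identities provable by induction in the same style as the $|OY|$ computation in Lemma~\ref{lem:als}.

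**The second side and the closing relation.** For $|OX_y|$, the mutation at $Y$ acts on the piece containing $X_x$, so $|OX_y|$ is obtained from the length relation dictated by the fixed sub-quadrilateral, solving the analogue of equation~\eqref{eqn:4addzero} (or the $s$-parameter elimination used in Lemma~\ref{lem:k3}'s $|Y_xV_x|$ computation). I expect to set up the vector equation expressing closure of the quadrilateral fixed under this $y$-mutation, solve for the length $s$ of the nodal ray at its intersection with $\overrightarrow{X_xV_x}$, and thereby extract $|OX_y|$. The target value $\frac{d_k + e_k\beta}{p_k}$ suggests that the key nontrivial identity will involve the appearance of $p_k$ in the denominator, arising from the entries of $\vec n_{Y_x} = (q_k, -p_k)$ and $\overrightarrow{X_xV_x}$; I anticipate needing the compatibility and adjacency identities of Lemma~\ref{lem:identities}, applied to the triple $(\bE_k, \hat\bE_{k+1}, \bE_{k+1})$, to collapse the resulting expression.

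**Main obstacle.** The principal difficulty is not any single computation but the bookkeeping of which triple of classes governs each step: the sequence $v^2yxy^kx$ is controlled by $(\bE_k, \bE_{k+1}, \bE)$, whereas the final $y$-mutation transitions to data indexed by $\hat\bE_{k+1}$, so the closing identities must be verified using Lemma~\ref{lem:identities} for the triple $(\bE_k, \hat\bE_{k+1}, \bE_{k+1})$ rather than $(\bE_k, \bE_{k+1}, \bE)$. Correctly identifying $\hat q_{k+1} = t_k q_{k+1} - 5$ (from Definition~\ref{def:hatclasses}) and threading it through the numerator and denominator simplifications is where sign errors and index slips are most likely. Once the intersection claim is pinned down and the correct triple is invoked, the remaining identities are linear in the recursively-defined quantities and follow by induction exactly as in the preceding lemmas, with base cases checked directly from the seed values in Definitions~\ref{def:outerclasses} and~\ref{def:hatclasses}.
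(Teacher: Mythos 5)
There is a genuine gap, and it lies exactly at the step you flagged as your ``first step'': determining which side the nodal ray $\vec{n}_{Y}$ hits during the final $y$-mutation. You assert that it intersects $\overrightarrow{X_xV_x}$, ``analogously to the behavior established in Lemma~\ref{lem:correctside}.'' The paper proves the opposite, and this is the whole point of the lemma: for this last $y$-mutation the ray $\vec{n}_Y$ (of slope $-p_k/q_k$, starting from $(0,(d_k+e_k\beta)/q_k)$) extends all the way down to hit the axis side $\overrightarrow{OX}$, not $\overrightarrow{XV}$. (The paper's remark immediately after the proof emphasizes that this change of side is precisely what corresponds to passing from an embedding strictly above the capacity function to one lying on it.) Your premise is not merely unproved --- it is inconsistent with the formula you are trying to establish: if the ray hit $\overrightarrow{X_xV_x}$, then the bottom side would lie entirely in the piece fixed by the mutation, so one would have $|OX_y|=|OX_x|=(d'_{k+1}+e'_{k+1}\beta)/q_{k+1}$ exactly as in Lemma~\ref{lem:als}, which is not the claimed value $(d_k+e_k\beta)/p_k$. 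No closure relation or application of Lemma~\ref{lem:identities} can produce a change in $|OX|$ under your hypothesis, so the second half of your plan cannot be carried out.

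Once the correct side is identified, the $|OX_y|$ formula is immediate (it is just the $x$-intercept of the nodal ray, $x=(d_k+e_k\beta)/p_k$), but one must then prove the inequality
\[
\frac{d_k+e_k\beta}{p_k}<\frac{d'_{k+1}+e'_{k+1}\beta}{q_{k+1}},
\]
i.e.\ that this intercept lands inside the segment $OX$. This is the substantive analytic content of the paper's proof: it is established by solving the recursion in closed form ($x_k=xr^k+\ov{x}r^{-k}$ with $r=11+2\sqrt{30}$) and estimating the resulting coefficients, and your proposal contains no substitute for it. For what it is worth, your treatment of $|OY_y|$ is essentially the paper's: $|OY_y|=|OY|+|YV|$ holds regardless of which side the ray hits, your two integer identities agree (after rearrangement) with the paper's, and you correctly identify that they must be verified via Lemma~\ref{lem:identities} applied to the triple $(\bE_k,\hat\bE_{k+1},\bE_{k+1})$ rather than $(\bE_k,\bE_{k+1},\bE)$. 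But the $|OX_y|$ half, and with it the lemma, does not follow from your argument.
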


\begin{proof} 
First we show that for the last $y$-mutation, $\vec{n}_Y$ extends to hit the side $\overrightarrow{OX}$ (rather than $\overrightarrow{XV}$ as for earlier $y$-mutations in Lemmas \ref{lem:vvyx} and \ref{lem:correctside}). As a consequence, we prove the formula for $|OX_y|$. By Lemma \ref{lem:k3}, we need to show that
\begin{enumerate}[label=(\roman*)]
    \item the $x$-coordinate of the $x$-intercept of the line of slope $-p_k/q_k$ (the slope of $\vec{n}_Y$) through the point $(0,|OY|)=(0,(d_k+e_k\beta)/q_k)$ is less than $|OX|=(d'_{k+1}+e'_{k+1}\beta)/q_{k+1}$, and that
    \item this $x$-coordinate equals the claimed value $(d_k+e_k\beta)/p_k$ of $|OX_y|$.
\end{enumerate}

Note that this $x$-coordinate is at the solution to
\[
-\frac{d_k+e_k\beta}{q_k}=-\frac{p_k}{q_k}x \iff x=\frac{d_k+e_k\beta}{p_k},
\]
so proving (i) suffices to prove (ii). To prove (ii), we need to show
\begin{equation}\label{eqn:hitsOX}
\frac{d_k+e_k\beta}{p_k}<\frac{d'_{k+1}+e'_{k+1}\beta}{q_{k+1}}.
\end{equation}
 Recall that in Lemma \ref{prop:fullacc}, we solved the recursion defining the $\bE_k$ and found
    \begin{align*}
    r&=11+2\sqrt{30}
    \\d&=\frac{3}{2}+\frac{31}{120}\sqrt{30},\quad \ov{d}=\frac{3}{2}-\frac{31}{120}\sqrt{30}
    \\e&=\frac{1}{2}+\frac{1}{10}\sqrt{30},\quad \ov{e}=\frac{1}{2}-\frac{1}{10}\sqrt{30}
    \\q&=\frac{1}{2}+\frac{1}{15}\sqrt{30},\quad \ov{q}=\frac{1}{2}-\frac{1}{15}\sqrt{30}.
    \end{align*}
We can further compute
\[
p=\frac{7}{2}+\frac{13}{20}\sqrt{30},\quad \ov{p}=\frac{7}{2}-\frac{13}{20}\sqrt{30}.
\]
Expanding \eqref{eqn:hitsOX} using $x_k=xr^k+\ov{x}r^{-k}$ for $x=d,e,p,q$, we want to show
\[
c_{2k+1}r^{2k+1}+c_1r+c_{-1}r^{-1}+c_{-2k-1}r^{-2k-1}>0,
\]
where
\begin{align*}
    c_{2k+1}&=(2q-d)p+(2q-e)p\beta-dq-eq\beta=0,
    \\c_1&=(2q-d)\ov{p}+(2q-e)\ov{p}\beta-\ov{d}q-\ov{e}q\beta=-\frac{17}{12}+\frac{31}{120}\sqrt{30}\approx-0.0017,
    \\c_{-1}&=(2\ov{q}-\ov{d})p+(2\ov{q}-\ov{e})p\beta-d\ov{q}-e\ov{q}\beta=\frac{27}{8}+\frac{37}{60}\sqrt{30}\approx6.7526, \ \mbox{and}
    \\c_{-2k-1}&=(2\ov{q}-\ov{d})\ov{p}+(2\ov{q}-\ov{e})\ov{p}\beta-\ov{d}\ov{q}-\ov{e}\ov{q}\beta=-\frac{215}{24}+\frac{39}{24}\sqrt{30}\approx-0.0578.
\end{align*}
Because $c_{2k+1}=0,\ r>1,\ k\geq0$, and $c_{-2k-1}<0$, for all $k\geq1$,
\[
c_{2k+1}r^{2k+1}+c_1r+c_{-1}r^{-1}+c_{-2k-1}r^{-2k-1}>c_1r+c_{-1}r^{-1}+c_{-2k-1}r^{-2(k-1)-1},
\]
which means
\begin{align*}
    c_{2k+1}r^{2k+1}+c_1r+c_{-1}r^{-1}+c_{-2k-1}r^{-2k-1} & >c_1r+(c_{-1}+c_{-2k-1})r^{-1}\\
    & =-196+215\sqrt{\frac{5}{6}}\approx0.2673>0.
\end{align*}

Finally we prove the statement about $|OY_y|$. By Lemma~\ref{lem:k3}, we must verify that
\[
|OY_y|=|OY|+|YV|=
\frac{d_k+e_k\beta}{q_k}+\frac{-d'_{k+1}+e'_{k+1}\beta}{q_k\hat{q}_{k+1}}=\frac{\hat{d}_{k+1}+\hat{e}_{k+1}\beta}{\hat{q}_{k+1}}.
\]
Therefore, the formula for $|OY|$ will hold if
\begin{align*} 
 e'_{k+1}&=q_k\hat{e}_{k+1}-\hat{q}_{k+1}e_k
 \\
d'_{k+1}&= \hat{q}_{k+1}d_k-q_k\hat{d}_{k+1}.
\end{align*} 
Using Remark~\ref{rmk:d'e'}, and Lemma \ref{lem:depqt} to replace all $d, e$ terms with $p, q, t$, the first identity becomes
\begin{align*}
    7q_{k+1}-p_{k+1}+t_{k+1}&=q_k\hat p_{k+1}+q_k\hat q_{k+1}-q_k\hat t_{k+1}-\hat q_{k+1}p_k-\hat q_{k+1}q_k+\hat q_{k+1}t_k
    \\7q_{k+1}-p_{k+1}&=-q_k\hat t_{k+1}+\hat q_{k+1}t_k \text{ by Lemma \ref{lem:identities} (vii)},
\end{align*}
which holds by Lemma~\ref{lem:identities}~(ii). (Both uses of Lemma \ref{lem:identities} (vii) are applied to the triple $(\bE_k,\hat\bE_{k+1},\bE_{k+1})$.) The proof of the second identity is almost identical.
\end{proof}

\begin{rmk} Note, that in Lemma~\ref{lem:31} the nodal ray of the additional $y$ mutation hits the side $|OX|$ of the quadrilateral rather than the side $|XV|$ like in Lemma~\ref{lem:als}. This implies the formulas in Lemma~\ref{lem:31} are no longer in parallel with the formulas derived by Magill in \cite{M1}. Instead, as found in \cite{M2}, when the nodal ray emanating from $Y$ changes the side it intersects with this corresponds to moving from an embedding strictly above the function to one that lies on the function. 
\end{rmk}

We prove Proposition \ref{prop:ic1}, proving that there is a full filling at the inner corner between the obstructions from $\bE_k$ and $\hat\bE_{k+1}$.

\begin{proof} (of Proposition \ref{prop:ic1})
By Lemma \ref{lem:31}, there is an embedding
\[
(1-\eps)\cdot E\left(\frac{d_k+e_k\beta}{p_k},\frac{\hat d_{k+1}+\hat e_{k+1}\beta}{\hat q_{k+1}}\right) \sembeds P(1,\beta)
\]
for all $\eps>0$. Thus there is an embedding
\[
E\left(1,\frac{p_k(\hat d_{k+1}+\hat e_{k+1}\beta)}{\hat q_{k+1}(d_k+e_k\beta)}\right) \sembeds \frac{1}{1-\eps}\cdot P\left(\frac{p_k}{d_k+e_k\beta},\frac{p_k\beta}{d_k+e_k\beta}\right),
\]
implying that
\[
c_\beta\left(\frac{p_k(\hat d_{k+1}+\hat e_{k+1}\beta)}{\hat q_{k+1}(d_k+e_k\beta)}\right)\leq\frac{p_k}{d_k+e_k\beta}
\]
once we take the infimum defining $c_\beta$.
\end{proof}

To close this section, we explain our reasoning behind Conjecture \ref{conj:ic2}.
\begin{rmk}\label{rmk:conj}
We initially believed that
\[
c_\beta|_{[1,\acc(\beta)]}=\sup_k\left\{\mu_{\bE_k,\beta}\middle|_{[1,\acc(\beta)]}\right\},
\]
and thought that Theorem \ref{thm:main} would be proved as a consequence of the lower bounds from Proposition \ref{prop:oc}~(i) and an embedding providing an upper bound at the intersection between the horizontal line through $O_{k-1}$ and the line through the origin and $O_k$ (see Figure \ref{fig:main} or \ref{fig:mainzoomed}).
Specifically, we thought there might be an infinite sequence of mutations starting with $v^2yxy^kxy$ which approached this hypothetical inner corner. We held on to this expectation because the obstructions from the classes $\hat\bE_k$ are extremely difficult to visualize computationally.\footnote{We also naively failed to notice that the intersection between the obstructions from $\bE_0$ and $\bE_1$ is below the volume obstruction. This is not true for all $k$, however.} 
The obstruction from $\hat\bE_2$ corresponds to the $4,769,607,569^\text{th}$ ECH capacity by Lemma \ref{lem:Eck}, while comparing the ratios between even the first $10,000,000$ ECH capacities is very computationally expensive!

\begin{figure}[H]
    \centering
    \includegraphics[width=.7\textwidth]{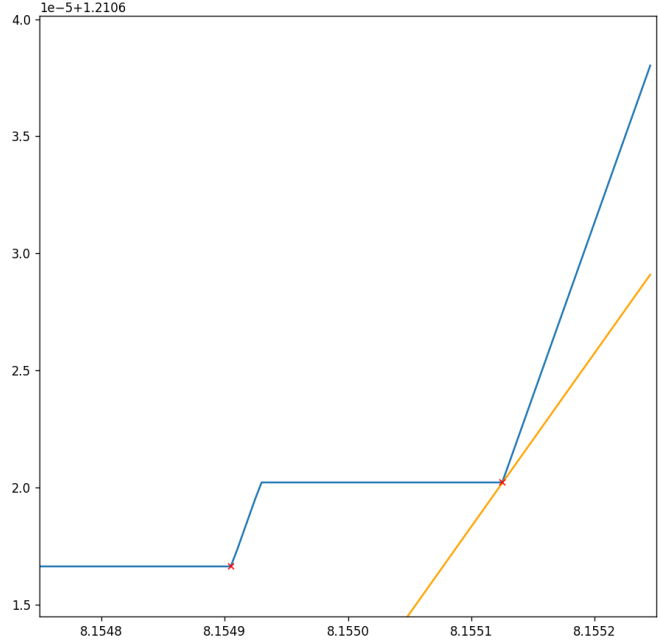}
    \caption{Here we have depicted the obstruction from the inner class $\hat\bE_1$. The orange curve is the volume obstruction $\vol_\beta(z)$, and the embedding function $c_\beta$ is in blue. The left hand red cross depicts the embedding from the mutation sequence $v^2yx^2y$ proving Proposition \ref{prop:ic1}, while the right hand red cross depicts the embedding from the mutation sequence $v^2yx^2y^2$ supporting Conjecture \ref{conj:ic2}.}
    \label{fig:E_1}
\end{figure}

By comparison with \cite{usher, usherletter}, we eventually discovered the inner classes $\hat\bE_k$, and found the same sequence considered by Magill in \cite{M2}, namely $v^2yxy^kxy$, proves Proposition \ref{prop:ic1}. In order to compute all of $c_\beta$ on $[1,\acc(\beta)]$, we would need to prove Conjecture \ref{conj:ic2} by identifying a sequence of mutations of $P(1,\beta)$ so that
\[
|OX|=\frac{\hat d_{k+1}+\hat e_{k+1}\beta}{\hat p_{k+1}}, \quad |OY|=\frac{d_{k+1}+e_{k+1}\beta}{q_{k+1}}.
\]
Our hypothesized sequence of mutations is $v^2yxy^kxy^2$, which would provide a full filling and thus upper bound for $c_\beta$ at the inner corner between the obstructions from $\hat\bE_{k+1}$ and $\bE_{k+1}$: in Figure \ref{fig:mainzoomed}, this would mean that $c_\beta$ equals the dashed black line.

\begin{figure}[h]
    \centering
    \includegraphics{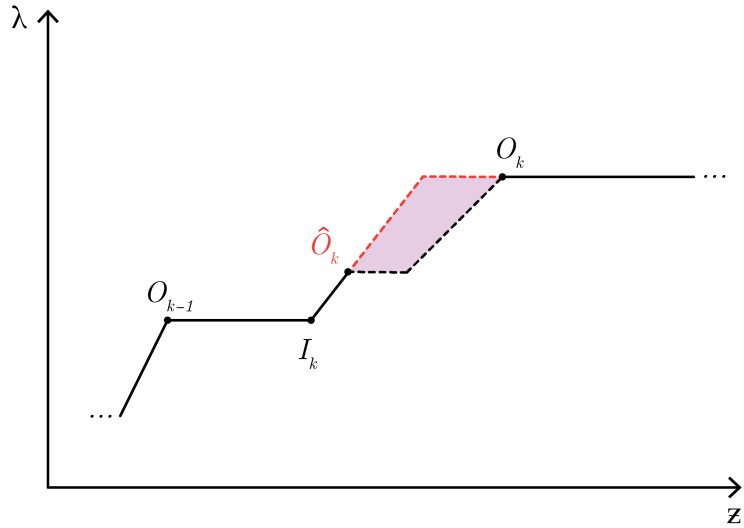}
    \caption{This figure provides more detail on the schematic presented in Figure \ref{fig:combination}. Using Propositions \ref{prop:oc}~(i), \ref{prop:oc}~(ii), and \ref{prop:ic1}, along with Lemma \ref{lem:ics} (i), we have computed $c_\beta$ along the solid black lines. However, we would need to prove Conjecture \ref{conj:ic2} to compute $c_\beta$ in between $\hat O_k$ and $O_k$. We know the function must lie in the violet quadrilateral (possibly on its boundary), and we conjecture the function is given by its lower boundary, the dashed black line.}
    \label{fig:mainzoomed}
\end{figure}

However, computing $|OX|$ and $|OY|$ for the sequence $v^2yxy^kxy^2$ -- even though it differs from the sequence $v^2yxy^kxy$ considered in Lemma \ref{lem:31} by only one $y$-mutation -- is considerably more time consuming because for the final two $y$ mutations, the nodal ray $\vec{n}_Y$ hits the bottom side of the quadrilateral $|OX|$ rather than the side $|XV|$ that the previous $y^k$ mutations hit. Thus, to determine the combinatorics for the quadrilaterals of the $y$ mutations hitting the $|OX|$ would involve many new computations. While this could be done, computing the whole function is not necessary to claim there is an infinite staircase. 

\end{rmk}

\section{Other properties of the embedding function}\label{sec:ideas}

In this section we collect several observations about the structure of the ellipsoid embedding function for polydisks which may be useful for future work.

\subsection{Towards Conjecture \ref{thm:generalb}} Proving Conjecture \ref{thm:generalb} would require analogues of Propositions \ref{prop:oc}~(i) and \ref{prop:ic1}. For the obstructions providing the outer corners, this means identifying new Diophantine classes and proving that they are Diophantine. In analogy to \cite{MMW}, we expect that the correct classes $\bE_{k,n}$ and $\hat\bE_{k,n}$ can be obtained from $\bE_{k,2}:=\bE_k$ and $\hat\bE_{k,2}:=\hat\bE_k$ as follows:
\begin{itemize}
    \item Modify $p_{k,2}/q_{k,2}:=p_k/q_k$ by adding $2n-4$ to each entry in its continued fraction; this is $p_{k,n}/q_{k,n}$.
    \item Use Lemma \ref{lem:depqt} to define $d_{k,n}$ and $e_{k,n}$.
\end{itemize}
To prove the analogue of Proposition \ref{prop:ic1} requires identifying new embeddings. In analogy to \cite{M1} we expect that this amounts to performing the mutations $v^n$ at the start of every sequence of mutations considered in \S\ref{ssec:inner}, rather than just $v^2$.

It would also be possible to prove Conjecture \ref{thm:generalb} using \cite[Thm.~4.4]{usher}. However, this would require proving that the Diophantine quasi-perfect classes $\bE_{k,n}$ are perfect,\footnote{For a class to be {\bf perfect}, it must be represented by a symplectically embedded sphere, rather than one that is only immersed. That the classes $\bE_{k,n}$ and $\hat\bE_{k,n}$ are quasi-perfect can be proved following the $n=2$ case discussed in the proof of Proposition \ref{prop:oc}.} which we do not do in this paper.

Finally we discuss the obstructions analogous to $\bE$ appearing after the accumulation point of the conjectural staircases $c_{\beta_n}$.

\begin{rmk}\label{prop:stepafteracc}\hfill

\begin{enumerate}[label=(\roman*)]
    \item We predict that the ECH capacity which gives the step after the accumulation point, generalizing the obstruction for $\bE$ in the case $n=2$, for the infinite staircases of Conjecture \ref{thm:generalb} will have index
\[
k_n=(2n+1)(2n^2+6n+5)=4n^3+14n^2+16n+5
\]
for $n\geq2$. At these steps, the $z-$coordinate of the associated outer corner is given by the fraction $\frac{p_n}{q_n}$, where
\[
p_n=4n^2+10n+5,
\]
and
\[
q_n=2n+1.
\]
The first few of these values are summarized in the table below:

\begin{center}
    \begin{tabular}{c|c|c|c }
    $n$ & $p_n$ & $q_n$ & $k_n$ \\ [0.5ex]
    \hline
    2 & 41 & 5 & 125 \\
    \hline
    3 & 71 & 7 & 287 \\
    \hline
    4 & 109 & 9 & 549 \\
    \hline
    5 & 155 & 11 & 935 \\
    \hline
    6 & 209 & 13 & 1469 \\
    \end{tabular}
\end{center}

One can check using the formulas for $p_n,q_n,$ and $k_n$ above that
\[
k_n=\frac{(p_n+1)(q_n+1)}{2}-1
\]
for every $n\geq2$, as predicted by the proof of Lemma \ref{lem:Eck}.

Also, note that
\[
p_n+q_n=2(2n^2+6n+3)=t_{n+1},
\]
which is the predicted coefficient of the recursion governing the outer corners of the next infinite staircase in this family.

\item Another way to identify the steps after the accumulation point is to compare to the case of $H_b$ and use Conjecture \ref{conj:Pfractal}. For $H_b$, these steps are the obstructions from the quasi-perfect Diophantine classes centered at\footnote{Here we are starting with $n=2$ as in \cite{usher}, as opposed to $n=0$ as in \cite{ICERM, MMW}.}
\[
[7,4], [9,6], \dots, [2n+3,2n].
\]
Thus for $P(1,\beta)$, we expect the centers to be at $[2n+4,2n+1]$. This agrees with the $p_n$ and $q_n$ computed in (i):
\[
[2n+4,2n+1]=2n+4+\frac{1}{2n+1}=\frac{(2n+4)(2n+1)+1}{2n+1}.
\]

We can then use
\[
t_n=\sqrt{p_n^2-6p_nq_n+q_n^2+8}
\]
and Lemma \ref{lem:depqt} to identify the corresponding quasi-perfect Diophantine classes
\[
\bE_n:=(2n^2+4n+1, 2n+2, 4n^2+10n+5, 2n+1, 2(2n^2+2n-1)).
\]
Notice $\bE_2$ is what we have been referring to as $\bE$.

\item The relevance of the $\bE_n$s to the staircases $c_{\beta_n}$ can also be seen in the fact that the obstruction $\mu_{\bE_n,\beta_n}(z)$ crosses through the volume obstruction $\vol_{\beta_n}(z)$ at $z=\acc(\beta_n)$. That is,
\[
    \mu_{\bE_n,\beta_n}(\acc(\beta_n))=\frac{q_n\acc(\beta_n)}{d_n+e_n\beta_n}=\sqrt{\frac{\acc(\beta_n)}{2\beta_n}}=\frac{1+\acc(\beta_n)}{2+2\beta_n}=\vol(\beta_n),
\]
implying that \eqref{eqn:qe'd'} holds with $q_n, d_n$, and $e_n$ replacing $q, d$, and $e$. We won't prove either of these claims here, but we do note that it is a straightforward if tedious computation using the formulas for $\beta_n$ and $\acc(\beta_n)$ in Conjecture \ref{thm:generalb} and for $q_n, d_n$, and $e_n$ in (ii) above.




\end{enumerate}
\end{rmk}

\subsection{Usher's Conjecture}\label{ssec:Uconj}

In \cite{usher}, Usher considers the family
\[
L_{n,0}:=\sqrt{n^2-1},
\]
proving that $c_{L_{n,0}}$ has an infinite staircase. He does this by proving that a sequence of classes $A_{k,n}$\footnote{Again note that we use $k$ to denote a step of the staircase where Usher uses $i$.} are perfect.

Usher's classes $A_{k,n}$ play the role of our classes $\bE_k$ (or $\bE_{k,n}$ more generally). However, he also identified other obstructions
\[
\hat A_{k,n}:=t_{k-1,n}A_{k,n}-\bE(n),
\]
where $\bE(n):=\bE=(n+1,1,2n+3,1,2n)$.\footnote{Note that the $\hat A_{k,n}$ are not defined in precisely this way in \cite[\S4.6]{usher}; this definition is inspired by ``$x$-mutation'' investigated in \cite{MMW}. His $k$-indexing of the $\hat A_{k,n}$ classes is also one less than what we define here.}

The classes $\bE(n)$ are similar to our $\bE=(17,6,41,5,22)$. Our new staircase $c_\beta$ accumulates to precisely the point where the obstruction from $\bE$ for $z<41/5$ crosses the volume curve: see \eqref{eqn:lowerblocked}. Meanwhile, Usher's staircases satisfy
\[
\mu_{\bE(n),L_{n,0}}(\acc(L_{n,0}))=\frac{\acc(L_{n,0})}{n+1+L_{n,0}}=\vol_{L_{n,0}}(\acc(L_{n,0})).
\]

Usher conjectured he could compute the whole function up to the accumulation point. His conjecture says
\begin{conjecture}[{\cite[Conj.~4.23]{usher}}]
    Between the center of $A_{0,n}$ and $\acc(L_{n,0})$, $c_{L_{n,0}}$ equals the supremum of the obstructions $\mu_{\bE,L_{n,0}}$, where $\bE$ is one of the $A_{k,n}$ or $\hat A_{k,n}$.
\end{conjecture}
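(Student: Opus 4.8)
The plan is to prove Usher's conjecture along the same two-sided architecture used for Theorem \ref{thm:main}: first obtain the lower bound $c_{L_{n,0}} \ge \sup_k\{\mu_{A_{k,n},L_{n,0}}, \mu_{\hat A_{k,n},L_{n,0}}\}$ from (quasi-)perfect classes, and then produce matching embeddings at every inner corner so that monotonicity and sublinearity (Proposition \ref{prop:cXprops}(ii),(iii)) pinch the function to equality on the whole interval.

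For the lower bound, Usher has already shown the $A_{k,n}$ are perfect, hence quasi-perfect, so \eqref{eqn:cboundmu} gives $c_{L_{n,0}} \ge \mu_{A_{k,n},L_{n,0}}$. For the classes $\hat A_{k,n} = t_{k-1,n}A_{k,n} - \bE(n)$ I would first verify they are quasi-perfect Diophantine classes by the argument of Proposition \ref{prop:oc}: since they are generated by the same Brahmagupta-type recursion from $A_{k,n}$ and the fixed class $\bE(n)=(n+1,1,2n+3,1,2n)$, one checks the surface equation \eqref{eq:xtAx} on the seeds and the compatibility \eqref{eq:nuCompat} with $\nu = t_{k-1,n}$, then invokes Corollary \ref{cor:dioph}; \eqref{eqn:cboundmu} then supplies the lower bounds at their centers. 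An analogue of Lemma \ref{lem:alternate} — a pair of linear-inequality inductions — shows the centers of the $A_{k,n}$ and $\hat A_{k,n}$ interleave and that their obstruction values alternate, so the supremum is organized into a genuine sequence of alternating outer and inner corners between the center of $A_{0,n}$ and $\acc(L_{n,0})$.

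The substance is the matching upper bound. By monotonicity and sublinearity it suffices to realize the function at the two families of inner corners: the intersections of $\mu_{A_{k,n}}$ with $\mu_{\hat A_{k+1,n}}$, and of $\mu_{\hat A_{k+1,n}}$ with $\mu_{A_{k+1,n}}$. The first family is the direct analogue of Proposition \ref{prop:ic1}. Following \S\ref{ssec:inner} I would find an ATF mutation sequence for $\Omega_{L_{n,0}}$ ending in a single $\dots xy$ whose mutated base diagram contains a triangle of the correct eccentricity, tracking nodal rays through the mutation matrix and affine lengths through the closing-up identity \eqref{eqn:4addzero}, and then apply Proposition \ref{prop:atfswork}. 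The required algebraic identities are the analogues of Lemma \ref{lem:identities} for the triples $(A_{k,n},A_{k+1,n},\bE(n))$ and $(A_{k,n},\hat A_{k+1,n},A_{k+1,n})$; they hold because the relevant pairs are adjacent and $t$-compatible by Lemma \ref{lem:recur0}. Since Usher's staircase is two-periodic rather than four-periodic, the initial block of mutations and the recursion constant will differ from Lemmas \ref{lem:vvyx}--\ref{lem:als}, but the bookkeeping is identical in form.

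The hard part — and the reason Usher's statement remains a conjecture — is the second family of inner corners, which is the exact analogue of our own open Conjecture \ref{conj:ic2}. The needed embedding should come from appending one further $y$-mutation, but as explained in Remark \ref{rmk:conj} this changes which edge the nodal ray $\vec{n}_Y$ meets (the bottom edge $|OX|$ rather than the slanted edge $|XV|$), so the closed-form recursions for the side lengths must be re-derived in this new combinatorial regime, in the spirit of \cite{M2}. I expect establishing those recursions — and checking, via fresh instances of Lemma \ref{lem:identities}, that they output exactly the coordinates $|OX| = (\hat d_{k+1,n}+\hat e_{k+1,n}L_{n,0})/\hat p_{k+1,n}$ and $|OY| = (d_{k+1,n}+e_{k+1,n}L_{n,0})/q_{k+1,n}$ demanded by the second inner corner — to be the principal obstacle. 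A cleaner alternative, flagged in the footnote to Remark \ref{prop:stepafteracc}, is to remain inside Usher's own framework: prove that the $\hat A_{k,n}$ are \emph{perfect} (represented by embedded, not merely immersed, spheres) and then apply \cite[Thm.~4.4]{usher} directly to conclude that $c_{L_{n,0}}$ equals the supremum of the two families of obstructions on the entire interval. Either route reduces the conjecture to supplying the single sharp upper bound that the present paper's ATF methods are designed to produce.
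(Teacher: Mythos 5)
The statement you are addressing is not a theorem of this paper but an open conjecture: the paper restates Usher's Conjecture 4.23 and, in \S\ref{ssec:Uconj}, offers only a two-item strategy for how it ``could be solved'' --- an analogue of Proposition \ref{prop:ic1} via the mutation sequence $v^{n-1}y^{k+1}xy$ for the inner corners between $A_{k,n}$ and $\hat A_{k+1,n}$, and an analogue of Conjecture \ref{conj:ic2} via $v^{n-1}y^{k+1}xy^2$ for those between $\hat A_{k+1,n}$ and $A_{k+1,n}$. Your proposal reproduces exactly this architecture: lower bounds from quasi-perfectness of the $A_{k,n}$ and $\hat A_{k,n}$ (via the analogue of Proposition \ref{prop:oc} and Corollary \ref{cor:dioph}), interleaving of centers as in Lemma \ref{lem:alternate}, and ATF embeddings at the two families of inner corners. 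As a research plan it is faithful to the paper's intent, and your identification of which triples must satisfy the identities of Lemma \ref{lem:identities} is the right bookkeeping.

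However, as a proof it has a genuine gap, and you name it yourself: the second family of inner corners. Appending the final $y$-mutation puts the nodal ray $\vec{n}_Y$ against the bottom edge $|OX|$ rather than $|XV|$, so the side-length recursions (the analogues of Lemmas \ref{lem:als}, \ref{lem:k3}, and \ref{lem:31}) must be re-derived in a new combinatorial regime; neither you nor the paper carries this out, and this is precisely why both Conjecture \ref{conj:ic2} and Usher's conjecture remain open. Indeed, even your first family of inner corners is only asserted to be tractable (``I would find an ATF mutation sequence\dots''), whereas a proof requires executing those computations for the specific sequence $v^{n-1}y^{k+1}xy$. Your proposed fallback --- prove the $\hat A_{k,n}$ are perfect and apply \cite[Thm.~4.4]{usher} --- also needs more justification than you give: the paper cites that theorem only as a route to the existence statement of Conjecture \ref{thm:generalb}, and it is not established, here or in your sketch, that perfectness of the $\hat A_{k,n}$ alone pins down $c_{L_{n,0}}$ on the full interval between consecutive centers, which is the actual content of the conjecture. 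In short, your outline matches the paper's own outline, but it is an outline: the step that would close the conjecture is deferred in both, so the statement remains unproven.
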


Our proof of Theorem \ref{thm:main} (i) proves that $c_\beta$ is determined on $[p_k/q_k,\hat p_{k+1}/\hat q_{k+1}]$ by only the classes $\bE_k$ and $\hat\bE_k$; proving Conjecture \ref{conj:ic2} would solve our version of Usher's conjecture. 
We expect that Usher's conjecture for the $c_{L_{n,0}}$ staircases could be solved by proving:
\begin{itemize}
    \item the analogue of Proposition \ref{prop:ic1} using the mutation sequence $v^{n-1}y^{k+1}xy$ to compute the inner corner between $A_{k,n}$ and $\hat A_{k+1,n}$;
    \item the analogue of Conjecture \ref{conj:ic2} using the mutation sequence $v^{n-1}y^{k+1}xy^2$ to compute the inner corner between $\hat A_{k+1,n}$ and $A_{k+1,n}$.
\end{itemize}

\subsection{Descending staircases and fractal structure}

If Conjecture \ref{conj:Pfractal} is true, then the set of $\sqrt{3}<\beta\leq\sqrt{8}$ for which $c_\beta$ has an infinite staircase is homeomorphic to the Cantor set. Key to proving this analogy is understanding how $c_\beta$ with $\beta=(6+5\sqrt{30})/12$ can be obtained from $c_{\sqrt{8}}$ and a descending infinite staircase $c_\beta$ ``mirroring'' $c_{\sqrt{3}}$.

In fact, the descending staircase with
\[
\beta=\frac{24+7\sqrt{3}}{13}
\]
was one of the first conjectural infinite staircases we found via computer exploration, and it is precisely this mirror! It is shown in Figure \ref{fig:sqrt3desc}. The reason why we think of $\beta=\frac{24+7\sqrt{3}}{13}$ and $\sqrt{3}$ as paired is that both accumulate to the point where the obstruction from $(3,1,7,1,4)$ intersects the volume curve $\vol_\beta(z)$; the ascending staircase $c_{\sqrt{3}}$ from below, where the obstruction has positive slope, and the descending staircase $c_\beta$ from above, where the obstruction is horizontal.

\begin{figure}[h]
    \subfigure[]{
         \centering
         \includegraphics[width=.43\textwidth]{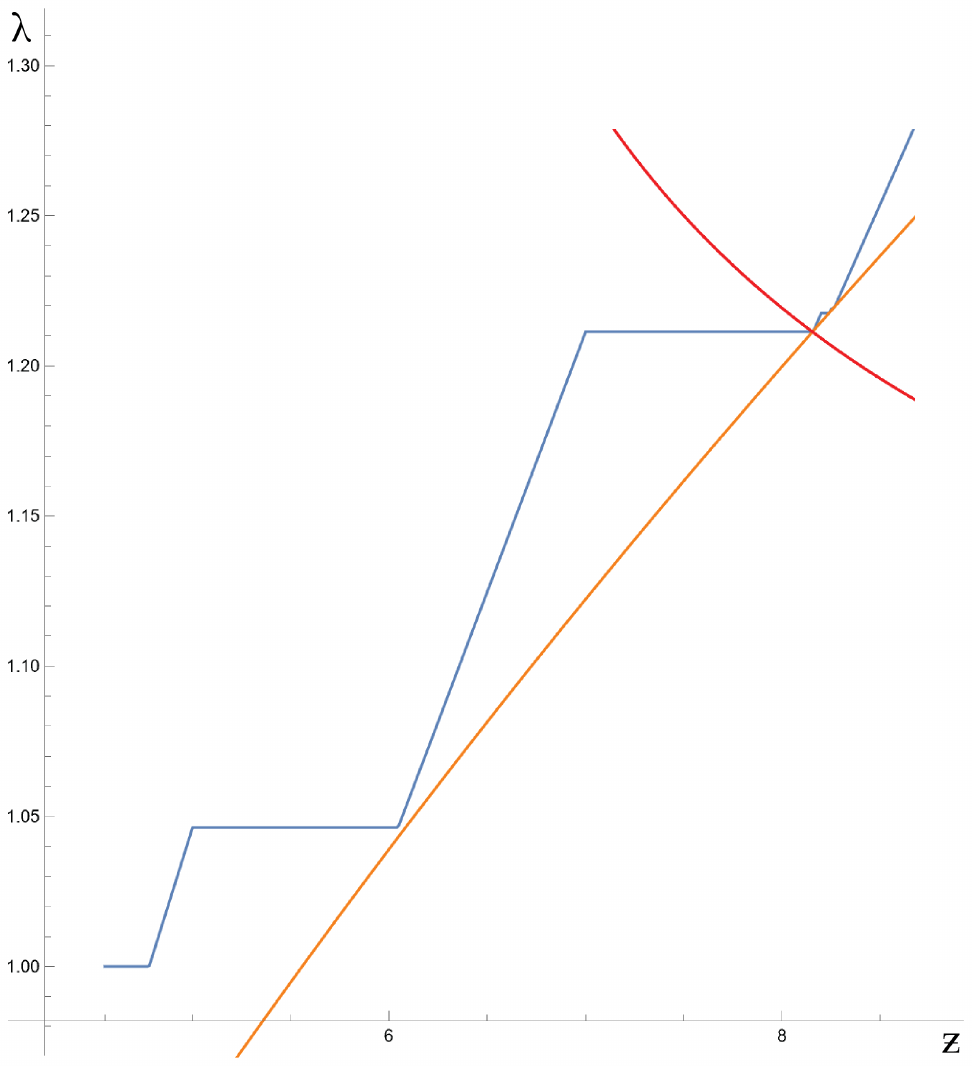}
     }\hfil
    \subfigure[]{
         \centering
         \includegraphics[width=.53\textwidth]{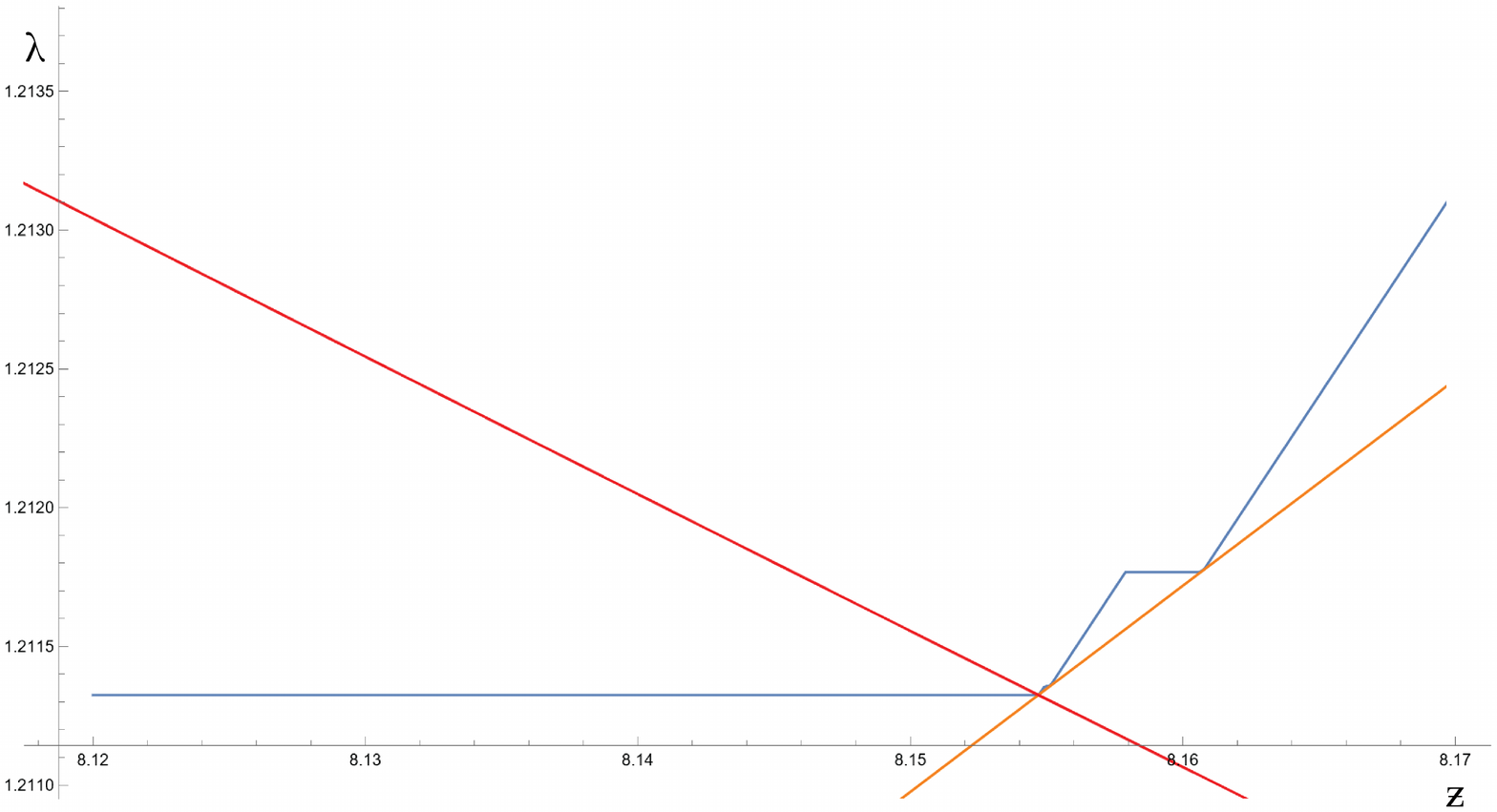}
         }
     \hfil
     \caption{This figure depicts the conjectural infinite staircase $c_\beta$ with $\beta=(24+7\sqrt{3})/13$. In both figures the orange curve is $\vol_\beta(z)$ and $c_\beta$ is in blue. The accumulation point curve $(\acc(\beta),\vol(\beta))$ is in red with $\beta$ varying. Thus the accumulation point of $c_\beta$ ought to occur at the intersection of these three curves. Figure (a) suggests that the accumulation point is precisely where the horizontal obstruction from the class $(3,1,7,1,4)$ intersects the volume curve. 
     In (b), we have zoomed in.}
    \label{fig:sqrt3desc}
\end{figure}

The first two steps of $c_\beta$ are $(5,1,9,1,6)$ and $\bE=(17,6,41,5,22)$. These three classes form what is called in \cite{MMW} a ``compatible triple.'' Thus we expect the same Cantor set structure to arise for $P(1,\beta)$ infinite staircases with $7<\acc(\beta)<9$ as does for the target $H_b$ with $6<\acc_H(b)<8$, see \cite[Thm.~1.1.1]{MMW}.

Moreover, the language of ``blocking classes'' developed in \cite{ICERM} and relying on the accumulation point formula from \cite{cghmp} provides more detail for understanding the results of \cite{integralpolydisks}. In \S\ref{ssec:toric} we defined the notion of a blocked $\beta$-value. Using the lower bound \eqref{eqn:cboundmu} on $c_\beta$ by the obstructions $\mu_{\bE,\beta}$, we say a quasi-perfect Diophantine class $\bE$ \textbf{blocks} $c_\beta$ from having an infinite staircase if $\mu_{\bE,\beta}(\acc(\beta))>\vol_\beta(\acc(\beta))$. We expect that $c_n, n\in\Z_{\geq2}$ are blocked by the perfect classes $\bE(n)$.  This illustrates the power of \cite[Thm.~1.13]{cghmp}: it reaffirms why the classes $\bE(n)$ (which appeared in different notation as the classes $E_n$ in \cite[(1-4)]{integralpolydisks}) are natural key players for the computations of Cristofaro-Gardiner, Frenkel, and Schlenk.


\subsection{Brahmagupta moves}\label{ssec:B}

In \cite{symm}, \cite{MMW}, and \cite{usher}, the authors found a symmetry, referred to as a Brahmagupta move by \cite{usher}, that acts on quasi-perfect classes to construct infinitely many different targets that have infinite staircases given one target with an infinite staircase.

It is more natural to define these symmetries via their action on the $z$-variable than the $\beta$-variable in the case of the polydisk (or the $b$-variable in the case of the Hirzebruch surface). Given an infinite sequence of classes $\{\bE_k\}$ centered at $p_k/q_k$ that form the steps of one infinite staircase, the symmetry sends the class $\bE_k$ to $S(\bE_k)$ where $S(\bE_k)$ is centered at $(6p_k-q_k)/q_k.$ On the $z$-coordinate, the symmetry can be expressed as the function $S(z)=6-1/z.$ In the work of \cite{symm}, \cite{MMW}, and \cite{usher}, the authors proved that for the staircases being considered the classes $\{S(\bE_k)\}$ form the steps of a new\footnote{In the special cases where $\beta=1/2$ or $b=1/3$, when the infinite staircase accumulates to $3+2\sqrt{2}$, the symmetry $S$ actually fixes the staircase, but in all other cases studied a new staircase is found.} infinite staircase. Further, by iterating $S$, for each positive integer $i,$  $\{S^i(\bE_k)\}$ is an infinite sequence of classes, which in practice always corresponds to the steps of a new infinite staircase. However, it has not been proven in general that $S$ sends staircases to staircases, and in \cite{symm}, \cite{MMW}, and \cite{usher} the authors required specific estimates about the starting $\{\bE_k\}$ staircases to conclude that for each $i$, the $S^i(\bE_k)$ also form staircases.



For the polydisk, because the function $\acc$ is 1-1, we can also consider the effect of the symmetries on the parameter $\beta$ of the polydisk via
\[
\beta\mapsto\acc^{-1}\circ S\circ\acc(\beta).
\]
The transformation on the $b$-coordinate is exactly the same, but with $\acc_H$ replacing $\acc$ and its domain restricted to account for the fact that $\acc_H$ is 2-to-1 in general.

In \cite{MMW} it was proved that the images of the four-periodic infinite staircase accumulating to $[\{7,5,3,1\}^\infty]$ under the symmetries also have infinite staircases in their ellipsoid embedding functions. Thus we expect that the images of $P(1,(6+5\sqrt{30})/12)$ under the Brahmagupta moves likely also have infinite staircases.
\begin{conjecture}
    The functions $c_{\beta_i}$ have infinite staircases, where
    \[
    \beta_i:=\acc^{-1}\circ S^i\circ\acc\left(\frac{6+5\sqrt{30}}{12}\right).
    \]
\end{conjecture}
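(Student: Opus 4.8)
The plan is to mirror the proof of Theorem \ref{thm:main}: for each fixed $i$, produce an alternating sequence of outer and inner quasi-perfect Diophantine classes for the target $\beta_i$, deduce lower bounds at their centers (the analogue of Proposition \ref{prop:oc}), construct embeddings pinning down the intervening inner corners (the analogue of Proposition \ref{prop:ic1}), and then assemble these into infinitely many nonsmooth points exactly as in the proof of Theorem \ref{thm:main} (i). The natural candidate classes are the Brahmagupta images $S^i(\bE_k)$ and $S^i(\hat\bE_k)$ of the classes of Definitions \ref{def:outerclasses} and \ref{def:hatclasses}.

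First I would pin down the algebraic (obstruction) side, which transports essentially for free. The key observation is that the Brahmagupta move $S(z)=6-1/z$ is realized on the coordinates $(p,q,t)$ by the integral linear automorphism $\Sigma(p,q,t)=(6p-q,\,p,\,t)$ of the quadratic form $\bx^T A \bx = 6pq - p^2 - q^2 + t^2$: one checks directly that $\Sigma^T A \Sigma = A$, that $\Sigma$ induces $p/q \mapsto (6p-q)/p = 6 - 1/z$ on centers, and that $\Sigma$ preserves the integrality of $(d,e)$ recovered via Lemma \ref{lem:depqt}. Since $\Sigma$ is linear it commutes with the recursions of Definitions \ref{def:outerclasses} and \ref{def:hatclasses} (with the same constant $t=22$, as $\Sigma$ fixes the $t$-coordinate of $\bE$), and since $\Sigma$ preserves the bilinear form $\bx^T A \bx'$ it preserves $t''$-compatibility, and (by the short computation that reduces the adjacency relation to compatibility) adjacency as well. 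Consequently $\Sigma^i(\bE_k)$ and $\Sigma^i(\hat\bE_k)$ are again quasi-perfect Diophantine classes satisfying the hypotheses of Lemma \ref{lem:identities}, so the lower bounds of the Proposition \ref{prop:oc} analogue follow immediately from \eqref{eqn:cboundmu} and \eqref{eqn:mupq}. It remains to re-establish the alternation of Lemma \ref{lem:alternate}: the centers alternate because $S^i$ is increasing (as $S'(z)=1/z^2>0$) and so preserves the ordering of the original centers, while the alternation of the \emph{obstruction values} at $\beta_i$ is a computation of the same shape as Lemma \ref{lem:alternate} (ii), now carrying the parameter $i$ through the relation tying $\beta_i$ to $\acc(\beta_i)=S^i(\acc(\beta))$.

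The hard part will be the embedding (upper bound) side, i.e. the analogue of Proposition \ref{prop:ic1} producing a symplectic embedding realizing the inner corner between the obstructions from $S^i(\bE_k)$ and $S^i(\hat\bE_{k+1})$. Unlike the obstructions, the ATF mutation sequences of \S\ref{ssec:inner} do not transport under $\Sigma$ in any purely formal way, since they encode the geometry of the target $P(1,\beta_i)$ rather than the Diophantine surface. I would pursue two routes. The first is to construct, for each $i$, an explicit mutation sequence generalizing the $v^2yxy^kxy$ of Lemma \ref{lem:31}; guided by the pattern in \S\ref{ssec:Uconj} (where a prefix $v^{n-1}$ handles the family $L_{n,0}$) and by the fully proven $H_b$ analogue in \cite{MMW}, one expects the correct sequence to acquire an $i$-dependent prefix, after which the nodal-ray bookkeeping of Lemmas \ref{lem:rays}--\ref{lem:31} can be rerun. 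The second route is to prove that the classes $\Sigma^i(\bE_k)$ are \emph{perfect} (represented by embedded, not merely immersed, spheres) and to invoke \cite[Thm.~4.4]{usher}; this bypasses the ATF constructions but requires the perfectness input that we do not establish here.

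The principal obstacle is to carry out whichever embedding route uniformly in $i$. The numerical inequalities that make the argument work — the alternation of obstruction values, and the positivity computation at the end of Lemma \ref{lem:31} that forces the final nodal ray to strike the correct edge — must now be controlled simultaneously for all $i$, and it is exactly this kind of uniform estimate that forced the case-by-case analysis of \cite{MMW}, \cite{symm}, and \cite{usher} and that underlies the open status of ``$S$ sends staircases to staircases'' noted in \S\ref{ssec:B}. Once the Proposition \ref{prop:ic1} analogue is in hand, the conclusion is immediate from the template of the proof of Theorem \ref{thm:main} (i). Finally, a conditional shortcut would be to combine Conjecture \ref{conj:Pfractal} with \cite[Thm.~1.1.1]{MMW}, which already proves the corresponding statement for the images of the $[\{7,5,3,1\}^\infty]$ staircase in the $H_b$ family; this would deduce the conjecture for all $i$ at once, at the cost of assuming Conjecture \ref{conj:Pfractal}.
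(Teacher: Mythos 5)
This statement is a \emph{conjecture} in the paper: the authors offer no proof of it, only the motivating observation (in \S\ref{ssec:B}) that the analogous statement holds for the Hirzebruch family by \cite[Thm.~1.1.1]{MMW} and that $S$ has not been shown in general to send staircases to staircases. Your proposal, by your own framing, is likewise a research plan rather than a proof, so the honest assessment is that you have not closed the statement either --- and the gap you leave open is exactly the gap that makes this a conjecture in the first place.

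Concretely: your algebraic side is sound and worth having. The map $\Sigma(p,q,t)=(6p-q,p,t)$ does preserve the form $\bx^TA\bx$ (a direct check: $6(6p-q)p-(6p-q)^2-p^2=6pq-p^2-q^2$), induces $z\mapsto 6-1/z$ on centers, preserves integrality of $(d,e)$ via Lemma \ref{lem:depqt} (since $p+q$ is even for quasi-perfect classes, $7p-q+t\equiv p+q+t\pmod 4$), and, being linear and form-preserving, commutes with the $t$-recursion and preserves $t''$-compatibility. So the analogue of Proposition \ref{prop:oc} for the classes $\Sigma^i(\bE_k)$, $\Sigma^i(\hat\bE_k)$ is genuinely within reach, matching the paper's remark that the symmetry acts naturally on quasi-perfect classes. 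But lower bounds alone never produce an infinite staircase: the proof of Theorem \ref{thm:main}~(i) needs the upper bounds of Proposition \ref{prop:ic1}, i.e.\ actual symplectic embeddings at the inner corners, and these are precisely what does not transport under $\Sigma$. You name two routes (new $i$-dependent ATF mutation sequences, or perfectness of $\Sigma^i(\bE_k)$ plus \cite[Thm.~4.4]{usher}) but carry out neither, and you correctly observe that the required estimates must hold uniformly in $i$ --- the very difficulty that forced the case-specific analyses in \cite{symm}, \cite{MMW}, and \cite{usher}. Your ``conditional shortcut'' via Conjecture \ref{conj:Pfractal} assumes another open conjecture, so it cannot count as a proof. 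In short: your plan is a reasonable and well-informed attack, consistent with the paper's own expectations, but the statement remains open both in the paper and after your proposal.
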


\section{Code for exploring ATFs}\label{sec:code}

In \S\ref{ssec:ATFdefs}, we gave a detailed introduction to ATFs. In this section, we will continue the topic to discuss the Python realization of mutations. 

To start, we use \texttt{Decimal} data type for the calculation. As seen in the mutation sequences providing a full filling at the accumulation point (Proposition \ref{prop:fullacc}), the affine lengths of edges can get extremely small after only a few steps of mutation. This goes beyond the limit of any type of traditional floating data type and could lead to errors and breakdowns of the program. With \texttt{Decimal}, however, one can set however many digits needed with exact precision. This helps greatly when looking for the inner corners near the accumulation point. Further, with enough digits, one can compute the continued fractions of the ratios $|OY|/|OX|$ and, after the periodic pattern is clear, reverse engineer the precise quadratic irrational accumulation points. Here is the code for our setup, with 29 digits:

\begin{lstlisting}[language=Python]
    import decimal
    from decimal import Decimal as D
    # number of digits calculated:
    decimal.getcontext().prec = 10000 
    # number of digits printed:
    N = 29 
\end{lstlisting}

We construct the \texttt{node} class to integrate the vertex, the nodal ray at the vertex, and the edge departing clockwise from that vertex. 
\begin{lstlisting}[language=Python]
    class node (object):
        def __init__ (self, vertex, nodal_ray, edge, 
                                        affine_length):
            self.vertex = [D(vertex[0]), D(vertex[1])]
            self.nodal_ray = [D(nodal_ray[0]), D(nodal_ray[1])]
            self.edge = [D(edge[0]), D(edge[1])]
            self.affine_length = D(affine_length)
\end{lstlisting}

The next definition, \texttt{init\_polydisk(b)}, initializes the polydisk $P(1,\beta)$
\begin{lstlisting}[language=Python]
    def init_polydisk (b):
        global n
        global nodes
        n = 4
        nodes = [None] * 4
        nodes[0] = node([0,0], [1,1], [0,1], 1.)
        nodes[1] = node([0,1], [1,-1], [1,0], b)
        nodes[2] = node([b,1], [-1,-1], [0,-1], 1)
        nodes[3] = node([b,0], [-1,1], [-1,0], b)
\end{lstlisting}

The following two functions, \texttt{dist} and \texttt{dot}, are hand-written helper functions to facilitate the usage of $\texttt{Decimal}$. We then compute the mutation matrix $M$.
\begin{lstlisting}[language=Python]
    def dist (x,y):
        # distance between x and y
        return ( (x[0]-y[0])**2 + (x[1]-y[1])**2 ).sqrt()
    def dot (mat, vec):
        # multiplication of 2*2mat and 2*1vec
        return [ mat[0][0]*vec[0]+mat[0][1]*vec[1], 
                 mat[1][0]*vec[0]+mat[1][1]*vec[1] ]
    
    def solve_matrix (v1, v2, w1, w2):
        # solve the matrix M such that M(v1)=v2, M(w1)=w2
        mat = [ [w1[1], -v1[1]],
                [-w1[0], v1[0]] ]
  
        res = [ dot(mat, [v2[0],w2[0]]), 
                dot(mat, [v2[1],w2[1]]) ]
  
        res[0][0] = res[0][0] / (v1[0]*w1[1]-v1[1]*w1[0])
        res[0][1] = res[0][1] / (v1[0]*w1[1]-v1[1]*w1[0])
        res[1][0] = res[1][0] / (v1[0]*w1[1]-v1[1]*w1[0])
        res[1][1] = res[1][1] / (v1[0]*w1[1]-v1[1]*w1[0])

        return res
\end{lstlisting}

In the program we label the vertices clockwise using the numbers $0-3$, starting from the origin as $0$. Then \texttt{intersect\_one(i,j)} solves for the intersection point between the lines of the $i$-th nodal ray and the $j$-th edge. The function will return the intersection point if it lies on the edge segment and $[-1,-1]$ otherwise.
\begin{lstlisting}[language=Python]
    def intersect_one (i,j):
        # solve the intersection between i-th nodal ray 
        # and j-th edge
        global n
        global nodes

        # copy as local variables
        n1 = nodes[i].vertex
        n2 = nodes[j].vertex
        n3 = nodes[(j+1)%n].vertex
        v1 = nodes[i].nodal_ray
        v2 = nodes[j].edge

        # solve for the intersection point
        vec = [ v1[1]*n1[0]-v1[0]*n1[1], 
                    v2[1]*n2[0]-v2[0]*n2[1] ]
        mat = [[ -v2[0], v1[0] ],
               [ -v2[1], v1[1] ]]

        itx = dot(mat, vec)
        itx[0] = itx[0] / (v1[0]*v2[1] - v1[1]*v2[0])
        itx[1] = itx[1] / (v1[0]*v2[1] - v1[1]*v2[0])

        # check if the intersection is on the edge
        if abs(n2[0] - n3[0]) == 0:
            lmbda = (itx[1]-n3[1]) / (n2[1]-n3[1])
        else:
            lmbda = (itx[0]-n3[0]) / (n2[0]-n3[0])
        if (lmbda<0 or lmbda>1):
            return [-1,-1]
    
        return itx
\end{lstlisting}

The next function, \texttt{intersect\_all(x)}, solves for the edge that the $x$-th nodal ray intersects with. This is achieved by finding the intersections of the $x$-th nodal ray with all other edges, throwing away invalid intersections, and keeping the one with the shortest distance to the $x$-th vertex.
\begin{lstlisting}[language=Python]
    def intersect_all (x):
        # solve the intersecting edge for the x-th nodal ray
        global n
        global nodes

        # the variables for the intersecting edge
        min_edge = x
        min_itx = []
        min_dis = math.inf
  
        for i in range(n):
            # i is adjacent to x
            if (i==x or i==(x-1)%n):
            continue

        # the intersection of x-th nodal ray 
        # and i-th edge is invalid
        itx = intersect_one(x,i)
        if (itx == [-1,-1]):
            continue
    
        # maintain the closest intersection
        dis = dist(nodes[x].vertex, itx)
        if (dis < min_dis):
            min_edge = i
            min_itx = itx
            min_dis = dis

        return (min_edge, min_itx)
\end{lstlisting}

With the above foundations, the function \texttt{mutate(x)} calculates the polygon after mutating the $x$-th nodal ray. It has two secondary helper functions \texttt{mutate\_counterclockwise} and \texttt{mutate\_clockwise}, depending on whether the intersecting edge is to the left or right of the the nodal ray. Here we demonstrate the code for the former as the two are extremely similar.
\begin{lstlisting}[language=Python]
    def mutate_counterclockwise (head, tail, itx):
        # mutate with nodal_ray < intersecting edge
        global n
        global nodes 

        mat = solve_matrix( nodes[head].nodal_ray, 
                            nodes[head].nodal_ray, 
                            nodes[head].edge, 
                            nodes[(head-1)%n].edge )

        # construct the new node
        new_length = nodes[tail].affine_length 
                    * dist(itx, 
                    nodes[(tail+1)%n].vertex) 
                    / dist(nodes[tail].vertex, 
                    nodes[(tail+1)%n].vertex)
        new = node(itx, [-nodes[head].nodal_ray[0], 
                        -nodes[head].nodal_ray[1]], 
                        nodes[tail].edge, new_length)
        nodes = np.insert(nodes, tail+1, new)

        # adjust the head and tail node
      nodes[tail].affine_length -= new_length
      nodes[head-1].affine_length += nodes[head].affine_length
      nodes = np.delete(nodes, head)

        # update remaining nodes
        for i in range(head, tail):
            pre = nodes[(i-1)%n]
            nodes[i].vertex[0] = pre.vertex[0] 
                                + pre.affine_length*pre.edge[0]
            nodes[i].vertex[1] = pre.vertex[1] 
                                + pre.affine_length*pre.edge[1]
            nodes[i].nodal_ray = dot(mat, nodes[i].nodal_ray)
            nodes[i].edge = dot(mat, nodes[i].edge)
    
    def mutate (x):
        # mutate once by x-th nodal_ray
        global n
        global nodes

        # y is the intersecting edge
        # itx is the intersection point
        (y, itx) = intersect_all(x)

        if (x<y):
            mutate_counterclockwise(x,y,itx)
            return y
        else:
            mutate_clockwise(y,x,itx)
            return y+1
\end{lstlisting}

Finally, we have two interface functions \texttt{plot\_nodes} and \texttt{print\_embd} that output direct information for use. The first, \texttt{plot\_nodes}, plots the polygon with respect to edge length ratio; \texttt{print\_embd} prints the staircase coordinate $(z, \lambda)$ such that $E(1,z) \sembeds P(\lambda,\lambda b)$. The embedding is constructed by fitting the right triangle $\Delta OXY$ into the polygon. Below is an example that gets the $v^2y^2$ example above.

\begin{lstlisting}[language=Python]
    b = (6 + 5 * D(30).sqrt()) / 12
    init_polydisk(b)
    mutate(2)
    mutate(2)
    mutate(1)
    mutate(1)
    plot_nodes()
    print_embd()
\end{lstlisting}

It should be easy to generalize the initialization functions for other types of polygons beyond rectangles, such as triangles and trapezoids. For a complete file of the code, see the \href{https://github.com/Ginko2501/Almost-Toric-Fibration/tree/main}{Github Repository}.

\end{document}